\documentclass[10pt,twoside,a4paper,english,final]{article}

\title{Solitary wave solutions to a class of modified Green-Naghdi systems}
\author{Vincent Duch\^{e}ne \and Dag Nilsson \and Erik Wahl\'{e}n}
\date{\today}

\usepackage{babel} 
\usepackage[utf8]{inputenc}
\usepackage[T1]{fontenc}
\usepackage{amsmath, amsthm,amsfonts,amssymb}
\usepackage{float} 
\usepackage[pdftex]{graphicx}
\usepackage[hang,centerlast]{subfigure}
\usepackage{url,hyperref}
\urlstyle{sf}
\usepackage{mathtools}

%


\setlength{\oddsidemargin}{20pt}  
\setlength{\evensidemargin}{0pt}  
\setlength{\textwidth}{430pt}
\setlength{\textheight}{620pt}

\makeatletter
\let\Title\@title
\let\Author\@author
\makeatother

 \usepackage{fancyhdr}     
\fancypagestyle{mystyle}{%
	\fancyfoot{}
	\fancyhead[CO]{\Title} 
	\fancyhead[CE]{\scriptsize Vincent Duch\^{e}ne, Dag Nilsson and Erik Wahl\'{e}n} 
	\fancyhead[RE]{\scriptsize\today}  
	\fancyhead[LO]{}  
	\fancyhead[LE,RO]{\scriptsize\thepage}
	}
\fancypagestyle{plain}{\fancyhead{} } 

\pagestyle{mystyle}

\newcommand{\RR}{\mathbb{R}}

\newcommand{\ZZ}{\mathbb{Z}}
\newcommand{\NN}{\mathbb{N}}

\newcommand{\A}{\mathcal{A}}

\newcommand{\Q}{\mathcal{Q}}
\newcommand{\I}{\mathcal{I}}
\newcommand{\C}{\mathcal{C}}
\newcommand{\R}{\mathcal{R}}
\renewcommand{\H}{\mathcal{H}}
\renewcommand{\O}{\mathcal{O}}
\newcommand{\E}{\mathcal{E}}

\newcommand{\dd}{{\rm d}}
\newcommand{\eqdef}{\stackrel{\rm def}{=}}

\newcommand{\F}{{\sf F}}
\DeclareMathOperator{\sech}{sech}

\DeclareMathOperator*{\argmin}{arg\, min}
\DeclareMathOperator*{\supp}{supp}
\DeclareMathOperator*{\dist}{dist}

\newcommand{\ie}{{\em i.e.}~}
\newcommand{\eg}{{\em e.g.}~}

\newcommand{\dsp}{\displaystyle}

\DeclarePairedDelimiter\abs{\lvert}{\rvert}
\DeclarePairedDelimiter\Norm{\big\lVert}{\big\rVert}

\newtheorem{Theorem}{Theorem}[section]
\newtheorem{Definition}[Theorem]{Definition}
\newtheorem{Proposition}[Theorem]{Proposition}
\newtheorem{Corollary}[Theorem]{Corollary}
\newtheorem{Lemma}[Theorem]{Lemma}
\newtheorem{Remark}[Theorem]{Remark}
\newtheorem{Assumption}[Theorem]{Assumption}

\numberwithin{equation}{section}

\begin{document}
\maketitle

\begin{abstract}
We provide the existence and asymptotic description of solitary wave solutions to a class of modified Green-Naghdi systems, modeling the propagation of long surface or internal waves. This class was recently proposed by Duch\^ene, Israwi and Talhouk~\cite{DucheneIsrawiTalhouk16} in order to improve the frequency dispersion of the original Green-Naghdi system while maintaining the same precision. The solitary waves are constructed from the solutions of a constrained minimization problem. The main difficulties stem from the fact that the functional at stake involves low order non-local operators, intertwining multiplications and convolutions through Fourier multipliers.
\end{abstract}

\section{Introduction}

\subsection{Motivation}
In this work, we study solitary traveling waves for a class of long-wave models for the propagation of surface and internal waves. Starting with the serendipitous discovery and experimental investigation by John Scott Russell, the study of solitary waves at the surface of a thin layer of water in a canal has a rich history~\cite{Darrigol03}. In particular, it is well-known that the most widely used nonlinear and dispersive models for the propagation of surface gravity waves, such as the Korteweg-de Vries equation or the Boussinesq and Green-Naghdi systems, admit explicit families of solitary waves~\cite{Boussinesq72,Rayleigh76,KortewegDe95,Serre53a,Chen98a}. These equations can be derived as asymptotic models for the so-called water waves system, describing the motion of a two-dimensional layer of ideal, incompressible, homogeneous, irrotational fluid with a free surface and a flat impermeable bottom; we let the reader refer to~\cite{Lannes} and references therein for a detailed account of the rigorous justification of these models. Among them, the Green-Naghdi model is the most precise, in the sense that it does not assume that the surface deformation is small. However, the validity of all these models relies on the hypothesis that the depth of the layer is thin compared with the horizontal wavelength of the flow and, as expected, the models do not describe the system accurately (for instance the dispersion relation of infinitesimally small waves) in a deep water situation. In order to tackle this issue, one of the authors has recently proposed in~\cite{DucheneIsrawiTalhouk16} a new family of models:
\begin{equation}\label{GN-1layer}
\left\{ \begin{array}{l}
\displaystyle\partial_{ t}{\zeta} \ + \ \partial_x w \ =\ 0,  \\ \\
\displaystyle
\partial_{ t} \left( h^{-1}w +\Q^{\F}[h](h^{-1}w)\right)\ + \ g\partial_x{\zeta} \ + \ \frac{1}{2} \partial_x\Big(( h^{-1} w)^2\Big)\ = \partial_x\big( \R^{\F}[h,h^{-1} w]\big),
\end{array}
\right.
\end{equation}
where
\begin{align*}
\dsp\Q^{\F}[h]\overline{u}&\eqdef -\frac13 h^{-1}\partial_x \F\big\{h^3 \partial_x \F\{\overline{u}\}\big\},  \\ 
\dsp \R^{\F}[h,\overline{u}]&\eqdef \frac13 \overline{u} h^{-1}\partial_x \F\big\{h^3\partial_x \F \{\overline{u}\}\big\}+\frac12 \big(h\partial_x \F\{\overline{u}\}\big)^2.
\end{align*}
Here, $\zeta$ is the surface deformation, $h=d+\zeta$ the total depth (where $d$ is the depth of the layer at rest), $\overline{u}$ the layer-averaged horizontal velocity,  $w=h \overline{u}$ the horizontal momentum and $g$ the gravitational acceleration; see Figure~\ref{F.sketch}.
Finally,  $\F\eqdef  \F( D)$ is a Fourier multiplier, \ie
\[ \widehat{\F \{\varphi\}}(k) =\F( k ) \widehat{\varphi}(k).\]
The original Green-Naghdi model is recovered when setting $\F(k)\equiv 1$. Any other choice satisfying $\F(k)=1+\O(k^2)$ enjoys the same precision (in the sense of consistency) in the shallow-water regime and the specific choice of $\F(k)=\sqrt{\frac{3}{d|k|\tanh(d|k|)}-\frac3{ d^2|k|^2}}$ allows to obtain a model whose linearization around constant states fits exactly with the one of the water waves system.
Compared with other strategies such as the Benjamin-Bona-Mahony trick~\cite{BenjaminBonaMahony72,MadsenMurraySorensen91} or using different choices of velocity unknowns~\cite{BonaSmith76,Nwogu93}, an additional advantage of~\eqref{GN-1layer} is that it preserves the Hamiltonian structure of the model, which turns out to play a key role since the existence of solitary waves will be deduced from a variational principle.

\begin{figure}[htb]
\includegraphics[width=.5\textwidth]{./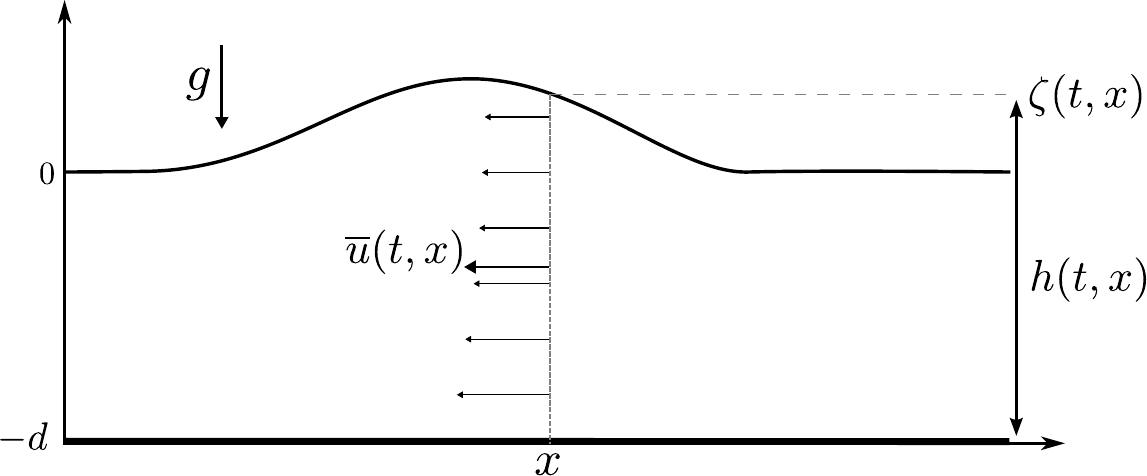}%
\includegraphics[width=.5\textwidth]{./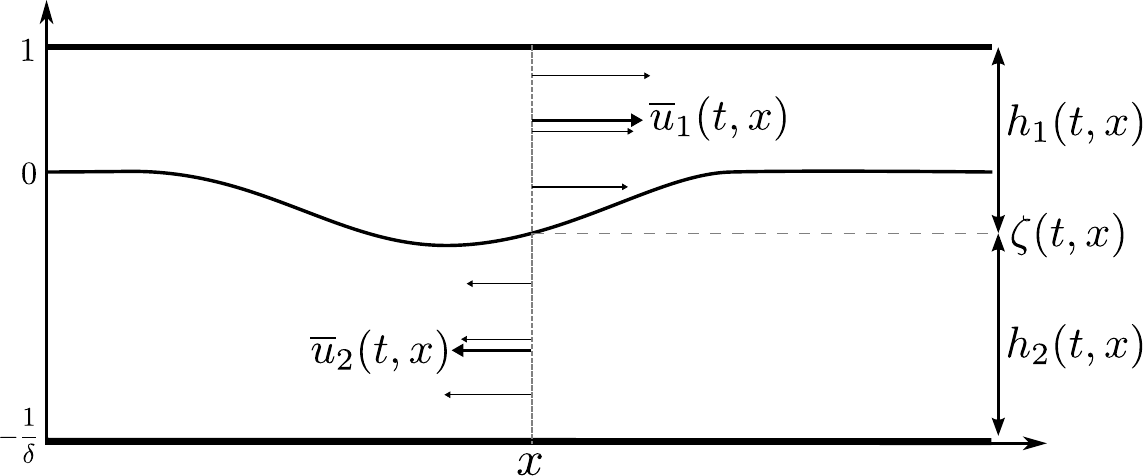}
\caption{Sketch of the domain and notations in the one-layer and bilayer situations.}
\label{F.sketch}
\end{figure}

The study of~\cite{DucheneIsrawiTalhouk16} is not restricted to surface propagation, but is rather dedicated to the propagation of internal waves at the interface between two immiscible fluids, confined above and below by rigid, impermeable and flat boundaries. Such a configuration appears naturally as a model for the ocean, as salinity and temperature may induce sharp density stratification, so that internal solitary waves are observed in many places~\cite{OstrovskyStepanyants89,Jackson04,HelfrichMelville06}. Due to the weak density contrast, the observed solitary waves typically have much larger amplitude than their surface counterpart, hence the bilayer extension of the Green-Naghdi system introduced by~\cite{Miyata87,Malcprimetseva89,ChoiCamassa99}, often called Miyata-Choi-Camassa model, is a very natural choice. It however suffers from strong Kelvin-Helmholtz instabilities --- in fact stronger than the ones of the water waves system for large frequencies --- and the work in~\cite{DucheneIsrawiTalhouk16} was motivated by taming these instabilities. 
The modified bilayer system reads
\begin{equation}\label{GN}
\left\{ \begin{array}{l}
\displaystyle\partial_{ t}{\zeta} \ + \ \partial_x w \ =\ 0,  \\ \\
\displaystyle
\partial_{ t} \left( \frac{h_1+\gamma h_2}{h_1 h_2}w + \Q^{\F}_{\gamma,\delta}[\zeta]w\right)\ + \ (\gamma+\delta)\partial_x{\zeta} \ + \ \frac{1}{2} \partial_x\Big(\frac{h_1^2 -\gamma h_2^2 }{(h_1 h_2)^2}w^2\Big)  = \partial_x\big( \R^{\F}_{\gamma,\delta}[\zeta,w]\big)
\end{array}
\right.
\end{equation}
where we denote $h_1=1-\zeta$, $h_2=\delta^{-1}+\zeta$, $
\Q^{\F}_{\gamma,\delta}[\zeta]w\eqdef \Q_2^{\F}[h_2](h_2^{-1}w)+\gamma \Q_1^{\F}[h_1](h_1^{-1} w) $ and $
 \R^{\F}_{\gamma,\delta}[\zeta,w]\eqdef \R_2^{\F}[h_2,h_2^{-1}w]-\gamma \R_1^{\F}[h_1,h_1^{-1}w]$, with
\begin{align*}
\dsp\Q_i^{\F}[h_i]\overline{u}_i&\eqdef -\frac13 h_i^{-1}\partial_x \F_i\big\{h_i^3 \partial_x \F_i\{\overline{u}_i\}\big\},  \\ 
\dsp \R_i^{\F}[h_i,\overline{u}_i]&\eqdef \frac13 \overline{u}_i h_i^{-1}\partial_x \F_i\big\{h_i^3\partial_x \F_i \{\overline{u}_i\}\big\}+\frac12 \big(h_i\partial_x \F_i\{\overline{u}_i\}\big)^2.
\end{align*}
 Here, $\zeta$ represents the deformation of the interface, 
 $h_1$ (resp.~$h_2$) is the depth of the upper (resp.~lower) layer,
 $\overline{u}_1$ (resp.~$\overline{u}_2$) is the layer-averaged horizontal velocity of the upper (resp.~lower) layer and  $w=h_1h_2(\overline{u}_2-\gamma \overline{u}_1)/(h_1+\gamma h_2)$ is the shear momentum. In this formulation we have used dimensionless variables, so that the depth at rest of the upper layer is scaled to $1$, whereas the one of the lower layer is $\delta^{-1}$, in which $\delta$ is the ratio of the depth at rest of the upper layer to the depth at rest of the lower layer (see Figure~\ref{F.sketch}). 
Similarly, $\gamma$ is the ratio of the upper layer over the lower layer densities. 
As a consequence of our scaling, the celerity of infinitesimally small and long waves is $c_0=1$. Once again, $\F_i$ ($i=1,2$) are Fourier multipliers. The choice $\F_i^{\rm id}(k)\equiv 1$ yields the Miyata-Choi-Camassa model while
 \[\F_i^{\rm imp}(k)=\sqrt{\frac{3}{\delta_i^{-1}|k|\tanh(\delta_i^{-1}|k|)}-\frac3{\delta_i^{-2} |k|^2}},\]
 with convention $\delta_1=1,\delta_2=\delta$, fits the behavior of the full bilayer Euler system around constant states, and thus gives hope for an improved precision when weak nonlinearities are involved.
Note that compared to equations (7)--(9) in~\cite{DucheneIsrawiTalhouk16} we have scaled the variables so that the shallowness parameter $\mu$ and amplitude parameter $\epsilon$ do not appear in the equations. This is for notational convenience since the parameters do not play a direct role in our results. On the other hand, we only expect the above model to be relevant for describing water waves in the regime $\mu \ll 1$ and the solutions that we construct in the end are found in the long-wave regime $\epsilon, \mu \ll 1$.

 In the following, we study solitary waves for the bilayer system~\eqref{GN}, noting that setting ${\gamma=0}$ immediately yields the corresponding result for the one-layer situation, namely system~\eqref{GN-1layer}. Our results are valid for a large class of parameters $\gamma,\delta$ and Fourier multipliers $\F_1,\F_2$, described hereafter. Our results are twofold:
 \begin{enumerate}
 \item We prove the existence of a family of solitary wave solutions for system~\eqref{GN};
 \item We provide an asymptotic description for this family in the long-wave regime.
 \end{enumerate}
These solitary waves are constructed from the Euler-Lagrange equation associated with a constrained minimization problem, as made possible by the Hamiltonian structure of system~\eqref{GN}. There are however several difficulties compared with standard works in the literature following a similar strategy (see e.g.~\cite{Angulo-Pava09} and references therein). Our functional cannot be written as the sum of the linear dispersive contribution and the nonlinear pointwise contribution: Fourier multipliers and nonlinearities are entangled. What is more, the operators involved are typically of low order ($\F$ is a smoothing operator). In order to deal with this situation, we  follow a strategy based on penalization and concentration-compactness used in a number of recent papers on the water waves problem (see e.g.~\cite{Buffoni04a, BuffoniGrovesSunWahlen13, GrovesWahlen15} and references therein) and in particular, in a recent work by one of the authors on nonlocal model equations with weak dispersion,~\cite{EhrnstromGrovesWahlen12}. Thus we show that the strategy therein may be favorably applied to bidirectional systems of equations in addition to unidirectional scalar equations such as the Whitham equation. 
Roughly speaking, the strategy is the following. The minimization problem is first solved in periodic domains using a penalization argument do deal with the fact that the energy functional is not coercive.
This allows to construct a {\em special minimizing sequence} for the real line problem by letting the period tend to infinity, which is essential to rule-out the dichotomy scenario in Lions' concentration-compactness principle. 
The long-wave description follows from precise asymptotic estimates and standard properties of the limiting (Korteweg-de Vries) model.
When the Fourier multipliers $\F_i$ have sufficiently high order, we can in fact avoid the penalization argument and consider the minimization problem on the real line directly, since any minimizing sequence is then also a special minimizing sequence. In particular, this is the case for the original Miyata-Choi-Camassa model (and of course  also the Green-Naghdi system).

Our existence proof unfortunately gives no information about stability, since our variational formulation does not involve conserved functionals; see the discussion in Section~\ref{S.S.preliminaries}. 
If sufficiently strong surface tension is included in the model, we expect that a different variational formulation could be used which also yields a conditional stability result (see~\cite{Buffoni04a, BuffoniGrovesSunWahlen13, GrovesWahlen15}).  A similar situation appears e.g.~in the study of Boussinesq systems~\cite{ChenNguyenSun10,ChenNguyenSun11}.

\subsection{The minimization problem}\label{S.S.preliminaries}

We now set up the minimization problem which allows to obtain solitary waves of system~\eqref{GN}. 
We seek traveling waves of~\eqref{GN}, namely solutions of the form (abusing notation) $\zeta(t,x)=\zeta(x-ct)$, $w(t,x)=w(x-ct)$; from which we deduce
 \[  -c\partial_x \zeta + \partial_x w \ = \ 0 \quad ;\quad -c\partial_x \big(\A_{\gamma, \delta}^\F[\zeta] w\big) +   (\gamma+\delta)\partial_x\zeta  +  \frac{1}{2} \partial_x\big(\frac{h_1^2-\gamma h_2^2}{h_1^2h_2^2} w^2\big) -\partial_x\big( \R^{\F}_{\gamma,\delta}[\zeta,w ] \big) \ = \ 0,\]
 where we denote
 \[
 \A_{\gamma, \delta}^\F[\zeta] w\eqdef \A_2^{\F}[h_2](h_2^{-1}w)+\gamma\A_1^{\F}[h_1](h_1^{-1}w), \quad \A_i^{\F}[h_i]\overline{u}_i\eqdef \overline{u}_i+\Q_i^{\F}[h_i]\overline{u}_i.
 \]
 Integrating these equations and using the assumption (since we restrict ourselves to solitary waves) $\lim_{\abs{x}\rightarrow \infty}\zeta(x)= \lim_{\abs{x}\rightarrow \infty}w(x)= 0$ yields the system of equations
\begin{equation}\label{GN-sol}
\left\{ \begin{array}{l}
-c{\zeta} \ + \  w \ =\ 0 ,  \\ \\
\displaystyle
-c \A_{\gamma, \delta}^\F[\zeta]w\ + \ (\gamma+\delta){\zeta} \ + \ \frac{1}{2} \frac{h_1^2 -\gamma h_2^2 }{(h_1 h_2)^2} w^2 \ -\  \R^{\F}_{\gamma,\delta}[\zeta,w]\ = \ 0.
\end{array}
\right.
\end{equation}

We now observe that system~\eqref{GN} enjoys a Hamiltonian structure. Indeed, define the functional
\[ \H(\zeta,w)\eqdef \frac12\int_{-\infty}^\infty (\gamma+\delta)\zeta^2+ w\A_{\gamma, \delta}^\F[\zeta]w \, \dd x.
\]
Under reasonable assumptions on $\F_1,\F_2$, and for sufficiently regular $\zeta$, $\A_{\gamma, \delta}^\F[\zeta]$ defines a well-defined, symmetric, positive definite operator~\cite{DucheneIsrawiTalhouk16}. We may thus introduce the variable
\begin{equation}\label{def-v}
v \eqdef \A_{\gamma, \delta}^\F[\zeta] w,
\end{equation}
and write
\[
 \H(\zeta,v) \eqdef \frac12\int_{-\infty}^\infty (\gamma+\delta)\zeta^2+ v(\A_{\gamma, \delta}^\F[\zeta])^{-1}v \, \dd x.
\]
It is now straightforward to check that~\eqref{GN} can be written in terms of functional derivatives of $\H$:
\begin{equation}\label{GN-Hamilton}
 \partial_t \zeta =-\partial_x \left(\dd_v \H\right)\qquad  ; \qquad  \partial_t v =-\partial_x \left(\dd_\zeta \H\right).\end{equation}

 It is therefore tempting to seek traveling waves through the time-independent quantities 
 \[\H(\zeta,v)\ = \ \frac12\int_{-\infty}^\infty (\gamma+\delta)\zeta^2+ v\A_{\gamma, \delta}^\F[\zeta]^{-1}v\, \dd x
\quad  \text{ and }\quad 
\I(\zeta,v) \ \eqdef \  \int_{-\infty}^\infty\zeta v\ \dd x.\] 
Indeed any solution to~\eqref{GN-sol} is a critical point of the functional $\H-c\I$:
 \[ \dd \H(\zeta,v)-c \dd  \I(\zeta,v)=0\qquad  \Longleftrightarrow \qquad  \dd_v\H -c\zeta =0 \quad \text{ and } \quad  \dd_\zeta \H  - c v=0 ,\]
 which, by~\eqref{GN-Hamilton}, is the desired system of equations. However, notice that from Weyl's essential spectrum theorem, one has
 \[ \text{spec}_{\text{ess}}\left(\dd^2 \H(\zeta,v)-c \dd^2  \I(\zeta,v)\right)=\text{spec}_{\text{ess}}\left(\dd^2 \H_\infty(\zeta,v)-c \dd^2  \I_\infty(\zeta,v)\right)\]
 where $\H_\infty,\I_\infty$ are the asymptotic operators as $|x|\to\infty$, \ie
 \[\dd^2 \H_\infty(\zeta,v)-c \dd^2  \I_\infty(\zeta,v)=\dd^2 \H(0,0)-c \dd^2  \I(0,0)=\begin{pmatrix} \gamma+\delta & -c \\ -c & (\gamma+\delta-\frac\gamma3(\partial_x\F_1)^2-\frac1{3\delta}(\partial_x\F_2)^2)^{-1}\end{pmatrix}.\]
 Since the spectrum of the above operator has both negative and positive components, the desired critical point is neither a minimizer nor a maximizer, as noticed (for the Green-Naghdi system) in~\cite{Li02}.

We will obtain solutions to~\eqref{GN-sol} from a {\em constrained} minimization problem depending solely on the variable $\zeta$. 
Notice that for each fixed $c$ and $\zeta$, the functional $v\mapsto \H(\zeta, v)-c \I(\zeta, v)$ has a unique critical point,
$v_{c,\zeta}=c\A_{\gamma, \delta}^\F[\zeta]\zeta$ . Substituting $v_{c, \zeta}$ into 
$\H(\zeta, v)-c \I(\zeta, v)$, we obtain
\begin{align*}
\H(\zeta, v_{c, \zeta})-c\I(\zeta, v_{c,\zeta})&=
 \frac12\int_{-\infty}^\infty (\gamma+\delta)\zeta^2-c^2 \zeta\A_{\gamma, \delta}^\F[\zeta] \zeta\, \dd x
 \\
 &=\frac{\gamma+\delta}{2} \Norm{\zeta}_{L^2}^2-\frac{c^2}2 \I(\zeta, \A_{\gamma, \delta}^\F[\zeta] \zeta).
\end{align*}
Observe now that $(\zeta, v)$ is a critical point of $\H(\zeta, v)-c\I(\zeta, v)$ if and only if 
$\zeta$ is a critical point of $\H(\zeta, v_{c, \zeta})-c\I(\zeta, v_{c,\zeta})$ and
$v=v_{c, \zeta}$.
We thus define
\begin{equation}
\label{def-E}
\begin{aligned}
\E(\zeta)&\eqdef  \I (\zeta,\A_{\gamma, \delta}^\F[\zeta]\zeta)=\int_{-\infty}^\infty \zeta\A_{\gamma, \delta}^\F[\zeta]\zeta\, \dd x\\
&=\int_{-\infty}^\infty \frac{h_1+\gamma h_2}{h_1h_2}\zeta^2+\frac\gamma3 (1-\zeta)^3 \big(\partial_x \F_1\{\frac{\zeta}{1-\zeta}\}\big)^2
+\frac13 (\delta^{-1}+\zeta)^3 \big(\partial_x \F_2\{\frac{\zeta}{\delta^{-1}+\zeta}\}\big)^2\ \dd x\\
&=\gamma\overline\E(\zeta)+\underline\E(\zeta),
\end{aligned}
\end{equation}
where 
\begin{align*}
\overline{\E}(\zeta)&=\int_{-\infty}^\infty \frac{\zeta^2}{1-\zeta}+\frac13 (1-\zeta)^3 \big(\partial_x \F_1\{\frac{\zeta}{1-\zeta}\}\big)^2\dd x,\\
\underline{\E}(\zeta)&=\int_{-\infty}^\infty\frac{\zeta^2}{\delta^{-1}+\zeta}+\frac13 (\delta^{-1}+\zeta)^3 \big(\partial_x \F_2\{\frac{\zeta}{\delta^{-1}+\zeta}\}\big)^2\dd x
\end{align*}
and look for critical points of $\H(\zeta, v_{c, \zeta})-c\I(\zeta, v_{c,\zeta})$ 
by considering the minimization problem
\begin{equation}
\label{reduced minimization problem}
\argmin \left\{  \E(\zeta), \ (\gamma+\delta)\Norm{\zeta}_{L^2}^2=q\right\},
\end{equation}
with $c^{-2}$ acting as a Lagrange multiplier.

Another way of thinking of this reduction is to solve the first equation in~\eqref{GN-sol} for $w$ and substitute the solution $w=c\zeta$ into the second equation, yielding
\begin{equation}\label{sol-c}
 c^2 \A_{\gamma,\delta}^\F[\zeta]\zeta \ = \   (\gamma+\delta)\zeta  + \frac{c^2}{2} \frac{h_1^2-\gamma h_2^2}{h_1^2h_2^2}\zeta^2 - c^2\R_{\gamma,\delta}^\F[\zeta,\zeta ].
 \end{equation}
Computing
\begin{multline}\label{def-dE}\dd \E(\zeta)=2\frac{h_1+\gamma h_2}{h_1 h_2}\zeta-\frac{h_1^2-\gamma h_2^2}{h_1^2h_2^2}\zeta^2-\frac23 \delta^{-1} h_2^{-2}\partial_x \F_2\big\{h_2^3\partial_x \F_2 \{h_2^{-1}\zeta\}\big\}-\frac{2\gamma}3 h_1^{-2}\partial_x \F_1\big\{h_1^3\partial_x \F_1 \{h_1^{-1}\zeta\}\big\} \\
\dsp \hfill +\big(h_2\partial_x \F_2\{h_2^{-1}\zeta\}\big)^2-\gamma\big(h_1\partial_x \F_1\{h_1^{-1}\zeta\}\big)^2,\end{multline}
we find that
\[\frac12 \dd \E(\zeta)=  \A_{\gamma, \delta}^\F[\zeta]\zeta \ -\ \frac{1}{2} \frac{h_1^2-\gamma h_2^2}{h_1^2h_2^2}\zeta^2 \ + \  \R_{\gamma,\delta}^\F[\zeta,\zeta ],\]
which allows us to recognize~\eqref{sol-c} as the Euler-Lagrange equation for~\eqref{reduced minimization problem}.

\subsection{Statement of the results}\label{S.statements}

For the sake of readability, we postpone to Section~\ref{S.preliminaries} the definition and (standard) notations of the functional spaces used herein. The class of Fourier multipliers for which our main result is valid is the following.
\begin{Definition}[Admissible class of Fourier multipliers]\label{D.admissible}~
\begin{enumerate}
\item \label{D.admissible 1}
$\F(k)=\F(|k|)$ and $0< \F \leq 1$;
\item \label{D.admissible 2}
$\F\in \C^2(\RR)$, $\F(0)=1$ and $\F'(0)=0$;
\item \label{D.admissible 3}
There exists an integer $j\ge 2$ such that
\[
\partial_k^j(k\F(k))\in L^2(\RR);
\]
\item \label{D.admissible 4}
There exists $\theta\in[0,1)$ and $C_\pm^{\F}>0$ such that
\[
 C^{\F}_-(1+\abs{k})^{-\theta}\leq \F(k)\leq C^{\F}_+(1+ |k|)^{-\theta}.
\]
\end{enumerate}
\end{Definition}
We also introduce a second class of strongly admissible Fourier multipliers which is used in our second result.
\begin{Definition}[Strongly admissible class of Fourier multipliers]\label{D.stradmissible}~
An admissible Fourier multipler $\F$ in the sense of Definition~\ref{D.admissible} is strongly admissible if $\F \in \C^\infty(\RR)$ and for each $j\in \NN$ there exists a constant $C_j$ such that
\[
|\partial_k^j \F(k)|\le C_j(1+|k|)^{-\theta-j}.
\]
\end{Definition}

Notice the following.
\begin{Proposition} \label{P.examples-are-subadditive}
The two aforementioned examples, namely $\F_i^{\rm id}$ and $\F_i^{\rm imp}$ are strongly admissible, and satisfy Definition~\ref{D.admissible},\ref{D.admissible 4} with (respectively) $\theta=0,1/2$.
\end{Proposition}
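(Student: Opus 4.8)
\emph{Proof plan.} The multiplier $\F_i^{\rm id}\equiv 1$ satisfies all of Definition~\ref{D.admissible} trivially with $\theta=0$ (take $C_\pm^{\F}=1$ in item~\ref{D.admissible 4}, and $j=2$ in item~\ref{D.admissible 3} since $\partial_k^2 k\equiv 0$), and all its derivatives of positive order vanish, so it is strongly admissible. The content is thus the assertion for $\F_i^{\rm imp}$, and the plan is to reduce every requirement to elementary one-variable properties of
\[
G(x)\eqdef\frac{3}{x\tanh x}-\frac{3}{x^2}=\frac{3}{x^2}\big(x\coth x-1\big),\qquad x>0,
\]
so that $\F_i^{\rm imp}(k)=\sqrt{G(\delta_i^{-1}|k|)}$, with $\delta_i>0$ a fixed constant. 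First I would observe that $z\mapsto z\coth z-1$ is analytic near $0$ and vanishes there to second order, with $z\coth z-1=\tfrac13 z^2+\O(z^4)$; hence $G$ extends to an \emph{even} real-analytic function near $0$, is plainly smooth on $\RR\setminus\{0\}$ (as $\sinh$ has no nonzero real zero), and $G(0)=1$. Once positivity of $G$ is established, it follows that $\F_i^{\rm imp}(k)=\sqrt{G(\delta_i^{-1}k)}$ (using evenness of $G$) is a smooth even function of $k$ with $\F_i^{\rm imp}(0)=1$ and $(\F_i^{\rm imp})'(0)=0$; together with $\F_i^{\rm imp}\le1$ this gives items~\ref{D.admissible 1}--\ref{D.admissible 2}.

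The core estimates are the two pointwise bounds $0<G\le1$. Positivity is simply $x\coth x>1$, i.e.\ $\tanh x<x$ for $x>0$. The upper bound $G\le1$ is equivalent to $x\coth x-1\le\tfrac13 x^2$, i.e.\ to $x\cosh x\le(1+\tfrac13x^2)\sinh x$; I would prove this by setting $\phi(x)\eqdef(1+\tfrac13x^2)\sinh x-x\cosh x$, noting $\phi(0)=0$, and computing $\phi'(x)=\tfrac{x}{3}\big(x\cosh x-\sinh x\big)$, which is positive on $(0,\infty)$ since $x\cosh x-\sinh x$ vanishes at $0$ with derivative $x\sinh x>0$. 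Thus $0<G\le1$ on $[0,\infty)$ and hence $0<\F_i^{\rm imp}\le1$. For item~\ref{D.admissible 4} with $\theta=\tfrac12$, I would use $\coth x=1+\O(e^{-2x})$ as $x\to\infty$ to get $(1+x)G(x)\to3$; therefore $x\mapsto(1+x)G(x)$ is continuous, strictly positive on $[0,\infty)$, and has a positive finite limit at infinity, hence is bounded above and below by positive constants, i.e.\ $G(x)\asymp(1+x)^{-1}$. Since $1+\delta_i^{-1}|k|\asymp 1+|k|$, this yields $C_-^{\F}(1+|k|)^{-1/2}\le\F_i^{\rm imp}(k)\le C_+^{\F}(1+|k|)^{-1/2}$.

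It remains to establish the derivative bounds $|\partial_k^j\F_i^{\rm imp}(k)|\le C_j(1+|k|)^{-1/2-j}$ required by Definition~\ref{D.stradmissible}; item~\ref{D.admissible 3} then follows with $j=2$, because $\partial_k^2\big(k\F_i^{\rm imp}\big)=2(\F_i^{\rm imp})'+k(\F_i^{\rm imp})''$ is $\O((1+|k|)^{-3/2})$, which is square-integrable on $\RR$ (local integrability being clear from smoothness). For $|k|\le1$ the estimate is immediate from smoothness of $\F_i^{\rm imp}$ and the fact that $(1+|k|)^{-1/2-j}$ is bounded below on $[-1,1]$. For $|k|\ge1$, write $x=\delta_i^{-1}|k|\ge\delta_i^{-1}$ and use $xG(x)=3\coth x-\tfrac3x=3+R(x)$ with $R(x)=\tfrac{6e^{-2x}}{1-e^{-2x}}-\tfrac3x$, for which $|R^{(m)}(x)|\le C_m x^{-m-1}$ on $[\delta_i^{-1},\infty)$ for every $m\ge0$ (the exponential term is dominated by any negative power there). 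Then $\sqrt{G(x)}=x^{-1/2}\sqrt{3+R(x)}$, and applying the Fa\`a di Bruno formula to $r\mapsto\sqrt{3+r}$ — smooth on the relevant compact range of $R$, which stays bounded away from $-3$ — every derivative of order $m\ge1$ of $\sqrt{3+R(\cdot)}$ carries at least one factor $R^{(\ell)}$ with $\ell\ge1$, giving $|\partial_x^m\sqrt{3+R(x)}|\le C_m x^{-m-1}$ for $m\ge1$ and a uniform bound for $m=0$; Leibniz' rule against $x^{-1/2}$ then produces $|\partial_x^j\sqrt{G(x)}|\le C_j x^{-1/2-j}$. Finally $\partial_k^j\F_i^{\rm imp}(k)=(\delta_i^{-1}\sgn k)^j(\partial_x^j\sqrt G)(\delta_i^{-1}|k|)$ on each of $\{k>0\}$ and $\{k<0\}$, which with $|k|\ge1$ gives the claimed decay.

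I expect the only genuinely nontrivial point to be the inequality $G\le1$ (equivalently $x\coth x-1\le\tfrac13 x^2$), handled by the auxiliary function $\phi$ above; everything else is bookkeeping. The second point requiring some care is tracking the exact exponent $x^{-1/2-j}$ in the last step, where it is essential to split off the constant $3$ so that the otherwise non-decaying factor $\sqrt{3+R}$ has derivatives that do decay.
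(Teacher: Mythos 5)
The paper states Proposition~\ref{P.examples-are-subadditive} without supplying a proof, so there is nothing to compare against; your argument is correct and fills that gap. All the key steps check out: the reduction to $G(x)=\tfrac{3}{x^2}(x\coth x-1)$, the verification $0<G\le1$ via the auxiliary function $\phi(x)=(1+\tfrac{x^2}{3})\sinh x-x\cosh x$ with $\phi'(x)=\tfrac{x}{3}(x\cosh x-\sinh x)\ge0$, the identification $(1+x)G(x)\to3$ giving the two-sided bound with $\theta=\tfrac12$, and the derivative estimates obtained by writing $\sqrt{G(x)}=x^{-1/2}\sqrt{3+R(x)}$ with $R$ having power-decaying derivatives away from the origin (which also yields item~\ref{D.admissible 3} via $\partial_k^2(k\F)=\O((1+|k|)^{-3/2})\in L^2$).
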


\begin{Assumption}[Admissible parameters]\label{D.parameters} In the following, we fix $\gamma\geq 0$, $\delta\in(0,\infty)$ such that $\delta^2-\gamma\neq0$.
We also fix a positive number $\nu$ such that $\nu\ge 1-\theta$ and $\nu>1/2$ (the second condition is automatically satisfied if $\theta <1/2$). Finally, fix $R$ an arbitrary positive constant.
\end{Assumption}

\begin{Remark}
Our results hold for any values of the parameters $(\gamma,\delta)\in[0,\infty)\times(0,\infty)$ such that $\delta^2\neq\gamma$, although admissible values for $q_0$ depend on the choice of the parameters. However, not all parameters are physically relevant in the oceanographic context. When $\gamma>1$, the upper fluid is heavier than the lower fluid, and the system suffers from strong Rayleigh-Taylor instabilities~\cite{Chandrasekhar61}. In the bilayer setting, the use of the rigid-lid assumption is well-grounded only when the density contrast, $1-\gamma$, is small. In this situation, one may use the Boussinesq approximation, that is set $\gamma=1$; see~\cite{Duchene14a} in the dispersionless setting. Notice however that system~\eqref{GN} exhibits unstable modes that are reminiscent of Kelvin-Helmholtz instabilities when the Fourier multipliers $\F_i$ satisfy Definition~\ref{D.admissible},\ref{D.admissible 4} with $\theta\in[0,1)$; see~\cite{DucheneIsrawiTalhouk16}. It is therefore noteworthy that internal solitary waves in the ocean and in laboratory experiments are remarkably stable and fit very well with the Miyata-Choi-Camassa predictions~\cite{HelfrichMelville06}. The sign of the parameter $\delta^2-\gamma$ is known to determine whether long solitary waves are of elevation or depression type, as corroborated by Theorem~\ref{T.asymptotic-result}. At the critical value $\delta^2=\gamma$, the first-order model would be the modified (cubic) {\rm KdV} equation,
predicting that no solitary wave exists~\cite{DjordjevicRedekopp78}. 
\end{Remark}

 We study the constrained minimization problem
\begin{equation}\label{min-pb}
\argmin_{\zeta\in V_{q,R}}  \E(\zeta),
\end{equation}
with
\[
V_{q,R}=\{\zeta \in H^\nu(\RR)\ :\ \Norm{\zeta}_{H^\nu(\RR)}<R,\ (\gamma+\delta)\Norm{\zeta}_{L^2(\RR)}^2=q\},
\]
and $q\in(0,q_0)$, with $q_0$ sufficiently small. Notice in particular that as soon as $q$ is sufficiently small $\Norm{\zeta}_{L^\infty}<\min(1,\delta^{-1})$  (by Lemma~\ref{L.interpolation} thereafter and since $\nu>1/2$) and $\E(\zeta)$ is well-defined  (by Lemmata~\ref{L.product} and~\ref{L.Fi-bounds} and since $\nu\geq 1-\theta$) for any $\zeta\in V_{q,R}$. Any solution will satisfy the Euler-Lagrange equation
\begin{equation}\label{sol-alpha}  \dd  \E(\zeta)+2\alpha(\gamma+\delta)\zeta=0,
\end{equation}
where $\alpha$ is a Lagrange multiplier. Equation~\eqref{sol-alpha} is exactly~\eqref{sol-c} with $(-\alpha)^{-1}=c^2$, and therefore provides a traveling-wave solution to~\eqref{GN}.

Our goal is to prove the following theorems.
\begin{Theorem}\label{T.main-result}
Let $\gamma,\delta,\nu,R$ satisfying Assumption~\ref{D.parameters} and $\F_i$, $i=1,2$ be admissible in the sense of Definition~\ref{D.admissible}. Let $D_{q,R}$ be the set of minimizers of $\E$ over $V_{q,R}$. Then there exists $q_0>0$ such that for all $q\in (0,q_0)$, the following statements hold:
\begin{itemize}
\item The set $D_{q,R}$ is nonempty and each element in $D_{q,R}$ solves the traveling wave equation~\eqref{sol-c}, with $c^2=(-\alpha)^{-1}>1 $. Thus for any $\zeta\in D_{q,R}$, $\big(\zeta(x\pm ct),w_\pm=\pm c \zeta(x\pm ct)\big)$ is a supercritical solitary wave solution to~\eqref{GN}.
\item For any  minimizing sequence $\{\zeta_n\}_{n\in\mathbb{N}}$ for $\E$ in $V_{q,R}$ such that $\sup_{n\in\NN} \Norm{\zeta_n}_{H^\nu(\RR)}<R$, there exists a sequence $\{x_n\}_{n\in\mathbb{N}}$ of real numbers such that a subsequence of $\{\zeta_n(\cdot+x_n)\}_{n\in\mathbb{N}}$ converges (strongly in $H^\nu(\RR)$ if $\nu=1-\theta>1/2$; weakly in $H^\nu(\RR)$ and strongly in $H^s(\RR)$ for $s\in[0,\nu)$ otherwise) to an element in $D_{q,R}$.

\item There exist two constants $m,M>0$ 
 such that 
\[ \Norm{\zeta}_{H^\nu(\RR)}^2\leq  Mq \quad \text{ and } \quad c^{-2}=-\alpha\leq
 1-m q^{\frac23},\]
uniformly over $q\in(0,q_0)$ and $\zeta\in D_{q,R}$.
\end{itemize}
\end{Theorem}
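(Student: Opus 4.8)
The plan is to establish existence of minimizers via the concentration-compactness method, following the penalization strategy of~\cite{EhrnstromGrovesWahlen12}, and then to extract the quantitative bounds from near-optimality of the minimizing value together with the near-scaling structure of $\E$ in the long-wave regime. I would organize the argument around the infimum $I_q \eqdef \inf_{\zeta\in V_{q,R}} \E(\zeta)$ and its behavior as $q\to 0$.

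\emph{Step 1: Coercivity-type lower bounds and strict subhomogeneity.} First I would use Lemmata~\ref{L.product} and~\ref{L.Fi-bounds}, together with $\Norm{\zeta}_{L^\infty}<\min(1,\delta^{-1})$, to control the quadratic-in-$\zeta$ part of $\E$ from below by $(\gamma+\delta)\Norm{\zeta}_{L^2}^2 = q$ plus a nonnegative ``dispersive'' contribution of the form $\tfrac\gamma3\Norm{\partial_x\F_1 u_1}_{L^2}^2 + \tfrac1{3\delta}\Norm{\partial_x\F_2 u_2}_{L^2}^2$ up to cubic errors, where $u_i = \zeta/h_i$. Because $\F_i(k)=1+\O(k^2)$ near the origin and $\F_i(k)\gtrsim (1+|k|)^{-\theta}$, this dispersive term is comparable, on the relevant low-frequency range, to $\Norm{\partial_x\zeta}_{\dot H^{-\theta}}^2$; combined with the constraint this yields $\E(\zeta)\ge q - C q^{3/2}$ (roughly), so $I_q = q + o(q)$. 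The key structural fact I would isolate here is a \emph{strict subadditivity / subhomogeneity} statement: $I_{tq} < t\, I_q$ for $t\in(0,1)$, or more precisely $I_{q_1+q_2} < I_{q_1}+I_{q_2}$, which follows from the fact that the genuinely nonlinear (cubic and higher) corrections to $\E$ lower the energy below the quadratic value and scale like $q^{3/2}$ rather than $q$. This is the classical mechanism ruling out dichotomy.

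\emph{Step 2: Concentration-compactness with a special minimizing sequence.} Following the excerpt's roadmap, I would first solve~\eqref{min-pb} on a periodic interval $[-P,P]$ using a penalized functional (adding a term that activates when $\Norm{\zeta}_{H^\nu}$ approaches $R$) to compensate for the lack of coercivity, obtaining periodic minimizers $\zeta_P$; then let $P\to\infty$ to produce a \emph{special} minimizing sequence $\{\zeta_n\}$ for the real-line problem, enjoying uniform $H^\nu$ bounds and, crucially, extra decay/equidistribution properties that let one exclude vanishing directly. Applying Lions' concentration-compactness dichotomy to the $L^2$-densities $|\zeta_n|^2$: vanishing is excluded because it would force $\E(\zeta_n)\to q$ (no nonlinear gain), contradicting $I_q < q$; dichotomy is excluded by the strict subadditivity from Step 1, using that the nonlocal operators $\F_i$ are bounded on $L^2$ and that cut-off errors are controlled since $\partial_k^j(k\F(k))\in L^2$ gives enough kernel decay. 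Hence, up to translation $\zeta_n(\cdot + x_n)$, concentration holds; weak $H^\nu$ limits pass to a limit $\zeta_\infty$ with $(\gamma+\delta)\Norm{\zeta_\infty}_{L^2}^2 = q$ and $\Norm{\zeta_\infty}_{H^\nu}\le R$, and lower semicontinuity of the dispersive part plus strong $L^2$ (hence strong $L^s$, $s<\nu$) convergence handles the cubic terms, giving $\E(\zeta_\infty)\le I_q$, so $\zeta_\infty\in D_{q,R}$. When $\nu = 1-\theta>1/2$ the dispersive part controls the full $H^\nu$ norm and one upgrades to strong $H^\nu$ convergence. One must also check $\Norm{\zeta_\infty}_{H^\nu}<R$ strictly (not just $\le R$), so the minimizer lies in the \emph{open} set and the Euler-Lagrange equation~\eqref{sol-alpha} holds with a genuine Lagrange multiplier; this uses the a priori bound $\Norm{\zeta}_{H^\nu}^2 \le Mq < R^2$ for $q$ small.

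\emph{Step 3: The quantitative estimates.} For $\Norm{\zeta}_{H^\nu}^2\le Mq$: from $\E(\zeta)=I_q\le \E(t_q\phi)$ for a suitable fixed profile $\phi$ rescaled to the constraint, one gets $I_q\le q + C q^{3/2}$; on the other hand the lower bound from Step 1 reads $\E(\zeta)\ge q + c_0\Norm{\zeta}_{\dot H^{\nu}}^2\text{-type quantity} - C q^{3/2}$ (after estimating cubic remainders by $\Norm{\zeta}_{L^\infty}\Norm{\zeta}_{L^2}^2 \lesssim q^{3/2}$ via interpolation), whence the dispersive seminorm is $\O(q)$; combined with $\Norm{\zeta}_{L^2}^2 = q/(\gamma+\delta)$ and the frequency-localization that $\F_i(k)^2 k^2 \gtrsim (1+|k|)^{2\nu}$ fails only for bounded $k$ (where the $L^2$ norm suffices), this gives $\Norm{\zeta}_{H^\nu}^2\le Mq$. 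For the Lagrange multiplier: pairing~\eqref{sol-alpha} with $\zeta$ gives $\langle \dd\E(\zeta),\zeta\rangle = -2\alpha q$, and expanding $\langle\dd\E(\zeta),\zeta\rangle$ using~\eqref{def-dE} one finds $\langle\dd\E(\zeta),\zeta\rangle = 2\E(\zeta) + (\text{cubic-order terms})$ — more precisely the quadratic part is $2$-homogeneous and the cubic part is $3$-homogeneous, so $\langle\dd\E(\zeta),\zeta\rangle - 2\E(\zeta)$ equals the cubic contribution, which is $\le -c_1 q^{3/2}$ in sign? Here I must be careful: the KdV scaling predicts the nonlinear gain is of order $q^{5/3}$ relative to $q$ after optimizing the long-wave balance, i.e. $I_q \le q - m' q^{5/3}$. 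I would therefore \emph{not} rely on crude cubic estimates for the sharp exponent but instead insert a one-parameter family of long-wave trial functions $\zeta_\lambda(x) = \lambda^{a}\phi(\lambda x)$ adjusted so $(\gamma+\delta)\Norm{\zeta_\lambda}_{L^2}^2 = q$ and optimize in $\lambda$; the competition between the $\O(\lambda^2)$ dispersive cost and the cubic-nonlinear gain yields $\lambda\sim q^{1/3}$ and $I_q \le q - m q^{5/3}$... but the claimed bound is $c^{-2}=-\alpha\le 1 - mq^{2/3}$. Reconciling: $-2\alpha q = \langle\dd\E(\zeta),\zeta\rangle = 2 I_q + (\text{correction})$; since $I_q = q - \Theta(q^{5/3})$ and the correction (difference between $\langle\dd\E,\zeta\rangle$ and $2\E$) is also $\Theta(q^{5/3})$, we get $-2\alpha q = 2q - \Theta(q^{5/3})$, i.e. $-\alpha = 1 - \Theta(q^{2/3})$, consistent. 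So the plan is: establish $I_q \le q - m q^{5/3}$ via optimized KdV-type trial functions, establish the matching lower bound $I_q \ge q - M q^{5/3}$ from the sharp form of Step 1 (using that the cubic term, evaluated on a function with $L^2$-mass $q$ and dispersive seminorm $\sigma$, is bounded in modulus by $C q^{1/2}\sigma$-ish, optimized against the $\sigma^2$ term), and then read off $-\alpha$ from the pairing identity. This also gives $c^2 = (-\alpha)^{-1} > 1$ since $-\alpha < 1$.

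\emph{Main obstacle.} The hardest part is the combination of \emph{two} delicate points: (a) excluding dichotomy in concentration-compactness when the only available coercive quantity is the weak dispersive seminorm $\Norm{\partial_x\F_i u_i}_{L^2}$ — because $\F_i$ is a \emph{smoothing} operator of negative order, one cannot localize in physical space without paying a price in this seminorm, so the standard cut-and-glue argument requires careful commutator estimates using the kernel decay from Definition~\ref{D.admissible},\ref{D.admissible 3}, and this is precisely why the periodic/penalization detour producing a \emph{special} minimizing sequence is needed; and (b) getting the \emph{sharp} exponent $q^{2/3}$ in the Lagrange multiplier bound, which forces one to track the long-wave (KdV) scaling quantitatively rather than using soft cubic bounds — the upper bound on $I_q$ needs a well-chosen trial function optimized in the dilation parameter, and the lower bound needs the Gagliardo–Nirenberg-type inequality relating the cubic term, the $L^2$-mass, and the dispersive seminorm to be applied at its optimal balance point. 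Everything else — well-definedness of $\E$, the variational derivative computation~\eqref{def-dE}, lower semicontinuity, and the passage from $\le R$ to $<R$ — is routine given the lemmata invoked in the excerpt.
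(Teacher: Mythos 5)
Your overall architecture — penalization on a torus, construction of a special minimizing sequence with a uniform $H^\nu$ bound, strict subhomogeneity, Lions' concentration--compactness, and then reading off the quantitative bounds from the pairing identity with the Euler--Lagrange equation — coincides with the paper's proof. Two of your steps, however, contain genuine errors that the paper's argument is specifically designed to avoid.

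\emph{First}, you propose to run concentration--compactness on the densities $e_n = |\zeta_n|^2$. The paper deliberately does \emph{not} make this ``more evident'' choice; it uses instead the full integrand of $\E$,
\[
e_n=\gamma\Big(\tfrac{\zeta_n^2}{1-\zeta_n}+\tfrac13(1-\zeta_n)^3\big(\partial_x\F_1\{\tfrac{\zeta_n}{1-\zeta_n}\}\big)^2\Big)
+\tfrac{\zeta_n^2}{\delta^{-1}+\zeta_n}+\tfrac13(\delta^{-1}+\zeta_n)^3\big(\partial_x\F_2\{\tfrac{\zeta_n}{\delta^{-1}+\zeta_n}\}\big)^2,
\]
and notes that this is precisely what allows the argument to close when $\nu=1-\theta$. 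The reason this matters is in the dichotomy exclusion (the paper's Lemma~\ref{L.dichotomy}): after cutting $\zeta_n$ into inner/outer pieces $\eta_n^{(1)},\eta_n^{(2)}$ one must show that the \emph{total} energy is asymptotically split, i.e. $\E(\eta_n^{(1)})\to I^*$, $\E(\eta_n^{(2)})\to I_q-I^*$, and that requires the integrand $e_n$ to go to zero on the expanding annulus $M_n<|x-x_n|<N_n$. Dichotomy for $|\zeta_n|^2$ only tells you that the $L^2$ mass of the annulus vanishes; it gives no control on the dispersive contribution $(\partial_x\F_i\{\cdot\})^2$ there, which is a genuine $H^{1-\theta}$-seminorm quantity that cannot be bounded by local $L^2$ mass when the domain is unbounded and expanding. (Rellich compactness does not apply on the growing annulus.) So the cut-and-glue step as you sketch it does not close; you would have to reprove the whole dichotomy exclusion with the energy density.

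\emph{Second}, your derivation of the bound $\Norm{\zeta}_{H^\nu}^2\le Mq$ relies on the claim that $\F_i(k)^2k^2\gtrsim(1+|k|)^{2\nu}$ except for bounded $k$. This is false whenever $\nu>1-\theta$: from Definition~\ref{D.admissible}, item~\ref{D.admissible 4}, one only has $\F_i(k)^2k^2\sim(1+|k|)^{2(1-\theta)}$ at high frequency, so $\E$ is coercive in $H^{1-\theta}$ (this is exactly Lemma~\ref{L.EvsY}) but \emph{not} in $H^\nu$. This lack of coercivity in $H^\nu$ is the whole reason the penalization is introduced in the first place, and you observed this yourself in Step~2, but then your Step~3 contradicts it. The correct route, used in the paper's Lemma~\ref{L.zetaP-H1-estimate}, is a finite bootstrap: starting from $\Norm{\zeta}_{H^{1-\theta}}^2\lesssim q$ (given by coercivity) one tests the Euler--Lagrange equation against $\Lambda^{2r_n}\zeta$ with $r_n=\min(\nu-(1-\theta),\,n(1-\theta))$ and gains $1-\theta$ derivatives per step, absorbing the nonlinear terms using the smallness of $q$. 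Without this bootstrap you cannot establish $\Norm{\zeta}_{H^\nu}^2\le Mq<R^2$ for $\nu>1-\theta$, and hence you cannot conclude that the (penalized) minimizer lies in the interior of the constraint ball, which you correctly flag as necessary to remove the penalization and obtain the genuine Euler--Lagrange equation.

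One minor remark: for the Lagrange multiplier you use $\Theta(q^{5/3})$ bounds in both directions, but only the one-sided estimate $I_q\le q(1-mq^{2/3})$ (from the rescaled trial function) is needed for $-\alpha\le1-mq^{2/3}$; the matching lower bound on $I_q$ belongs to Theorem~\ref{T.asymptotic-result} and requires the stronger admissibility hypothesis, so it should not be assumed here.
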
 

\begin{Theorem}\label{T.asymptotic-result}
In addition to the hypotheses of Theorem~\ref{T.main-result}, assume that $\F_i$, $i=1,2$, are strongly admissible in the sense of Definition~\ref{D.stradmissible}. Then there exists $q_0>0$ such that for any $q\in(0,q_0)$,
each $\zeta \in D_{q,R}$ belongs to $H^s(\RR)$ for any $s\ge 0$ and 
\[
\sup_{\zeta\in D_{q,R}} \inf_{x_0\in\RR} \Norm{q^{-\frac23}\zeta(q^{-1/3}\cdot)-\xi_{\rm KdV}(\cdot-x_0)}_{H^1(\RR)}=\O(q^{\frac16}) 
\]
where 
\[\xi_{\rm KdV}(x)=\frac{\alpha_0(\gamma+\delta)}{\delta^2-\gamma}\sech^2\left(\frac{1}{2}\sqrt{\frac{3\alpha_0(\gamma+\delta)}{\gamma+\delta^{-1}}}x\right)\]
is the unique (up to translation) solution of the {\rm KdV} equation~\eqref{KdV-eq} and 
\[
\alpha_0=\frac{3}{4}\left(\frac{(\delta^2-\gamma)^4}{(\gamma+\delta)^4(\gamma+\delta^{-1})}\right)^\frac{1}{3}.
\]
In addition, the number $\alpha$, defined in Theorem~\ref{T.main-result}, satisfies
\[
\alpha+1=q^\frac{2}{3}\alpha_0+\mathcal{O}(q^\frac{5}{6}),
\]
uniformly over $q\in(0,q_0)$ and $\zeta\in D_{q,R}$.
\end{Theorem}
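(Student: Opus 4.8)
The plan is to exploit the scaling that turns the constrained minimization problem~\eqref{min-pb} into a near-KdV variational problem in the long-wave limit, then invoke uniqueness and nondegeneracy of the KdV ground state. First I would introduce the rescaled unknown $\eta(y)\eqdef q^{-2/3}\zeta(q^{-1/3}y)$, so that the constraint $(\gamma+\delta)\Norm{\zeta}_{L^2}^2=q$ becomes a fixed $q$-independent $L^2$-constraint on $\eta$, while the functional $\E(\zeta)$, after subtracting the quadratic part and rescaling, expands as $q^{5/3}$ times a functional of the form $\int \frac{\gamma+\delta^{-1}}{3}(\partial_y\eta)^2 - \frac{\delta^2-\gamma}{3}\eta^3 \, \dd y + \O(q^{1/3})$, the leading part being precisely (a multiple of) the KdV energy whose constrained minimizer is $\xi_{\rm KdV}$. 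This uses the admissible-class expansions $\F_i(k)=1+\O(k^2)$ from Definition~\ref{D.admissible},\ref{D.admissible 2}, together with the Taylor expansion of the nonlinearities $h_i^{\pm1}$ around $h_i\equiv$ const; the strong admissibility hypothesis is what guarantees that the error terms (which involve higher-order Fourier multipliers and therefore higher derivatives) are controlled in the relevant norms once we already know from Theorem~\ref{T.main-result} that $\Norm{\zeta}_{H^\nu}^2\lesssim q$ and, by a bootstrap using the Euler-Lagrange equation~\eqref{sol-alpha} and the smoothing properties of $\F_i$, that $\zeta\in H^s$ for all $s$ with the corresponding scaled bounds.

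Second, I would translate the convergence of functionals into convergence of minimizers. From Theorem~\ref{T.main-result} the rescaled family $\{\eta_q\}$ is bounded in $H^1$ (indeed in every $H^s$), so along a subsequence it converges weakly in $H^1$; the lower bound $c^{-2}=-\alpha\le 1-mq^{2/3}$ and the matching upper bound one extracts from testing with a fixed KdV-profile competitor pin down $\alpha+1 = q^{2/3}\alpha_0 + \O(q^{5/6})$, and force the weak limit to be a nonzero constrained critical point of the limiting KdV functional, hence — after the concentration-compactness argument already available from Theorem~\ref{T.main-result} rules out vanishing and splitting — a translate of $\xi_{\rm KdV}$. Upgrading to the quantitative $\O(q^{1/6})$ rate in $H^1$ is then a matter of combining: (i) the $\O(q^{1/3})$ error in the functional, (ii) the coercivity/nondegeneracy of the linearized KdV operator $-\partial_y^2 + 1 - 2\xi_{\rm KdV}$ on the subspace orthogonal to translations, which converts an $\O(q^{1/3})$ energy gap into an $\O(q^{1/6})$ distance, and (iii) a uniform-in-$q$ argument to pass from "each subsequence" to the stated $\sup_{\zeta\in D_{q,R}}\inf_{x_0}$ form, using that $D_{q,R}$ is exactly the minimizer set.

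The main obstacle I anticipate is step~(ii) made uniform in $q$: one must show that for $q$ small every scaled minimizer $\eta_q$ lies in a fixed neighborhood of the (translation-adjusted) KdV orbit where the nondegeneracy estimate applies, and that the error terms in the functional expansion are genuinely $\O(q^{1/3})$ in a norm strong enough to feed into that estimate — this is where the entanglement of multiplications and convolutions in $\F_i$ is most delicate, since naive $H^1$ bounds on products like $h_i^3\partial_x\F_i\{h_i^{-1}\zeta\}$ are not obviously enough and one must use the smoothing of $\F_i$ (strong admissibility) to absorb the extra derivative. A secondary technical point is justifying the higher regularity $\zeta\in H^s$ for all $s$ with quantitative scaled bounds, which requires iterating~\eqref{sol-alpha}: solving for $\A_{\gamma,\delta}^\F[\zeta]\zeta$ and inverting $\A$ gains derivatives only through $\F_i^{-2}$-type multipliers, so the bootstrap must be set up carefully using Definition~\ref{D.stradmissible} rather than merely Definition~\ref{D.admissible}.
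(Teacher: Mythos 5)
Your outline follows essentially the same overall strategy as the paper: decompose $\E$ into its KdV leading part plus remainders (the paper's Lemma~\ref{L.E-decomp-KdV}), establish higher regularity of minimizers via the Euler--Lagrange equation, compare $I_q$ to $I_{\rm KdV}$ after the rescaling $S_{\rm KdV}$, and convert the resulting $\O(q^{1/3})$ energy gap into an $\O(q^{1/6})$ $H^1$-distance using the coercivity of the linearized KdV operator modulo translations (the paper invokes Bona--Souganidis--Strauss at that point). The $\alpha$ expansion is then read off from the Euler--Lagrange identity $\langle \dd\E(\zeta),\zeta\rangle = -2\alpha q$ together with the convergence of minimizers.

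There is, however, a genuine gap in your step where you assert that $\Norm{\zeta}_{H^s}^2 \lesssim q$ for all $s$, "with the corresponding scaled bounds," follows from a regularity bootstrap. The bootstrap (which the paper does via paradifferential calculus in Lemma~\ref{L.ellipticity}, needed because of the $\gamma$-mixing of the two Fourier multipliers) only delivers $\Norm{\zeta}_{H^s}^2\lesssim q$ for each $s$; interpolating any number of these gives at best $\Norm{\zeta}_{L^\infty}\lesssim q^{1/2}$ and $\Norm{\partial_x\zeta}_{L^2}^2\lesssim q$, which are too weak to make the KdV remainder $\E_{\rm rem}(\zeta)=\O(q^2)$. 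The sharper bounds $\Norm{\zeta}_{L^\infty}\lesssim q^{2/3}$, $\Norm{\partial_x\zeta}_{L^2}^2\lesssim q^{5/3}$, $\Norm{\partial_x^2\zeta}_{L^2}^2\lesssim q^{7/3}$ (the paper's Lemma~\ref{L.zeta-Linfty-bound}) are not a regularity phenomenon at all: they come from rewriting the Euler--Lagrange equation in Fourier form as
\[
\widehat\zeta(k)=-\frac{\tfrac12\,\mathcal F\big(\dd\E_3(\zeta)+\dd\E_{\rm rem}^{(1)}(\zeta)\big)(k)}{(\gamma+\delta)(\alpha+1)+\tfrac13\big(\gamma(k\F_1(k))^2+\delta^{-1}(k\F_2(k))^2\big)},
\]
and exploiting the spectral gap $\alpha+1\ge mq^{2/3}$ in a high/low-frequency splitting: the low-frequency part contributes $\int_{|k|\le k_0}(mq^{2/3}+k^2)^{-1}\dd k \sim q^{-1/3}$ against an $L^\infty$ bound of the numerator of order $q$, and the high-frequency part is controlled by the positivity of the symbol away from the origin. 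Without this resolvent-type argument, the anisotropic $q$-scaling never appears, the $\O(q^2)$ bound for $\E_{\rm rem}$ in your expansion does not follow, and the $\O(q^{1/6})$ rate is not obtained. You should replace the vague "bootstrap gives the scaled bounds" by a version of that Fourier-side estimate.

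A secondary, smaller point: for the convergence to the KdV profile you do not actually need to re-run a full concentration-compactness argument; once the refined bounds are in hand, the rescaled minimizers form a minimizing sequence for $\E_{\rm KdV}$ on the unit $L^2$-sphere, and one can invoke the known subsequential convergence modulo translation of such sequences directly, then upgrade to a quantitative rate via the coercivity estimate you describe. The paper does exactly this, so your item (iii) is unnecessarily heavy.
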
 

\section{Technical results}\label{S.preliminaries}

In the following, we denote $C(\lambda_1,\lambda_2,\dots)$ a positive constant depending non-decreasingly on the parameters  $\lambda_1,\lambda_2,\dots$. We write $A\lesssim B$ when $A\leq CB $ with $C$ a nonnegative constant whose value is of no importance. 
We do not display the dependence with respect to the parameters $\gamma,\delta,C^{\F_i}_\pm$ and regularity indexes.

\paragraph{Functional setting on the real line} Here and thereafter, we denote $L^2(\RR)$ the standard Lebesgue space of square-integrable functions, endowed with the norm $\Norm{f}_{L^2}=(\int_{-\infty}^\infty \abs{f(x)}^2\ \dd x)^{1/2}$. 
The real inner product of $f_1,f_2\in L^2(\RR)$ is denoted by
$
\langle f_1,f_2\rangle=\int_{\RR}f_1(x)f_2(x) \dd x
 $.
We use the same notation for duality pairings which are clear from the context. The space $L^\infty(\RR)$ consists of all essentially bounded, Lebesgue-measurable functions
 $f$, endowed with the norm
$
\Norm{f}_{L^\infty}= {\rm ess\,sup}_{x\in\RR} \abs{ f(x)}
$.
For any real constant $s\in\RR$, $H^s(\RR)$ denotes the Sobolev space of all tempered distributions $f$ with finite norm $\Norm{ f}_{H^s}=\Norm{ \Lambda^s f}_{L^2} < \infty$, where $\Lambda$ is the pseudo-differential operator $\Lambda=(1-\partial_x^2)^{\frac12}$. For $n\in\NN$, $\C^n(\RR)$ is the space of functions having continuous derivatives up to order $n$, and $\C^\infty(\RR)=\bigcap_{n\in\NN} \C^n(\RR)$.  The Schwartz space is denoted $\mathcal S(\RR)$ and the tempered distributions $\mathcal{S}'(\RR)$. We use the following convention for the Fourier transform:
\[\mathcal F\big(f\big)(k)=\hat f(k)\eqdef \frac{1}{\sqrt{2\pi}}\int_{\RR} f(x)e^{-\mathrm i x k}\ \dd x.\]

We start with standard estimates in Sobolev spaces. 
The following interpolation estimates are standard and used without reference in our proofs.
\begin{Lemma}[Interpolation estimates]\label{L.interpolation}
Let $f\in H^\mu(\RR)$, with $\mu>1/2$.
\begin{enumerate}
\item \label{L.interpolation 1}
One has $f\in L^\infty(\RR)$ and
\[ \Norm{f}_{L^\infty}\lesssim  \Norm{f}_{L^2}^{1-\frac1{2\mu}}\Norm{f}_{H^\mu}^{\frac1{2\mu}}.\]
\item \label{L.interpolation 2}
For any $\delta\in (0,\mu)$, one has $f\in H^{\mu-\delta}(\RR)$ and
\[\Norm{f}_{H^{\mu-\delta}}\leq \Norm{f}_{L^2}^{\frac\delta\mu}\Norm{f}_{H^\mu}^{1-\frac\delta\mu}.\]
\end{enumerate}
\end{Lemma}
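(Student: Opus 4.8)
\emph{Plan.} The plan is to argue entirely on the Fourier side, using the convention for $\widehat{\,\cdot\,}$ fixed above; in particular the inversion formula gives the elementary bound $\Norm{f}_{L^\infty}\le (2\pi)^{-1/2}\Norm{\widehat f}_{L^1}$, which will be the only non-trivial input for the first inequality. For the second inequality, the plan is to invoke nothing more than the log-convexity of $L^p$ norms.

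Concretely, for the second (interpolation) inequality I would start from the pointwise identity, valid for $\delta\in(0,\mu)$,
\[
(1+k^2)^{\mu-\delta}\abs{\widehat f(k)}^2=\big[(1+k^2)^{\mu}\abs{\widehat f(k)}^2\big]^{1-\frac\delta\mu}\,\big[\abs{\widehat f(k)}^2\big]^{\frac\delta\mu},
\]
and then apply H\"older's inequality with conjugate exponents $\mu/(\mu-\delta)$ and $\mu/\delta$ to the two factors. Integrating over $k$ gives $\Norm{f}_{H^{\mu-\delta}}^2\le \Norm{f}_{H^\mu}^{2(1-\delta/\mu)}\Norm{f}_{L^2}^{2\delta/\mu}$, and taking square roots is exactly the claim (with the sharp constant $1$).

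For the first ($L^\infty$) inequality I would estimate $\Norm{\widehat f}_{L^1}$ by splitting the frequency integral at a threshold $\rho>0$. On $\{\abs k\le \rho\}$, Cauchy--Schwarz gives $\int_{\abs k\le\rho}\abs{\widehat f}\,\dd k\lesssim \rho^{1/2}\Norm{\widehat f}_{L^2}=\rho^{1/2}\Norm{f}_{L^2}$. On $\{\abs k>\rho\}$, writing $\abs{\widehat f}=(1+k^2)^{-\mu/2}\cdot(1+k^2)^{\mu/2}\abs{\widehat f}$, Cauchy--Schwarz together with the elementary estimate $\int_{\abs k>\rho}(1+k^2)^{-\mu}\,\dd k\lesssim \rho^{1-2\mu}$ gives $\int_{\abs k>\rho}\abs{\widehat f}\,\dd k\lesssim \rho^{1/2-\mu}\Norm{f}_{H^\mu}$. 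Hence $\Norm{f}_{L^\infty}\lesssim \rho^{1/2}\Norm{f}_{L^2}+\rho^{1/2-\mu}\Norm{f}_{H^\mu}$, and choosing $\rho=\big(\Norm{f}_{H^\mu}/\Norm{f}_{L^2}\big)^{1/\mu}$ (the case $f\equiv 0$ being trivial) balances the two terms and produces $\Norm{f}_{L^\infty}\lesssim \Norm{f}_{L^2}^{1-\frac1{2\mu}}\Norm{f}_{H^\mu}^{\frac1{2\mu}}$, as desired.

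Both estimates are completely standard, so I do not expect a genuine obstacle. The only points that deserve a moment's attention are the convergence of $\int_{\RR}(1+k^2)^{-\mu}\,\dd k$ near infinity — which is precisely what the hypothesis $\mu>1/2$ ensures, and which is where this hypothesis enters — and the (routine) bookkeeping of exponents in the optimization over $\rho$.
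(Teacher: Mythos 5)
Your proof is correct and is the standard one. The paper itself states this lemma without proof (``The following interpolation estimates are standard and used without reference in our proofs''), so there is nothing to compare against: the H\"older/log-convexity argument on the Fourier side for the second item, and the frequency-splitting with optimization over the threshold $\rho$ for the first, are exactly the expected textbook derivations, and you correctly identify $\mu>1/2$ as the hypothesis guaranteeing integrability of $(1+k^2)^{-\mu}$ in the high-frequency tail.
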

The following Lemma is given for instance in~\cite[Theorem~C.12]{Benzoni-GavageSerre07}.
\begin{Lemma}[Composition estimate]\label{L.composition}  
Let $G$ be a smooth function vanishing at $0$, and  $f\in H^\mu(\RR)$ with $\mu>1/2$. Then  $G\circ f\in H^\mu(\RR)$ and we have
\[
\Norm{G\circ f}_{H^\mu} \le C(\Norm{f}_{L^\infty}) \Norm{f}_{H^\mu}.
\]
\end{Lemma}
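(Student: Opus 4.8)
The plan is to split $\Norm{G\circ f}_{H^\mu}$ into its $L^2$ component, which is essentially free, and its homogeneous component, which is where the work lies. Write $\mu=n+s$ with $n\in\NN_0$ and $s\in[0,1)$. Since $\mu>1/2$, Lemma~\ref{L.interpolation} gives $f\in L^\infty(\RR)$, so the number $\Lambda\eqdef\sup_{|t|\le\Norm{f}_{L^\infty}}\sum_{p=0}^{n+1}|G^{(p)}(t)|$ is finite; it will play the role of $C(\Norm{f}_{L^\infty})$ throughout. For the $L^2$ bound I would use $G(0)=0$ together with the mean value theorem: $|G(f(x))|\le\Lambda\,|f(x)|$ pointwise, hence $\Norm{G\circ f}_{L^2}\le\Lambda\Norm{f}_{L^2}$. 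Everything then reduces to controlling the homogeneous seminorm of $G\circ f$, and I would do this by a Littlewood--Paley / paralinearization argument, which has the advantage of treating integer and fractional $\mu$ on the same footing.

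Concretely, with $\{\Delta_j\}$ a Littlewood--Paley decomposition and $\{S_j\}$ the associated low-pass projections, I would telescope
\[ G(f)=G(S_0f)+\sum_{j\ge0}\bigl(G(S_{j+1}f)-G(S_jf)\bigr)=G(S_0f)+\sum_{j\ge0}m_j\,\Delta_jf, \]
where $m_j\eqdef\int_0^1 G'\bigl(S_jf+t\Delta_jf\bigr)\,\dd t$ (the right-hand side converges to $G(f)$ because $S_{j+1}f\to f$ and $G$ is continuous, and $G(S_0 f)\in L^2$ since $G(0)=0$). The two facts about $m_j$ that drive the estimate are $\Norm{m_j}_{L^\infty}\le\Lambda$ and, crucially, $\Norm{\partial_x^N m_j}_{L^\infty}\le C_N\,\Lambda\,2^{jN}$ for every $N$ --- that is, $m_j$ behaves like a symbol localized at frequencies $\lesssim 2^j$ --- which follows from Bernstein's inequality applied to the band-limited functions $S_jf$ and $\Delta_jf$ together with the Faà di Bruno expansion of $\partial_x^N\bigl(G'(S_jf+t\Delta_jf)\bigr)$. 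Hence $\Norm{\Delta_\ell m_j}_{L^\infty}\le C_N\,\Lambda\,2^{(j-\ell)N}$, and from this I would extract the product estimates $\Norm{\Delta_k(m_j\Delta_jf)}_{L^2}\lesssim\Lambda\Norm{\Delta_jf}_{L^2}$ when $k\le j$, and $\Norm{\Delta_k(m_j\Delta_jf)}_{L^2}\lesssim_N\Lambda\,2^{(j-k)N}\Norm{\Delta_jf}_{L^2}$ when $k>j$ (splitting $m_j$ into its own Littlewood--Paley pieces and noting that only $\ell\gtrsim k$ can contribute to frequency $\sim 2^k$).

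Plugging these into $\Norm{\Delta_kG(f)}_{L^2}\le\Norm{\Delta_kG(S_0f)}_{L^2}+\sum_j\Norm{\Delta_k(m_j\Delta_jf)}_{L^2}$, multiplying by $2^{k\mu}$, and summing in $k$ by Young's inequality for sequences --- the kernels $2^{(k-j)\mu}\mathbf 1_{\{j\ge k\}}$ and $2^{(j-k)(N-\mu)}\mathbf 1_{\{j<k\}}$ are summable since $\mu>0$ and $N$ may be chosen $>\mu$ --- I would obtain $\bigl\|(2^{k\mu}\Norm{\Delta_kG(f)}_{L^2})_k\bigr\|_{\ell^2}\lesssim\Lambda\Norm{f}_{H^\mu}+\Norm{G(S_0f)}_{H^\mu}$; and the last term is $\lesssim\Lambda\Norm{f}_{L^2}$ directly by Faà di Bruno and Bernstein, $S_0f$ being band-limited at frequency $O(1)$. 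Together with the $L^2$ bound and $\Norm{g}_{H^\mu}^2\simeq\Norm{S_0g}_{L^2}^2+\sum_{k\ge0}2^{2k\mu}\Norm{\Delta_kg}_{L^2}^2$, this gives $\Norm{G\circ f}_{H^\mu}\le C(\Norm{f}_{L^\infty})\Norm{f}_{H^\mu}$.

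I expect the main obstacle to be the off-diagonal product estimate (the case $k>j$): its gain comes \emph{only} from the quantitative Bernstein bounds $\Norm{\partial_x^N m_j}_{L^\infty}\lesssim 2^{jN}$, i.e.\ from making rigorous, for every $N$ and with constants depending only on $\Norm{f}_{L^\infty}$, the heuristic that a generic smooth function of a function band-limited at scale $2^j$ stays essentially band-limited at the same scale. The Faà di Bruno bookkeeping behind this is routine but is the one place that genuinely needs care. As a fallback, avoiding Littlewood--Paley altogether, I would use the Gagliardo seminorm characterization of $H^\mu$, expand $\partial_x^n(G\circ f)$ by Faà di Bruno, and bound each resulting term by combining the fractional chain rule $[G^{(p)}(f)]_{\dot W^{s,r}}\lesssim\Lambda\,[f]_{\dot W^{s,r}}$, a multi-factor fractional Leibniz rule, and Gagliardo--Nirenberg interpolation inequalities for both the Lebesgue norms and the fractional seminorms of the $\partial_x^i f$; there the obstacle shifts to verifying that all the Hölder exponents balance so as to reproduce exactly the linear dependence on $\Norm{f}_{H^\mu}$.
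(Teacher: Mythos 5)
The paper does not prove this lemma; it simply cites \cite[Theorem~C.12]{Benzoni-GavageSerre07}, so there is no internal argument to compare against. What you have written is the standard Meyer--Bony paralinearization proof of the Moser composition estimate --- the same telescoping argument $G(f)=G(S_0f)+\sum_j m_j\Delta_jf$ with $m_j=\int_0^1 G'(S_jf+t\Delta_jf)\,\dd t$ that underlies the cited reference and most textbook treatments --- and it is correct in outline and in every step that drives the estimate. Two small bookkeeping slips are worth fixing. First, $\Lambda\eqdef\sup_{|t|\le\Norm{f}_{L^\infty}}\sum_{p=0}^{n+1}|G^{(p)}(t)|$ is not quite large enough: Fa\`a di Bruno applied to $\partial_x^N m_j$, with $N$ chosen strictly larger than $\mu$ (say $N=n+1$), produces $G^{(p)}$ for $p$ up to $N+1=n+2$, so the sum defining $\Lambda$ must go one or two orders further; since $G$ is smooth and $N$ is fixed once $\mu$ is, this is harmless but should be stated. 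Second, $S_j$ and $\Delta_j$ are not contractive on $L^\infty$ --- one only has $\Norm{S_jf}_{L^\infty}\le C_0\Norm{f}_{L^\infty}$ with $C_0$ depending on the Littlewood--Paley family --- so the supremum defining $\Lambda$ must be taken over a ball of radius $C_0\Norm{f}_{L^\infty}$; otherwise the inequalities $\Norm{m_j}_{L^\infty}\le\Lambda$ and $\Norm{\partial_x^N m_j}_{L^\infty}\lesssim\Lambda 2^{jN}$ are not literally true. With those two adjustments, which are absorbed into the final constant $C(\Norm{f}_{L^\infty})$, the off-diagonal decay $\Norm{\Delta_\ell m_j}_{L^\infty}\lesssim 2^{(j-\ell)N}$, the product estimates for $\Norm{\Delta_k(m_j\Delta_jf)}_{L^2}$ in the two regimes $k\le j$ and $k>j$, and the summation by Young's inequality for sequences all go through exactly as you describe, and the low-frequency piece $G(S_0f)$ is handled correctly. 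The fallback via Gagliardo seminorms you sketch at the end would also work, but is considerably more painful; the Littlewood--Paley route is the right one and is essentially what the cited reference does.
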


\begin{Lemma}[Product estimates]\label{L.product}~
\begin{enumerate}
\item \label{L.product 1}
For any $f,g\in L^\infty(\RR)\cap H^s(\RR)$ with $s\geq 0$, one has $fg\in H^s(\RR)$ and
\[ \Norm{fg}_{H^s}\lesssim  \Norm{f}_{H^s}\Norm{ g}_{L^\infty}+\Norm{g}_{H^s}\Norm{ f}_{L^\infty}.\]
\item \label{L.product 2}
For any $f\in H^s(\RR),g\in H^t(\RR)$ with $s+t\ge 0$, and let $r$ such that $\min(s,t)\geq r$ and $r<s+t-1/2$. Then one has $fg\in H^r(\RR)$ and
\[ \Norm{fg}_{H^r}\lesssim  \Norm{f}_{H^s}\Norm{ g}_{H^t}.\]
\item \label{L.product 3}
 For any $\zeta\in L^\infty(\RR)$ such that $\Norm{\zeta}_{L^\infty}\leq 1-h_0$ with $h_0>0$ and any $f\in L^\infty(\RR)$, one has
\[  \Norm{\frac{f}{1+\zeta}}_{L^\infty}\leq C(h_0^{-1})\Norm{f}_{L^\infty}.\]
\item \label{L.product 4}
For any $\zeta\in H^\mu(\RR)$ with $\mu>1/2$ such that $\Norm{\zeta}_{L^\infty}\leq 1-h_0$ with $h_0>0$ and any $f\in H^s(\RR)$ with $s\in [-\mu,\mu]$, one has
\[  \Norm{\frac{f}{1+\zeta}}_{H^s}\leq C(h_0^{-1},\Norm{\zeta}_{H^\mu})\Norm{f}_{H^s}.\]
\end{enumerate}
\end{Lemma}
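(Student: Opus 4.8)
The plan is to establish the four estimates in turn. Parts~\ref{L.product 1} and~\ref{L.product 2} are classical fractional Leibniz rules, which I would prove via a Littlewood--Paley (Bony paraproduct) decomposition: writing $f=\sum_{j}\Delta_j f$, $g=\sum_{k}\Delta_k g$ and splitting $fg$ into the low--high ($j\ll k$), high--low ($k\ll j$) and high--high ($j\sim k$) interactions. For~\ref{L.product 1}, in the off-diagonal interactions the operator $\Lambda^s$ effectively acts on the high-frequency factor while the low-frequency factor is estimated by its full $L^\infty$ norm, which produces exactly the two terms on the right-hand side; the high--high interaction converges precisely because $s\ge 0$. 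For~\ref{L.product 2}, the off-diagonal interactions are controlled using $r\le\min(s,t)$, whereas summability of the high--high interaction comes from $r<s+t-\tfrac12$ (Cauchy--Schwarz in the dyadic index against the $\ell^2$ sequences of frequency-localized norms costs half a derivative), and $s+t\ge 0$ ensures $fg$ is a well-defined distribution. Since these are entirely standard, I would simply cite a reference for the dyadic bookkeeping.

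Part~\ref{L.product 3} is immediate: $\Norm{\zeta}_{L^\infty}\le 1-h_0$ forces $\abs{1+\zeta}\ge h_0$ a.e., hence $\abs{f/(1+\zeta)}\le h_0^{-1}\abs{f}$ pointwise. The substantive point is part~\ref{L.product 4}, which I would reduce to~\ref{L.product 2}. Write
\[
\frac1{1+\zeta}=1+G(\zeta),\qquad G(x)=\frac{-x}{1+x},
\]
and fix $\widetilde G\in\C^\infty(\RR)$ with $\widetilde G(0)=0$ that coincides with $G$ on $[-(1-h_0),1-h_0]$ and has all derivatives globally bounded by constants depending only on $h_0^{-1}$ --- this is possible because that interval is a compact subset of the interval of analyticity $(-1,\infty)$ of $G$. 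Since $\Norm{\zeta}_{L^\infty}\le 1-h_0$ we have $G(\zeta)=\widetilde G(\zeta)$, so the composition estimate (Lemma~\ref{L.composition}) gives $G(\zeta)\in H^\mu(\RR)$ with $\Norm{G(\zeta)}_{H^\mu}\le C(h_0^{-1},\Norm{\zeta}_{H^\mu})$. Then $\frac{f}{1+\zeta}=f+f\,G(\zeta)$, and I would bound the second term by part~\ref{L.product 2} applied to $f\in H^s(\RR)$, $G(\zeta)\in H^\mu(\RR)$ with target exponent $r=s$: for every $s\in[-\mu,\mu]$ one checks $\min(s,\mu)=s\ge r$ (as $s\le\mu$), $r=s<s+\mu-\tfrac12$ (as $\mu>\tfrac12$) and $s+\mu\ge 0$ (as $s\ge-\mu$), so $\Norm{f\,G(\zeta)}_{H^s}\lesssim\Norm{f}_{H^s}\Norm{G(\zeta)}_{H^\mu}$, which is the claim. (Negative $s$ could equally be handled by duality from the range $-s\in(0,\mu]$, since multiplication by the fixed function $(1+\zeta)^{-1}$ is self-adjoint on $L^2(\RR)$.)

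None of this is genuinely delicate; if I must single out an obstacle it is the frequency-sum bookkeeping in part~\ref{L.product 2} --- matching each hypothesis to the interaction it controls and not losing the endpoint exponents --- together with the observation, used in part~\ref{L.product 4}, that one must route through the bilinear estimate~\ref{L.product 2} rather than the Moser-type estimate~\ref{L.product 1} in order to cover $s\le\tfrac12$, where $f$ need not belong to $L^\infty(\RR)$; and, finally, being careful that the truncation step is what makes the constant in~\ref{L.product 4} depend on $h_0^{-1}$ rather than merely on $\Norm{\zeta}_{L^\infty}$.
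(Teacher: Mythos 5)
Your proposal is correct and follows essentially the same route as the paper: items~\ref{L.product 1} and~\ref{L.product 2} are treated as standard (the paper simply cites \cite[Prop.~C.11, Th.~C.10]{Benzoni-GavageSerre07} where you sketch the paraproduct proof), item~\ref{L.product 3} is pointwise, and item~\ref{L.product 4} is obtained by writing $\frac{1}{1+\zeta}=1-\frac{\zeta}{1+\zeta}$, bounding $\Norm{\frac{\zeta}{1+\zeta}}_{H^\mu}$ via Lemma~\ref{L.composition}, and then invoking item~\ref{L.product 2} with $g\in H^\mu$, $r=s\in[-\mu,\mu]$ exactly as you do. The one point where you are more careful than the paper is the explicit truncation of $G(x)=-x/(1+x)$ to a globally smooth function before applying Lemma~\ref{L.composition} (whose hypothesis is global smoothness); the paper uses the same estimate without spelling out this step, so your remark is a genuine (if minor) tightening rather than a divergence.
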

\begin{proof}The first two items are standard (see for instance~\cite[Prop. C.11 and Th. C.10]{Benzoni-GavageSerre07}. The third item is obvious.
For the last item, we use the second item and deduce that for any $f\in H^s(\RR)$, $s\in[-\mu,\mu]$, and $g\in H^\mu(\RR)$,
\[ \Norm{fg}_{H^s}\lesssim \Norm{g}_{H^\mu}\Norm{f}_{H^s}.\]
Hence
\[
\Norm{\frac{f}{1+\zeta}}_{H^s}\leq \Norm{f}_{H^s}+\Norm{\frac{f\zeta}{1+\zeta}}_{H^s}
\leq \Norm{f}_{H^s}+\Norm{\frac{\zeta}{1+\zeta}}_{H^\mu}\Norm{f}_{H^s}
\]
We conclude by
\[ \Norm{\frac{\zeta}{1+\zeta}}_{H^\mu}\lesssim C(h_0^{-1})\Norm{\zeta}_{H^\mu}, \]
where we have used Lemma~\ref{L.composition}, 
and the estimate is proved.
\end{proof}

The following Lemma justifies the assumptions of admissible Fourier multipliers in Definition~\ref{D.admissible}.
\begin{Lemma}[Properties of admissible Fourier multipliers]\label{L.Fi-bounds} Any admissible Fourier multipler (in the sense of Definition~\ref{D.admissible}), $\F_i$, satisfies the following.
\begin{enumerate}
\item\label{L.Fi-bounds 1}
 The linear operator $\partial_x\F_i(D)$ is bounded from $H^s(\RR)$ to $H^{s-1+\theta}(\RR)$, for any $s\in\RR$, and 
\[\Norm{\partial_x\F_i}_{H^s\to H^{s-1+\theta}}\lesssim C^{\F_i}_+.\]
Moreover, for any $\zeta\in H^{s+1-\theta}$, one has
 \[\Norm{\zeta}_{H^s}^2 + (C^{\F_i}_+)^{-2} \Norm{\partial_x\F_i\{\zeta\}}_{H^s}^2 \lesssim \Norm{\zeta}_{H^{s+1-\theta}}^2\lesssim \Norm{\zeta}_{H^s}^2 + (C^{\F_i}_-)^{-2} \Norm{\partial_x\F_i\{\zeta\}}_{H^s}^2.\]
\item\label{L.Fi-bounds 2}
Let $\varphi \in \C^\infty(\RR)$ with compact support and $[\partial_x \F_i, \varphi] \zeta= \partial_x \F_i\{\varphi\zeta\}-\varphi \partial_x \F_i\{\zeta\}$. Then
\[
\Norm{[\partial_x \F_i, \varphi] \zeta}_{L^2} \lesssim \Norm{\widehat{\varphi'}}_{L^1} \Norm{\zeta}_{H^{1-\theta}}.
\]
\item\label{L.Fi-bounds 3}
There exists $j\geq 2$ and $C_j$ such that for any $\zeta\in L^2(\RR)$ with compact support
\[ \abs{\partial_x\F_i\{\zeta\}}(x)\leq \frac{C_j}{\dist(x,\supp(\zeta))^j}\Norm{\zeta}_{L^2},\quad \text{for a.a. } x\in\RR\setminus\supp(\zeta).\]
\end{enumerate}
\end{Lemma}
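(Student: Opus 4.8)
The plan is to prove the three items in turn, each by a Fourier-space computation. For item~\ref{L.Fi-bounds 1}, the symbol of $\partial_x\F_i(D)$ is $k\mapsto \mathrm i k\F_i(k)$, and I would start from two elementary pointwise bounds valid because $0\le\theta<1$: from Definition~\ref{D.admissible},\ref{D.admissible 4},
\[
k^2\F_i(k)^2\le (C^{\F_i}_+)^2\,k^2(1+|k|)^{-2\theta}\lesssim (C^{\F_i}_+)^2\,(1+k^2)^{1-\theta},\qquad 1+(C^{\F_i}_-)^{-2}k^2\F_i(k)^2\ge 1+k^2(1+|k|)^{-2\theta}\gtrsim (1+k^2)^{1-\theta}.
\]
Both the mapping property $\partial_x\F_i(D)\colon H^s\to H^{s-1+\theta}$ with norm $\lesssim C^{\F_i}_+$ and the stated norm equivalence then follow from Plancherel's theorem, by inserting these bounds into $\Norm{\partial_x\F_i\{\zeta\}}_{H^{s-1+\theta}}^2=\int(1+k^2)^{s-1+\theta}k^2\F_i(k)^2\abs{\hat\zeta(k)}^2\,\dd k$ and comparing the weight $(1+k^2)^{s+1-\theta}$ with $(1+k^2)^s\big(1+(C^{\F_i}_\pm)^{-2}k^2\F_i(k)^2\big)$.

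For item~\ref{L.Fi-bounds 2} I would first record an auxiliary pointwise bound: writing $g(k)=k\F_i(k)$, one has $\abs{g'(k)}\lesssim (1+|k|)^{1-\theta}$. Indeed $g\in\C^2(\RR)$ by Definition~\ref{D.admissible},\ref{D.admissible 2}, $\abs{g(k)}\le C^{\F_i}_+(1+|k|)^{1-\theta}$ by \ref{D.admissible 4}, and $\partial_k^jg\in L^2(\RR)$ for some $j\ge2$ by \ref{D.admissible 3}; applying on each unit interval $I=[k-1,k+1]$ the embedding $H^2(I)\hookrightarrow\C^1(\overline I)$ with constant uniform in $k$, together with the interpolation inequality $\Norm{g''}_{L^2(I)}\lesssim \Norm{g}_{L^2(I)}+\Norm{\partial_k^jg}_{L^2(I)}$, gives $\abs{g'(k)}\le\Norm{g'}_{L^\infty(I)}\lesssim (1+|k|)^{1-\theta}+\Norm{\partial_k^jg}_{L^2(\RR)}\lesssim(1+|k|)^{1-\theta}$. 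Then I would decompose $[\partial_x\F_i,\varphi]\zeta=\partial_x\big([\F_i,\varphi]\zeta\big)+\varphi'\,\F_i\{\zeta\}$, the last term being bounded in $L^2$ by $\Norm{\varphi'}_{L^\infty}\Norm{\zeta}_{L^2}\lesssim \Norm{\widehat{\varphi'}}_{L^1}\Norm{\zeta}_{L^2}$ since $0<\F_i\le1$. For the commutator term, inserting $\varphi(x)=(2\pi)^{-1/2}\int\hat\varphi(\eta)e^{\mathrm i\eta x}\,\dd\eta$ and using the modulation identity $\F_i(D)(e^{\mathrm i\eta\cdot}\zeta)=e^{\mathrm i\eta\cdot}\F_i(D+\eta)\zeta$ yields
\[
\partial_x\big([\F_i,\varphi]\zeta\big)=\frac{1}{\sqrt{2\pi}}\int_\RR\hat\varphi(\eta)\,e^{\mathrm i\eta\cdot}\,m_\eta(D)\zeta\,\dd\eta,\qquad m_\eta(k)=\mathrm i\big(g(k+\eta)-g(k)-\eta\F_i(k)\big).
\]
One checks $\abs{m_\eta(k)}\lesssim\abs{\eta}\,(1+|k|)^{1-\theta}$: the contribution $\abs{\eta}\F_i(k)\le\abs{\eta}(1+|k|)^{1-\theta}$ is immediate, while $\abs{g(k+\eta)-g(k)}$ is estimated by the mean value theorem and the auxiliary bound on $g'$ when $\abs{\eta}\le1$, and by the triangle inequality with $\abs{g(k\pm\eta)}\lesssim(1+|k+\eta|)^{1-\theta}\lesssim(1+|k|)^{1-\theta}(1+\abs{\eta})^{1-\theta}\lesssim\abs{\eta}(1+|k|)^{1-\theta}$ when $\abs{\eta}>1$. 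Plancherel gives $\Norm{m_\eta(D)\zeta}_{L^2}\lesssim\abs{\eta}\Norm{\zeta}_{H^{1-\theta}}$, and Minkowski's integral inequality then yields $\Norm{\partial_x([\F_i,\varphi]\zeta)}_{L^2}\lesssim\big(\int_\RR\abs{\eta}\abs{\hat\varphi(\eta)}\,\dd\eta\big)\Norm{\zeta}_{H^{1-\theta}}=\Norm{\widehat{\varphi'}}_{L^1}\Norm{\zeta}_{H^{1-\theta}}$ since $\widehat{\varphi'}(\eta)=\mathrm i\eta\hat\varphi(\eta)$.

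For item~\ref{L.Fi-bounds 3} I would write $\partial_x\F_i\{\zeta\}=K*\zeta$ with $K=\mathcal F^{-1}\big((2\pi)^{-1/2}\mathrm i k\F_i(k)\big)\in\mathcal S'(\RR)$. The Fourier rule $\mathcal F(x^jK)=\mathrm i^j\partial_k^j\widehat K$ together with Definition~\ref{D.admissible},\ref{D.admissible 3} shows that the tempered distribution $z\mapsto z^jK(z)$ is in fact the $L^2(\RR)$ function $\pm\mathrm i^{j+1}(2\pi)^{-1/2}\mathcal F^{-1}\big(\partial_k^j(k\F_i)\big)$, with $\Norm{z^jK}_{L^2(\RR)}=(2\pi)^{-1/2}\Norm{\partial_k^j(k\F_i)}_{L^2}=:C_j$. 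Consequently, on $\{\abs{z}\ge d\}$ for any $d>0$, $K$ agrees with an honest function satisfying $\Norm{K}_{L^2(\{\abs{z}\ge d\})}\le C_j\,d^{-j}$. For $\zeta\in L^2(\RR)$ with compact support and $x$ with $d=\dist(x,\supp(\zeta))>0$, every $y\in\supp(\zeta)$ has $\abs{x-y}\ge d$, so Cauchy--Schwarz gives $\abs{\partial_x\F_i\{\zeta\}(x)}=\big\lvert\int_\RR K(x-y)\zeta(y)\,\dd y\big\rvert\le\Norm{K}_{L^2(\{\abs{z}\ge d\})}\Norm{\zeta}_{L^2}\le C_j\,d^{-j}\Norm{\zeta}_{L^2}$, which is the claim.

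I expect item~\ref{L.Fi-bounds 2} to be the main obstacle: one must both identify the correct frequency-localized representation of the commutator and, more importantly, extract the pointwise symbol bound $\abs{g'(k)}\lesssim(1+|k|)^{1-\theta}$ from the comparatively weak regularity imposed in Definition~\ref{D.admissible} (only $\C^2$ together with a single $L^2$ bound on a higher-order derivative of $k\F_i(k)$). Items~\ref{L.Fi-bounds 1} and~\ref{L.Fi-bounds 3} are comparatively direct, the only point in~\ref{L.Fi-bounds 3} being the observation that it suffices to control the convolution kernel in $L^2$ away from the origin rather than pointwise.
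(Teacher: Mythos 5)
Your proof is correct; all three items are established, and the key step — extracting the pointwise symbol bound $\abs{g'(k)}\lesssim(1+\abs k)^{1-\theta}$ for $g(k)=k\F_i(k)$ from the hypotheses of Definition~\ref{D.admissible} — is exactly what the paper's proof isolates as well, though the two arguments reach it by different routes. The paper splits $g'$ in frequency with a smooth cut-off $\chi(D)$: the low-frequency piece $\chi(D)g'$ is rewritten by an integration by parts as a convolution against $\hat\chi'$ and bounded using the pointwise decay $\abs{g(\xi)}\lesssim\langle\xi\rangle^{1-\theta}$ and Peetre's inequality, while the high-frequency piece $(1-\chi(D))g'$ is bounded by $\Norm{g^{(j)}}_{L^2}$ via Cauchy--Schwarz; this requires a density/smoothing step since $g$ a priori is only $\C^2$. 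Your argument instead works in physical space on unit intervals, combining $H^2(I)\hookrightarrow\C^1(\overline I)$ with Gagliardo--Nirenberg interpolation between $\Norm{g}_{L^2(I)}$ and $\Norm{\partial_k^j g}_{L^2(I)}$; it avoids the regularization step but quietly uses that $g\in\C^2$ and $\partial_k^j g\in L^2(\RR)$ together give $g\in H^j_{\rm loc}$, which is worth making explicit. For the commutator estimate itself, the paper takes the Fourier transform of $[\partial_x\F_i,\varphi]\zeta$ directly, bounds $\abs{g(k)-g(s)}\lesssim(1+\abs s)^{1-\theta}\abs{k-s}$ by the mean value theorem (with a case distinction hidden in the ``$\lesssim$''), and concludes by Young's inequality; your modulation decomposition and Minkowski integral inequality is an equivalent repackaging and is fine. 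In item~\ref{L.Fi-bounds 3}, the paper proves the estimate first under the extra hypothesis $K_i\in L^2(\RR)$ and then passes to the limit by smoothly truncating $\F_i$; your observation that $z^jK\in L^2(\RR)$ already forces $K$ to coincide with an $L^2$ function on $\{\abs z\ge d\}$, so that the convolution is classically defined for $x$ off $\supp\zeta$, is a slightly cleaner route to the same conclusion.
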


\begin{proof}
The first result is obvious from Definition~\ref{D.admissible},\ref{D.admissible 1} and the definition of Sobolev spaces. For the second, we shall first prove that the function ${\mathsf G}_i:k\mapsto k\F_i(k)$ satisfies
\begin{equation}\label{est-kF}
\abs{{\mathsf G}_i'(k)}\lesssim \langle k\rangle^{1-\theta}.
\end{equation}
To this aim, let us first consider ${\mathsf G}\in\mathcal{S}(\RR)$ and $\chi$ a smooth cut-off function, such that $\chi(k)=1$ for $|k|\leq 1/2$ and $\chi(k)=0$ for $|k|\geq 1$. We decompose
\[\abs{{\mathsf G}'}(k)\leq \abs{\chi(D){\mathsf G}'}(k)+ \abs{(1-\chi(D)){\mathsf G}'}(k).\]
For the first contribution, one has
\[ \abs{\chi(D){\mathsf G}'}(k)=\frac{1}{\sqrt{2\pi}}\left|\int_{\RR} \hat\chi(\xi){\mathsf G}'(k+\xi)\dd \xi\right|=\frac{1}{\sqrt{2\pi}} \left|\int_{\RR} (\hat\chi)'(\xi){\mathsf G}(k+\xi)\dd \xi \right|\lesssim \sup_{\xi\in\RR}\frac{\abs{{\mathsf G}(k+\xi)}}{\langle k+\xi\rangle^{1-\theta}}\langle k\rangle^{1-\theta} \Norm{ \langle \cdot\rangle^{1-\theta}\hat\chi'}_{L^1},\]
and the second contribution satisfies for any $j\geq 2$,
\[ \abs{(1-\chi(D)){\mathsf G}'}(k)\lesssim\Norm{(1-\chi(\xi))|\xi|\hat {\mathsf G}(\xi)}_{L^1}\lesssim \Norm{\langle \xi\rangle^{-(j-1)}|\xi|^j\hat {\mathsf G}(\xi)}_{L^1}\lesssim \Norm{{\mathsf G}^{(j)}}_{L^2},\]
by the Cauchy-Schwarz inequality and Parseval's theorem. Thus we find, for any $j\geq 2$,
\[\abs{{\mathsf G}'}(k)\lesssim \Norm{\langle\cdot\rangle^{\theta-1}{\mathsf G}}_{L^\infty} \langle k\rangle^{1-\theta} +\Norm{{\mathsf G}^{(j)}}_{L^2}.\]
The same estimate applies to ${\mathsf G}(k)= k\F_i(k) $ by smooth approximation, and~\eqref{est-kF} follows from Definition~\ref{D.admissible}.
Now, we
note that the Fourier transform of $[\partial_x \F_i, \varphi] \zeta$ is given by
\[
\frac1{\sqrt{2\pi}}
\int_{\RR} (\mathrm ik\F_i(k)-\mathrm is\F_i(s))\hat \varphi(k-s)\hat \zeta(s)\dd s.
\]
By~\eqref{est-kF} and  the mean value theorem, we find that $|k\F_i(k)-s\F_i(s)|\lesssim (1+|s|)^{1-\theta} |k-s|$.
Due to  Young's inequality and Parseval's theorem, we find
\[
\Norm{[\partial_x \F_i, \varphi] \zeta}_{L^2}  \lesssim \Norm{\widehat{\varphi'}}_{L^1} \Norm{(1+|\cdot|)^{1-\theta}\hat \zeta}_{L^2} \lesssim \Norm{\widehat{\varphi'}}_{L^1} \Norm{\zeta}_{H^{1-\theta}}.
\]
For the third result, let us assume at first that the kernel $K_i\eqdef\mathcal{F}^{-1}(\mathrm ik\F_i(k))\in L^2(\RR)$. Then one has
\begin{align*}
\abs{\partial_x\F_i\{\zeta\}}(x)&=\frac{1}{\sqrt{2\pi}}\left|\int_{\RR}K_i(x-y)\zeta(y)\ \dd y\right|=\frac{1}{\sqrt{2\pi}}\left|\int_{\text{supp}(\zeta)}\frac{(x-y)^jK_i(x-y)\zeta(y)}{(x-y)^j}\ \dd y\right|\\
&\leq \frac{\big(\abs{K_{i,j}}\ast\abs{\zeta}\big)(x)}{\sqrt{2\pi}\dist(x,\text{supp}(\zeta))^j}\lesssim \frac{\Norm{\zeta}_{L^2}}{\dist(x,\text{supp}(\zeta))^j},
\end{align*}
where we denote $K_{i,j}(x)=x^jK_i(x) $, remark that $K_{i,j}\in L^2(\RR)$ by Definition~\ref{D.admissible},\ref{D.admissible 3} and Plancherel's theorem, and apply the Cauchy-Schwarz inequality to the convolution. If $K_i\notin L^2(\RR)$, we obtain the result by regularizing $K_i$ (\ie smoothly truncating $\F_i$) and passing to the limit.
\end{proof}

\begin{Lemma}\label{L.EvsY}
Let $\gamma\geq 0$, $\delta>0$, $\mu>1/2$ and $\F_i$ be admissible Fourier multipliers. Assume that $\zeta\in H^\mu(\RR)$ is such that $1-\Norm{\zeta}_{L^\infty}\geq h_0$, $\delta^{-1}-\Norm{\zeta}_{L^\infty}\geq h_0$, with $h_0>0$. Then there exist a constant $C_0=C(h_0^{-1},\Norm{\zeta}_{H^\mu})$ such that
\[C_0^{-1}\Norm{\zeta}_{H^{1-\theta}}^2\leq \E(\zeta)\leq C_0\Norm{\zeta}_{H^{1-\theta}}^2 .\]
\end{Lemma}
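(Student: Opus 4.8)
The plan is to estimate the three integrands defining $\E(\zeta)$ separately, after introducing the natural unknowns $\overline u_i\eqdef \zeta/h_i$ ($i=1,2$), with $h_1=1-\zeta$, $h_2=\delta^{-1}+\zeta$, and then reducing everything to two-sided equivalences between $\Norm{\zeta}_{H^s}$, $\Norm{\overline u_i}_{H^s}$ and $\Norm{\partial_x\F_i\{\overline u_i\}}_{L^2}$.

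First I would record the pointwise bounds on the depths. The hypotheses $\Norm{\zeta}_{L^\infty}\le \min(1,\delta^{-1})-h_0$ give $0<h_0\le h_i(x)\le C(h_0^{-1})$ for a.e.~$x$, and since $\gamma\ge 0$, $\delta>0$ one also gets $0<C(h_0^{-1})^{-1}\le \frac{h_1+\gamma h_2}{h_1h_2}(x)\le C(h_0^{-1})$. Hence the first term of $\E(\zeta)$ is comparable to $\Norm{\zeta}_{L^2}^2$; and the weights $h_i^3=(d_i\mp\zeta)^3$ (with $d_1=1$, $d_2=\delta^{-1}$) in the two dispersive terms satisfy $h_0^3\le h_i^3\le C(h_0^{-1})$, so that
\[
\int_{\RR} h_i^3\big(\partial_x\F_i\{\overline u_i\}\big)^2\,\dd x=\Norm{h_i^{3/2}\,\partial_x\F_i\{\overline u_i\}}_{L^2}^2\approx \Norm{\partial_x\F_i\{\overline u_i\}}_{L^2}^2,
\]
with implicit constants of the form $C(h_0^{-1})$.

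The \emph{key step} is that the change of unknowns $\zeta\leftrightarrow \overline u_i$ is bi-Lipschitz on Sobolev scales. Writing $h_i=d_i(1+\tilde\zeta_i)$ with $\Norm{\tilde\zeta_i}_{L^\infty}\le 1-d_i^{-1}h_0<1$, Lemma~\ref{L.product},\ref{L.product 4} gives $\Norm{\overline u_i}_{H^s}=\Norm{\zeta/h_i}_{H^s}\le C(h_0^{-1},\Norm{\zeta}_{H^\mu})\Norm{\zeta}_{H^s}$ for $s\in[-\mu,\mu]$; conversely, from $\zeta=h_i\overline u_i=d_i\overline u_i\mp \zeta\,\overline u_i$ together with $\zeta\in H^\mu$, $\mu>1/2$, Lemma~\ref{L.product},\ref{L.product 2} yields $\Norm{\zeta}_{H^s}\le C(\Norm{\zeta}_{H^\mu})\Norm{\overline u_i}_{H^s}$ for $s\in[-\mu,\mu]$. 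In particular $\Norm{\overline u_i}_{H^{1-\theta}}\approx\Norm{\zeta}_{H^{1-\theta}}$ (when $1-\theta\le\mu$, as holds in all our applications; otherwise the asserted inequality is understood in $[0,+\infty]$, and the Fourier-multiplier estimate below shows both sides are then infinite). Applying Lemma~\ref{L.Fi-bounds},\ref{L.Fi-bounds 1} with $s=0$ gives $\Norm{\overline u_i}_{L^2}^2+\Norm{\partial_x\F_i\{\overline u_i\}}_{L^2}^2\approx\Norm{\overline u_i}_{H^{1-\theta}}^2\approx\Norm{\zeta}_{H^{1-\theta}}^2$.

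Combining the three contributions, $\E(\zeta)\approx \Norm{\zeta}_{L^2}^2+\sum_i\Norm{\partial_x\F_i\{\overline u_i\}}_{L^2}^2\approx\Norm{\zeta}_{H^{1-\theta}}^2$, with all constants of the announced form $C(h_0^{-1},\Norm{\zeta}_{H^\mu})$ (for $\gamma=0$ the $i=1$ dispersive term is simply absent, which only makes the estimate easier, the $L^2$ term and $\Norm{\partial_x\F_2\{\overline u_2\}}_{L^2}^2$ already controlling $\Norm{\zeta}_{H^{1-\theta}}^2$ from both sides). I expect the only genuine difficulty to be the bi-Lipschitz equivalence $\Norm{\overline u_i}_{H^s}\approx\Norm{\zeta}_{H^s}$: the forward bound amounts to inverting $h_i$, which is exactly where the strict sub-criticality hypothesis $\Norm{\zeta}_{L^\infty}<\min(1,\delta^{-1})$ and the dependence of $C_0$ on $\Norm{\zeta}_{H^\mu}$ (not merely on $\Norm{\zeta}_{L^\infty}$) are needed, via Lemmata~\ref{L.product} and~\ref{L.composition}; the rest is mechanical.
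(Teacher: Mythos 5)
Your proof is correct and follows essentially the same route as the paper's: the substitution $\overline u_i=\zeta/h_i$ is just the paper's $\zeta/(1\mp\zeta)$ written uniformly, the forward bound via Lemma~\ref{L.product},\ref{L.product 4}, the converse via the identity $\zeta=d_i\overline u_i\mp\zeta\overline u_i$ and Lemma~\ref{L.product},\ref{L.product 2}, and Lemma~\ref{L.Fi-bounds},\ref{L.Fi-bounds 1} to pass from $\Norm{\partial_x\F_i\overline u_i}_{L^2}$ to $\Norm{\overline u_i}_{H^{1-\theta}}$, all match the paper's argument. Your side remark that the application of Lemma~\ref{L.product},\ref{L.product 4} requires $1-\theta\le\mu$, which the lemma statement does not impose but which holds in all uses, is a correct and sensible observation.
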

\begin{proof}
We first deal with the contribution of $\overline{\E}(\zeta)$ defined in~\eqref{def-E}. By Lemma~\ref{L.Fi-bounds},\ref{L.Fi-bounds 1}
we get that
\[
\overline{\E}(\zeta)\leq C(\Norm{\zeta}_{L^\infty})\Norm{\frac{\zeta}{1-\zeta}}_{H^{1-\theta}}^2 \quad \text{ and } \quad  \Norm{\frac{\zeta}{1-\zeta}}_{H^{1-\theta}}^2 \leq C(h_0^{-1})\overline{\E}(\zeta).
\]
By Lemma~\ref{L.product},\ref{L.product 4},  one has
\[\Norm{\frac{\zeta}{1-\zeta}}_{H^{1-\theta}}\leq C(h_0^{-1},\Norm{\zeta}_{H^\mu})\Norm{\zeta}_{H^{1-\theta}},\]
and the triangle inequality together with Lemma~\ref{L.product},\ref{L.product 2}  yields
\[
\Norm{\zeta}_{H^{1-\theta}}\lesssim \Norm{\frac{\zeta}{1-\zeta}}_{H^{1-\theta}}+\Norm{\frac{\zeta^2}{1-\zeta}}_{H^{1-\theta}}\\
\lesssim  \Norm{\frac{\zeta}{1-\zeta}}_{H^{1-\theta}}+\Norm{\zeta}_{H^\mu}\Norm{\frac{\zeta}{1-\zeta}}_{H^{1-\theta}}.\]
Collecting the above information, we find that
\[
C_0^{-1}\Norm{\zeta}_{H^{1-\theta}}^2\leq \overline{\E}(\zeta)\leq C_0\Norm{\zeta}_{H^{1-\theta}}^2,
\]
with $C_0=C(h_0^{-1},\Norm{\zeta}_{H^\mu})$. Similar estimates hold for $\underline{\E}(\zeta)$, and thus for
 $\E(\zeta)=\gamma\overline{\E}(\zeta)+\underline{\E}(\zeta)$.
\end{proof}

\begin{Lemma}\label{L.diff-E}
Let $\gamma\geq 0$, $\delta>0$, $\mu>1/2$ and $\F_i$ be admissible Fourier multipliers. Assume that, for $j\in\{1,2\}$, $\zeta_j\in H^\mu(\RR)$ is such that $1-\Norm{\zeta_j}_{L^\infty}\geq h_0$ and $\delta^{-1}-\Norm{\zeta_j}_{L^\infty}\geq h_0$, with $h_0>0$. Then one has
\[\E(\zeta_1)-\E(\zeta_2)\leq C(h_0^{-1},\Norm{\zeta_1}_{H^\mu},\Norm{\zeta_2}_{H^\mu}) \Norm{\zeta_1-\zeta_2}_{H^{\mu}}.\]
\end{Lemma}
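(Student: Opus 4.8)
The plan is to reduce everything to the one-layer quantities via the decomposition $\E=\gamma\overline\E+\underline\E$ from \eqref{def-E} and to estimate $\overline\E(\zeta_1)-\overline\E(\zeta_2)$; the bound for $\underline\E(\zeta_1)-\underline\E(\zeta_2)$ then follows \emph{verbatim} after replacing $1-\zeta$ by $\delta^{-1}+\zeta=\delta^{-1}(1+\delta\zeta)$ and $\F_1$ by $\F_2$, noting that Lemma~\ref{L.product},\ref{L.product 4} applies to $1+\delta\zeta_j$ with $h_0$ replaced by $\delta h_0$; summing the two contributions (with $\gamma\ge0$ absorbed into the constant) gives the claim. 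Within $\overline\E$ I would treat separately the potential part $\int\frac{\zeta^2}{1-\zeta}\,\dd x$ and the dispersive part $\frac13\int(1-\zeta)^3\big(\partial_x\F_1\{\tfrac{\zeta}{1-\zeta}\}\big)^2\,\dd x$. Since $\zeta_1,\zeta_2$ stay at distance at least $h_0$ from $1$ and $-\delta^{-1}$, all quotients below are well defined, and we may assume $\mu\ge 1-\theta$ (automatic in all our applications, and needed for $\E$ to be finite, cf.\ Lemma~\ref{L.EvsY}).

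For the potential part I would use the algebraic identity
\[
\frac{\zeta_1^2}{1-\zeta_1}-\frac{\zeta_2^2}{1-\zeta_2}=(\zeta_1-\zeta_2)\,\frac{\zeta_1+\zeta_2-\zeta_1\zeta_2}{(1-\zeta_1)(1-\zeta_2)},
\]
integrate in $x$, and apply the Cauchy--Schwarz inequality: the $L^2$-norm of the second factor is at most $C(h_0^{-1},\Norm{\zeta_1}_{H^\mu},\Norm{\zeta_2}_{H^\mu})$ by Lemma~\ref{L.product},\ref{L.product 3}, Lemma~\ref{L.product},\ref{L.product 1} and the embedding $H^\mu\hookrightarrow L^\infty$, which leaves the factor $\Norm{\zeta_1-\zeta_2}_{L^2}\le\Norm{\zeta_1-\zeta_2}_{H^\mu}$. (Equivalently, write $\frac{\zeta^2}{1-\zeta}=G(\zeta)$ with $G$ smooth and $G(0)=0$, integrate $G(\zeta_1)-G(\zeta_2)=(\zeta_1-\zeta_2)\int_0^1 G'(\zeta_2+t(\zeta_1-\zeta_2))\,\dd t$, and bound $G'$ of the convex combination in $L^2$ via Lemma~\ref{L.composition}, the constraint set $\{\Norm{\cdot}_{L^\infty}\le1-h_0\}$ being convex.)

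For the dispersive part, writing $u_j\eqdef\frac{\zeta_j}{1-\zeta_j}$, I would telescope, using the linearity of $\partial_x\F_1$,
\[
(1-\zeta_1)^3\big(\partial_x\F_1\{u_1\}\big)^2-(1-\zeta_2)^3\big(\partial_x\F_1\{u_2\}\big)^2=\big[(1-\zeta_1)^3-(1-\zeta_2)^3\big]\big(\partial_x\F_1\{u_1\}\big)^2+(1-\zeta_2)^3\,\partial_x\F_1\{u_1-u_2\}\,\partial_x\F_1\{u_1+u_2\}.
\]
After integration, the first summand is at most $\Norm{(1-\zeta_1)^3-(1-\zeta_2)^3}_{L^\infty}\Norm{\partial_x\F_1\{u_1\}}_{L^2}^2$, where $\Norm{(1-\zeta_1)^3-(1-\zeta_2)^3}_{L^\infty}\lesssim C(\Norm{\zeta_1}_{L^\infty},\Norm{\zeta_2}_{L^\infty})\Norm{\zeta_1-\zeta_2}_{L^\infty}\lesssim\Norm{\zeta_1-\zeta_2}_{H^\mu}$ and $\Norm{\partial_x\F_1\{u_1\}}_{L^2}\lesssim\Norm{u_1}_{H^{1-\theta}}\le C(h_0^{-1},\Norm{\zeta_1}_{H^\mu})$ by Lemma~\ref{L.Fi-bounds},\ref{L.Fi-bounds 1} and Lemma~\ref{L.product},\ref{L.product 4}. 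For the second summand I would bound $\Norm{(1-\zeta_2)^3}_{L^\infty}\le C$, estimate $\Norm{\partial_x\F_1\{u_1+u_2\}}_{L^2}\le C(h_0^{-1},\Norm{\zeta_1}_{H^\mu},\Norm{\zeta_2}_{H^\mu})$ as above, and exploit the key identity
\[
u_1-u_2=\frac{\zeta_1-\zeta_2}{(1-\zeta_1)(1-\zeta_2)},
\]
so that two applications of Lemma~\ref{L.product},\ref{L.product 4} give $\Norm{u_1-u_2}_{H^{1-\theta}}\le\Norm{u_1-u_2}_{H^\mu}\le C(h_0^{-1},\Norm{\zeta_1}_{H^\mu},\Norm{\zeta_2}_{H^\mu})\Norm{\zeta_1-\zeta_2}_{H^\mu}$, hence $\Norm{\partial_x\F_1\{u_1-u_2\}}_{L^2}\lesssim\Norm{\zeta_1-\zeta_2}_{H^\mu}$ by Lemma~\ref{L.Fi-bounds},\ref{L.Fi-bounds 1}. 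A Cauchy--Schwarz on the second summand closes the estimate for $\overline\E$, and likewise for $\underline\E$.

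The main obstacle is this dispersive term: one must arrange the telescoping so that every ``difference'' factor collapses to $\zeta_1-\zeta_2$ — directly, or through $u_1-u_2$, which is $\zeta_1-\zeta_2$ up to multiplication by a function controlled in $H^\mu$ — after which the $H^s\to H^{s-1+\theta}$ mapping property of $\partial_x\F_i$ finishes the job. The only real bookkeeping is tracking the non-decreasing dependence of each constant on $h_0^{-1}$ and on $\Norm{\zeta_j}_{H^\mu}$ through the repeated use of Lemma~\ref{L.product},\ref{L.product 4}, Lemma~\ref{L.composition} and $H^\mu\hookrightarrow L^\infty$; everything else is Cauchy--Schwarz.
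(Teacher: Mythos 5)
Your proof is correct and follows essentially the same route as the paper's: both split $\E=\gamma\overline\E+\underline\E$, telescope the difference into a potential part, a cubic-factor part, and a difference-of-squares dispersive part, and close with Lemma~\ref{L.product},\ref{L.product 3},\ref{L.product 4} and Lemma~\ref{L.Fi-bounds},\ref{L.Fi-bounds 1}. The only differences are cosmetic: you make explicit the identity $u_1-u_2=(\zeta_1-\zeta_2)/\big((1-\zeta_1)(1-\zeta_2)\big)$ and the implicit hypothesis $\mu\geq 1-\theta$, both of which the paper uses but leaves unstated.
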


\begin{proof}
As previously, we detail the result for $\overline{\E}(\zeta)$, as the similar estimate for $\underline{\E}(\zeta)$ is obtained in the same way. One has 
\begin{multline*}
\overline\E(\zeta_1)-\overline\E(\zeta_2)=\int_\RR \frac{\zeta_1^2}{1-\zeta_1}-\frac{\zeta_2^2}{1-\zeta_2}+\frac13 \big[(1-\zeta_1)^3-(1-\zeta_2)^3\big] \big(\partial_x \F_1\{\frac{\zeta_1}{1-\zeta_1}\}\big)^2\\
+\frac13(1-\zeta_2)^3\Big[ \big(\partial_x \F_1\{\frac{\zeta_1}{1-\zeta_1}\}\big)^2-\big(\partial_x \F_1\{\frac{\zeta_2}{1-\zeta_2}\}\big)^2\Big]
\dd x,
\end{multline*}
By Lemma~\ref{L.product},~\ref{L.product 3}, and the Cauchy-Schwarz inequality, we immediately have
\[\int_\RR \left|\frac{\zeta_1^2}{1-\zeta_1}-\frac{\zeta_2^2}{1-\zeta_2}\right|\dd x\leq  C(h_0^{-1},\Norm{\zeta_1}_{L^\infty},\Norm{\zeta_2}_{L^\infty})(\Norm{\zeta_1}_{L^2}+\Norm{\zeta_2}_{L^2})\Norm{\zeta_1-\zeta_2}_{L^2}.\]
Similarly, we find by Lemma~\ref{L.Fi-bounds},\ref{L.Fi-bounds 1} 
\[\int_\RR \left|\big[(1-\zeta_1)^3-(1-\zeta_2)^3\big] \big(\partial_x \F_1\{\frac{\zeta_1}{1-\zeta_1}\}\big)^2\right|\dd x\leq  C(\Norm{\zeta_1}_{L^\infty},\Norm{\zeta_2}_{L^\infty})\Norm{\zeta_1-\zeta_2}_{L^\infty}\Norm{\frac{\zeta_1}{1-\zeta_1}}_{H^{1-\theta}}^2,\]
and by Lemma~\ref{L.product},\ref{L.product 4},
\[\Norm{\frac{\zeta_1}{1-\zeta_1}}_{H^{1-\theta}}^2\leq   C(h_0^{-1},\Norm{\zeta_1}_{H^\mu}).\]
Finally, 
\begin{multline*}
\int_\RR \left| (1-\zeta_2)^3\Big[\big(\partial_x \F_1\{\frac{\zeta_1}{1-\zeta_1}\}\big)^2-\big(\partial_x \F_1\{\frac{\zeta_2}{1-\zeta_2}\}\big)^2\Big]\right|\dd x\\
\leq C(\Norm{\zeta_2}_{L^\infty})\Norm{\frac{\zeta_1}{1-\zeta_1}-\frac{\zeta_2}{1-\zeta_2}}_{H^{1-\theta}}\Norm{\frac{\zeta_1}{1-\zeta_1}+\frac{\zeta_2}{1-\zeta_2}}_{H^{1-\theta}},\end{multline*}
and we conclude by Lemma~\ref{L.product},~\ref{L.product 4}
\[\Norm{\frac{\zeta_1}{1-\zeta_1}-\frac{\zeta_2}{1-\zeta_2}}_{H^{1-\theta}}\leq C(h_0^{-1},\Norm{\zeta_1}_{H^\mu},\Norm{\zeta_2}_{H^\mu})\Norm{\zeta_1-\zeta_2}_{H^{1-\theta}},\]
and
\[\Norm{\frac{\zeta_1}{1-\zeta_1}+\frac{\zeta_2}{1-\zeta_2}}_{H^{1-\theta}}\leq C(h_0^{-1},\Norm{\zeta_1}_{H^\mu},\Norm{\zeta_2}_{H^\mu}).\]
The result is proved.
\end{proof}

 \begin{Lemma}\label{L.E-decomp-KdV}
Let $\gamma\geq 0$, $\delta>0$, and $\F_i$ be admissible Fourier multipliers. Let $l\in\{1,2,3\}$ and $\zeta\in H^l(\RR)$ such that $1-\Norm{ \zeta}_{L^\infty}\geq h_0$ and $\delta^{-1}-\Norm{\zeta}_{L^\infty}\geq h_0$, with $h_0>0$. Then one can decompose
\[
\E(\zeta)=\int_\RR (\gamma+\delta)\zeta^2+(\gamma-\delta^2)\zeta^3+\frac{\gamma+\delta^{-1}}{3}(\partial_x\zeta)^2\ \dd x+\E_{{\rm rem}}(\zeta),
\]
and
\[
 \langle \dd\E(\zeta),\zeta\rangle =\int_\RR  2(\gamma+\delta)\zeta^2+3(\gamma-\delta^2)\zeta^3+2\frac{\gamma+\delta^{-1}}{3}(\partial_x\zeta)^2\ \dd x +\langle \dd\E_{\rm rem}(\zeta),\zeta\rangle,
\]
 where
\[
\abs{\E_{{\rm rem}}}+\abs{\langle \dd\E_{{\rm rem}}(\zeta),\zeta\rangle}\leq C(h_0^{-1},\Norm{\zeta}_{H^1})\big( \Norm{\zeta}_{L^\infty}^2\Norm{\zeta}_{L^2}^2+\Norm{\zeta}_{L^\infty}\Norm{\partial_x\zeta}_{L^2}^2+\Norm{\partial_x^l\zeta}_{L^2}\Norm{\partial_x\zeta}_{L^2}\big).
\]
 \end{Lemma}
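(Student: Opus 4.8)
The plan is to prove Lemma~\ref{L.E-decomp-KdV} by Taylor-expanding each of the three nonlinear building blocks appearing in the definition~\eqref{def-E} of $\E$ around $\zeta=0$, keeping the quadratic and cubic terms explicitly and controlling the remainders by the stated quantities. Recall $\E=\gamma\overline\E+\underline\E$, so it suffices to treat $\underline\E$ (the term with $h_2=\delta^{-1}+\zeta$) and $\overline\E$ (with $h_1=1-\zeta$) separately; I detail $\underline\E$, the other being identical up to the sign change $\delta^{-1}\mapsto 1$, $\zeta\mapsto -\zeta$ in the ``depth'' factor. The three pieces are: (a) the algebraic term $\frac{\zeta^2}{\delta^{-1}+\zeta}=\delta\zeta^2-\delta^2\zeta^3+\frac{\zeta^4}{(\delta^{-1}+\zeta)}\cdot(\cdots)$, i.e.\ $\delta\zeta^2-\delta^2\zeta^3+ g_1(\zeta)\zeta^2$ with $g_1$ smooth, $g_1(0)=0$, $g_1'(0)=0$; (b) the ``inner'' substitution $\frac{\zeta}{\delta^{-1}+\zeta}=\delta\zeta+\zeta^2 g_2(\zeta)$ with $g_2$ smooth; (c) the ``outer'' cube $(\delta^{-1}+\zeta)^3=\delta^{-3}+3\delta^{-2}\zeta+\zeta^2 g_3(\zeta)$. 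Plugging (b) into $\partial_x\F_2\{\cdot\}$ and using $\F_2(0)=1$, $\F_2'(0)=0$ (Definition~\ref{D.admissible},\ref{D.admissible 2}), the leading behaviour of $\partial_x\F_2\{\frac{\zeta}{\delta^{-1}+\zeta}\}$ is $\delta\,\partial_x\zeta$ up to lower-order corrections, so that $\frac13(\delta^{-1}+\zeta)^3\big(\partial_x\F_2\{\tfrac{\zeta}{\delta^{-1}+\zeta}\}\big)^2$ contributes $\frac13\delta^{-3}\cdot\delta^2(\partial_x\zeta)^2=\frac{\delta^{-1}}{3}(\partial_x\zeta)^2$ at leading order; combined with the $\gamma\overline\E$ contribution $\frac{\gamma}{3}(\partial_x\zeta)^2$ (coming from $(1-\zeta)^3\cdot 1\cdot(\partial_x\zeta)^2$ at $\zeta=0$) this produces the announced coefficient $\frac{\gamma+\delta^{-1}}{3}$, while the algebraic terms give $(\gamma+\delta)\zeta^2$ and $(\gamma-\delta^2)\zeta^3$ exactly as stated.

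Concretely, I would write $\partial_x\F_2\{\tfrac{\zeta}{\delta^{-1}+\zeta}\} = \delta\,\partial_x\F_2\{\zeta\} + \partial_x\F_2\{\zeta^2 g_2(\zeta)\}$, and further $\delta\,\partial_x\F_2\{\zeta\} = \delta\,\partial_x\zeta + \delta\,\partial_x(\F_2-1)\{\zeta\}$. The multiplier $\F_2-1$ vanishes to second order at $k=0$ by Definition~\ref{D.admissible},\ref{D.admissible 2}, so $\partial_x(\F_2-1)(D)$ maps $H^l$ to $L^2$ with a gain: writing $\F_2(k)-1 = k^2 m(k)$ with $m$ bounded (using $\F_2\in\C^2$, $\F_2\leq 1$ and Definition~\ref{D.admissible},\ref{D.admissible 4} to control large frequencies), one gets $\Norm{\partial_x(\F_2-1)\{\zeta\}}_{L^2}\lesssim \Norm{\partial_x^l\zeta}_{L^2}$ — this is where the parameter $l\in\{1,2,3\}$ and the term $\Norm{\partial_x^l\zeta}_{L^2}\Norm{\partial_x\zeta}_{L^2}$ in the remainder bound enter; actually one only needs $l\geq 3$ to kill $k^2\cdot k$ against $\langle k\rangle^{-1}$-type decay, but any $l\leq 3$ works after interpolation as the hypothesis assumes $\zeta\in H^l$. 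Then all cross terms in the expansion of $\frac13(\delta^{-1}+\zeta)^3\big(\cdots\big)^2$ are estimated by pairing: one factor is estimated in $H^{1-\theta}$ (via Lemma~\ref{L.Fi-bounds},\ref{L.Fi-bounds 1} and Lemma~\ref{L.product},\ref{L.product 4}, using $1-\Norm{\zeta}_{L^\infty}\geq h_0$, $\delta^{-1}-\Norm{\zeta}_{L^\infty}\geq h_0$), the other in $L^2$ or $L^\infty$, and the smooth prefactors $g_i(\zeta)$ are handled by the composition estimate Lemma~\ref{L.composition} together with Lemma~\ref{L.product}. Grouping, every term in $\E_{\rm rem}$ is either cubic-or-higher in $\zeta$ with at least one $L^\infty$ factor and the remaining $L^2$-type norms, or linear in the ``gained-derivative'' quantity $\Norm{\partial_x^l\zeta}_{L^2}$ against $\Norm{\partial_x\zeta}_{L^2}$, which is exactly the claimed bound $C(h_0^{-1},\Norm{\zeta}_{H^1})\big(\Norm{\zeta}_{L^\infty}^2\Norm{\zeta}_{L^2}^2+\Norm{\zeta}_{L^\infty}\Norm{\partial_x\zeta}_{L^2}^2+\Norm{\partial_x^l\zeta}_{L^2}\Norm{\partial_x\zeta}_{L^2}\big)$.

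For the derivative identity, I would either differentiate the decomposition of $\E$ directly — noting $\langle\dd\E(\zeta),\zeta\rangle$ is the first variation tested against $\zeta$, which for a homogeneous-of-degree-$k$ term gives a factor $k$, hence the coefficients $2(\gamma+\delta)$, $3(\gamma-\delta^2)$, $2\cdot\frac{\gamma+\delta^{-1}}{3}$ — or use the explicit formula~\eqref{def-dE} for $\dd\E(\zeta)$, pair it with $\zeta$, and run the same Taylor expansion. The second route is cleaner for the remainder: each term of $\dd\E(\zeta)$ in~\eqref{def-dE} is of the schematic form (rational in $\zeta$)$\times\zeta$, (rational)$\times\zeta^2$, or $\partial_x\F_2\{h_2^3\partial_x\F_2\{h_2^{-1}\zeta\}\}$-type; pairing with $\zeta$ and integrating by parts moves one $\partial_x\F_2$ onto $\zeta$, after which the same frequency-localization argument for $\F_2-1$ and the same product/composition estimates apply verbatim, yielding a remainder of the identical form. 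The main obstacle is bookkeeping: one must organize the (fairly numerous) cross terms so that the gained derivative from $\F_i-1$ is always available to absorb the two ``extra'' derivatives that would otherwise be uncontrolled, and so that no term requires more than $\Norm{\zeta}_{H^1}$ of non-smallness-carrying regularity — the smallness is always carried by an $L^\infty$, $L^2$, or $\Norm{\partial_x\zeta}_{L^2}$ factor, as the statement demands. Apart from this organizational care the estimates are routine given Lemmata~\ref{L.composition}, \ref{L.product} and~\ref{L.Fi-bounds}.
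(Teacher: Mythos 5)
Your strategy---Taylor-expand the three building blocks of $\E$ around $\zeta=0$, isolate the quadratic and cubic parts, and push the rest into a remainder controlled via Cauchy--Schwarz, Plancherel, and the symbol bound $|\F_i(k)-1|\lesssim |k|^{l-1}$ together with Lemmata~\ref{L.composition}, \ref{L.product}, \ref{L.Fi-bounds}---is exactly the paper's. The paper's bookkeeping is a bit more surgical: it keeps $\zeta/(1-\zeta)$ intact inside $\partial_x\F_1$, and the piece $\frac13\int(1-\zeta)^3\big(\partial_x\{\tfrac{\zeta}{1-\zeta}\}\big)^2-(\partial_x\zeta)^2\,\dd x$ collapses in closed form to $\frac13\int\frac{\zeta(\partial_x\zeta)^2}{1-\zeta}\,\dd x$, so the only nontrivial remainder is the single $\partial_x\F_1$-versus-$\partial_x$ difference; your fully-expanded version generates more cross terms, but as you observe they all pair into the same three schematic quantities and the estimates go through.

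One small but genuine slip worth fixing: your justification for $\Norm{\partial_x(\F_2-1)\{\zeta\}}_{L^2}\lesssim\Norm{\partial_x^l\zeta}_{L^2}$ for $l<3$ via ``interpolation'' from the $l=3$ case does not work---interpolating the operator bounds would still require $\zeta\in H^3$, whereas when $l<3$ you only have $\zeta\in H^l$. The correct (and simpler) reasoning is that since $\F_2\in\C^2$ with $\F_2(0)=1$, $\F_2'(0)=0$, and $0<\F_2\le 1$, the bound $|\F_2(k)-1|\lesssim \min(k^2,1)\lesssim |k|^{l-1}$ holds globally in $k$ for each $l\in\{1,2,3\}$ separately (Taylor near $0$, boundedness away from $0$), which directly gives the multiplier estimate $|k(\F_2(k)-1)|\lesssim|k|^l$; this is precisely the inequality $|\F_1(k)-1|\lesssim |k|^{l-1}$ invoked in the paper's proof.
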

 \begin{proof}
We consider $\overline{\E}(\zeta)$; the corresponding expansion for $\underline{\E}(\zeta)$ is obtained similarly.
We write
\[\overline{\E}(\zeta)=\int_\RR \zeta^2+\zeta^3+\frac{1}{3}(\partial_x\zeta)^2\ \dd x+\overline\E_{{\rm rem}}(\zeta),
\]
where
 \begin{multline*} \overline\E_{{\rm rem}}(\zeta)= \int_\RR\frac{\zeta^4}{1-\zeta} \, \dd x+\frac13 \int_\RR(1-\zeta)^3 \big(\partial_x\{\frac{\zeta}{1-\zeta}\}\big)^2-(\partial_x \zeta)^2 \, \dd x\\
 +\int_\RR (1-\zeta)^3 \left[ \big(\partial_x \F_1\{\frac{\zeta}{1-\zeta}\}\big)^2- \big(\partial_x\{\frac{\zeta}{1-\zeta}\}\big)^2\right]\, \dd x.
\end{multline*}
Note that by the Cauchy-Schwarz inequality
\[
\left|\int_\RR\frac{\zeta^4}{1-\zeta} \, \dd x\right|\leq \frac{\Norm{\zeta}_{L^\infty}^2\Norm{\zeta}_{L^2}^2}{h_0}
\]
and
\[
\left|\int_\RR(1-\zeta)^3\left(\partial_x\left\{\frac{\zeta}{1-\zeta}\right\}\right)^2-(\partial_x\zeta)^2\, \dd x\right|= \left| \int_\RR \frac{\zeta(\partial_x\zeta)^2}{1-\zeta}\, \dd x\right|
\leq \frac{\Norm{\zeta}_{L^\infty}\Norm{\partial_x\zeta}_{L^2}^2}{h_0}.
\]
Moreover
 \begin{multline*}
\left|\int_\RR(1-\zeta)^3\left[\left(\partial_x\F_1\left\{\frac{\zeta}{1-\zeta}\right\}\right)^2-\left(\partial_x\left\{\frac{\zeta}{1-\zeta}\right\}\right)^2\right]\, \dd x\right| \\
\leq\int_\RR (1+\abs{\zeta})^3 \left|(\partial_x\F_1-\partial_x)\left(\frac{\zeta}{1-\zeta}\right)\right|\,\left|(\partial_x\F_1+\partial_x)\left(\frac{\zeta}{1-\zeta}\right)\right|\, \dd x.
\end{multline*}
Applying the Cauchy-Schwarz inequality, Plancherel's theorem and the estimates
\[ |\F_1(k)-1|\lesssim |k|^{l-1}, \qquad  |\F_1(k)+1|\lesssim 1,\]
(by Definition~\ref{D.admissible},\ref{D.admissible 1} and~\ref{D.admissible 2}), we deduce
\begin{align*}
&\left|\int_\RR(1-\zeta)^3\left[\left(\partial_x\F_1\left\{\frac{\zeta}{1-\zeta}\right\}\right)^2-\left(\partial_x\left\{\frac{\zeta}{1-\zeta}\right\}\right)^2\right] \, \dd x\right| \\
&  \leq (1+\Norm{\zeta}_{L^\infty})^3 \Norm{\partial_x^l \left(\frac{\zeta}{1-\zeta}\right)}_{L^2} \Norm{\partial_x \left(\frac{\zeta}{1-\zeta}\right)}_{L^2}\\
& \leq C(\Norm{\zeta}_{H^\mu})\Norm{\partial_x^l\zeta}_{L^2}\Norm{\partial_x\zeta}_{L^2},
\end{align*}
where the last inequality follows from Leibniz's rule and standard bilinear estimates~\cite[Prop.~C.12]{Benzoni-GavageSerre07}.
Combining the above estimates together with similar calculations for $\underline\E$ yields the desired estimate for $\abs{\E_{{\rm rem}}}$. The estimate for $\abs{\langle \dd\E_{{\rm rem}}(\zeta),\zeta\rangle}$ follows in the same way when decomposing
\begin{multline}\label{def-dEzeta}\langle\dd \E(\zeta),\zeta\rangle=\int_\RR 2\frac{h_1+\gamma h_2}{h_1 h_2}\zeta^2-\frac{h_1^2-\gamma h_2^2}{h_1^2h_2^2}\zeta^3+\frac23 \delta^{-1} h_2^3\big(\partial_x \F_2 \{h_2^{-1}\zeta\}\big)\big(\partial_x \F_2 \{h_2^{-2}\zeta\}\big)\\
\dsp \hfill +\frac{2\gamma}3 h_1^3\big(\partial_x \F_1 \{h_1^{-1}\zeta\}\big)\big(\partial_x \F_1 \{h_1^{-2}\zeta\}\big) +\zeta\big(h_2\partial_x \F_2\{h_2^{-1}\zeta\}\big)^2-\gamma\zeta\big(h_1\partial_x \F_1\{h_1^{-1}\zeta\}\big)^2\, \dd x,\end{multline}
and we do not detail for the sake of conciseness.
\end{proof}

\begin{Lemma}\label{L.E-decomp-order}
Let $\gamma\geq 0$, $\delta>0$, $\mu>1/2$ and $\F_i$ be admissible Fourier multipliers such that $\mu\geq 1-\theta$. Let $\zeta\in H^\mu(\RR)$ such that $1-\Norm{\zeta}_{L^\infty}\geq h_0$ and $\delta^{-1}-\Norm{\zeta}_{L^\infty}\geq h_0$, with $h_0>0$. Then one can decompose
\[
\E(\zeta)=\E_{2}(\zeta)+\E_{3}(\zeta)+\E_{{\rm rem}}^{(1)}(\zeta)
\]
and
\[\langle\dd \E(\zeta),\zeta\rangle=2\E_{2}(\zeta)+3\E_{3}(\zeta)+\E_{{\rm rem}}^{(2)}(\zeta),\]
where
\begin{align*}
\E_{2}(\zeta)&=\int_\RR (\gamma+\delta)\zeta^2 +\gamma \frac13(\partial_x\F_1\{\zeta\})^2+\delta^{-1}\frac13(\partial_x\F_2\{\zeta\})^2\ \dd x,\\ \E_{3}(\zeta)&=\int_\RR (\gamma-\delta^2)\zeta^3- \gamma \zeta (\partial_x\F_1\{\zeta\})^2+\zeta(\partial_x\F_2\{\zeta\})^2 +\gamma \frac{2}{3}(\partial_x\F_1\{\zeta\})(\partial_x\F_1\{\zeta^2\})-\frac{2}{3}(\partial_x\F_2\{\zeta\})(\partial_x\F_2\{\zeta^2\})\ \dd x.
\end{align*}
Moreover, one has $\E_2(\zeta)\geq (\gamma+\delta)\Norm{\zeta}_{L^2}^2$  and
\begin{align*}
 \abs{\E_{3}(\zeta)}&\leq C(h_0^{-1},\Norm{\zeta}_{H^\mu})\Norm{\zeta}_{L^\infty}\Norm{\zeta}_{H^{1-\theta}}^2,\\
\forall j\in\{1,2\}, \quad \abs{\E_{{\rm rem}}^{(j)}(\zeta)}&\leq C(h_0^{-1},\Norm{\zeta}_{H^\mu}) \Norm{\zeta}_{L^\infty}^2\Norm{\zeta}_{H^{1-\theta}}^2,
\end{align*}
\end{Lemma}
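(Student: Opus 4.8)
The plan is to obtain the decomposition by Taylor expanding, about $\zeta=0$, each of the three terms in the integrand of $\E(\zeta)$ in~\eqref{def-E}, putting the quadratic contribution into $\E_2(\zeta)$, the cubic contribution into $\E_3(\zeta)$, and estimating the rest. The pointwise term is the simplest: from
\[
\frac{h_1+\gamma h_2}{h_1 h_2}=\frac{\gamma}{1-\zeta}+\frac{1}{\delta^{-1}+\zeta}=(\gamma+\delta)+(\gamma-\delta^2)\zeta+\Big(\frac{\gamma}{1-\zeta}+\frac{\delta^3}{1+\delta\zeta}\Big)\zeta^2
\]
its contribution to the remainder is exactly $\int_\RR\big(\frac{\gamma}{1-\zeta}+\frac{\delta^3}{1+\delta\zeta}\big)\zeta^4\,\dd x$; since $1-\zeta\geq h_0$ and $1+\delta\zeta=\delta h_2\geq\delta h_0$, Lemma~\ref{L.product},\ref{L.product 3} and the Cauchy--Schwarz inequality bound it by $C(h_0^{-1})\Norm{\zeta}_{L^\infty}^2\Norm{\zeta}_{L^2}^2$.

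For the two nonlocal terms I would first record the exact second-order expansion of $u_i\eqdef\zeta/h_i$ with explicit cubic remainder,
\[
u_i=c_i\zeta+d_i\zeta^2+\frac{e_i\zeta^3}{h_i},
\]
where $(c_1,d_1,e_1)=(1,1,1)$ and $(c_2,d_2,e_2)=(\delta,-\delta^2,\delta^2)$, and note that $h_i^3$ is already a polynomial of degree three in $\zeta$. Inserting these into $\frac{\gamma}{3}h_1^3(\partial_x\F_1\{u_1\})^2$ and $\frac{1}{3}h_2^3(\partial_x\F_2\{u_2\})^2$ and expanding the squares by linearity of $\partial_x\F_i$, one checks by a short computation that the genuinely quadratic and cubic terms reproduce exactly the $\F_i$-contributions to $\E_2(\zeta)$ and $\E_3(\zeta)$, while every remaining term — and likewise each term making up $\E_3(\zeta)$ — is of the form
\[
\int_\RR\pi(\zeta)\big(\partial_x\F_i\{a\}\big)\big(\partial_x\F_i\{b\}\big)\,\dd x\qquad\text{or}\qquad\int_\RR\pi(\zeta)\zeta^2\,\dd x,
\]
with $a,b\in\{\zeta,\zeta^2,e_i\zeta^3/h_i\}$ and $\pi(\zeta)$ a product of powers of $\zeta$ and factors $h_i^{-1}$; moreover the total number of explicit powers of $\zeta$ carried by $\pi$, $a$ and $b$ together (counting $\zeta$ as one, $\zeta^2$ as two and $e_i\zeta^3/h_i$ as three) is $\geq4$ for the terms collected into $\E_{\rm rem}^{(1)}(\zeta)$ and exactly $3$ for those collected into $\E_3(\zeta)$.

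Each such integral is then estimated by the Cauchy--Schwarz inequality together with Lemma~\ref{L.Fi-bounds},\ref{L.Fi-bounds 1}, which gives
\[
\Big|\int_\RR\pi(\zeta)(\partial_x\F_i\{a\})(\partial_x\F_i\{b\})\,\dd x\Big|\lesssim\Norm{\pi(\zeta)}_{L^\infty}\Norm{a}_{H^{1-\theta}}\Norm{b}_{H^{1-\theta}},
\]
after which Lemma~\ref{L.product},\ref{L.product 3} controls $\Norm{\pi(\zeta)}_{L^\infty}$ by $C(h_0^{-1})\Norm{\zeta}_{L^\infty}^{k}$, with $k$ the number of $\zeta$'s in $\pi$, while Lemma~\ref{L.product},\ref{L.product 1} and~\ref{L.product 4} (here the assumptions $0\leq1-\theta\leq\mu$ and $\mu>1/2$ are used) yield $\Norm{\zeta^2}_{H^{1-\theta}}\lesssim\Norm{\zeta}_{L^\infty}\Norm{\zeta}_{H^{1-\theta}}$ and $\Norm{e_i\zeta^3/h_i}_{H^{1-\theta}}\leq C(h_0^{-1},\Norm{\zeta}_{H^\mu})\Norm{\zeta}_{L^\infty}^2\Norm{\zeta}_{H^{1-\theta}}$; the pointwise integrals obey $\abs{\int_\RR\pi(\zeta)\zeta^2\,\dd x}\leq\Norm{\pi(\zeta)}_{L^\infty}\Norm{\zeta}_{L^2}^2$ in the same way. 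Multiplying, a term of total $\zeta$-count $n$ is bounded by $C(h_0^{-1},\Norm{\zeta}_{H^\mu})\Norm{\zeta}_{L^\infty}^{n-2}\Norm{\zeta}_{H^{1-\theta}}^2$; summing the finitely many contributions gives $\abs{\E_3(\zeta)}\leq C(h_0^{-1},\Norm{\zeta}_{H^\mu})\Norm{\zeta}_{L^\infty}\Norm{\zeta}_{H^{1-\theta}}^2$ and, with the pointwise-term bound of the first paragraph, $\abs{\E_{\rm rem}^{(1)}(\zeta)}\leq C(h_0^{-1},\Norm{\zeta}_{H^\mu})\Norm{\zeta}_{L^\infty}^2\Norm{\zeta}_{H^{1-\theta}}^2$. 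The coercivity $\E_2(\zeta)\geq(\gamma+\delta)\Norm{\zeta}_{L^2}^2$ is immediate since the two $\F_i$-terms in $\E_2$ are nonnegative (recall $\gamma\geq0$, $\delta>0$).

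For the derivative identity I would use that $\E_2$ and $\E_3$ are homogeneous of degrees $2$ and $3$, so $\langle\dd\E_2(\zeta),\zeta\rangle=2\E_2(\zeta)$ and $\langle\dd\E_3(\zeta),\zeta\rangle=3\E_3(\zeta)$, whence $\E_{\rm rem}^{(2)}(\zeta)=\langle\dd\E(\zeta),\zeta\rangle-2\E_2(\zeta)-3\E_3(\zeta)$. Starting from the explicit formula~\eqref{def-dEzeta} for $\langle\dd\E(\zeta),\zeta\rangle$ and expanding as above, every term is again of one of the forms displayed in the second paragraph, now with $\zeta$-count $\geq4$ once $2\E_2+3\E_3$ has been subtracted, and the estimates of the third paragraph deliver the stated bound for $\abs{\E_{\rm rem}^{(2)}(\zeta)}$. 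The main obstacle is not any individual estimate but the bookkeeping: one has to verify that after the expansions every surviving term genuinely has the advertised structure — a bounded rational prefactor times at most two Fourier-multiplier factors with polynomial-in-$\zeta$ arguments — and count $\zeta$-powers correctly. Because $H^{1-\theta}$ is only a module over $H^\mu$ rather than an algebra (the exponent $1-\theta$ may lie well below $1/2$), one must ensure that an $H^\mu$ or $L^\infty$ factor is always available whenever Lemma~\ref{L.product} is applied, which is exactly where the admissibility hypotheses $\mu\geq1-\theta$ and $\mu>1/2$ intervene.
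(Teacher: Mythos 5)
Your proposal is correct and follows essentially the same strategy as the paper: Taylor-expand the integrand of $\E$ about $\zeta=0$, identify $\E_2$ and $\E_3$ as the quadratic and cubic contributions, and bound the remaining terms by the Cauchy--Schwarz inequality together with Lemma~\ref{L.Fi-bounds},\ref{L.Fi-bounds 1} and the product estimates of Lemma~\ref{L.product}. One small difference worth noting is your use of Euler's identity for homogeneous functionals ($\langle\dd\E_2(\zeta),\zeta\rangle=2\E_2(\zeta)$, $\langle\dd\E_3(\zeta),\zeta\rangle=3\E_3(\zeta)$) to dispense with the explicit verification that the low-degree parts of $\langle\dd\E(\zeta),\zeta\rangle$ in~\eqref{def-dEzeta} match $2\E_2+3\E_3$; the paper instead expands~\eqref{def-dEzeta} directly and says the $\E_{\rm rem}^{(2)}$ estimate is obtained ``using similar estimates,'' while your homogeneity observation makes the reduction to estimating degree-$\geq4$ terms more transparent, at the price of more careful bookkeeping when the arguments of the $\partial_x\F_i$ are $h_1^{-1}\zeta$, $h_1^{-2}\zeta$, etc.\ rather than $h_1^{-1}\zeta$ alone.
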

\begin{proof}
The estimate on $\E_3$ is straightforward by the Cauchy-Schwarz inequality and applying Lemma~\ref{L.Fi-bounds},\ref{L.Fi-bounds 1} and Lemma~\ref{L.product},\ref{L.product 1}. We detail the estimate for $\abs{\E_{{\rm rem}}^{(j)}(\zeta)}$.
As above, we focus on the terms involving $\overline\E$, the terms involving $\underline\E$ being obtained identically. One has
\begin{multline*}
\overline\E_{\rm rem}^{(1)}(\zeta)
=\int_\RR\frac{\zeta^4}{1-\zeta}+\big(\zeta^2-\frac{\zeta^3}{3}\big)\left(\partial_x\F_1\left\{\frac{\zeta}{1-\zeta}\right\}\right)^2\,\dd x\\
+\frac{1}{3}\int_\RR(1-3\zeta)\left(\partial_x\F_1\left\{\frac{\zeta}{1-\zeta}\right\}\right)^2
 -(\partial_x\F_1\{\zeta\})^2-2\partial_x\F_1\{\zeta\}\partial_x\F_1\{\zeta^2\}+3\zeta (\partial_x\F_1\{\zeta\})^2\,\dd x.
\end{multline*}
We estimate each bracket separately. First note that, by the Cauchy-Schwarz inequality, Lemma~\ref{L.Fi-bounds},\ref{L.Fi-bounds 1} and Lemma~\ref{L.product},\ref{L.product 3},\ref{L.product 4} one has
\begin{align*}
\int_\RR\left|\frac{\zeta^4}{1-\zeta}\right| \, \dd x&\leq \frac{\Norm{\zeta}_{L^\infty}^2\Norm{\zeta}_{L^2}^2}{h_0},\\
\int_\RR\left|\big(\zeta^2-\frac{\zeta^3}{3}\big)\left(\partial_x\F_1\left\{\frac{\zeta}{1-\zeta}\right\}\right)^2\right| \, \dd x &\leq C(h_0^{-1},\Norm{\zeta}_{H^\mu})\Norm{\zeta}_{L^\infty}^2\Norm{\zeta}_{H^{1-\theta}}^2.
\end{align*}
Next consider
\begin{align*}
&\frac{1}{3}\int_\RR(1-3\zeta)\left(\partial_x\F_1\left\{\frac{\zeta}{1-\zeta}\right\}\right)^2-(\partial_x\F_1\{\zeta\})^2-2\partial_x\F_1\{\zeta\}\partial_x\F_1\{\zeta^2\}+3\zeta (\partial_x\F_1\{\zeta\})^2 \, \dd x\\
&=\frac{1}{3}\int_\RR(1-3\zeta)\left(\partial_x\F_1\left\{\frac{\zeta^2}{1-\zeta}\right\}\right)^2+2(\partial_x\F_1\{\zeta\})(\partial_x\F_1\left\{\frac{\zeta^3}{1-\zeta}\right\}) -6\zeta(\partial_x\F_1\{\zeta\})\left(\partial_x\F_1\left\{\frac{\zeta^2}{1-\zeta}\right\}\right) \, \dd x.
\end{align*}
where we used the identity $\partial_x\F_1\left\{\frac{\zeta}{1-\zeta}\right\}=\partial_x\F_1\{\zeta\}+\partial_x\F_1\left\{\frac{\zeta^2}{1-\zeta}\right\}$.
It follows from the Cauchy-Schwarz inequality, Lemma~\ref{L.Fi-bounds},\ref{L.Fi-bounds 1} and Lemma~\ref{L.product},\ref{L.product 1},\ref{L.product 4} that
\begin{multline*}
\left|\frac{1}{3}\int_\RR\left(\partial_x\F_1\left\{\frac{\zeta}{1-\zeta}\right\}\right)^2-(\partial_x\F_1\{\zeta\})^2-2\partial_x\F_1\{\zeta\}\partial_x\F_1\{\zeta^2\} \, \dd x \right|\\
\lesssim \Norm{\frac{\zeta^2}{1-\zeta}}_{H^{1-\theta}}^2+\Norm{\zeta}_{H^{1-\theta}}\Norm{\frac{\zeta^3}{1-\zeta}}_{H^{1-\theta}}
\leq C(h_0^{-1},\Norm{\zeta}_{H^\mu})\Norm{\zeta}_{L^\infty}^2\Norm{\zeta}_{H^{1-\theta}}^2
\end{multline*}
and
\begin{multline*}
\left|\int_\RR \zeta\left[\left(\partial_x\F_1\left\{\frac{\zeta^2}{1-\zeta}\right\}\right)^2+2(\partial_x\F_1\{\zeta\})\left(\partial_x\F_1\left\{\frac{\zeta^2}{1-\zeta}\right\}\right)  \right]\, \dd x \right|\\
\lesssim \Norm{\zeta}_{L^\infty}\Norm{\frac{\zeta^2}{1-\zeta}}_{H^{1-\theta}}^2+\Norm{\zeta}_{L^\infty}\Norm{\zeta}_{H^{1-\theta}}\Norm{\frac{\zeta^2}{1-\zeta}}_{H^{1-\theta}}
\leq C(h_0^{-1},\Norm{\zeta}_{H^\mu})\Norm{\zeta}_{L^\infty}^2\Norm{\zeta}_{H^{1-\theta}}^2.
\end{multline*}
This concludes the estimate for $\abs{\overline\E_{{\rm rem}}^{(1)}(\zeta)}$. The estimate for $\abs{\underline\E_{{\rm rem}}^{(1)}(\zeta)}$ is obtained identically, and the one for $\abs{\E_{{\rm rem}}^{(2)}(\zeta)}$ are obtained using similar estimates when decomposing $\langle\dd \E(\zeta),\zeta\rangle$ given in~\eqref{def-dEzeta}.
We do not detail for the sake of conciseness.
\end{proof}

\paragraph{Periodic functional setting}
Given $P>0$, we denote $L^2_P$ the space of $P$-periodic, locally square-integrable functions, endowed with the norm
\[ \Norm{u}_{L^2_P}=\Norm{u}_{L^2(-P/2,P/2)}\eqdef \left(\int_{-P/2}^{P/2}\abs{u(x)}^2\ \dd x\right)^{\frac12}.\]
The Fourier coefficients of $u\in L^2_P$ are defined by
\[ \hat u_k\eqdef \frac1{\sqrt{P}} \int_{-P/2}^{P/2} u(x)e^{-\frac{2\mathrm i\pi kx}{P}}\ \dd x, \qquad u(x)= \frac1{\sqrt{P}}   \sum_{k\in\ZZ} \hat u_ke^{\frac{2\mathrm i\pi kx}{P}}.\]
We define, for $s\geq 0$,
\[ H^s_P\eqdef \left\{u\in L^2_P, \quad \Norm{u}_{H^s_P}^2\eqdef \sum_{k\in\ZZ}\left(1+\frac{4\pi^2k^2}{P^2}\right)^s\abs{\hat u_k}^2<\infty\right\}.\]
The Fourier multiplier operator $\Lambda\colon \mathcal S'(\RR) \to \mathcal S'(\RR)$ is defined as usual by 
$\Lambda=(1-\partial_x^2)^{\frac12}$. It maps periodic distributions to periodic distributions and we have
\[
\widehat{\Lambda u}_k=\left(1+\frac{4\pi^2k^2}{P^2}\right)^{\frac12} \hat u_k.
\]
Thus
\[
\Norm{u}_{H_P^s}^2=\int_{-P/2}^{P/2} u \Lambda^{2s} u\ \dd x
\]
and $\Lambda^m$ is an isomorphism from $H^s_P$ to $H^{s-m}_P$ for any $s,m\in \RR$.
Similarly, the operators $\partial_x\F_i$ extend to operators from $\mathcal S'(\RR)$ to itself, and maps smoothly $H^s_P$ into $H_P^{s-1+\theta}$, acting on the Fourier coefficients by pointwise multiplication:
\[ \widehat{\partial_x\F_i u}_k=\frac{2\pi \mathrm{i} k}{P} \F_i(2\pi k/P) \hat u_k.\]
For any $s>1/2$, the continuous embedding
\[ \Norm{u}_{L^\infty}\leq \frac1{\sqrt{P}} \sum_{k\in\ZZ} \abs{\hat u_k}\leq \Norm{u}_{H^s_P}\times\frac1{\sqrt{P}} \left(\sum_{k\in\ZZ} \frac1{(1+\frac{4\pi^2k^2}{P^2})^s}\right)^\frac{1}{2}\lesssim \Norm{u}_{H^s_P},\]
holds uniformly with respect to $P\geq 1$.
More generally, one checks by a partition of unity argument, or repeating the proofs in the periodic setting, that Lemmata~\ref{L.interpolation},\ref{L.composition},~\ref{L.product} and as a consequence Lemmata~\ref{L.EvsY}--\ref{L.E-decomp-order} have immediate analogues in the periodic setting, with uniform estimates with respect to $P\geq1$, when defining
\[
\E_P(\zeta)=\gamma\overline\E_P(\zeta)+\underline\E_P(\zeta)
\]
where 
\begin{align*}
\overline{\E}_P(\zeta)&=\int_{-P/2}^{P/2} \frac{\zeta^2}{1-\zeta}+\frac13 (1-\zeta)^3 \big(\partial_x \F_1\{\frac{\zeta}{1-\zeta}\}\big)^2\dd x,\\
\underline{\E}_P(\zeta)&=\int_{-P/2}^{P/2}\frac{\zeta^2}{\delta^{-1}+\zeta}+\frac13 (\delta^{-1}+\zeta)^3 \big(\partial_x \F_2\{\frac{\zeta}{\delta^{-1}+\zeta}\}\big)^2\dd x.
\end{align*}

\section{The periodic problem}

Our first task is to construct periodic traveling-wave solutions with large periods by considering the periodic minimization problem corresponding to~\eqref{min-pb}. We will use this in the next section to construct a special minimizing sequence for~\eqref{min-pb}, which is useful when $\nu>1-\theta$.
When $\theta<1/2$ and $\nu=1-\theta$, any minimizing sequence has the special property and therefore it is strictly speaking unnecessary to first consider the periodic minimization problem. Nevertheless, we consider here all possible parameters in order to highlight some interesting differences between the cases $\nu=1-\theta$ and $\nu>1-\theta$.

We ensure that the hypotheses of Section~\ref{S.preliminaries}, namely
\[\zeta\in H_P^\nu\quad \text{ and } \quad \Norm{\zeta}_{L^\infty}<\min(1,\delta^{-1})\]
 will be satisfied through a penalization argument. To this aim, we fix $R>0$ and restrict ourselves to values $q\in(0,q_0)$ sufficiently small so that $\Norm{\zeta}_{H_P^\nu(\RR)}\le 2R$ and $ (\gamma+\delta)\Norm{\zeta}_{L^2_P}^2=q$ ensures (by Lemma~\ref{L.interpolation},\ref{L.interpolation 1} in the periodic setting) that $\Norm{\zeta}_{L^\infty}<\min(1,\delta^{-1})-h_0$ with some $h_0>0$, uniformly with respect to $P\geq P_0$ sufficiently large (and likewise in the real line setting).
We then define $\varrho:[0,(2R)^2)\to[0,\infty)$ a smooth, non-decreasing penalization function, satisfying
\begin{enumerate}
\item $\varrho(t)=0$ for $0\leq t\leq R^2$;
\item $\varrho(t)\to\infty$ as $t\nearrow (2R)^2$;
\item For any $a_1\in(0,1)$, there exists $M_1,M_2>0$ and $a_2>1$ such that
\begin{equation}\label{assumption-rho}
 \varrho'(t)\leq M_1\varrho(t)^{a_1}+M_2\varrho(t)^{a_2};
 \end{equation}
\end{enumerate}
for instance $\varrho\colon (R^2,(2R)^2)\ni t \mapsto ((2R)^2-t)^{-1}\exp(\frac{1}{R^2-t})$.

Now consider the functional
\[
\E_{P,\varrho}(\zeta)\eqdef \varrho(\Norm{\zeta}_{H_P^\nu}^2)+\E_P(\zeta)
\]
and the constraint set
\[ V_{P,q,2R}\eqdef \left\{ \zeta\in H_P^\nu, \quad (\gamma+\delta)\Norm{\zeta}_{L^2_P}^2=q\text{ and } \Norm{\zeta}_{H_P^\nu}<2R \right\}.\]
In the following we solve for $q$ sufficiently small and $P$ sufficiently large
\begin{equation}\label{sol-min}
\argmin_{\zeta\in V_{P,q,2R}}  \E_{P,\varrho}(\zeta).
\end{equation}

\begin{Lemma}\label{L.def-zetaP}
There exists $q_0>0$ such that for any $q\in(0,q_0)$, the  functional $\E_{P,\varrho}:V_{P,q,2R}\to\RR $ is weakly lower semi-continuous, bounded from below and $\E_{P,\varrho}(\zeta)\to\infty$ as $\Norm{\zeta}_{H_P^\nu}\nearrow 2R$. In particular, it has a minimizer $\zeta_P\in V_{P,q,2R}$, which satisfies the Euler-Lagrange equation
\begin{equation}\label{Euler-Lagrange-penalization}
 2\varrho'(\Norm{\zeta_P}_{H_P^\nu}^2)\Lambda^{2\nu}\zeta_P+\dd \E_{P}(\zeta_P)+2\alpha_P(\gamma+\delta)\zeta_P=0
 \end{equation}
for some Lagrange multiplier $\alpha_P(\zeta_P)\in\RR$.
\end{Lemma}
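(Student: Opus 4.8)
The plan is to prove Lemma~\ref{L.def-zetaP} by the direct method of the calculus of variations. First I would fix $q_0>0$ small enough that the constraint set $V_{P,q,2R}$ (together with the bound $\Norm{\zeta}_{H^\nu_P}<2R$, which is always enforced on it) guarantees $\Norm{\zeta}_{L^\infty}<\min(1,\delta^{-1})-h_0$ for some $h_0>0$, uniformly for $P\ge P_0$; this is exactly the reduction set up just before the lemma via the periodic analogue of Lemma~\ref{L.interpolation},\ref{L.interpolation 1}, and it ensures that $\E_P(\zeta)$ is well-defined on $V_{P,q,2R}$. Note $V_{P,q,2R}$ is nonempty for such $q$ (take a single suitably-scaled Fourier mode).

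Next I would establish the three listed properties. \emph{Bounded from below}: $\varrho\ge 0$ and, by the periodic version of Lemma~\ref{L.EvsY}, $\E_P(\zeta)\ge C_0^{-1}\Norm{\zeta}_{H^{1-\theta}_P}^2\ge 0$, so $\E_{P,\varrho}\ge 0$. \emph{Coercivity at the boundary}: since $\varrho(t)\to\infty$ as $t\nearrow(2R)^2$ and $\E_P\ge 0$, we get $\E_{P,\varrho}(\zeta)\to\infty$ as $\Norm{\zeta}_{H^\nu_P}\nearrow 2R$. \emph{Weak lower semicontinuity}: on $H^\nu_P$, the norm $\zeta\mapsto\Norm{\zeta}_{H^\nu_P}$ is weakly lower semicontinuous and $\varrho$ is continuous and non-decreasing, so $\zeta\mapsto\varrho(\Norm{\zeta}_{H^\nu_P}^2)$ is weakly l.s.c.; for $\E_P$ I would argue that along a weakly convergent sequence $\zeta_n\rightharpoonup\zeta$ in $H^\nu_P$ with $\Norm{\zeta_n}_{H^\nu_P}\le 2R$, Rellich's theorem gives strong convergence in $H^s_P$ for every $s<\nu$ (in particular in $L^\infty_P$ since $\nu>1/2$), and then Lemma~\ref{L.diff-E} (periodic version) shows $\E_P(\zeta_n)-\E_P(\zeta)\to 0$; actually since $1-\theta\le\nu$ and the quadratic form controlling $\E_P$ is of order $1-\theta<\nu$ one even gets continuity of $\E_P$ under such convergence, which is more than l.s.c.

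With these properties in hand, the existence of a minimizer $\zeta_P$ follows from the standard argument: take a minimizing sequence in $V_{P,q,2R}$; the coercivity property forces $\limsup\Norm{\zeta_n}_{H^\nu_P}<2R$ (otherwise $\E_{P,\varrho}\to\infty$, contradicting minimality since the infimum is finite), so after extracting a weakly convergent subsequence $\zeta_n\rightharpoonup\zeta_P$ we have $\Norm{\zeta_P}_{H^\nu_P}<2R$ strictly, the $L^2_P$-constraint passes to the limit by strong $L^2_P$-convergence (Rellich), so $\zeta_P\in V_{P,q,2R}$, and weak l.s.c. gives $\E_{P,\varrho}(\zeta_P)\le\liminf\E_{P,\varrho}(\zeta_n)=\inf$, hence $\zeta_P$ is a minimizer. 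Finally, since $\Norm{\zeta_P}_{H^\nu_P}<2R$ is a strict (open) constraint, $\zeta_P$ is an interior minimizer relative to it, so the only active constraint is $(\gamma+\delta)\Norm{\zeta}_{L^2_P}^2=q$; computing the Gateaux derivative of $\E_{P,\varrho}$ (using that $\dd(\Norm{\cdot}_{H^\nu_P}^2)=2\Lambda^{2\nu}$, that $\varrho$ is smooth, and the formula~\eqref{def-dE} for $\dd\E_P$, whose differentiability in the relevant topology follows from the estimates in Lemmata~\ref{L.product}--\ref{L.E-decomp-order}), the Lagrange multiplier theorem yields~\eqref{Euler-Lagrange-penalization} with multiplier $-2\alpha_P(\gamma+\delta)$.

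The main obstacle I expect is the weak lower semicontinuity (equivalently, weak continuity) of the nonlinear, nonlocal functional $\E_P$: one must check that quantities such as $\int (1-\zeta)^3(\partial_x\F_1\{\zeta/(1-\zeta)\})^2$ behave well under weak $H^\nu_P$ convergence. The point is that $\partial_x\F_i$ maps $H^\nu_P$ into $H^{\nu-1+\theta}_P$ with $\nu-1+\theta\ge 0$, and the compositions $\zeta\mapsto\zeta/(1\mp\zeta)$ are continuous from $H^\nu_P$ (strong in $H^s_P$, $s<\nu$) by Lemma~\ref{L.composition}; combined with the compact embedding $H^\nu_P\hookrightarrow H^s_P$ for $s<\nu$ and the bilinear estimates of Lemma~\ref{L.product}, one upgrades weak convergence to enough strong convergence to pass to the limit in every term of $\E_P$ — this is precisely what the already-proven Lemma~\ref{L.diff-E} packages, so invoking its periodic analogue disposes of the difficulty. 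A minor technical point is justifying differentiability of $\E_P$ (and hence the Euler--Lagrange equation) in a space in which $\dd\E_P(\zeta_P)$ lies; here one uses that $\nu\ge 1-\theta$ so that $\dd\E_P(\zeta_P)\in H^{\nu-2+\theta}_P\subset H^{-\nu}_P$, pairing legitimately against test functions in $H^\nu_P$.
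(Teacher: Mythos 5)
Your overall strategy—direct method, coercivity from the penalization, weak lower semicontinuity, then the Lagrange multiplier rule using that the $H^\nu_P$-ball constraint is inactive—is the same as the paper's. The gap lies in your treatment of the weak lower semicontinuity of $\E_P$, specifically in the borderline case $\nu=1-\theta$ (which Assumption~\ref{D.parameters} explicitly permits whenever $\theta<1/2$, e.g.\ for $\F_i^{\rm id}$ where $\theta=0$). You argue that $\zeta_n\rightharpoonup\zeta$ in $H^\nu_P$ gives strong convergence in $H^s_P$ for $s<\nu$ and then that Lemma~\ref{L.diff-E} (with $\mu=s$) forces $\E_P(\zeta_n)\to\E_P(\zeta)$, i.e.\ weak \emph{continuity}. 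But Lemma~\ref{L.diff-E} implicitly requires $\mu\ge 1-\theta$ (its proof controls quantities such as $\Norm{\zeta/(1-\zeta)}_{H^{1-\theta}}$ by $C(\Norm{\zeta}_{H^\mu})$ via Lemma~\ref{L.product},\ref{L.product 4}, which needs $1-\theta\le\mu$). When $\nu>1-\theta$ you can pick $s\in[\max(1/2,1-\theta),\nu)$ and your argument goes through and in fact gives continuity; but when $\nu=1-\theta$ there is no such $s<\nu$, and indeed $\E_P$ is \emph{not} weakly continuous in that case—the dominant quadratic contribution $\frac13\int(1-\zeta)^3(\partial_x\F_1\{\zeta/(1-\zeta)\})^2$ involves a derivative of the top order $1-\theta=\nu$, which behaves like the $H^\nu$-norm itself and can drop in the weak limit. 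You even write ``$1-\theta<\nu$'' in your justification, betraying that the argument silently excludes the equality case.

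What the paper does instead is prove genuine weak l.s.c.\ (which is all that the direct method needs) by rewriting $\E_P(\zeta)$ as a sum of $L^2_P$-norms squared of quantities such as $(1-\zeta)^{3/2}\partial_x\F_1\{\zeta/(1-\zeta)\}$ and $(1-\zeta)^{-1/2}\zeta$. Using Rellich (strong $L^\infty$-convergence of $\zeta_n$), boundedness of $\zeta_n/(1-\zeta_n)$ in $H^\nu_P$, and boundedness of $\partial_x\F_1:H^\nu_P\to L^2_P$, one upgrades to \emph{weak} $L^2_P$-convergence of these quantities (strong for the low-order ones), and then the weak lower semicontinuity of $\Norm{\cdot}_{L^2_P}$ does the work term-by-term. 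This is the key additional idea you need to cover $\nu=1-\theta$. If you prefer, you could also treat the two cases separately: your continuity argument for $\nu>1-\theta$, and the $L^2$-weak-l.s.c.\ decomposition for $\nu=1-\theta$—but the latter already handles both.
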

\begin{proof}
As explained above, we restrict ourselves to $q\in(0,q_0)$ so that $\E_{P,\varrho}$ is well-defined, and in particular $\sup_{\zeta\in  V_{P,q,2R}} \Norm{\zeta}_{L^\infty}<\min(1,\delta^{-1})$. The argument is standard; see \eg~\cite[\S I.1,I.2]{Struwe}. Since $\E_{P,\varrho}(\zeta)\to\infty$ as $\Norm{\zeta}_{H_P^\nu}\nearrow 2R$, any minimizing sequence is bounded, and therefore weakly convergent (up to a subsequence) in the (reflexive) Hilbert space $H_P^\nu$.  We only need to show that for any $\zeta_n\in V_{P,q,2R}$ such that $\zeta_n \rightharpoonup\zeta$ weakly in $H_P^\nu$, one has
\[ 0\leq \E_{P,\varrho}(\zeta)\leq \liminf_{n\to\infty}\E_{P,\varrho}(\zeta_n).\]
Notice first that by the weak lower semi-continuity of $\Norm{\cdot}_{H_P^\nu}$, and since $\varrho$ is non-decreasing, one has
\[\varrho(\Norm{\zeta}_{H_P^\nu}^2)\leq \liminf_{n\to\infty}\varrho(\Norm{\zeta_n}_{H_P^\nu}^2).\]
Now, by the Rellich-Kondrachov theorem, $\zeta_n\rightharpoonup \zeta$ in $H_P^\nu$ implies that $\zeta_n\rightarrow \zeta$ in $H_P^s$, for $s\in (1/2,\nu)$; and in particular, $\zeta_n\to\zeta$ in $L^\infty$. Moreover, one has $\sup_n\Norm{\zeta_n}_{L^\infty}<\min(1,\delta^{-1})$ since $\zeta_n\in V_{P,q,2R}$, and therefore $\frac{\zeta_n}{1-\zeta_n}\to \frac{\zeta}{1-\zeta}$ in $L^\infty$, and $\frac{\zeta}{1-\zeta}\in H_P^\nu$ by  Lemma~\ref{L.product},\ref{L.product 3}, \ref{L.product 4}.
Since $\frac{\zeta_n}{1-\zeta_n}$ is uniformly bounded in $H^\nu_P$ and converges in $L^\infty$, it 
follows that $\frac{\zeta_n}{1-\zeta_n}\rightharpoonup \frac{\zeta}{1-\zeta}$ in $H_P^\nu$, and therefore $\partial_x\F_1\left\{\frac{\zeta_n}{1-\zeta_n}\right\}\rightharpoonup \partial_x\F_1\left\{\frac{\zeta}{1-\zeta}\right\}$ in $L^2_P$ by Lemma~\ref{L.Fi-bounds},\ref{L.Fi-bounds 1}, and finally \[(1-\zeta_n)^{\frac32}\partial_x\F_1\left\{\frac{\zeta_n}{1-\zeta_n}\right\}\rightharpoonup (1-\zeta)^{\frac32}\partial_x\F_1\left\{\frac{\zeta}{1-\zeta}\right\} \quad \text{ in } L^2_P.\]
In the same way, we find
\[(\delta^{-1}+\zeta_n)^{\frac32} \partial_x \F_2\left\{\frac{\zeta_n}{\delta^{-1}+\zeta_n}\right\}\rightharpoonup (\delta^{-1}+\zeta)^{\frac32} \partial_x \F_2\left\{\frac{\zeta}{\delta^{-1}+\zeta}\right\} \quad \text{ in } L^2_P.\]
and
\[(1-\zeta_n)^{-\frac12}\zeta_n \rightarrow (1-\zeta)^{-\frac12}\zeta, \qquad (\delta^{-1}+\zeta_n)^{-\frac12}\zeta_n \rightarrow  (\delta^{-1}+\zeta)^{-\frac12}\zeta  \quad \text{ in } L^2_P.\]
The result follows from the weak lower semi-continuity of $\Norm{\cdot}_{L^2_P}$.
\end{proof}

Now we wish to prove that $\zeta_P\in V_{P,q,R}$, and in particular satisfies the Euler-Lagrange equation
\[ \dd \E_{P}(\zeta_P)+2\alpha_P (\gamma+\delta)\zeta_P=0.\]
From this point on, we heavily make use of the property (see Assumption~\ref{D.parameters})
\[\gamma-\delta^2\neq 0.\]
without explicit references in the statements.

\begin{Lemma}\label{L.I-bound-from-below} 
There exists $m>0$ and $q_0>0$ such that for any $q\in(0,q_0)$, 
\[ I_q\eqdef \inf\{\E(\zeta),\ \zeta\in V_{q,R}\}<q(1-m q^{\frac23})\]
and there exists $P_q>0$ such that
\[ I_{P,\varrho,q}\eqdef \inf\{\E_{P,\varrho}(\zeta),\ \zeta\in V_{P,q,2R}\}<q(1-m q^{\frac23})\]
for any $P\geq P_q$.
\end{Lemma}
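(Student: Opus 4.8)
The plan is to estimate $\E$ on an explicit Korteweg--de Vries rescaling of a fixed profile, exploiting the expansion of Lemma~\ref{L.E-decomp-KdV}. First I would fix $\psi\in\C^\infty(\RR)$ with compact support, $\psi\not\equiv 0$, of constant sign, normalized so that $(\gamma+\delta)\Norm{\psi}_{L^2}^2=1$ and with $\sgn\big(\int_\RR\psi^3\,\dd x\big)=-\sgn(\gamma-\delta^2)$; this choice is possible precisely because $\gamma-\delta^2\neq0$ (and encodes the elevation/depression dichotomy mentioned in the introduction). For $\lambda\in(0,1]$ set $\psi_\lambda(x)=\sqrt\lambda\,\psi(\lambda x)$, so that $(\gamma+\delta)\Norm{\psi_\lambda}_{L^2}^2=1$, $\int_\RR\psi_\lambda^3\,\dd x=\sqrt\lambda\int_\RR\psi^3\,\dd x$ and $\Norm{\partial_x\psi_\lambda}_{L^2}^2=\lambda^2\Norm{\partial_x\psi}_{L^2}^2$. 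Hence
\[
B_\lambda\eqdef (\gamma-\delta^2)\int_\RR\psi_\lambda^3\,\dd x+\frac{\gamma+\delta^{-1}}{3}\Norm{\partial_x\psi_\lambda}_{L^2}^2
=\sqrt\lambda\Big((\gamma-\delta^2)\int_\RR\psi^3\,\dd x+\frac{\gamma+\delta^{-1}}{3}\lambda^{3/2}\Norm{\partial_x\psi}_{L^2}^2\Big).
\]
As $\lambda\to0$ the bracket converges to $(\gamma-\delta^2)\int_\RR\psi^3\,\dd x<0$, so $B_\lambda<0$ for all small $\lambda$; I fix such a $\lambda_0$ and set $m\eqdef-\tfrac12 B_{\lambda_0}>0$.

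Next, for $q\in(0,q_0)$ I would take the trial function $\zeta_q(x)\eqdef q^{2/3}\psi_{\lambda_0}(q^{1/3}x)$. A change of variables gives $(\gamma+\delta)\Norm{\zeta_q}_{L^2}^2=q$, $\Norm{\zeta_q}_{L^\infty}=q^{2/3}\Norm{\psi_{\lambda_0}}_{L^\infty}$ and $\Norm{\zeta_q}_{H^\nu}^2\le q\Norm{\psi_{\lambda_0}}_{H^\nu}^2$ (using $\nu>0$ and $q\le1$), so that for $q_0$ small enough $\zeta_q\in V_{q,R}$ and $\min(1,\delta^{-1})-\Norm{\zeta_q}_{L^\infty}\ge h_0>0$. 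Since moreover $\int_\RR\zeta_q^3\,\dd x=q^{5/3}\int_\RR\psi_{\lambda_0}^3\,\dd x$ and $\Norm{\partial_x\zeta_q}_{L^2}^2=q^{5/3}\Norm{\partial_x\psi_{\lambda_0}}_{L^2}^2$, Lemma~\ref{L.E-decomp-KdV} applied with $l=2$ yields $\E(\zeta_q)=q+q^{5/3}B_{\lambda_0}+\E_{\rm rem}(\zeta_q)$ with
\[
\abs{\E_{\rm rem}(\zeta_q)}\le C(h_0^{-1},\Norm{\zeta_q}_{H^1})\big(\Norm{\zeta_q}_{L^\infty}^2\Norm{\zeta_q}_{L^2}^2+\Norm{\zeta_q}_{L^\infty}\Norm{\partial_x\zeta_q}_{L^2}^2+\Norm{\partial_x^2\zeta_q}_{L^2}\Norm{\partial_x\zeta_q}_{L^2}\big),
\]
whose three contributions are of orders $q^{7/3}$, $q^{7/3}$ and $q^{2}$, all $o(q^{5/3})$, while $\Norm{\zeta_q}_{H^1}\le\Norm{\psi_{\lambda_0}}_{H^1}$ keeps the constant bounded. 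It is here that applying the lemma with $l=2$ rather than $l=1$ is essential (with $l=1$ the last term would be $\O(q^{5/3})$, of the same order as the gain), and this is what forces $\psi$ to be smooth. Thus $\E(\zeta_q)\le q-2mq^{5/3}+Cq^2$, and after shrinking $q_0$ so that $Cq_0^{1/3}<m$ one gets $I_q\le \E(\zeta_q)<q(1-mq^{2/3})$.

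For the periodic bound I would transfer this to the torus. Since $\zeta_q$ is smooth and supported in an interval of length $\sim q^{-1/3}$, there is $P_q>0$ such that $\supp\zeta_q\subset(-P/2,P/2)$ whenever $P\ge P_q$; let $\zeta_{q,P}$ be the $P$-periodic extension of $\zeta_q$, a smooth $P$-periodic function coinciding with $\zeta_q$ on the fundamental domain. Then every pointwise-product integral and every integer-order Sobolev norm of $\zeta_{q,P}$ equals the corresponding quantity for $\zeta_q$, in particular $(\gamma+\delta)\Norm{\zeta_{q,P}}_{L^2_P}^2=q$, and since the periodic extension is an isometry between integer-order Sobolev spaces, $\Norm{\zeta_{q,P}}_{H^\nu_P}\le\Norm{\zeta_q}_{H^\nu}$ by interpolation, uniformly in $P\ge1$; hence $\Norm{\zeta_{q,P}}_{H^\nu_P}<R$ for $q$ small, so $\varrho(\Norm{\zeta_{q,P}}_{H^\nu_P}^2)=0$ and $\zeta_{q,P}\in V_{P,q,2R}$. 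Applying the periodic analogue of Lemma~\ref{L.E-decomp-KdV} --- whose leading terms reduce to the real-line integrals computed above and whose remainder obeys the same bound with constant uniform in $P$ --- we obtain $\E_{P,\varrho}(\zeta_{q,P})=\E_P(\zeta_{q,P})=q+q^{5/3}B_{\lambda_0}+\O(q^2)<q(1-mq^{2/3})$ for every $P\ge P_q$ (enlarging $C$ and shrinking $q_0$ if necessary), whence $I_{P,\varrho,q}<q(1-mq^{2/3})$. The main obstacle is really the first paragraph: one must arrange $B_{\lambda_0}<0$, which uses $\gamma\neq\delta^2$ to give the cubic term a definite sign and the subcritical scaling $\psi_\lambda$ to make it dominate the positive gradient term; the remaining points --- the choice $l=2$ to keep the error of strictly higher order, and the support condition making the transfer to the periodic setting transparent --- are routine.
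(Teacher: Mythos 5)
Your proposal is correct and follows essentially the same approach as the paper: the same KdV-rescaled compactly supported trial function with the sign of $\int\psi^3$ chosen opposite to $\gamma-\delta^2$, the same application of Lemma~\ref{L.E-decomp-KdV}, and the same periodization to deduce the bound for $I_{P,\varrho,q}$. The only difference is that you apply Lemma~\ref{L.E-decomp-KdV} with $l=2$, giving a remainder $\O(q^2)$, while the paper uses $l=3$ and gets the slightly sharper $\O(q^{7/3})$; both are $o(q^{5/3})$, so the conclusion is unaffected.
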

\begin{proof}
Let us first consider the case of the real line.
Consider $\psi\in \C^\infty(\RR)$ with compact support, such that ${(\gamma+\delta)\Norm{\psi}_{L^2}^2=1}$; and denote $\psi_\lambda:x\mapsto \lambda^{\frac12}\psi(\lambda x)$. One has
\[ \int_\RR\psi_\lambda^3 \, \dd x =\lambda^{\frac12}\int_\RR\psi^3 \, \dd x \quad \text{ and } \quad \Norm{\partial_x\psi_\lambda}_{L^2}=\lambda \Norm{\partial_x\psi}_{L^2}.\]
It follows that, for the case when $\gamma-\delta^2<0$, one can choose $\psi\geq 0$ and $\lambda$ small enough so that
\[
\int_\RR(\gamma-\delta^2)\psi_\lambda^3+\frac{\gamma+\delta^{-1}}{3}(\partial_x\psi_\lambda)^2\ \dd x\eqdef-2m<0.
\]
If $\gamma-\delta^2>0$, we instead let $\psi\leq 0$ and again choose $\lambda$ small enough so that the above holds.

Now, consider $\phi_q:x\mapsto q^{\frac23}\psi_\lambda(q^{\frac13}x)$. One has
\[ \Norm{\phi_q}_{L^\infty}\leq q^{\frac23}\Norm{\psi_\lambda}_{L^\infty} , \quad \int_\RR\phi_q^3\, \dd x=q^{\frac53}\int_\RR\psi_\lambda^3 \, \dd x \quad \text{ and } \quad \Norm{\partial_x^n\phi_q}_{L^2}=q^{\frac12+\frac{n}3} \Norm{\partial_x^n\psi_\lambda}_{L^2} \quad (n\in\NN).\]
In particular, for $q$ sufficiently small, $\Norm{\phi_q}_{H^\nu}<R$; and by Lemma~\ref{L.E-decomp-KdV} with $l=3$,
\begin{align*}
\E(\phi_q)&=(\gamma+\delta)\int_\RR\phi_q^2\ \dd x+\int_\RR(\gamma-\delta^2)\phi_q^3+\frac{\gamma+\delta^{-1}}{3}(\partial_x\phi_q)^3\ \dd x+\mathcal{O}(q^\frac73)\\
& =q-2m q^\frac{5}{3}+\mathcal{O}(q^\frac73).
\end{align*}
The result follows in the real-line setting.

We now deduce the result in the periodic setting. By taking $P\geq P_q$ sufficiently large, we may ensure
\[ \supp \phi_q \subset (-P/2,P/2),\]
and define 
\[ \phi_{P,q}=\sum_{j\in\ZZ} \phi_q(x-jP)\in H_P^n \quad (n\in\NN).\]
One has
\[ (\gamma+\delta)\Norm{\phi_{P,q}}_{L^2_P}^2= (\gamma+\delta)\Norm{\phi_{q}}_{L^2}^2=q\]
and$ \Norm{\phi_{P,q}}_{H_P^\nu}<R$ for $q$ sufficiently small, so that
as well as 
\begin{align*}
\E_{P,\varrho}(\phi_{P,q})=\E_P(\phi_{P,q})&=(\gamma+\delta)\int_{-P/2}^{P/2}\phi_{P,q}^2\ \dd x+\int_{-P/2}^{P/2}(\gamma-\delta^2)\phi_{P,q}^3+\frac{\gamma+\delta^{-1}}{3}(\partial_x\phi_{P,q})^2\ \dd x+\mathcal{O}(q^\frac73)\\
& =q-2m q^\frac{5}{3}+\mathcal{O}(q^\frac73).
\end{align*}
The result is proved.
\end{proof}

\begin{Lemma}\label{L.alpha-bounds}
There exists $q_0>0$ such that for any $q\in(0,q_0)$, one has 
\[\forall P\geq P_q, \qquad |\alpha_P+1|< \frac{1}{2},\]
where $\alpha_P$ is defined in Lemma~\ref{L.def-zetaP} and $P_q$ in Lemma~\ref{L.I-bound-from-below}.
\end{Lemma}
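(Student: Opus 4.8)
The plan is to test the Euler--Lagrange equation~\eqref{Euler-Lagrange-penalization} against $\zeta_P$ itself and extract a sharp two-sided estimate on $\alpha_P$. Pairing~\eqref{Euler-Lagrange-penalization} with $\zeta_P$ gives
\[
2\varrho'(\Norm{\zeta_P}_{H_P^\nu}^2)\Norm{\zeta_P}_{H_P^\nu}^2+\langle \dd \E_P(\zeta_P),\zeta_P\rangle+2\alpha_P q=0.
\]
Since $\varrho'\geq 0$ and $\Norm{\zeta_P}_{H_P^\nu}^2\geq 0$, this already yields $2\alpha_P q\leq -\langle \dd \E_P(\zeta_P),\zeta_P\rangle$, so an upper bound on $\alpha_P$ follows from a lower bound on $\langle \dd \E_P(\zeta_P),\zeta_P\rangle$, and conversely. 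The key inputs are: (a) the decomposition of Lemma~\ref{L.E-decomp-order} (periodic version), which gives $\langle \dd \E_P(\zeta_P),\zeta_P\rangle=2\E_{2,P}(\zeta_P)+3\E_{3,P}(\zeta_P)+\E^{(2)}_{{\rm rem},P}(\zeta_P)$ with $\E_{2,P}(\zeta_P)\geq (\gamma+\delta)\Norm{\zeta_P}_{L^2_P}^2=q$ and $|\E_{3,P}(\zeta_P)|+|\E^{(2)}_{{\rm rem},P}(\zeta_P)|\leq C\Norm{\zeta_P}_{L^\infty}\Norm{\zeta_P}_{H^{1-\theta}_P}^2$; and (b) the a priori bound $\Norm{\zeta_P}_{H^{1-\theta}_P}^2\lesssim \E_P(\zeta_P)\leq \E_{P,\varrho}(\zeta_P)=I_{P,\varrho,q}< q(1-mq^{2/3})\leq q$, coming from Lemma~\ref{L.EvsY} (periodic version) combined with Lemma~\ref{L.I-bound-from-below} and the fact that $\varrho\geq 0$.

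Concretely, from (b) we get $\Norm{\zeta_P}_{H^{1-\theta}_P}^2\lesssim q$, and since $\nu\geq 1-\theta$ and $\nu>1/2$, the interpolation/embedding estimate gives $\Norm{\zeta_P}_{L^\infty}\lesssim q^{1/2}\cdot(\text{bounded factor})$ using $\Norm{\zeta_P}_{H^\nu_P}\leq 2R$; hence $\Norm{\zeta_P}_{L^\infty}\Norm{\zeta_P}_{H^{1-\theta}_P}^2\lesssim q^{3/2}$. Therefore
\[
\langle \dd\E_P(\zeta_P),\zeta_P\rangle = 2\E_{2,P}(\zeta_P)+\mathcal{O}(q^{3/2})\geq 2q + \mathcal{O}(q^{3/2}),
\]
which combined with $2\alpha_P q\leq -\langle\dd\E_P(\zeta_P),\zeta_P\rangle$ yields $\alpha_P\leq -1+\mathcal{O}(q^{1/2})$. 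For the lower bound, I note $\E_{2,P}(\zeta_P)\leq C\Norm{\zeta_P}_{H^{1-\theta}_P}^2\lesssim q$ by Lemma~\ref{L.Fi-bounds},\ref{L.Fi-bounds 1} (or directly by Lemma~\ref{L.EvsY}), so $\langle\dd\E_P(\zeta_P),\zeta_P\rangle\leq Cq+\mathcal{O}(q^{3/2})$, and since $\varrho'\Norm{\zeta_P}_{H^\nu_P}^2\geq 0$ we get $2\alpha_P q = -2\varrho'(\cdot)\Norm{\zeta_P}_{H^\nu_P}^2-\langle\dd\E_P(\zeta_P),\zeta_P\rangle\geq -Cq+\mathcal{O}(q^{3/2})$, i.e. $\alpha_P\geq -C/2$. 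This only gives a \emph{bounded} lower bound, not one close to $-1$; to pin down $|\alpha_P+1|<1/2$ I instead test~\eqref{Euler-Lagrange-penalization} against a fixed comparison function or, more simply, combine the two relations $\E_P(\zeta_P)\leq I_{P,\varrho,q}<q$ and $\langle\dd\E_P(\zeta_P),\zeta_P\rangle = 2\E_{2,P}+3\E_{3,P}+\E^{(2)}_{{\rm rem},P}$ together with $\E_P(\zeta_P)=\E_{2,P}+\E_{3,P}+\E^{(1)}_{{\rm rem},P}$: subtracting, $\langle\dd\E_P(\zeta_P),\zeta_P\rangle = 2\E_P(\zeta_P)+\E_{3,P}+(\E^{(2)}_{{\rm rem},P}-2\E^{(1)}_{{\rm rem},P}) = 2\E_P(\zeta_P)+\mathcal{O}(q^{3/2})$. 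Hence $2\alpha_P q = -2\varrho'(\cdot)\Norm{\zeta_P}_{H^\nu_P}^2 - 2\E_P(\zeta_P) + \mathcal{O}(q^{3/2})$. Since $0\leq \E_P(\zeta_P)\leq I_{P,\varrho,q}<q$ and $\varrho'(\cdot)\Norm{\zeta_P}_{H^\nu_P}^2\geq 0$, this forces $\alpha_P\leq \mathcal{O}(q^{1/2})< -1+1/2$ for $q$ small...

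Wait — the sign is wrong; let me reorganize. The clean route is: $2\alpha_P q\leq -\langle\dd\E_P(\zeta_P),\zeta_P\rangle = -2\E_P(\zeta_P)+\mathcal{O}(q^{3/2})\leq \mathcal{O}(q^{3/2})$, giving $\alpha_P\leq \mathcal{O}(q^{1/2})$, which is the \emph{wrong direction} for $|\alpha_P+1|<1/2$. So the upper bound $\alpha_P< -1/2$ must instead come from $\E_P(\zeta_P)\geq \E_{2,P}(\zeta_P)+\mathcal{O}(q^{3/2})\geq q+\mathcal{O}(q^{3/2})$ being impossible together with $\E_P(\zeta_P)<q(1-mq^{2/3})$ unless... no, that's only a contradiction, showing the bound $\E_{2,P}\geq q$ must be \emph{nearly} an equality. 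The correct extraction: from $2\alpha_P q = -2\varrho'(\cdot)\Norm{\zeta_P}_{H^\nu_P}^2 - \langle\dd\E_P(\zeta_P),\zeta_P\rangle$ and $\langle\dd\E_P(\zeta_P),\zeta_P\rangle = 2\E_{2,P}+3\E_{3,P}+\E^{(2)}_{{\rm rem},P}\geq 2q - C q^{3/2}$, we get $\alpha_P\leq -1+Cq^{1/2}+(\text{nonpositive})< -1/2$ for $q$ small, giving the upper half. For the lower half, $-2\alpha_P q = 2\varrho'(\cdot)\Norm{\zeta_P}_{H^\nu_P}^2+\langle\dd\E_P(\zeta_P),\zeta_P\rangle$; here I must bound $\varrho'(\cdot)\Norm{\zeta_P}_{H^\nu_P}^2$ from above, which is precisely the delicate point. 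This is handled using assumption~\eqref{assumption-rho} on $\varrho$: from the Euler--Lagrange equation one derives a bound on $\varrho'(\Norm{\zeta_P}_{H^\nu_P}^2)$ by pairing against $\Lambda^{-2\nu}$ of suitable test functions, or by the argument that if $\varrho'$ were large then $\Norm{\zeta_P}_{H^\nu_P}$ is close to $2R$, contradicting $\E_P(\zeta_P)<q$ via Lemma~\ref{L.EvsY}; combined with $\langle\dd\E_P(\zeta_P),\zeta_P\rangle\leq C\Norm{\zeta_P}_{H^{1-\theta}_P}^2\lesssim q$ this gives $-2\alpha_P q\leq Cq$, hence $\alpha_P\geq -C/2$, and refining the constants (using $mq^{2/3}$ slack and smallness of $q$) yields $\alpha_P>-3/2$. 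The main obstacle, as flagged, is controlling the penalization term $\varrho'(\Norm{\zeta_P}_{H^\nu_P}^2)\Norm{\zeta_P}_{H^\nu_P}^2$: one expects it to vanish (indeed the whole point of the next lemma will be $\zeta_P\in V_{P,q,R}$, i.e. $\varrho'=0$), but at this stage one only knows $\E_P(\zeta_P)<q$ so $\Norm{\zeta_P}_{H^{1-\theta}_P}$ is small, which does not immediately bound $\Norm{\zeta_P}_{H^\nu_P}$ when $\nu>1-\theta$; the inequality~\eqref{assumption-rho} relating $\varrho'$ to powers of $\varrho$, together with a bootstrap on the Euler--Lagrange equation, is exactly what closes this gap.
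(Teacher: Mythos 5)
Your overall strategy matches the paper's: test the Euler--Lagrange equation~\eqref{Euler-Lagrange-penalization} against $\zeta_P$, decompose $\langle\dd\E_P(\zeta_P),\zeta_P\rangle$ via Lemma~\ref{L.E-decomp-order}, and use Lemmata~\ref{L.I-bound-from-below} and~\ref{L.EvsY} to obtain $\Norm{\zeta_P}_{H^{1-\theta}_P}^2\lesssim q$. You also correctly flag the delicate point: controlling the penalization contribution $\varrho'(\Norm{\zeta_P}_{H^\nu_P}^2)\Norm{\zeta_P}_{H^\nu_P}^2$. But your proposed resolution of that point is incorrect and leaves a genuine gap.

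The observation you miss is that $\varrho(\Norm{\zeta_P}_{H^\nu_P}^2)$ itself (not only $\E_P(\zeta_P)$) is directly bounded by the minimization inequality. Lemma~\ref{L.I-bound-from-below} gives $\varrho(\Norm{\zeta_P}_{H^\nu_P}^2)+\E_P(\zeta_P)< q(1-mq^{2/3})\leq q$, and one also has a matching lower bound on $\E_P(\zeta_P)$: dropping the nonnegative dispersive terms,
\[
\E_P(\zeta_P)\geq\int_{-P/2}^{P/2}\gamma\frac{\zeta_P^2}{1-\zeta_P}+\frac{\zeta_P^2}{\delta^{-1}+\zeta_P}\,\dd x=(\gamma+\delta)\Norm{\zeta_P}_{L^2_P}^2+\O\big(\Norm{\zeta_P}_{L^\infty}\Norm{\zeta_P}_{L^2_P}^2\big)=q+\O(q^{1+\frac{\epsilon}{2\nu}})
\]
with $\epsilon=\nu-\tfrac12>0$ by interpolation. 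Subtracting gives $\varrho(\Norm{\zeta_P}_{H^\nu_P}^2)=\O(q^{1+\frac{\epsilon}{2\nu}})$; only now does assumption~\eqref{assumption-rho} apply (with $a_1<1$ chosen close enough to $1$) to yield $\varrho'(\Norm{\zeta_P}_{H^\nu_P}^2)=\O(q^{1+\frac{\epsilon}{4\nu}})$, and since $\Norm{\zeta_P}_{H^\nu_P}^2<(2R)^2$ the penalization term in the tested Euler--Lagrange identity is $o(q)$. Your alternative suggestions do not close this gap: pairing against $\Lambda^{-2\nu}$ test functions is unspecified, and the dichotomy ``if $\varrho'$ is large then $\Norm{\zeta_P}_{H^\nu_P}$ is close to $2R$'' cannot be refuted by $\E_P(\zeta_P)<q$ alone, precisely because (as you note yourself) smallness of $\Norm{\zeta_P}_{H^{1-\theta}_P}$ does not control $\Norm{\zeta_P}_{H^\nu_P}$ when $\nu>1-\theta$. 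No bootstrap on the Euler--Lagrange equation is needed here; the minimization inequality already forces $\varrho$ to be small.
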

\begin{proof}
We use the Euler-Lagrange equation satisfied by $\zeta_P$, namely
\[ 2\varrho'(\Norm{\zeta_P}_{H_P^\nu}^2)\Lambda^{2\nu}\zeta_P+\dd \E_{P}(\zeta_P)+2\alpha_P(\gamma+\delta)\zeta_P=0.\]
This equation is well-defined in $(H_P^\nu)'$ and testing against $\zeta_P\in H_P^\nu$ yields 
\[
2\varrho'(\Norm{\zeta_P}_{H_P^\nu}^2)\Norm{\zeta_P}_{H_P^\nu}^2+2\alpha_P(\gamma+\delta)\Norm{\zeta_P}_{L^2_P}^2+\langle \dd\E_P(\zeta_P), \zeta_P\rangle=0.
\]
Using the decompositions in Lemma~\ref{L.E-decomp-order} (changing the domain of integration to $[-P/2,P/2]$) yields
\begin{align*}
\langle\dd \E_P(\zeta_P),\zeta_P\rangle&=2{\E}_{2,P}(\zeta_P)+3{\E}_{3,P}(\zeta_P)+{\E}_{{\rm rem},P}^{(2)}(\zeta_P)\\
&=2{\E}_{P}(\zeta_P)+{\E}_{3,P}(\zeta_P)+{\E}_{{\rm rem},P}^{(2)}(\zeta_P)-2{\E}_{{\rm rem},P}^{(1)}(\zeta_P),
\end{align*}
so that one obtains the identity
\begin{equation}\label{eq-1}
-\alpha_P\ q = {\E}_{P}(\zeta_P)+\frac12{\E}_{3,P}(\zeta_P)+\frac12{\E}_{{\rm rem},P}^{(2)}(\zeta_P)-{\E}_{{\rm rem},P}^{(1)}(\zeta_P) + \varrho'(\Norm{\zeta_P}_{H_P^\nu}^2)\Norm{\zeta_P}_{H_P^\nu}^2.
\end{equation}

Let us now use Lemma~\ref{L.I-bound-from-below}, which asserts
\begin{equation} \label{eq-2} 
\varrho(\Norm{\zeta_P}_{H_P^\nu}^2)+\E_P(\zeta_P)< q(1-mq^{\frac23})\leq q.
\end{equation}
Remark that, since $1-\zeta,\delta^{-1}+\zeta\geq h_0>0$, one has
\begin{align*}\E_P(\zeta_P)&\geq \int_{-P/2}^{P/2}\gamma\frac{\zeta_P^2}{1-\zeta_P}+\frac{\zeta_P^2}{\delta^{-1}+\zeta_P}\ \dd x\\
&=(\gamma+\delta) \int_{-P/2}^{P/2}\zeta_P^2 \ \dd x+\gamma\int_{-\frac{P}{2}}^{\frac{P}{2}}\frac{\zeta_P^3}{1-\zeta_P}\ \dd x-\delta\int_{-\frac{P}{2}}^{\frac{P}{2}}\frac{\zeta_P^3}{\delta^{-1}+\zeta_P}\ \dd x\\
&=q+\O(q^{1+\frac\epsilon{2\nu}}),
\end{align*}
where $\epsilon=\nu-1/2>0$ and we use in the last estimate that $\Norm{\zeta_P}_{L^\infty}^2\lesssim \Norm{\zeta_P}_{L^2_P}^{2-\frac{1}{\nu}}\Norm{\zeta_P}_{H_P^\nu}^{\frac{1}{\nu}}=\O(q^{\frac{2\nu-1}{2\nu}})$, by the interpolation estimate Lemma~\ref{L.interpolation},\ref{L.interpolation 1} in the periodic-setting.

Combining with~\eqref{eq-2} yields
\[\E_P(\zeta_P)=q+\O(q^{1+\frac{\epsilon}{2\nu}})\]
and
\[
\varrho(\Norm{\zeta_P}_{H_P^\nu}^2)=\mathcal{O}(q^{1+\frac\epsilon{2\nu}}).
\]
Hence, by~\eqref{assumption-rho}
\[
\varrho'(\Norm{\zeta_P}_{H_P^\nu}^2)=\mathcal{O}( q^{1+\frac\epsilon{4\nu}}).
\]
Now, by Lemma~\ref{L.EvsY} and using once again~\eqref{eq-2}, one has
\[
\Norm{\zeta_P}_{H_P^{1-\theta}}^2 \lesssim \E_P(\zeta_P)=\O(q).
\]
Thus by Lemma~\ref{L.E-decomp-order}, one has
\[
\abs{{\E}_{3,P}(\zeta_P)}\lesssim \Norm{\zeta_P}_{L^\infty}\Norm{\zeta_P}_{H_P^{1-\theta}}^2=\mathcal{O}(q^{1+\frac\epsilon{2\nu}})
\]
and
\[
\abs{{\E}_{{\rm rem}, P}^{(2)}(\zeta_P)}+\abs{{\E}_{{\rm rem}, P}^{(1)}(\zeta_P)}\lesssim \Norm{\zeta_P}_{L^\infty}^2\Norm{\zeta_P}_{H_P^{1-\theta}}^2=\mathcal{O}(q^{1+\frac\epsilon\nu}).
\]
Plugging the above estimates into~\eqref{eq-1} yields
\[-\alpha_P\ q =q+\mathcal{O}(q^{1+\frac\epsilon{4\nu}}),\]
and the proof is complete.
\end{proof}

\begin{Lemma}\label{L.zetaP-H1-estimate}
Let $q_0$, $q\in(0,q_0)$ and $P_q$ be as in Lemma~\ref{L.alpha-bounds}. There exists $M>0$ such that one has
\[\Norm{\zeta}_{H_P^\nu}^2\leq M q\]
 uniformly over $q\in(0,q_0)$, $P\geq P_q$ and $\zeta$ in the set of minimizers of $\E_{P,\varrho}$ over $V_{P,q,2R}$.
\end{Lemma}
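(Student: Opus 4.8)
The plan is to control $\|\zeta\|_{H_P^\nu}^2$ by splitting it into the $H_P^{1-\theta}$ part, which is already estimated, and the remaining higher-order part, which is where the penalization term enters. First I would recall from the proof of Lemma~\ref{L.alpha-bounds} the two facts already established for any minimizer $\zeta$ of $\E_{P,\varrho}$ over $V_{P,q,2R}$: namely that $\E_P(\zeta)=q+\O(q^{1+\epsilon/(2\nu)})$, hence $\|\zeta\|_{H_P^{1-\theta}}^2\lesssim \E_P(\zeta)=\O(q)$ by Lemma~\ref{L.EvsY}; and that $\varrho(\|\zeta\|_{H_P^\nu}^2)=\O(q^{1+\epsilon/(2\nu)})$ together with $\varrho'(\|\zeta\|_{H_P^\nu}^2)=\O(q^{1+\epsilon/(4\nu)})$ via~\eqref{assumption-rho}. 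These give good control of the low-order norm, so the only issue is to upgrade to the full $H_P^\nu$ norm.

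For the upgrade I would test the Euler-Lagrange equation~\eqref{Euler-Lagrange-penalization} against $\Lambda^{2(\nu-1+\theta)}\zeta$ (or, slightly more robustly, exploit that the quadratic part $\E_{2,P}$ dominated by $\E_P$ already carries a $\|\partial_x\F_i\{\zeta\}\|^2$ contribution). The cleanest route is: write $\|\zeta\|_{H_P^\nu}^2 = \|\zeta\|_{H_P^{1-\theta}}^2 + (\|\zeta\|_{H_P^\nu}^2-\|\zeta\|_{H_P^{1-\theta}}^2)$, and bound the second summand using that it is controlled by $\varrho'(\|\zeta\|_{H_P^\nu}^2)^{-1}$ times a term appearing in the tested Euler-Lagrange identity. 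Concretely, pairing~\eqref{Euler-Lagrange-penalization} with $\zeta$ as in Lemma~\ref{L.alpha-bounds} already gives
\[
2\varrho'(\|\zeta\|_{H_P^\nu}^2)\|\zeta\|_{H_P^\nu}^2 = -2\alpha_P(\gamma+\delta)\|\zeta\|_{L^2_P}^2 - \langle \dd\E_P(\zeta),\zeta\rangle = \O(q),
\]
using $|\alpha_P+1|<1/2$ from Lemma~\ref{L.alpha-bounds}, $\|\zeta\|_{L^2_P}^2=q/(\gamma+\delta)$, and $\langle \dd\E_P(\zeta),\zeta\rangle=2\E_P(\zeta)+\O(q^{1+\epsilon/(2\nu)})=\O(q)$ from the decomposition of Lemma~\ref{L.E-decomp-order}. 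This is the key identity, but it only shows $\varrho'(\|\zeta\|_{H_P^\nu}^2)\|\zeta\|_{H_P^\nu}^2=\O(q)$, which degenerates when $\varrho'=0$.

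The way around the degeneracy is a dichotomy: either $\|\zeta\|_{H_P^\nu}^2\le R^2$, in which case $\|\zeta\|_{H_P^\nu}^2\le R^2\le (R^2/q_0)q\eqdef Mq$ trivially (shrinking $q_0$ if needed so that $R^2/q_0$ serves as $M$ — or rather, one simply wants $\|\zeta\|_{H_P^\nu}^2 \le Mq$ with $M$ possibly large, which is automatic once $q$ is bounded; the honest statement is that for $q$ small one in fact lands in this regime); or $\|\zeta\|_{H_P^\nu}^2 > R^2$, where $\varrho'>0$ on $(\|\zeta\|_{H_P^\nu}^2-\eta,\|\zeta\|_{H_P^\nu}^2)$ and, more importantly, one uses that $\varrho(\|\zeta\|_{H_P^\nu}^2)=\O(q^{1+\epsilon/(2\nu)})\to 0$, so that $\|\zeta\|_{H_P^\nu}^2$ stays in a region where $\varrho$ is small, hence (since $\varrho\to\infty$ at $(2R)^2$ and $\varrho$ is continuous) bounded away from $(2R)^2$ — but this alone gives only $\|\zeta\|_{H_P^\nu}^2 \le C$, not $\le Mq$. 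To get the sharp $\O(q)$ bound I would argue directly: since $\varrho(\|\zeta\|_{H_P^\nu}^2)\to 0$ as $q\to 0$ and $\varrho(t)=0$ precisely for $t\le R^2$, for $q$ small enough one has $\|\zeta\|_{H_P^\nu}^2$ close to being $\le R^2$; but to conclude $\le Mq$ one combines $\|\zeta\|_{H_P^{1-\theta}}^2=\O(q)$ with the fact that the Euler-Lagrange equation, tested against $\Lambda^{2(\nu-1+\theta)}\zeta$, expresses $\|\zeta\|_{H_P^\nu}^2$ in terms of $\langle \dd\E_P(\zeta),\Lambda^{2(\nu-1+\theta)}\zeta\rangle$ plus $\alpha_P$-terms, all of which are $\O(\|\zeta\|_{H_P^{1-\theta}}\|\zeta\|_{H_P^\nu})$ by the mapping properties in Lemma~\ref{L.Fi-bounds},\ref{L.Fi-bounds 1} and the product estimates, whenever $\varrho'=0$; and when $\varrho'\ne 0$ the term $\varrho'\Lambda^{2\nu}\zeta$ has the favorable sign when paired with $\Lambda^{2(\nu-1+\theta)}\zeta$ (both are nonnegative Fourier multipliers of $\zeta$), so it can only help. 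Absorbing $\|\zeta\|_{H_P^\nu}$ by Young's inequality then yields $\|\zeta\|_{H_P^\nu}^2\lesssim \|\zeta\|_{H_P^{1-\theta}}^2 = \O(q)$, uniformly in $P\ge P_q$.

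\textbf{Main obstacle.} The delicate point is handling the penalization term $\varrho'(\|\zeta\|_{H_P^\nu}^2)\Lambda^{2\nu}\zeta$ when testing against something other than $\zeta$ itself: one must check its sign (or at least that it does not spoil the estimate) when paired with $\Lambda^{2(\nu-1+\theta)}\zeta$, which works because $\theta<1$ makes $2(\nu-1+\theta)\le 2\nu$ and both operators act as nonnegative multipliers, so the cross term $\varrho'\langle\Lambda^{2\nu}\zeta,\Lambda^{2(\nu-1+\theta)}\zeta\rangle = \varrho'\|\zeta\|_{H_P^{\nu-(1-\theta)/1\cdots}}^2\ge 0$ — more precisely $\varrho'\sum_k(1+\tfrac{4\pi^2k^2}{P^2})^{2\nu-1+\theta}|\hat\zeta_k|^2\ge 0$. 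The secondary nuisance is verifying all the bilinear and Fourier-multiplier estimates are uniform in $P\ge 1$, which the paper has already arranged in the periodic setting; so modulo that, the argument is a bootstrap of the $H_P^{1-\theta}$ bound to $H_P^\nu$ through one more testing of the Euler-Lagrange equation.
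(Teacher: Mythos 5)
Your core idea is the right one and matches the paper's in spirit: test the Euler--Lagrange equation~\eqref{Euler-Lagrange-penalization} against a Fourier multiplier applied to $\zeta_P$, observe that the penalization term $\varrho'\langle\Lambda^{2\nu}\zeta_P,\Lambda^{2r}\zeta_P\rangle\ge 0$ can be dropped, extract the coercive contribution $\Norm{\zeta_P}_{H_P^{r+1-\theta}}^2$ from $\langle\dd\E_P(\zeta_P),\Lambda^{2r}\zeta_P\rangle$ using Lemma~\ref{L.Fi-bounds},\ref{L.Fi-bounds 1}, and absorb the remainder using the already-established $\Norm{\zeta_P}_{H_P^{1-\theta}}^2\lesssim q$ and the smallness of $q$. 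The opening detour about a dichotomy on $\Norm{\zeta_P}_{H_P^\nu}^2\lessgtr R^2$ is, as you yourself note, a dead end (it cannot deliver the uniform $\O(q)$ rate) and should simply be cut.

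The genuine difference is that you propose a single test against $\Lambda^{2(\nu-(1-\theta))}\zeta_P$, whereas the paper runs a finite induction with $r_n=\min(\nu-(1-\theta),n(1-\theta))$, gaining one increment of $1-\theta$ at a time precisely so that the lowest-order contribution $\Norm{\zeta_P}_{H_P^{r_n}}^2$ is always already controlled by the previous step. Your one-shot version produces $\Norm{\zeta_P}_{H_P^{\nu-(1-\theta)}}^2$ on the right-hand side, which is \emph{not} in general of the form $\Norm{\zeta_P}_{H_P^{1-\theta}}\Norm{\zeta_P}_{H_P^\nu}$ as you assert; when $\nu>2(1-\theta)$ you would still need to interpolate $\Norm{\zeta_P}_{H_P^{\nu-(1-\theta)}}^2\le\Norm{\zeta_P}_{H_P^{1-\theta}}^{2(1-s)}\Norm{\zeta_P}_{H_P^\nu}^{2s}$ with $s\in(0,1)$ and invoke Young's inequality to split off a small multiple of $\Norm{\zeta_P}_{H_P^\nu}^2$ --- a step you do not articulate. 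With that repair the one-shot route can be made to close, but you should also be aware that the hard work in this lemma is precisely the verification, carried out term by term in~\eqref{zeta-est1}--\eqref{zeta-est7} with carefully chosen exponents in Lemma~\ref{L.product} and Lemma~\ref{L.Fi-bounds}, that all the non-coercive pieces of $\langle\dd\E_P(\zeta_P),\Lambda^{2r}\zeta_P\rangle$ really are bounded by $\Norm{\zeta_P}_{H_P^{r}}^2+q^{\epsilon/(2\nu)}\Norm{\zeta_P}_{H_P^{r+1-\theta}}^2$; your proposal asserts this structure but does not check it, and the exponent bookkeeping (e.g.\ the duality requirement $r+1-\theta\le\nu$, the constraint $\epsilon<\min(\nu-1/2,1-\theta)$) is exactly where errors typically creep in.
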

\begin{proof}
Recall the Euler-Lagrange equation:
\begin{equation}\label{Euler-Lagrange-2}
2\varrho'(\Norm{\zeta_P}_{H_P^\nu}^2)\Lambda^{2\nu}\zeta_P+\dd \E_{P}(\zeta_P)+2\alpha_P\zeta_P=0.\end{equation}
It follows from the proof of Lemma~\ref{L.alpha-bounds} that for $q_0$ sufficiently small
\[ \Norm{\zeta_P}_{H_P^{1-\theta}}^2\lesssim q\]
with $0\leq \theta<1$. Thus the result is proved if $\nu=1-\theta$, and we focus below on the situation $\nu>1-\theta$. In this case we obtain the desired estimate in a similar fashion after finite induction. Indeed, define $r_n=\min(\nu-(1-\theta), n(1-\theta))$, $n\in \NN$, and assume that $\Norm{\zeta_P}_{H_P^{r_n}}^2\lesssim q$. Note that this is satisfied for $n=0$ by assumption. We will show below that 
\begin{equation}\label{induction}
\Norm{\zeta_P}_{H_P^{1-\theta+r_n}}^2\lesssim \Norm{\zeta_P}_{H_P^{r_n}}^2 \lesssim q.
\end{equation}
Since $1-\theta>0$, the desired result follows by finite induction.
\medskip

Let us now prove~\eqref{induction}. We test~\eqref{Euler-Lagrange-2} against $\Lambda^{2r_n}\zeta_P$, and obtain
\begin{equation}
2\varrho'(\Norm{\zeta_P}_{H_P^\nu}^2)\langle \Lambda^{2\nu}\zeta_P,\Lambda^{2r_n}\zeta_P\rangle+ \langle\dd \E_{P}(\zeta_P),\Lambda^{2r_n}\zeta_P\rangle+2\alpha_P\langle\zeta_P,\Lambda^{2r_n}\zeta_P\rangle=0.\label{Euler-Lagrange-Lambda-1}
\end{equation}
Here, the notation $\langle,\rangle$ represents the $H_P^{\nu-2(1-\theta)}-H_P^{-\nu+2(1-\theta)}$ duality bracket. We will use the same notation for $H_P^s-H_P^{-s}$, where the value of $s\in(-\nu,\nu]$ is clear from the context.
Note that all the terms are well-defined, since $\zeta_P\in H_P^\nu$, and therefore by~\eqref{def-dE} and Lemma~\ref{L.product}, $\dd\E_P(\zeta_P)\in H_P^{\nu-2(1-\theta)}$. Moreover, by~\eqref{Euler-Lagrange-2}, if $\varrho'(\Norm{\zeta_P}_{H_P^\nu}^2)>0$ then $\Lambda^{2\nu}\zeta_P\in H_P^{\nu-2(1-\theta)}$ as well.  Finally,  $\Lambda^{2r_n}\zeta_P\in H_P^{\nu-2r_n}$, and $r_n+1-\theta\leq \nu$ so that $\nu-2r_n\ge -\nu+2(1-\theta)$. 

Now, using that $\varrho'(\Norm{\zeta_P}_{H_P^\nu}^2)\geq 0$, we get from~\eqref{Euler-Lagrange-Lambda-1} and Lemma~\ref{L.alpha-bounds} that 
\begin{equation}
\gamma\langle\dd \overline{\E}_{P}(\zeta_P),\Lambda^{2r_n}\zeta_P\rangle+\langle\dd \underline{\E}_{P}(\zeta_P),\Lambda^{2r_n}\zeta_P\rangle\leq 2(-\alpha_P)\Norm{\Lambda^{r_n}\zeta_P}_{L^2_P}^2\leq 3  \Norm{\zeta_P}_{H_P^{r_n}}^2,\label{Euler-Lagrange-Lambda-2}
\end{equation}
where we define $\dd \overline{\E}_{P}$ and $\dd \underline{\E}_{P}$ from~\eqref{def-dE} as in~\eqref{def-E}. In particular, 
\begin{multline}
\langle \dd \overline{\E}(\zeta_P), \Lambda^{2r_n}\zeta_P\rangle=\left\langle   \frac{2\zeta_P }{1-\zeta_P},\Lambda^{2r_n}\zeta_P\right\rangle+\left\langle \frac{\zeta_P^2}{(1-\zeta_P)^2},\Lambda^{2r_n}\zeta_P\right\rangle+\left\langle (1-\zeta_P)^2\left(\partial_x\F\left\{\frac{\zeta_P}{1-\zeta_P}\right\}\right)^2,\Lambda^{2r_n}\zeta_P\right\rangle\\ +\left\langle \frac{2}{3}(1-\zeta_P)^3\partial_x\F_1\left\{\frac{\zeta_P}{1-\zeta_P}\right\},\partial_x\F_1\left\{\frac{\Lambda^{2r_n}\zeta_P}{(1-\zeta_P)^2}\right\}\right\rangle.
\label{Der-zeta}
\end{multline}
We estimate each term of~\eqref{Der-zeta}, using that $(\gamma+\delta)\Norm{\zeta_P}_{L_P^2}^2=q$, $\Norm{\zeta_P}_{H_P^\nu}<2R$ and $\Norm{\zeta_P}_{L^\infty}<1-h_0$, recalling that Lemmata~\ref{L.interpolation},\ref{L.composition},\ref{L.product} and~\ref{L.Fi-bounds}.\ref{L.Fi-bounds 1} are valid in the periodic setting.

The first term in~\eqref{Der-zeta} is estimated by the Cauchy-Schwarz inequality and Lemma~\ref{L.composition}:
\begin{align}
\left|\left\langle   \frac{2\zeta_P }{1-\zeta_P},\Lambda^{2r_n}\zeta_P\right\rangle\right|
&=\left|\left\langle \Lambda^{r_n}\left(\frac{2\zeta_P}{1-\zeta_P}\right),\Lambda^{r_n}\zeta_P\right\rangle\right|
\leq 2\Norm{\frac{\zeta_P}{1-\zeta_P}}_{H_P^{r_n}}\Norm{\zeta_P}_{H_P^{r_n}}\lesssim \Norm{\zeta_P}_{H_P^{r_n}}^2. \label{zeta-est1}
\end{align} 
It is clear that the second term can be estimated in the same way. Next we see that
\begin{align*}
\left|\left\langle (1-\zeta_P)^2\left(\partial_x\F\left\{\frac{\zeta_P}{1-\zeta_P}\right\}\right)^2,\Lambda^{2r_n}\zeta_P\right\rangle\right|
&\leq\Norm{(1-\zeta_P)^2\left(\partial_x\F_1\left\{\frac{\zeta_P}{1-\zeta_P}\right\}\right)^2}_{H_P^{\theta - 1+r_n}}\Norm{\zeta_P}_{H_P^{1-\theta+r_n}}\\
&\lesssim \Norm{\left(\partial_x\F_1\left\{\frac{\zeta_P}{1-\zeta_P}\right\}\right)^2}_{H_P^{\theta-1+r_n}}\Norm{\zeta_P}_{H_P^{1-\theta+r_n}},
\end{align*}
and Lemma~\ref{L.product},\ref{L.product 2}, Lemma~\ref{L.Fi-bounds},\ref{L.Fi-bounds 1} and Lemma~\ref{L.interpolation},\ref{L.interpolation 2} yield for any $0<\epsilon<\min(\nu+\theta-1,1-\theta,\nu-1/2)$,
\begin{align*}
\Norm{\left(\partial_x\F_1\left\{\frac{\zeta_P}{1-\zeta_P}\right\}\right)^2}_{H_P^{\theta-1+r_n}}&\lesssim \Norm{\partial_x\F_1\left\{\frac{\zeta_P}{1-\zeta_P}\right\}}_{H_P^{r_n}}\Norm{\partial_x\F_1\left\{\frac{\zeta_P}{1-\zeta_P}\right\}}_{H_P^{\nu+\theta-1-\epsilon}}\\
&\lesssim \Norm{\zeta_P}_{H_P^{1-\theta+r_n}}\Norm{\zeta_P}_{H_P^{\nu-\epsilon}}\\
& \lesssim \Norm{\zeta_P}_{H_P^{1-\theta+r_n}}q^{\frac\epsilon{2\nu}}.
\end{align*}
We therefore have that
\begin{equation}
\left|\left\langle (1-\zeta_P)^2\left(\partial_x\F\left\{\frac{\zeta_P}{1-\zeta_P}\right\}\right)^2,\Lambda^{2r_n}\zeta_P\right\rangle\right|
\lesssim q^{\frac\epsilon{2\nu}} \Norm{\zeta_P}_{H_P^{1-\theta+r_n}}^2. \label{zeta-est3}
\end{equation}
We next consider the remaining term in~\eqref{Der-zeta} and note that
\begin{align}
\nonumber
&\left\langle (1-\zeta_P)^3\partial_x\F_1\left\{\frac{\zeta_P}{1-\zeta_P}\right\},\partial_x\F_1\left\{\frac{\Lambda^{2r_n}\zeta_P}{(1-\zeta_P)^2}\right\}\right\rangle\\
&=\left\langle\partial_x\F_1\{\zeta_P\},\partial_x\F_1\{\Lambda^{2r_n}\zeta_P\} \right\rangle\nonumber \\
&\quad+\underbrace{\left\langle(-3\zeta_P+3\zeta_P^2-\zeta_P^3)\partial_x\F_1\left\{\frac{\zeta_P}{1-\zeta_P}\right\},\partial_x\F_1\left\{\frac{\Lambda^{2r_n}\zeta_P}{(1-\zeta_P)^2}\right\}\right\rangle}_{I}\nonumber\\
&\quad +\underbrace{\left\langle\partial_x\F_1\left\{\frac{\zeta_P}{1-\zeta_P}-\zeta_P\right\},\partial_x\F_1\left\{\frac{\Lambda^{2r_n}\zeta_P}{(1-\zeta_P)^2}\right\}\right\rangle}_{II}\nonumber\\
&\quad+\underbrace{\left\langle\partial_x\F_1\{\zeta_P\},\partial_x\F_1\left\{\frac{\Lambda^{2r_n}\zeta_P}{(1-\zeta_P)^2}-\Lambda^{2r_n}\zeta_P\right\}\right\rangle}_{III}. \label{int-expansion}
\end{align}
First we see that, by Lemma~\ref{L.Fi-bounds},\ref{L.Fi-bounds 1},
\begin{equation}\label{zeta-est4}
\Norm{\zeta}_{H_P^{r_n}}^2+\left\langle\partial_x\F_1\{\zeta_P\},\partial_x\F_1\{\Lambda^{2r_n}\zeta_P\} \right\rangle\gtrsim\Norm{\zeta}_{H_P^{1-\theta+r_n}}^2.
\end{equation}
We estimate $I$ in~\eqref{int-expansion} proceeding as previously:
\begin{align}
\left|\left\langle\zeta_P\partial_x\F_1\left\{\frac{\zeta_P}{1-\zeta_P}\right\},\partial_x\F_1\left\{\frac{\Lambda^{2r_n}\zeta_P}{(1-\zeta_P)^2}\right\}\right\rangle\right|&\lesssim \Norm{\partial_x\F_1\left\{\frac{\zeta_P}{1-\zeta_P}\right\}}_{H_P^{r_n}}\Norm{\zeta_P
\partial_x\F_1\left\{\frac{\Lambda^{2r_n}\zeta_P}{(1-\zeta_P)^2}\right\}}_{H_P^{-r_n}}\nonumber\\
&\lesssim \Norm{\zeta_P}_{H_P^{1-\theta+r_n}} \Norm{\zeta_P}_{H_P^{\nu-\epsilon}} 
\Norm{\partial_x\F_1\left\{\frac{\Lambda^{2r_n}\zeta_P}{(1-\zeta_P)^2}\right\}}_{H_P^{-r_n}} \nonumber\\
&\lesssim 
\Norm{\zeta_P}_{H_P^{\nu-\epsilon}} 
\Norm{\zeta_P}_{H_P^{1-\theta+r_n}}^2\\
& \lesssim q^{\frac\epsilon{2\nu}} \Norm{\zeta_P}_{H_P^{1-\theta+r_n}}^2, 
\label{zeta-est5}
\end{align}
where we choose $0<\epsilon<\min\{\nu-1/2, 1-\theta\}$.
The remaining terms in $I$ are of higher order and can be estimated in the same way. Next we estimate $II$:
\begin{align}
\left|\left\langle\partial_x\F_1\left\{\frac{\zeta_P^2}{1-\zeta_P}\right\},\partial_x\F_1\left\{\frac{\Lambda^{2r_n}\zeta_P}{(1-\zeta_P)^2}\right\}\right\rangle\right|&\lesssim\Norm{\zeta_P^2}_{H_P^{1-\theta+r_n}}\Norm{\Lambda^{2r_n}\zeta_P}_{H_P^{1-\theta-r_n}}\nonumber\\
&\lesssim \Norm{\zeta_P}_{L^\infty}\Norm{\zeta_P}_{H_P^{1-\theta+r_n}}^2\\
&\lesssim q^{\frac{\nu-1/2}{2\nu}} \Norm{\zeta_P}_{H_P^{1-\theta+r_n}}^2\label{zeta-est6},
\end{align}
where we used Lemma~\ref{L.product},\ref{L.product 1} and Lemma~\ref{L.interpolation},\ref{L.interpolation 1}.
 Finally consider $III$: proceeding as above,
 \begin{align}\nonumber
 \left|\left\langle\partial_x\F_1\{\zeta_P\},\partial_x\F_1\left\{\frac{\Lambda^{2r_n}\zeta_P}{(1-\zeta_P)^2}-\Lambda^{2r_n}\zeta_P\right\}\right\rangle\right|
 &\lesssim 
 \Norm{\zeta_P}_{H_P^{1-\theta+r_n}}\Norm{\frac{2\zeta_P-\zeta_P^2}{(1-\zeta_P)^2}\Lambda^{2r_n}\zeta_P}_{H_P^{1-\theta-r_n}} \nonumber\\
 &\leq \Norm{\zeta_P}_{H_P^{1-\theta+r_n}}^2\Norm{\frac{2\zeta_P-\zeta_P^2}{(1-\zeta_P)^2}}_{H_P^{\nu-\epsilon}} \\
 &\lesssim q^{\frac\epsilon{2\nu}}\Norm{\zeta_P}_{H_P^{1-\theta+r_n}}^2,\label{zeta-est7}
 \end{align}
 with $0<\epsilon<\min(\nu-1/2, \nu-(1-\theta),1-\theta)$.
 
 Collecting~\eqref{zeta-est1}--\eqref{zeta-est7} in~\eqref{Der-zeta} yields
 \[\Norm{\zeta}_{H_P^{1-\theta+r_n}}^2\lesssim \langle \dd \overline{\E}(\zeta_P), \Lambda^{2r_n}\zeta_P\rangle+\Norm{\zeta_P}_{H_P^{r_n}}^2+q^{\frac\epsilon{2\nu}}\Norm{\zeta_P}_{H_P^{1-\theta+r_n}}^2\]
 with $\epsilon>0$ sufficiently small.
It is clear that the same estimate holds for $\langle \dd\underline{\E}(\zeta_P),\Lambda^{2r_n}\zeta_P\rangle$, and if we use these in~\eqref{Euler-Lagrange-Lambda-2}, we obtain
\[
\Norm{\zeta_P}_{H_P^{1-\theta+r_n}}^2 \lesssim  \Norm{\zeta_P}_{H_P^{r_n}}^2+\Norm{\zeta_P}_{H_P^{1-\theta+r_n}}^2q^{\frac\epsilon{2\nu}}.
\]
Thus one may choose $q$ sufficiently small, so that~\eqref{induction} holds. This concludes the proof.
\end{proof}

We now collect the preceding results and deduce solutions of the non-penalized periodic problem.
\begin{Theorem}[Existence of periodic minimizers]\label{T.periodic-minimizers}
There exists $q_0>0$ such that for any $q\in(0,q_0)$, one can define $P_q>0$ and the following holds. For each $P\geq P_q$, there exists $\zeta_P\in V_{P,q,R}$ such that
\[\E_P(\zeta_P)=\inf_{\zeta\in V_{P,q,R}} \E_P(\zeta)\eqdef I_{P,q}.\]
and the Euler-Lagrange equation holds with $\alpha_P\in(-3/2,-1/2)$:
\begin{equation}\label{Euler-Lagrange} \dd \E_P(\zeta_P)+2\alpha_P(\gamma+\delta) \zeta_P=0.\end{equation}
Furthermore, there exists $M>0$, independent of $q$, such that
\[\Norm{\zeta_P}_{H_P^\nu}^2\leq M q
\]
uniformly with respect to $P\geq P_q$.
\end{Theorem}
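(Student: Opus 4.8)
The plan is to combine the results of Lemmata~\ref{L.def-zetaP}--\ref{L.zetaP-H1-estimate}, the only remaining point being to upgrade the penalized minimizer $\zeta_P\in V_{P,q,2R}$ of $\E_{P,\varrho}$ produced by Lemma~\ref{L.def-zetaP} into a genuine minimizer of the non-penalized functional $\E_P$ over $V_{P,q,R}$. First I would fix $q_0>0$ small enough that all of Lemmata~\ref{L.I-bound-from-below}, \ref{L.alpha-bounds} and~\ref{L.zetaP-H1-estimate} apply, and $P_q$ as in Lemma~\ref{L.I-bound-from-below}; then for every $q\in(0,q_0)$ and $P\ge P_q$ take $\zeta_P$ the penalized minimizer. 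By Lemma~\ref{L.zetaP-H1-estimate} we have $\Norm{\zeta_P}_{H_P^\nu}^2\le Mq$ with $M$ independent of $q$ and $P$. Hence, by possibly shrinking $q_0$ so that $Mq_0<R^2$, we obtain $\Norm{\zeta_P}_{H_P^\nu}^2\le Mq<R^2$, which is the key gain: $\zeta_P$ lies in the region where the penalization function $\varrho$ vanishes, i.e.\ $\varrho(\Norm{\zeta_P}_{H_P^\nu}^2)=0$ and, since $\varrho$ is non-decreasing and smooth with $\varrho\equiv 0$ on $[0,R^2]$, also $\varrho'(\Norm{\zeta_P}_{H_P^\nu}^2)=0$. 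In particular $\zeta_P\in V_{P,q,R}$.

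With $\varrho'(\Norm{\zeta_P}_{H_P^\nu}^2)=0$, the Euler--Lagrange equation~\eqref{Euler-Lagrange-penalization} of Lemma~\ref{L.def-zetaP} collapses to~\eqref{Euler-Lagrange}, namely $\dd\E_P(\zeta_P)+2\alpha_P(\gamma+\delta)\zeta_P=0$, and Lemma~\ref{L.alpha-bounds} gives $|\alpha_P+1|<1/2$, i.e.\ $\alpha_P\in(-3/2,-1/2)$. It remains to check that $\zeta_P$ minimizes $\E_P$ over the \emph{smaller} set $V_{P,q,R}$, not merely $\E_{P,\varrho}$ over $V_{P,q,2R}$. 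This is immediate: for any competitor $\zeta\in V_{P,q,R}$ one has $\Norm{\zeta}_{H_P^\nu}<R$, hence $\varrho(\Norm{\zeta}_{H_P^\nu}^2)=0$ and $\E_{P,\varrho}(\zeta)=\E_P(\zeta)$; since $\varrho\ge 0$ always, $\E_P(\zeta_P)=\E_{P,\varrho}(\zeta_P)\le\E_{P,\varrho}(\zeta)=\E_P(\zeta)$, using that $\zeta_P$ is the penalized minimizer over $V_{P,q,2R}\supset V_{P,q,R}$. Thus $\E_P(\zeta_P)=\inf_{\zeta\in V_{P,q,R}}\E_P(\zeta)=:I_{P,q}$, and the bound $\Norm{\zeta_P}_{H_P^\nu}^2\le Mq$ holds uniformly in $P\ge P_q$ by Lemma~\ref{L.zetaP-H1-estimate}.

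There is essentially no serious obstacle left at this stage; the proof is a bookkeeping assembly of the earlier lemmata, the one substantive point being the observation that the a priori bound $\Norm{\zeta_P}_{H_P^\nu}^2\le Mq$ from Lemma~\ref{L.zetaP-H1-estimate}, once $q_0$ is taken so small that $Mq_0<R^2$, forces the penalization term and its derivative to vanish at the minimizer --- this is precisely the mechanism by which the penalization argument ``removes itself''. One should be slightly careful that the constant $M$ delivered by Lemma~\ref{L.zetaP-H1-estimate} does not itself depend on the final choice of $q_0$ (it does not: the proof of Lemma~\ref{L.zetaP-H1-estimate} only uses $\Norm{\zeta_P}_{H_P^\nu}<2R$ together with the bounds from Lemma~\ref{L.alpha-bounds}), so that the requirement $Mq_0<R^2$ can indeed be met by a further shrinking of $q_0$ without circularity. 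Assembling these remarks gives the statement.
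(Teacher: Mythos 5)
Your proposal is correct and follows essentially the same approach as the paper's own (very short) proof: invoke Lemma~\ref{L.zetaP-H1-estimate} to conclude $\Norm{\zeta_P}_{H_P^\nu}^2\le Mq<R^2$ after shrinking $q_0$, observe that this kills the penalization term and its derivative so that~\eqref{Euler-Lagrange-penalization} reduces to~\eqref{Euler-Lagrange}, read off the bound on $\alpha_P$ from Lemma~\ref{L.alpha-bounds}, and conclude that $\zeta_P$ minimizes $\E_P$ over $V_{P,q,R}$ because $\E_{P,\varrho}$ agrees with $\E_P$ there. Your extra remark on the non-circularity of the choice of $M$ and $q_0$ is a sensible sanity check but is not something the paper spells out; otherwise the two arguments coincide.
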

\begin{proof}
From Lemma~\ref{L.zetaP-H1-estimate}, any minimizer of $\E_{P,\varrho}$ over $V_{P,q,2R}$ satisfies, for $q_0$ sufficiently small and $P\geq P_q$ sufficiently large,
\[ \Norm{\zeta_P}_{H_P^\nu}^2\leq Mq< R^2.\]
Thus the Euler-Lagrange equation~\eqref{Euler-Lagrange-penalization} becomes~\eqref{Euler-Lagrange}, and the control on $\alpha_P$ is stated in Lemma~\ref{L.alpha-bounds}.
Moreover, since $\E_{P,\varrho}=\E_{P}$ over $V_{P,q,R}$, $\zeta_P$ minimizes $\E_{P} $ over $V_{P,q,R}$. The theorem is proved.
\end{proof}

\begin{Remark}\label{R.nu=1-theta}
If $\theta\in [0,1/2)$ and $\nu=1-\theta$, then the functional $\E_{P}$ is coercive on $V_{P,q,R}$ by Lemma~\ref{L.EvsY}, and it isn't necessary to consider the penalized functional $\E_{P, \varrho}$ to construct periodic minimizers. Indeed, one can minimize $\E_{P}$ over $V_{P,q,R}$ directly, noting that any minimizing sequence satisfies (up to subsequences) $\sup_n \Norm{\zeta_{P, n}}_{H_P^\nu}^2 \le Mq<R^2$ if $q_0$ is sufficiently small.
\end{Remark}

\section{The real line problem}

The construction of a minimizer for the real line problem,~\eqref{min-pb}, will follow from Lions' concentration compactness principle. The main difficulty consists in excluding the ``dichotomy'' scenario. To this aim, we shall use a special minimizing sequence (satisfying the additional estimate $\Norm{\zeta_n}_{H^\nu}^2\lesssim q$) to show that the function $q\mapsto I_q$ is strictly subhomogeneous (see Proposition~\ref{P.subhomogeneity}), which implies that it is also strictly subadditive (Corollary~\ref{C.subadditivity}). This special subsequence is constructed from the solutions of the periodic problem, obtained in Theorem~\ref{T.periodic-minimizers}, with period $P_n\to\infty$.

\subsection{A special minimizing sequence}

\begin{Theorem}[Special minimizing sequence for $\E$]\label{T.special-minimizing-sequence}
There exists $q_0>0$ such that for any $q\in(0,q_0)$, one can define constants $m,M>0$ and a sequence $\{\zeta_n\}_{n\in\NN}$ satisfying 
\[ (\gamma+\delta)\Norm{\zeta_n}_{L^2}^2=q, \quad \Norm{\zeta_n}_{H^\nu}^2\leq M q \]
and
\[\lim_{n\to\infty}\E(\zeta_n)=I_q\eqdef \inf_{\zeta\in V_{q,R}}\E(\zeta)< q(1-mq^{\frac23}).\]
\end{Theorem}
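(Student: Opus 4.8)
The plan is to build $\{\zeta_n\}$ from the periodic minimizers furnished by Theorem~\ref{T.periodic-minimizers}, by letting the period tend to infinity and localizing to the real line. Fix a sequence $P_n\to\infty$ with $P_n\ge P_q$ and let $\zeta_{P_n}$ be the associated minimizer of $\E_{P_n}$ over $V_{P_n,q,R}$, so that $(\gamma+\delta)\Norm{\zeta_{P_n}}_{L^2_{P_n}}^2=q$, $\Norm{\zeta_{P_n}}_{H_{P_n}^\nu}^2\le Mq$ and $\E_{P_n}(\zeta_{P_n})=I_{P_n,q}$. I would first show that
\[\limsup_{P\to\infty} I_{P,q}\le I_q.\]
Indeed, given $\varepsilon>0$, choose $\zeta^\varepsilon\in V_{q,R}$ with $\E(\zeta^\varepsilon)<I_q+\varepsilon$; truncating $\zeta^\varepsilon$ with a large smooth cutoff and rescaling to restore the $L^2$-constraint (using the $H^\nu$-continuity of $\E$ from Lemma~\ref{L.diff-E}, valid since $q$ is small) one may assume that $\zeta^\varepsilon$ has compact support while only enlarging $\E(\zeta^\varepsilon)$ by $\O(\varepsilon)$. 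For $P$ so large that $\supp\zeta^\varepsilon\subset(-P/2,P/2)$, the periodization $\zeta^\varepsilon_P=\sum_{j\in\ZZ}\zeta^\varepsilon(\cdot-jP)$ lies in $V_{P,q,R}$ and $\E_P(\zeta^\varepsilon_P)\to\E(\zeta^\varepsilon)$ as $P\to\infty$, the discrepancy in the nonlocal terms being controlled by the kernel decay of $\partial_x\F_i$ in Lemma~\ref{L.Fi-bounds},\ref{L.Fi-bounds 3}; hence $\limsup_P I_{P,q}\le\E(\zeta^\varepsilon)\le I_q+\O(\varepsilon)$, and $\varepsilon>0$ is arbitrary.

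Next I would localize $\zeta_{P_n}$. Fix an intermediate length scale $L_n$ with $1\ll L_n\ll P_n$ (for instance $L_n=\sqrt{P_n}$). Splitting $(-P_n/2,P_n/2)$ into $\O(P_n/L_n)$ intervals of length $L_n$ and using $\Norm{\zeta_{P_n}}_{H_{P_n}^\nu}^2\le Mq$, a pigeonhole argument (carried out with a Gagliardo or Littlewood--Paley description of the $H^\nu$-norm, since $H^\nu$ does not localize directly) yields an interval $J_n$ of length $L_n$ on which $\Norm{\zeta_{P_n}}_{H^\nu(J_n)}^2=\O(Mq\,L_n/P_n)=q\cdot o(1)$; by translation invariance of $\E_{P_n}$ we may take $J_n$ centred at $\pm P_n/2$. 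Let $\chi_n\in\C^\infty$ be a cutoff equal to $1$ away from a neighbourhood of $\pm P_n/2$ and transitioning to $0$ inside $J_n$, and set $\tilde\zeta_n=\chi_n\zeta_{P_n}$, regarded as a compactly supported function on $\RR$. Then $\tilde\zeta_n\in H^\nu(\RR)$ with $\Norm{\tilde\zeta_n}_{H^\nu}^2\le (1+o(1))Mq$ and, crucially,
\[\E(\tilde\zeta_n)=\E_{P_n}(\zeta_{P_n})+o(1)=I_{P_n,q}+o(1),\]
the difference of the two functionals being supported essentially on $J_n$ and estimated through the smallness of $\Norm{\zeta_{P_n}}_{H^{1-\theta}(J_n)}$ together with the commutator bound Lemma~\ref{L.Fi-bounds},\ref{L.Fi-bounds 2} (with $\Norm{\widehat{\chi_n'}}_{L^1}=\O(L_n^{-1})$) and the kernel decay Lemma~\ref{L.Fi-bounds},\ref{L.Fi-bounds 3}. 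Finally, set $\zeta_n=\lambda_n\tilde\zeta_n$ with $\lambda_n^2=q/\big((\gamma+\delta)\Norm{\tilde\zeta_n}_{L^2}^2\big)\to1$, so that $(\gamma+\delta)\Norm{\zeta_n}_{L^2}^2=q$ exactly; by Lemmata~\ref{L.EvsY}--\ref{L.diff-E} this perturbs $\E(\tilde\zeta_n)$ and $\Norm{\tilde\zeta_n}_{H^\nu}$ only by $o(1)$, so that, after possibly enlarging $M$ and shrinking $q_0$ (so that $Mq_0<R^2$), $\Norm{\zeta_n}_{H^\nu}^2\le Mq<R^2$ and $\E(\zeta_n)=I_{P_n,q}+o(1)$. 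Then $\zeta_n\in V_{q,R}$, so $\E(\zeta_n)\ge I_q$, while combining $\E(\zeta_n)=I_{P_n,q}+o(1)$ with the first step gives $\limsup_n\E(\zeta_n)\le I_q$; hence $\E(\zeta_n)\to I_q$. The strict inequality $I_q<q(1-mq^{2/3})$ is precisely Lemma~\ref{L.I-bound-from-below}, which also fixes the constant $m$.

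The step I expect to be the main obstacle is the identity $\E(\tilde\zeta_n)=\E_{P_n}(\zeta_{P_n})+o(1)$. Because the operators $\partial_x\F_i$ are nonlocal and entangled with the nonlinearity in the expression~\eqref{def-E} of $\E$, multiplication by $\chi_n$ does not merely restrict the integrals, and each term of $\E$ has to be split into a short-range part localized near $J_n$ --- absorbed using $\Norm{\zeta_{P_n}}_{H^\nu(J_n)}=o(1)$ together with localized versions of Lemmata~\ref{L.product} and~\ref{L.Fi-bounds} --- and a long-range part coming from the kernel of $\partial_x\F_i$ bridging $J_n$, absorbed using its polynomial decay (Lemma~\ref{L.Fi-bounds},\ref{L.Fi-bounds 3}) and $L_n\to\infty$. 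One must also check that all these bounds, as well as those used in the periodization step, are uniform with respect to the period, which is exactly what the uniform-in-$P$ analogues of Lemmata~\ref{L.interpolation}--\ref{L.E-decomp-order} collected at the end of Section~\ref{S.preliminaries} provide.
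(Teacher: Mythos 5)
Your plan is essentially the same as the paper's: build the special minimizing sequence from the periodic minimizers of Theorem~\ref{T.periodic-minimizers} with period tending to infinity, localize to the real line, and control the energy discrepancy through the kernel decay from Lemma~\ref{L.Fi-bounds},\ref{L.Fi-bounds 3}. The paper imports the precise localization construction (a translated periodic modification $\tilde\zeta_{P_n}$ and its compactly supported representative $\zeta_n$, together with the estimates~\eqref{prop1}--\eqref{prop4}) directly from \cite[p.~2918 and proof of Theorem~3.8]{EhrnstromGrovesWahlen12}; you instead re-derive that construction by hand via a pigeonhole argument for the $H^\nu$-mass, a cutoff, and a renormalization. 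The content is the same, though a few points in your write-up would need more care if done from scratch. First, the paper explicitly dispatches the case $\nu=1-\theta$ separately at the outset (any minimizing sequence is already special by Lemma~\ref{L.EvsY}) and assumes $\nu>1-\theta$ for the periodic construction; you should do likewise. Second, the single comparison $\E(\tilde\zeta_n)=\E_{P_n}(\zeta_{P_n})+o(1)$ that you flag as the main obstacle is handled more cleanly by inserting the periodization $\sum_l\zeta_n(\cdot+lP_n)$ as an intermediary, as the paper does: first compare the periodic energy of the periodization with the real-line energy of $\zeta_n$ (this is where the kernel decay enters, via the exact identity between the periodic and real-line Fourier multipliers applied to periodizations), and then compare the periodic energy of the periodization with that of $\zeta_{P_n}(\cdot-x_n)$ (this is where the $L^2_{P_n}$-closeness and the continuity Lemma~\ref{L.diff-E}, in an $H^{\nu'}$-norm with $1/2<\nu'<\nu$ obtained by interpolation, enter). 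Finally, pigeonholing the $H^\nu$-norm for non-integer $\nu$ is more delicate than you acknowledge --- the Gagliardo seminorm does not split exactly over intervals --- which is precisely why the paper delegates this to \cite{EhrnstromGrovesWahlen12} rather than redoing it. None of this changes the overall strategy, which matches the paper's.
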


\begin{proof}The estimate on $I_q$ was proved in Lemma~\ref{L.I-bound-from-below}; thus we only need to construct a minimizing sequence satisfying  $\Norm{\zeta_n}_{H^\nu}^2\leq M q$.
If $\nu=1-\theta$, then any minimizing sequence satisfies this property as a consequence of Lemma~\ref{L.EvsY}, so we assume in the following that $\nu>1-\theta$.
Let $q_0$ be sufficiently small so that Theorem~\ref{T.periodic-minimizers} holds. By the construction of~\cite[p. 2918 and proof of Theorem 3.8]{EhrnstromGrovesWahlen12}, one obtains, for any $P_n$ sufficiently large, $x_n\in\RR$, $\widetilde\zeta_{P_n}\in H^\nu_{P_n}$ and $\zeta_n\in H^\nu(\RR)$ such that
\begin{equation}\label{prop1}
\Norm{\widetilde\zeta_{P_n}-\zeta_{P_n}(\cdot-x_n)}_{L^2_{P_n}}\to 0 \quad (P_n\to\infty)
\end{equation}
where $\zeta_{P_n}$ is defined by Theorem~\ref{T.periodic-minimizers}, 
\begin{equation}\label{prop2}
 \supp \zeta_n\subset (-P_n/2+P_n^{1/2},P_n/2-P_n^{1/2}) \quad \text{ and } \quad \widetilde\zeta_{P_n}=\sum_{l\in\ZZ} \zeta_n(\cdot +lP_n).
\end{equation}
Moreover, one has
\begin{equation}\label{prop3}
\Norm{\zeta_{n}}_{L^2}=\Norm{\widetilde\zeta_{P_n}}_{L^2_P}=\Norm{\zeta_{P_n} }_{L^2_P}.\
\end{equation}
and
\begin{equation}\label{prop4}
\Norm{\zeta_n}_{H^\nu}\lesssim\Norm{\widetilde\zeta_{P_n}}_{H^\nu_{P_n}}\lesssim \Norm{\zeta_{P_n}}_{H^\nu_{P_n}}
\end{equation}
 uniformly with respect to $P_n$ sufficiently large.

By~\eqref{prop4} and Theorem~\ref{T.periodic-minimizers}, one has $\Norm{\zeta_n}_{H^\nu}^2 \leq Mq<R^2$ provided that $P_n$ is sufficiently large and $q_0$ is sufficiently small; and $\zeta_n\in V_{q,R}$ by~\eqref{prop3}. Thus there only remains to prove that $\zeta_n$ is a minimizing sequence.

Notice that by~\eqref{prop1} and~\eqref{prop4} and Lemma~\ref{L.interpolation},\ref{L.interpolation 2},
\begin{equation}\label{prop5}
\Norm{\widetilde \zeta_{P_n}-\zeta_{P_n}(\cdot -x_n)}_{H^{\nu'}_{P_n}}\to 0
\end{equation}
for any $\nu'\in [0,\nu)$.

One has
\begin{multline*}\overline\E_{P_n}(\widetilde\zeta_{P_n})-\overline\E(\zeta_n)=\frac13  \int_{-P_n/2}^{P_n/2} (1-\zeta_n)^3\Big[ \big(\partial_x \F_1\{\frac{\widetilde\zeta_{P_n}}{1-\widetilde\zeta_{P_n}}\}\big)^2-\big(\partial_x \F_1\{\frac{\zeta_{n}}{1-\zeta_{n}}\}\big)^2\Big]\dd x\\
-\frac13\int_{\RR\setminus(-P_n/2,P_n/2)} (1-\zeta_n)^3\big(\partial_x \F_1\{\frac{\zeta_{n}}{1-\zeta_{n}}\}\big)^2\, \dd x.
\end{multline*}
Using Lemma~\ref{L.Fi-bounds},\ref{L.Fi-bounds 1},~\eqref{prop4} and the Cauchy-Schwarz inequality, we deduce
\begin{multline*}
\left| \int_{-P_n/2}^{P_n/2} (1-\zeta_n)^3\Big[ \big(\partial_x \F_1\{\frac{\widetilde\zeta_{P_n}}{1-\widetilde\zeta_{P_n}}\}\big)^2-\big(\partial_x \F_1\{\frac{\zeta_{n}}{1-\zeta_{n}}\}\big)^2\Big]\dd x\right|\\
\lesssim C(\Norm{\zeta_n}_{H^\nu}) \Norm{\partial_x \F_1\{\frac{\widetilde\zeta_{P_n}}{1-\widetilde\zeta_{P_n}}\}-\partial_x \F_1\{\frac{\zeta_{n}}{1-\zeta_{n}}\}}_{L^2(-P_n/2,P_n/2)}.
\end{multline*}
Notice now that, by uniqueness of the Fourier decomposition in $L^2_{P_n}$, one has the identity
\[\partial_x \F_1\{\frac{\widetilde\zeta_{P_n}}{1-\widetilde\zeta_{P_n}}\}(x)=\sum_{l\in\ZZ}\Big(\partial_x \F_1\{\frac{\zeta_{n}}{1-\zeta_{n}}\}\Big)(x+lP_n),\]
and therefore, by Lemma~\ref{L.Fi-bounds},\ref{L.Fi-bounds 3} and~\eqref{prop2}, one has
\begin{align*}
\Norm{\partial_x \F_1\{\frac{\widetilde\zeta_{P_n}}{1-\widetilde\zeta_{P_n}}\}-\partial_x \F_1\{\frac{\zeta_{n}}{1-\zeta_{n}}\}}_{L^2(-P_n/2,P_n/2)}^2&=\int_{-P_n/2}^{P_n/2} \left(\sum_{|l|\geq 1} |\partial_x \F_1\{\frac{\zeta_{n}}{1-\zeta_{n}}\}(y+lP_n)|\right)^2 \dd y\\
&\lesssim \int_{-P_n/2}^{P_n/2} \left( \sum_{|l|\geq 1} \frac{1}{(P_n^{\frac12}+(l-1)P_n)^{j}}\right)^2 \dd y\\
&\to 0 \quad (P_n\to\infty).
\end{align*}
since $j\geq 2$. Similarly, Lemma~\ref{L.Fi-bounds},\ref{L.Fi-bounds 3} and~\eqref{prop2} yield
\begin{align*}\left|\int_{\RR\setminus(-P_n/2,P_n/2)} (1-\zeta_n)^3\big(\partial_x \F_1\{\frac{\zeta_{n}}{1-\zeta_{n}}\}\big)^2\, \dd x\right|&\leq C(\Norm{\zeta}_{H^\nu})\int_{P_n/2}^\infty \frac1{(x-P_n/2+P_n^{\frac12})^{2j}} \dd x \\
&\to 0 \quad (P_n\to\infty).
\end{align*}
The component $\underline\E$ satisfies the same bounds, thus we proved
\[\E_{P_n}(\widetilde\zeta_{P_n})-\E(\zeta_n)\to 0 \quad (P_n\to\infty).\]
Now by Lemma~\ref{L.diff-E} (which holds in the periodic setting and uniformly with respect to $P>0$) with $\nu$ replaced by some $\nu'\in(1/2, \nu)$ and~\eqref{prop5}, one has
\[ \E_{P_n}(\widetilde\zeta_{P_n})-I_{P_n,q}=\E_{P_n}(\widetilde\zeta_{P_n})-\E_{P_n}(\zeta_{P_n}(\cdot-x_n))\to 0 \quad (P_n\to\infty).\]
Thus we found that 
\[ I_q \leq \E(\zeta_n) =I_{P_n,q}+o(1) \quad (P_n\to\infty).\]
There remains to prove the converse inequality. For any $\epsilon>0$, there exists $\zeta \in V_{q,R}$ such that 
\[ \E(\zeta) \leq I_q+\frac\epsilon3.\]
By the same argument as above, we construct by smoothly truncating and rescaling, $\check\zeta\in V_{q,R}$ such that $\supp\check\zeta\in(-P_\star,P_\star)$, and
\[ \E(\check\zeta) \leq  \E(\zeta) +\frac\epsilon3.\]
Then for $P_n\geq 2 P_\star$, one has $\check\zeta_{P_n}=\sum_{j\in\ZZ}\check\zeta(\cdot+jP_n)\in V_{P,q,R}$ and, as above,
\[\E_{P_n}(\check\zeta_{P_n})-\E(\check\zeta)\to 0 \quad (P_n\to\infty).\]
Combining the above yields, for $P_n$ sufficiently large,
\[ I_{P_n,q}\leq \E_{P_n}(\check\zeta_{P_n})\leq \E(\check\zeta) +\frac\epsilon3\leq I_q+\epsilon.\]
Thus we proved that 
\[\E(\zeta_n) \to I_q \quad (P_n\to\infty).\]
This concludes the proof.
\end{proof}

The following proposition is essential to rule out the ``dichotomy'' scenario in Lions' concentration-compactness principle (see below).
\begin{Proposition}\label{P.subhomogeneity}
There exists $q_0>0$ such that the map $q\mapsto I_q$ is strictly subhomogeneous for $q\in(0,q_0)$:
\[ I_{aq}<aI_q \quad \text{ whenever }\quad 0<q<a q<q_0.\]
\end{Proposition}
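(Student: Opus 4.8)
The plan is to use the special minimizing sequence from Theorem~\ref{T.special-minimizing-sequence} together with the scaling behaviour of $\E$ exhibited in the decomposition of Lemma~\ref{L.E-decomp-order} (or equivalently Lemma~\ref{L.E-decomp-KdV}). The key quantitative input is the two-sided estimate $I_q < q(1-mq^{2/3})$ from Lemma~\ref{L.I-bound-from-below} (giving an \emph{upper} bound on $I_q$ strictly below $q$), complemented by a matching \emph{lower} bound of the form $I_q \ge q - Cq^{5/3}$ which one extracts from the facts that any minimizer (or near-minimizer with the special bound $\Norm{\zeta}_{H^\nu}^2\lesssim q$) satisfies $\E_2(\zeta)\ge q$ and $|\E_3(\zeta)|+|\E_{\rm rem}^{(1)}(\zeta)|\lesssim \Norm{\zeta}_{L^\infty}\Norm{\zeta}_{H^{1-\theta}}^2 \lesssim q^{1/2+\epsilon'}\cdot q = q^{3/2+\epsilon'}$, where one uses the interpolation $\Norm{\zeta}_{L^\infty}\lesssim q^{(2\nu-1)/(4\nu)}$. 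Actually one needs the sharper KdV-scaling of Lemma~\ref{L.E-decomp-KdV}: for a near-minimizer $\zeta$ one has $\E(\zeta) = q + \int (\gamma-\delta^2)\zeta^3 + \tfrac{\gamma+\delta^{-1}}{3}(\partial_x\zeta)^2\,\dd x + (\text{higher order})$, and the middle term is $\ge -Cq^{5/3}$ by the same rescaling argument as in Lemma~\ref{L.I-bound-from-below} applied in reverse; combining, $I_q = q - \Theta(q^{5/3})$ with two-sided bounds $q - C_1 q^{5/3}\le I_q \le q - 2m q^{5/3}$.

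With these two-sided bounds in hand the subhomogeneity is a direct computation. Fix $0<q<aq<q_0$ with $a>1$. I would write $I_{aq}\le aq - 2m (aq)^{5/3} = a\big(q - 2m a^{2/3} q^{5/3}\big)$, while $a I_q \ge a\big(q - C_1 q^{5/3}\big)$. The desired inequality $I_{aq}<aI_q$ then follows provided $2m a^{2/3} q^{5/3} > C_1 q^{5/3}$ — but this is \emph{not} automatic for $a$ close to $1$, so the naive bound is too crude. The fix is to use the \emph{same} constant on both sides: since $I_q$ genuinely behaves like $q - \alpha_0 q^{5/3}(1+o(1))$ with a fixed positive coefficient $\alpha_0$ (this is exactly the content leading to Theorem~\ref{T.asymptotic-result}, but we only need a two-sided bound with a \emph{common} leading constant up to a multiplicative $(1+o(1))$ factor that is uniform in $q\in(0,q_0)$), one gets $I_{aq} = aq - \alpha_0 (aq)^{5/3}(1+o(1)) = a q - \alpha_0 a^{5/3} q^{5/3}(1+o(1))$ and $aI_q = aq - \alpha_0 a q^{5/3}(1+o(1))$, whence $aI_q - I_{aq} = \alpha_0 q^{5/3}(a^{5/3}-a)(1+o(1)) > 0$ since $a^{5/3}>a$ for $a>1$, once $q_0$ is small enough that the $o(1)$ terms are dominated (note $a^{5/3}-a$ can be small, but it is a fixed positive number for each fixed $a$, whereas the error is $o(1)$ uniformly, so for fixed $a$ one chooses $q_0(a)$ — and in fact one can make the choice uniform because the relevant ratio $(a^{5/3}-a)/a^{5/3}$ is bounded below only when $a$ is bounded away from $1$; for $a$ near $1$ one must be more careful).

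The main obstacle is precisely this uniformity issue near $a=1$: one must show $I_{aq}<aI_q$ for \emph{all} $a>1$ with $aq<q_0$, including $a\to 1^+$. The clean way around it, which I would adopt, is to prove that the function $g(q)\eqdef I_q/q$ is strictly \emph{decreasing} on $(0,q_0)$ — then $I_{aq}/(aq) = g(aq) < g(q) = I_q/q$ immediately gives $I_{aq} < a I_q$ for all $a>1$. To establish monotonicity of $g$, I would combine: (i) the upper bound $I_q \le \E(\phi_q)$ from Lemma~\ref{L.I-bound-from-below} with the explicit test function, whose per-unit-mass energy $\E(\phi_q)/q = 1 - 2m q^{2/3} + O(q^{4/3})$ is manifestly strictly decreasing in $q$; and (ii) the fact that from a near-optimal $\zeta$ at mass $q$ one can rescale (replace $\zeta(x)$ by $(a/q)^{?}$-type scaling, or more simply by $\zeta$ itself at a larger mass via the amplitude-adjusted rescaling $x\mapsto \lambda x$, $\zeta\mapsto \mu\zeta(\lambda x)$ preserving the constraint) to produce a competitor at mass $aq$ with strictly smaller per-mass energy, using that the cubic term $\int(\gamma-\delta^2)\zeta^3$ has the favorable sign and scales like $q^{5/3}$ while the mass scales like $q$. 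Carrying out (ii) rigorously — checking the rescaled competitor still lies in $V_{aq,R}$ and that the error terms from Lemma~\ref{L.E-decomp-KdV} do not spoil the strict inequality — is the technical heart of the argument, but it follows the same pattern as the computations already performed in Lemma~\ref{L.I-bound-from-below} and Lemma~\ref{L.alpha-bounds}.
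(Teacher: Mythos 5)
Your proposal correctly identifies the central obstacle (uniformity of the strict inequality as $a\to 1^+$) and gestures at the right construction (scale a special minimizing sequence from mass $q$ to mass $aq$), but it never resolves the obstacle. Your first two attempts are dead ends for the reasons you yourself give, and your second attempt would moreover be circular, since it relies on the KdV asymptotics of Theorem~\ref{T.asymptotic-result}, which require strong admissibility, $H^s$-regularity of minimizers, and Theorem~\ref{T.main-result} itself (whose proof needs Proposition~\ref{P.subhomogeneity} to exclude dichotomy). Your third attempt --- prove $q\mapsto I_q/q$ is decreasing by rescaling near-optimal competitors --- is exactly equivalent to what has to be shown, so it is not a ``fix'' but a restatement; the actual work is deferred to ``(ii)'', which you do not carry out and which does \emph{not} follow ``the same pattern'' as Lemmata~\ref{L.I-bound-from-below} and~\ref{L.alpha-bounds}.

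The missing idea is the following. Take the special minimizing sequence $\zeta_n$ for mass $q$ and test $I_{aq}$ against $a^{1/2}\zeta_n\in V_{aq,R}$; using Lemma~\ref{L.E-decomp-order}, $\E(a^{1/2}\zeta_n)=a\E_2(\zeta_n)+a^{3/2}\E_3(\zeta_n)+\E_{\rm rem}^{(1)}(a^{1/2}\zeta_n)$. Since $\E_2(\zeta_n)\ge q$, $|\E_{\rm rem}^{(1)}(\zeta_n)|\lesssim q^2$ and $\E(\zeta_n)\to I_q<q(1-mq^{2/3})$, one gets $\E_3(\zeta_n)\le -\tfrac m2 q^{5/3}$ for large $n$, so
\[
I_{aq}\le aI_q-(a^{3/2}-a)\tfrac m2 q^{5/3}+\limsup_n\big(\E_{\rm rem}^{(1)}(a^{1/2}\zeta_n)-a\E_{\rm rem}^{(1)}(\zeta_n)\big).
\]
The crux, which is absent from your proposal, is that the last term can be bounded by $C(a_0)(a^{3/2}-a)q^2$: expand $\E_{\rm rem}^{(1)}$ as a Neumann series in powers $k\ge 4$ of $\zeta_n$ and use $|a^{k/2}-a|\le (k-2)\,a^{(k-3)/2}(a^{3/2}-a)$. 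Because the error carries the \emph{same} factor $(a^{3/2}-a)$ as the favorable cubic contribution, one can absorb it for $q_0$ small, uniformly in $a\in(1,a_0]$. One then extends to arbitrary $a>1$ by writing $a=(a^{1/p})^p$ with $a^{1/p}\in(1,a_0]$ and iterating. Without the matching $(a^{3/2}-a)$ structure in the error bound, the inequality cannot be made uniform near $a=1$, which is precisely the difficulty you flagged but left open.
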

\begin{proof}
Let us consider $\zeta_n$ the special minimizing sequence defined in Theorem~\ref{T.special-minimizing-sequence}. We first fix $a_0>1$, and restrict $q_0>0$ if necessary, so that for any $a\in(1,a_0]$ and $q\in(0,q_0)$ such that $aq<q_0$, one has $\Norm{a^{\frac12}\zeta_n}_{H^\nu}^2\leq M a q \leq M q_0<R^2$. 
Thus we have, by definition of $I_{aq}$ and Lemma~\ref{L.E-decomp-order},
\begin{align*}
I_{aq}&\leq \E(a^{\frac12}\zeta_n)=a\E_{2}(\zeta_n)+a^{\frac32}\E_{3}(\zeta_n)+\E_{\rm rem}^{(1)}(a^{\frac12}\zeta_n)\\
&=a\E(\zeta_n) +(a^{\frac32}-a)\E_{3}(\zeta_n)+\E_{\rm rem}^{(1)}(a^{\frac12}\zeta_n)-a\E_{\rm rem}^{(1)}(\zeta_n).
\end{align*}
Moreover, by Theorem~\ref{T.special-minimizing-sequence}, one has
\[\lim_{n\to\infty}\E(\zeta_n)=I_q< q(1-mq^{\frac23}),\]
and Lemma~\ref{L.E-decomp-order} yields
\[-\E_{2}(\zeta_n)\leq -q \quad \text{ and } \quad \E_{\rm rem}^{(1)}(\zeta_n)\lesssim \Norm{\zeta_n}_{H^\nu}^4\lesssim a_0^2 q^2.\] 
It follows that one has for $q\in (0,q_0)$ with $q_0$ sufficiently small and $n$ sufficiently large,
\[
\E_3(\zeta_n)=\E(\zeta_n)-\E_{2}(\zeta_n)-\E_{\rm rem}^{(1)}(\zeta_n)\leq -\frac12m q^{\frac53}.
\]
Thus we find for $n$ sufficiently large,
\begin{equation}\label{a0-est}
I_{aq}\leq a I_q -(a^{\frac32}-a)(m/2)q^{\frac53}+\limsup_{n\to\infty} \big(\E_{\rm rem}^{(1)}(a^{\frac12}\zeta_n)-a\E_{\rm rem}^{(1)}(\zeta_n)\big).
\end{equation}
We now estimate the last contribution, treating separately $\overline{\E}_{\rm rem}^{(1)}$ and $\underline{\E}_{\rm rem}^{(1)}$ in the same spirit as in the proof of Lemma~\ref{L.E-decomp-order}. Consider $\overline{\E}_{\rm rem}^{(1)}$ for instance.
We develop each contribution in $\overline{\E}_{\rm rem}^{(1)}(a^{\frac{1}{2}}\zeta_n)$ using Neumann series in powers of $a^{\frac12}\zeta_n$:
\begin{multline*}
\overline{\E}_{\rm rem}^{(1)}(a^{\frac{1}{2}}\zeta_n)=\int_{\RR} \sum_{k\geq 4} (a^{\frac12}\zeta_n)^k\, \dd x\\
+ \int_{\RR}\sum_{k_1+k_2+k_3\geq 4}c_{k_1,k_2,k_3}  (a^{\frac12}\zeta_n)^{k_1}(\partial_x\F_1\{(a^{\frac12}\zeta_n)^{k_2}\})(\partial_x\F_1\{(a^{\frac12}\zeta_n)^{k_3}\})\, \dd x.
\end{multline*}
The series are absolutely convergent provided $q_0$ is sufficiently small, and start at index $k=4$, as pointed out in the proof of Lemma~\ref{L.E-decomp-order}. We now subtract the contributions of $a\overline{\E}_{\rm rem}^{(1)}(\zeta_n)$ and by the triangle and Cauchy-Schwarz inequalities, 
\begin{multline*}\abs{\overline{\E}_{\rm rem}^{(1)}(a^{\frac{1}{2}}\zeta_n)-a\overline{\E}_{\rm rem}^{(1)}(\zeta_n)}\leq \sum_{k\geq 4}  (a^{\frac{k}2}-a)\Norm{\zeta_n}_{L^\infty}^{k-2}\Norm{\zeta_n}_{L^2}^2\\
+ \sum_{k_1+k_2+k_3\geq 4}|c_{k_1,k_2,k_3}|  (a^{\frac{k_1+k_2+k_3}2}-a) \Norm{\zeta_n}_{L^\infty}^{k_1}\Norm{\partial_x\F_1\{\zeta_n^{k_2}\}}_{L^2}\Norm{\partial_x\F_1\{\zeta_n^{k_3}\}}_{L^2}.
\end{multline*}
 Using that $| a^{\frac{k}2}-a|\leq (a^\frac32-a)( k-2) a^{\frac{k-3}2}$, Lemma~\ref{L.Fi-bounds},\ref{L.Fi-bounds 1}, that $H^\nu$ is a Banach algebra as well as the continuous embedding $H^\nu\subset L^\infty$, we find that one can restrict $q_0>0$ such that the above series is convergent and yields
 \[\abs{\overline{\E}_{\rm rem}^{(1)}(a^{\frac{1}{2}}\zeta_n)-a\overline{\E}_{\rm rem}^{(1)}(\zeta_n)}\leq C(a_0) (a^\frac32-a) q^2,\]
uniformly over $q\in(0,q_0)$ and $a\in (1,a_0]$ such that $aq<q_0$. Plugging this estimate in~\eqref{a0-est} and restricting $q_0$ if necessary, we deduce
\[
I_{aq}<aI_q\ \text{for }0<q<aq<q_0, \ a\in (1,a_0].
\]
Consider now the case when $a\in (1,a_0^p]$ for an integer $p\geq 2$. Then $a^\frac{1}{p}\in (1,a_0]$ and so
\[
I_{aq}=I_{a^\frac{1}{p}a^\frac{p-1}{p}q}
<a^\frac{1}{p}I_{a^\frac{p-1}{p}q}
=a^\frac{1}{p}I_{a^\frac{1}{p}a^\frac{p-2}{p}q}
<a^\frac{2}{p}I_{a^\frac{p-2}{p}q}
<\ldots
<aI_q.
\]
The result is proved.
\end{proof}

By a standard argument, Proposition~\ref{P.subhomogeneity} induces the subadditivity of the map $q\mapsto I_q$.

\begin{Corollary}\label{C.subadditivity}
There exists $q_0>0$ such that the map $q\mapsto I_q$ is strictly subadditive for $q\in(0,q_0)$:
\[ I_{q_1+q_2}< I_{q_1} +I_{q_2}\quad \text{ whenever }\quad 0<q_1<q_1+q_2<q_0.\]
\end{Corollary}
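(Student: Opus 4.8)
The plan is to derive strict subadditivity from strict subhomogeneity by the standard elementary argument. Without loss of generality assume $0<q_1\le q_2$ and set $q=q_1+q_2$, so that $q\in(0,q_0)$. The idea is to write each of $q_1$ and $q_2$ as a fraction of $q$ and apply Proposition~\ref{P.subhomogeneity} to the scaling factor $q/q_j>1$.

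First I would note that since $0<q_1\le q_2<q=q_1+q_2<q_0$, we have $q_1<q$ and $q_2<q$, and moreover $q/q_1>1$, $q/q_2>1$. Because $q_1,q<q_0$ and $q_2,q<q_0$ we may apply the strict subhomogeneity established in Proposition~\ref{P.subhomogeneity} with base point $q_1$ and factor $q/q_1$, and with base point $q_2$ and factor $q/q_2$, obtaining
\[
I_{q}=I_{\frac{q}{q_1}q_1}<\frac{q}{q_1}I_{q_1}\qquad\text{and}\qquad I_{q}=I_{\frac{q}{q_2}q_2}<\frac{q}{q_2}I_{q_2}.
\]
Equivalently, $\frac{q_1}{q}I_q<I_{q_1}$ and $\frac{q_2}{q}I_q<I_{q_2}$. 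Adding these two strict inequalities and using $q_1+q_2=q$ gives
\[
I_{q_1}+I_{q_2}>\frac{q_1}{q}I_q+\frac{q_2}{q}I_q=\frac{q_1+q_2}{q}I_q=I_q=I_{q_1+q_2},
\]
which is exactly the claimed strict subadditivity.

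There is essentially no obstacle here: the only point requiring a word of care is the degenerate case $q_1=q_2$, which is handled identically since $q/q_j=2>1$ is still strictly greater than $1$, so the strict inequality in Proposition~\ref{P.subhomogeneity} still applies; and one must check that all the arguments of $I$ invoked lie in the admissible range $(0,q_0)$, which holds because $q_1<q$, $q_2<q$ and $q<q_0$. Hence the corollary follows.
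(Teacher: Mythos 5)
Your argument is correct and is precisely the standard deduction of strict subadditivity from strict subhomogeneity that the paper invokes (the paper only says ``by a standard argument'' and gives no details). Applying Proposition~\ref{P.subhomogeneity} separately with base points $q_1$ and $q_2$ and factors $q/q_1$, $q/q_2$, then multiplying by $q_1/q$, $q_2/q$ and adding, is exactly the intended reasoning; the WLOG ordering $q_1\le q_2$ is harmless but unnecessary, since the argument is already symmetric in $q_1,q_2$.
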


\subsection{Concentration-compactness; proof of Theorem~\ref{T.main-result}}\label{S.concentration-compactness}
We now prove Theorem~\ref{T.main-result}. Let us first recall Lions' concentration compactness principle~\cite{Lions84}.
\begin{Theorem}[Concentration-compactness] \label{T.concentration-compactness}
Any sequence $\{e_n\}_{n\in\NN}\subset L^1(\RR)$ of non-negative functions such that
\[\lim_{n\to \infty}\int_{\RR}e_n\ \dd x= I>0\]
admits a subsequence, denoted again $\{e_n\}_{n\in\NN}$, for which one of the following phenomena occurs.
\begin{itemize}
\item (Vanishing) For each $r>0$, one has
\[\lim_{n\to\infty} \left( \sup_{x\in\RR}\int_{x-r}^{x+r}e_n\ \dd x\right)=0.\]
\item (Dichotomy) There are real sequences $\{x_n\}_{n\in\NN},\{M_n\}_{n\in\NN},\{N_n\}_{n\in\NN}\subset\RR$ and $I^* \in(0,I)$ such that $M_n,N_n\to\infty$, $M_n/N_n\to0$, and
\[\int_{x_n-M_n}^{x_n+M_n}e_n\ \dd x\to I^* \quad \text{ and } \quad  \int_{x_n-N_n}^{x_n+N_n}e_n\ \dd x\to I^*  \]
as $n\to\infty$.
\item (Concentration) There exists a sequence $\{x_n\}_{n\in\NN}\subset\RR$ with the property that for each $\epsilon>0$, there exists $r>0$ with 
\[\int_{x_n-r}^{x_n+r} e_n\ \dd x \geq I-\epsilon\]
for all $n\in\NN$.
\end{itemize}
\end{Theorem}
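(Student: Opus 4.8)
The plan is to run the classical Lévy concentration-function argument; since this is Lions' result, I would essentially reproduce the standard proof. First I would associate to the sequence the \emph{concentration functions}
\[ Q_n(r)\eqdef \sup_{x\in\RR}\int_{x-r}^{x+r}e_n\ \dd x,\qquad r\ge 0,\]
and observe that each $Q_n$ is non-negative, non-decreasing in $r$, and bounded above by $\int_\RR e_n\ \dd x$, which is uniformly bounded in $n$ because the integrals converge to $I$. By Helly's selection theorem for uniformly bounded monotone functions, after passing to a subsequence (not relabelled) there is a non-decreasing limit $Q\colon[0,\infty)\to[0,\infty)$ with $Q_n(r)\to Q(r)$ for every $r$. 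I would then set $I^*\eqdef\lim_{r\to\infty}Q(r)\in[0,I]$ and split into three cases according to the value of $I^*$.

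If $I^*=0$, then $Q\equiv 0$ by monotonicity and non-negativity, hence $Q_n(r)\to 0$ for every fixed $r$, which is exactly the vanishing alternative. If $I^*=I$, I would obtain concentration: given $\epsilon>0$, pick $r_0$ with $Q(r_0)>I-\epsilon/2$; then $Q_n(r_0)>I-\epsilon$ for $n$ large, giving points $x_n$ with $\int_{x_n-r_0}^{x_n+r_0}e_n\ \dd x>I-\epsilon$, while the finitely many remaining indices are handled by enlarging $r_0$ using $e_n\in L^1(\RR)$; a diagonal extraction then yields a single sequence $\{x_n\}$ valid for all $n$.

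The substantive case — and the step I expect to be the main obstacle — is dichotomy, $0<I^*<I$. Here one must simultaneously produce a center sequence $\{x_n\}$, a slowly growing radius $M_n$, and a fast growing radius $N_n$ with $M_n/N_n\to 0$, such that the mass in both balls converges to $I^*$. The mechanism is that $Q(r)\le I^*$ for every $r$ (so on a fixed scale $Q_n(r)$ is eventually at most $I^*+o(1)$), while picking $R$ with $Q(R)$ close to $I^*$ gives, for large $n$, a point $x_n$ with $\int_{x_n-R}^{x_n+R}e_n\ \dd x$ close to $I^*$; hence the annulus between radius $R$ and any larger fixed radius carries $o(1)$ mass. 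I would make this quantitative by choosing $\epsilon_k\downarrow 0$, radii $R_k$ with $Q(R_k)>I^*-\epsilon_k$, and indices $n_k\uparrow\infty$ along which $Q_n(R_k)>I^*-2\epsilon_k$ and $Q_n(r)<I^*+\epsilon_k$ for all $r\le 2^k R_k$; then, setting $M_n=R_k$, $N_n=2^k R_k$ for $n_k\le n<n_{k+1}$ and choosing $x_n$ to nearly realize $Q_n(M_n)$, I would conclude by the squeeze
\[ I^*-2\epsilon_k<\int_{x_n-M_n}^{x_n+M_n}e_n\ \dd x\le\int_{x_n-N_n}^{x_n+N_n}e_n\ \dd x\le Q_n(N_n)<I^*+\epsilon_k.\]
The delicate point is coordinating the three extractions so that $M_n,N_n\to\infty$ with $M_n/N_n\to0$ while keeping the mass bounds uniform over $n\ge n_k$; this is the only part requiring genuine care, the remainder being bookkeeping. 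Finally I would record that $I^*\in(0,I)$ holds by the case assumption, as required by the statement.
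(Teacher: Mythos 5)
The paper never proves this theorem: it is stated as a recalled result from Lions~\cite{Lions84}, and the text immediately moves on to applying it. Your reconstruction via the L\'evy concentration function $Q_n(r)=\sup_x\int_{x-r}^{x+r}e_n$ and Helly's selection theorem is indeed the classical proof behind the citation, so there is no divergence of method to report.

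Two places deserve more care before the argument is airtight in the form the paper requires. In the dichotomy case you set $M_n=R_k$ where $R_k$ is any radius with $Q(R_k)>I^*-\epsilon_k$. Since $Q$ is only non-decreasing, it may attain its limiting value $I^*$ at some finite $r_0$, in which case this condition does not force $R_k\to\infty$ and you could be left with bounded $M_n$, contradicting the requirement $M_n\to\infty$ in the statement. The fix is immediate — replace $R_k$ by $\max(R_k,k)$, invoking monotonicity of $Q$ to keep $Q(R_k)>I^*-\epsilon_k$ — but it must be said. In the concentration case, a ``diagonal extraction'' does not by itself produce a single sequence $\{x_n\}$ valid simultaneously for every $\epsilon$: the centers $x_n^{(\epsilon)}$ and radii $r(\epsilon)$ a priori depend on $\epsilon$. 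What closes the gap is the observation that, for $\epsilon,\epsilon'<I/2$ and $n$ large, the two intervals of radii $r(\epsilon),r(\epsilon')$ carrying masses $>I-\epsilon$ and $>I-\epsilon'$ must overlap (otherwise the total would eventually exceed $\int_\RR e_n\to I$), so $|x_n^{(\epsilon)}-x_n^{(\epsilon')}|<r(\epsilon)+r(\epsilon')$; one then fixes $\{x_n\}$ from the coarsest $\epsilon$ and enlarges the radius accordingly for finer $\epsilon$, with the finitely many small indices handled by $L^1$-integrability of each $e_n$, as you indicate. With these two points patched your proof is correct.
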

We shall apply Theorem~\ref{T.concentration-compactness} to 
\begin{align*}
e_n=\gamma\left(\frac{\zeta_n^2}{1-\zeta_n}+\frac13 (1-\zeta_n)^3 \big(\partial_x \F_1\{\frac{\zeta_n}{1-\zeta_n}\}\big)^2\right)
+\frac{\zeta_n^2}{\delta^{-1}+\zeta_n}+\frac13 (\delta^{-1}+\zeta_n)^3 \big(\partial_x \F_2\{\frac{\zeta_n}{\delta^{-1}+\zeta_n}\}\big)^2,
\end{align*}
where $\zeta_n$ is a minimizing sequence of $\E$ over $V_{q,R}$ with $\sup_n \Norm{\zeta_n}_{H^\nu}^2 <R^2$. Such a sequence is known to exist provided that $q\in(0,q_0)$ is sufficiently small, by Theorem~\ref{T.special-minimizing-sequence} (and any minimizing sequence is valid when $\nu=1-\theta$, by Lemma~\ref{L.EvsY}; see Remark~\ref{R.nu=1-theta}). 
The choice of density is inspired by the recent paper~\cite{Arnesen16}, and allows (contrarily to the more evident choice $e_n=\zeta_n^2$) to show, when $\nu=1-\theta$, that the constructed limit satisfies $\E(\eta)=I_q$ and is therefore a solution to the constrained minimization problem~\eqref{min-pb}.
Notice that 
\[
\int_{\RR} e_n\ \dd x =\E(\zeta_n)\to I_q \quad (n\to\infty)
\]
and that there exists a constant $C$ such that
\begin{equation}
\label{J estimate}
\Norm{\zeta_n}_{L^2(J)}^2= \int_J \abs{\zeta_n}^2\ \dd x  \le  C\int_J e_n\ \dd x
\end{equation}
for any interval $J\subseteq \RR$.

We exclude the two first scenarii in Lemmata~\ref{L.vanishing} and~\ref{L.dichotomy}, below. Thus the concentration scenario holds and, using~\eqref{J estimate}, we find that there exists $\{x_n\}_{n\in\NN}\subset\RR$ such that for any $\epsilon>0$, there exists $r>0$ with
\[\Norm{\eta_n}_{L^2(|x|>r)}<\epsilon,\]
where $\{\eta_n\}_{n\in\NN}\eqdef \{\zeta_n(\cdot+x_n)\}_{n\in\NN}$.
Since $\sup_{n\in\NN}\Norm{\eta_n}_{H^\nu(\RR)}<R$, there exists $\eta\in H^\nu(\RR)$ satisfying $\Norm{\eta}_{H^\nu(\RR)}<R$ and $\eta_n\rightharpoonup \eta$ weakly in $H^\nu(\RR)$ (up to the extraction of a subsequence). Increasing $r$ if necessary, we have 
\[\Norm{\eta}_{L^2(|x|>r)}<\epsilon.\]
Now, consider $\chi$ a smooth cut-off function, such that $\chi(x)=1$ for $|x|\leq r$ and $\chi(x)=0$ for $|x|\geq 2r$. One has $\chi\eta_n\rightharpoonup \chi\eta$ weakly in $H^\nu(\RR)$, and by compact embedding~\cite[Corollary~2.96]{BahouriCheminDanchin}, we find that one can extract a subsequence, still denoted $\eta_n$, such that $\Norm{\chi(\eta_n-\eta)}_{L^2(\RR)}\leq \epsilon$ for $n$ sufficiently large. Combining the above estimates, we find that the subsequence satisfies  $\Norm{\eta_n-\eta}_{L^2(\RR)}<3\epsilon$ for $n$ sufficiently large.
 By Cantor's diagonal extraction process, we construct a subsequence satisfying $\Norm{\eta_n-\eta}_{L^2}\to 0$; and by interpolation, $\Norm{\eta_n-\eta}_{H^s}\to 0$ for any $s\in[0,\nu)$.  In particular $(\gamma+\delta)\Norm{\eta}_{L^2}^2=q$, and recall $\Norm{\eta}_{H^\nu}\leq\sup_n \Norm{\zeta_n}_{H^\nu} <R$, thus $\eta\in V_{q,R}$.
If $\nu>1-\theta$, we deduce $\E(\eta_n)\to \E(\eta)$ as $n\to\infty$ by Lemma~\ref{L.diff-E}.
If on the other hand $\nu=1-\theta$ we use the weak lower semi-continuity argument in the proof of Lemma~\ref{L.def-zetaP} to deduce that $I_q\le \E(\eta)\le \lim_{n\to \infty} \E(\eta_n)=I_q$. In either case  
we have that $\E(\eta)=I_q$.

The constructed function $\eta\in H^\nu(\RR)$ is therefore a solution to the constrained minimization problem~\eqref{min-pb}.
In particular, it solves the Euler-Lagrange equation~\eqref{sol-alpha} with $\alpha<0$ provided that $q\in(0,q_0)$ is sufficiently small (proceeding as in Lemma~\ref{L.alpha-bounds}), and therefore satisfies~\eqref{sol-c} with $c^2=(-\alpha)^{-1}>0$.

This proves the first item of Theorem~\ref{T.main-result}, as well as the second item --- except for the strong convergence in $H^\nu(\RR)$ when $\nu=1-\theta>1/2$. This result follows from the fact that weak convergence together with convergence of the norm implies strong convergence in a Hilbert space (applied to $(\gamma^{1/2}(1-\zeta_n)^{3/2} \big(\partial_x \F_1\{\frac{\zeta_n}{1-\zeta_n}\}\big), (\delta^{-1}+\zeta_n)^{3/2} \big(\partial_x \F_2\{\frac{\zeta_n}{\delta^{-1}+\zeta_n}\}\big))\in (L^2(\RR))^2$).

There remains to prove the estimates of the third item. Proceeding as in Lemma~\ref{L.zetaP-H1-estimate}, we find
\[\Norm{\zeta}_{H^\nu}^2\leq M q,\]
 uniformly over the minimizers of $\E$ over $V_{q,R}$. Moreover, by Lemma~\ref{L.E-decomp-order}, one has
\begin{align*}
-\alpha\ q&=-\alpha(\gamma+\delta)\Norm{\zeta}_{L^2}^2=\frac{1}{2}\langle\dd\E(\zeta),\zeta\rangle=\E_2(\zeta)+\frac{3}{2}\E_3(\zeta)+\frac{1}{2}\E_{\rm rem}^{(2)}(\zeta)\\
&=\frac32\E(\zeta)-\frac{1}{2}\E_2(\zeta)-\frac32\E_{\rm rem}^{(1)}(\zeta)+\frac{1}{2}\E_{\rm rem}^{(2)}(\zeta).
\end{align*}
where $\E_{2}(\zeta)\geq q$ and
\[ \abs{\E_{\rm rem}^{(1)}(\zeta)}+\abs{ \E_{\rm rem}^{(2)}(\zeta)} =\O(q^2).\]
Altogether, using that $\E(\zeta)< q(1-mq^{\frac23})$ by Lemma~\ref{L.I-bound-from-below}, we find
\[-\alpha\ q<  \frac32 q(1-mq^{\frac23}) -\frac12 q+\O(q^2)= q\left(1-\frac32mq^{\frac23}\right)+\O(q^2),\]
and the result follows.
Theorem~\ref{T.main-result} is proved.

\begin{Lemma}[Excluding ``vanishing'']\label{L.vanishing}
No subsequence of $\{e_n\}_{n\in\NN}$ has the ``vanishing'' property.
\end{Lemma}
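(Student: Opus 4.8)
The plan is to argue by contradiction. Suppose some subsequence (not relabelled) of $\{e_n\}_{n\in\NN}$ has the ``vanishing'' property, i.e.\ for each $r>0$ one has $\sup_{x\in\RR}\int_{x-r}^{x+r}e_n\,\dd x\to 0$. Combined with the bound~\eqref{J estimate}, this forces $\sup_{x\in\RR}\Norm{\zeta_n}_{L^2(x-r,x+r)}\to 0$ for every fixed $r$. The key point is that this, together with the uniform bound $\sup_n\Norm{\zeta_n}_{H^\nu}^2\le Mq<R^2$ and the fact that $\nu>1/2$, implies $\Norm{\zeta_n}_{L^\infty(\RR)}\to 0$. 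This is a standard consequence of concentration-compactness theory (it is essentially the Lieb/Lions lemma: a sequence bounded in $H^\nu$, $\nu>1/2$, that vanishes in the above sense converges to zero in $L^p$ for every $p>2$, and here we even get $L^\infty$ via the interpolation estimate Lemma~\ref{L.interpolation},\ref{L.interpolation 1} applied on dyadic intervals together with a covering argument). Concretely, on any interval $J$ of length $2r$, $\Norm{\zeta_n}_{L^\infty(J)}\lesssim \Norm{\zeta_n}_{L^2(J)}^{1-\frac1{2\nu}}\Norm{\zeta_n}_{H^\nu(J)}^{\frac1{2\nu}}\lesssim \big(\sup_x\Norm{\zeta_n}_{L^2(x-r,x+r)}\big)^{1-\frac1{2\nu}} (Mq)^{\frac1{4\nu}}\to 0$, where the $H^\nu(J)$-norm is controlled uniformly in $J$ by localization.

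Once $\Norm{\zeta_n}_{L^\infty}\to 0$, I would use the cubic expansion of Lemma~\ref{L.E-decomp-order}: $\E(\zeta_n)=\E_2(\zeta_n)+\E_3(\zeta_n)+\E_{\rm rem}^{(1)}(\zeta_n)$ with $\E_2(\zeta_n)\ge (\gamma+\delta)\Norm{\zeta_n}_{L^2}^2=q$, while $\abs{\E_3(\zeta_n)}\le C\Norm{\zeta_n}_{L^\infty}\Norm{\zeta_n}_{H^{1-\theta}}^2\lesssim \Norm{\zeta_n}_{L^\infty}\, q\to 0$ and $\abs{\E_{\rm rem}^{(1)}(\zeta_n)}\le C\Norm{\zeta_n}_{L^\infty}^2\Norm{\zeta_n}_{H^{1-\theta}}^2\lesssim \Norm{\zeta_n}_{L^\infty}^2\,q\to 0$ (using $\Norm{\zeta_n}_{H^{1-\theta}}^2\lesssim\E(\zeta_n)\lesssim q$ from Lemma~\ref{L.EvsY}, or simply $\Norm{\zeta_n}_{H^\nu}^2\le Mq$ and $\nu\ge 1-\theta$). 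Hence $\liminf_{n\to\infty}\E(\zeta_n)\ge q$. But $\{\zeta_n\}$ is a minimizing sequence, so $\lim_{n\to\infty}\E(\zeta_n)=I_q$, and by Lemma~\ref{L.I-bound-from-below} (or Theorem~\ref{T.special-minimizing-sequence}) $I_q< q(1-mq^{\frac23})<q$. This is the desired contradiction.

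I do not anticipate a serious obstacle here; the one point requiring a little care is the passage from the vanishing of the local $L^2$-mass to $\Norm{\zeta_n}_{L^\infty}\to 0$ uniformly — one must make sure the localized $H^\nu$-norms are controlled by the global one with a constant independent of the interval and of $n$, which is true because $\nu>1/2$ (one can use a partition of unity subordinate to a locally finite cover by intervals of length $2r$, exactly as the paper already does in the periodic-to-real-line comparisons). After that, everything reduces to the elementary lower bound $\E(\zeta_n)\gtrsim q(1+o(1))$ coming from the quadratic term, which beats the strict upper bound $I_q<q$.
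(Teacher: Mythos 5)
Your proof is correct and takes essentially the same route as the paper: both rely on the decomposition of Lemma~\ref{L.E-decomp-order} together with the strict bound $I_q<q(1-mq^{2/3})$ from Lemma~\ref{L.I-bound-from-below} to force a lower bound $\|\zeta_n\|_{L^\infty}\gtrsim q^{5/3}$, and both relate $\|\zeta_n\|_{L^\infty}$ to $\sup_x\|\zeta_n\|_{L^2(x-1,x+1)}$ via a localized version of the interpolation estimate of Lemma~\ref{L.interpolation}. You merely run the implication in the contrapositive direction (assume vanishing, deduce $\|\zeta_n\|_{L^\infty}\to 0$, contradict the energy bound), which is a cosmetic rearrangement of the paper's argument.
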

\begin{proof}
By Lemmata~\ref{L.E-decomp-order} and~\ref{L.I-bound-from-below}, one has for $n$ sufficiently large
\[
q(1-mq^{\frac23}) > \E(\zeta_n)=\E_2(\zeta_n) +\E_3(\zeta_n)+\E_{\rm rem}^{(1)}(\zeta_n)\\
\geq q + \E_3(\zeta_n)+\E_{\rm rem}^{(1)}(\zeta_n)
\]
and hence
\[ m q^{\frac53}\leq \abs{\E_3(\zeta_n)}+\abs{\E_{\rm rem}^{(1)}(\zeta_n)} \lesssim\Norm{\zeta_n}_{L^\infty} .\]
On the other hand, one has
\[
\Norm{\zeta_n}_{L^\infty((x-\frac12,x+\frac12))} \leq \Norm{\varphi_x \zeta_n}_{L^\infty(\RR)} \leq \Norm{\varphi_x \zeta_n}_{L^2(\RR)}^{1-\frac{1}{2\nu}}  \Norm{\varphi_x \zeta_n}_{H^\nu(\RR)}^{\frac{1}{2\nu}}    \le C \Norm{\zeta_n}_{L^2((x-1, x+1)}^{1-\frac{1}{2\nu}} \Norm{\zeta_n}_{H^{\nu}(\RR)}^{\frac1{2\nu}},
\]
where $\varphi_x=\varphi(\cdot-x)$ with $\varphi$ a smooth function such that $\varphi=1$ for $|x|\leq 1/2$, $ \varphi=0$ for $|x|\geq 1$, and $0\leq \varphi\leq 1$ otherwise; and using Lemma~\ref{L.interpolation},\ref{L.interpolation 1} and Lemma~\ref{L.product},\ref{L.product 2}.
Since $C$ is independent of $x\in \mathbb R$, this shows that
\[
\Norm{\zeta_n}_{L^\infty} \le CR^\frac1{2\nu} \sup_{x\in \RR} \Norm{\zeta_n}_{L^2((x-1, x+1))}^{1-\frac{1}{2\nu}}.
\]
Hence one has for $n$ sufficiently large
\[
q^\frac{5}{3}\lesssim \sup_{x\in \RR} \Norm{\zeta_n}_{L^2((x-1, x+1))}^{1-\frac{1}{2\nu}},
\]
from which, using~\eqref{J estimate}, it follows that ``vanishing'' cannot occur.
\end{proof}
\begin{Lemma}[Excluding ``dichotomy'']\label{L.dichotomy}
No subsequence of $\{e_n\}_{n\in\NN}$ has the ``dichotomy'' property.
\end{Lemma}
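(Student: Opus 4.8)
The plan is to rule out dichotomy by the standard concentration-compactness cutting argument, adapted to the non-local operators $\partial_x\F_i$. Suppose for contradiction that dichotomy holds, with sequences $\{x_n\}$, $\{M_n\}$, $\{N_n\}$ and $I^*\in(0,I_q)$ as in Theorem~\ref{T.concentration-compactness}. After translating we may assume $x_n=0$. The key point is that $q\mapsto I_q$ is strictly subadditive (Corollary~\ref{C.subadditivity}), so we derive a contradiction by splitting $\zeta_n$ into two pieces, supported essentially in $|x|\le M_n$ and $|x|\ge N_n$ respectively, whose $L^2$-masses add up (asymptotically) to $q$ and whose energies add up (asymptotically) to at most $I_q$, while each piece individually has energy at least $I_{q_j}$ for some $q_1+q_2=q$ with $q_1,q_2$ bounded away from $0$ and $q$.

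First I would introduce smooth cutoffs: let $\chi$ be a smooth function with $\chi=1$ on $[-1,1]$, $\chi=0$ outside $[-2,2]$, and set $\chi_n(x)=\chi(x/M_n)$, $\psi_n(x)=1-\chi(x/N_n)$. Define $\zeta_n^{(1)}=\chi_n\zeta_n$ and $\zeta_n^{(2)}=\psi_n\zeta_n$. By the dichotomy hypothesis and~\eqref{J estimate}, the $L^2$-mass of $\zeta_n$ on the transition region $M_n\le|x|\le N_n$ tends to $0$, so $(\gamma+\delta)\Norm{\zeta_n^{(j)}}_{L^2}^2\to q_j$ with $q_1+q_2=q$ and, after passing to a subsequence, $q_1=I^*$-related quantity bounded away from $0$ and $q$ (precisely, the masses corresponding to $I^*$ and $I_q-I^*$ via~\eqref{J estimate}; one extracts so they converge). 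The hypotheses $\Norm{\zeta_n}_{H^\nu}<R$ and the fact that multiplication by $\chi_n,\psi_n$ is bounded on $H^\nu$ uniformly (their derivatives are $O(M_n^{-1}),O(N_n^{-1})\to 0$) give $\sup_n\Norm{\zeta_n^{(j)}}_{H^\nu}\le R$ for $n$ large, possibly after a harmless renormalization to restore the exact constraint $(\gamma+\delta)\Norm{\cdot}_{L^2}^2=q_j$, which costs only $o(1)$ in energy by Lemma~\ref{L.diff-E}.

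The main obstacle — and where the non-locality really enters — is showing the energies split: $\E(\zeta_n)\ge \E(\zeta_n^{(1)})+\E(\zeta_n^{(2)})-o(1)$. The local terms (the $\zeta^2/(h_i)$ contributions) split trivially since $\chi_n^2+\psi_n^2$ differs from $1$ only on the transition region where the mass vanishes. For the non-local terms $\int h_i^3(\partial_x\F_i\{\zeta_n/h_i\})^2$ one writes $\partial_x\F_i\{\chi_n w_n\}=\chi_n\partial_x\F_i\{w_n\}+[\partial_x\F_i,\chi_n]w_n$ (with $w_n=\zeta_n/h_i$), and the commutator is controlled by Lemma~\ref{L.Fi-bounds},\ref{L.Fi-bounds 2}: $\Norm{[\partial_x\F_i,\chi_n]w_n}_{L^2}\lesssim\Norm{\widehat{\chi_n'}}_{L^1}\Norm{w_n}_{H^{1-\theta}}\lesssim M_n^{-1}\cdot M_n\cdot\Norm{\chi'}\,\|w_n\|_{H^{1-\theta}}$; wait — $\Norm{\widehat{\chi_n'}}_{L^1}$ is scale-invariant, so instead one uses Lemma~\ref{L.Fi-bounds},\ref{L.Fi-bounds 3}: the contribution of the ``far'' part of $w_n$ (supported where $\chi_n$ or $\psi_n$ is not yet constant) to $\partial_x\F_i\{w_n\}$ at points inside the core region decays like $\dist^{-j}$ with $j\ge 2$, and since $M_n\to\infty$ these cross terms and the transition-region integrals vanish as $n\to\infty$. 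The cross term $\int(\partial_x\F_i\{\zeta_n^{(1)}/h_i\})(\partial_x\F_i\{\zeta_n^{(2)}/h_i\})$ likewise vanishes because the two factors are, up to $o(1)$ errors, supported at mutual distance $\ge N_n-2M_n\to\infty$, again via Lemma~\ref{L.Fi-bounds},\ref{L.Fi-bounds 3}. Putting these together, and using $\E(\zeta_n^{(j)})\ge I_{q_j+o(1)}\ge I_{q_j}-o(1)$ by definition of $I_q$ and continuity, yields
\[
I_q+o(1)=\E(\zeta_n)\ge \E(\zeta_n^{(1)})+\E(\zeta_n^{(2)})-o(1)\ge I_{q_1}+I_{q_2}-o(1),
\]
so $I_q\ge I_{q_1}+I_{q_2}$, contradicting strict subadditivity (Corollary~\ref{C.subadditivity}) since $0<q_1<q_1+q_2=q<q_0$. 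Hence dichotomy cannot occur.
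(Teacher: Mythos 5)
Your overall architecture (cutoffs, mass splitting, energy splitting, contradiction with strict subadditivity) matches the paper's proof, and your handling of the non-local energy terms is roughly in the right spirit. There are, however, two problems, one of which is a genuine gap.

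\textbf{Main gap: positivity of the split masses.} To invoke Corollary~\ref{C.subadditivity} you need $q_1>0$ and $q_2>0$, and you assert this by pointing to~\eqref{J estimate} and the fact that $I^*\in(0,I_q)$. But~\eqref{J estimate} gives only the \emph{upper} bound $\Norm{\zeta_n}_{L^2(J)}^2\le C\int_J e_n\,\dd x$; it does not say that a positive amount of the density $e_n$ on $J$ forces a positive amount of $L^2$-mass of $\zeta_n$ on $J$. Since $e_n$ contains the non-local contributions $(\partial_x\F_i\{\cdot\})^2$, it is consistent a priori with $I^*>0$ that all the $L^2$-mass of $\zeta_n$ sits in only one of the two regions while the energy in the other region comes entirely from the non-local terms. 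The paper handles this by a separate contradiction argument: if $q^*=0$, rescale $\eta_n^{(2)}$ by a factor $c_n\to 1$ so that it lies exactly in $V_{q,R}$; by the energy-splitting estimates $\E(\widetilde\eta_n^{(2)})\to I_q-I^*<I_q$, contradicting $I_q\le\E(\widetilde\eta_n^{(2)})$. A symmetric argument excludes $q^*=q$. Without this step your proof is incomplete.

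\textbf{Secondary issue: the commutator estimate.} You worry that $\Norm{\widehat{\chi_n'}}_{L^1}$ is scale-invariant and therefore abandon Lemma~\ref{L.Fi-bounds},\ref{L.Fi-bounds 2} in favour of Lemma~\ref{L.Fi-bounds},\ref{L.Fi-bounds 3}. This is wrong: for $\chi_n=\chi(\cdot/M_n)$ one has $\widehat{\chi_n'}(k)=\widehat{\chi'}(M_n k)$, so $\Norm{\widehat{\chi_n'}}_{L^1}=M_n^{-1}\Norm{\widehat{\chi'}}_{L^1}\to 0$. The paper uses precisely this $M_n^{-1}$ decay together with the commutator lemma; your detour is unnecessary and stems from a computational error (though the kernel-decay route via Lemma~\ref{L.Fi-bounds},\ref{L.Fi-bounds 3} can also be made to work, and is what the paper uses in the special-minimizing-sequence construction of Theorem~\ref{T.special-minimizing-sequence}). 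A related omission: the paper's choice $\chi=\chi_1^2$, $1-\chi=\chi_2^2$ is what lets it write the cross term $\eta_n^{(1)}(\eta_n^{(1)}-\eta_n)=-\chi_1^2\chi_2^2\eta_n^2$ and control it purely by the vanishing transition-region mass; your plain cutoffs would require a different bookkeeping of the same cross terms.
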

\begin{proof}
We denote by $ \chi\in\C^\infty(\RR^+)$ a non-increasing function with
\begin{equation}
\label{cut-off}
\chi(r)=1 \text{ if } 0\leq r\leq 1\quad \text{ and } \quad  \chi(r)=0 \text{ if }  r\geq 2,
\end{equation}
and such that
\[ \chi=\chi_1^2, \quad 1-\chi=\chi_2^2\]
where $\chi_1$ and $\chi_2$ are smooth. For instance, set $
\chi(r)=1-(1-\widetilde\chi^2(r))^2$ with $\widetilde\chi\in\C^\infty(\RR^+)$ non-increasing and satisfying~\eqref{cut-off}.
Define $\eta_n=\zeta_n(\cdot +x_n)$, and
\[ \eta_n^{(1)}(x)=\eta_n(x)\chi(|x|/M_n) \quad \text{ and } \quad \eta_n^{(2)}(x)=\eta_n(x)\Big(1-\chi(2|x|/N_n) \Big),\]
noting that
\[ \supp(\eta_n^{(1)})\subset [-2M_n,2M_n]\quad \text{ and } \quad  \supp(\eta_n^{(2)})\subset\RR\setminus [-N_n/2,N_n/2].\]
After possibly extracting a subsequence, we can 
assume that
\begin{equation}\label{lim-eta1}
\Norm{\eta_n^{(1)}}_{L^2}^2 \to \frac{q^*}{\gamma+\delta}
\end{equation}
with $q^*\in [0,q]$.
For $n$ sufficiently large, one has $N_n>N_n/2>2M_n>M_n$ and therefore
\begin{align}
 \Norm{\eta_n^{(2)}}_{L^2}^2&=\Norm{\eta_n}_{L^2(|x|>N_n)}^2+\Norm{\eta_n^{(2)}}_{L^2(M_n<|x|<N_n)}^2 \nonumber\\
 &=\frac{q}{\gamma+\delta}-\Norm{\eta_n^{(1)}}_{L^2}^2-\Norm{\eta_n}_{L^2(M_n<|x|<N_n)}^2\nonumber\\
 &\qquad+\Norm{\eta_n^{(1)}}_{L^2(M_n<|x|<N_n)}^2+\Norm{\eta_n^{(2)}}_{L^2(M_n<|x|<N_n)}^2\nonumber\\
 &\to \frac{q-q^*}{\gamma+\delta} \label{lim-eta2}
\end{align}
since $\Norm{\eta_n^{(1)}}_{L^2(M_n<|x|<N_n)}^2+\Norm{\eta_n^{(2)}}_{L^2(M_n<|x|<N_n)}^2\leq 2\Norm{\eta_n}_{L^2(M_n<|x|<N_n)}^2$ and
\[
\Norm{\eta_n}_{L^2(M_n<|x|<N_n)}^2
\le C\int_{M_n<|x-x_n|<N_n} e_n \dd x\to 0
\]
by~\eqref{J estimate} and the assumption of the dichotomy scenario.

We claim that $\E(\eta_n^{(1)})\to I^*$. To show this, note that
\[
\overline{\E}(\eta_n^{(1)})=
\int_\RR \frac{(\eta_n^{(1)})^2}{1- \eta_n^{(1)}}+\frac13 (1-\eta_n^{(1)})^3 \big(\partial_x \F_1\{\frac{\eta_n^{(1)}}{1-\eta_n^{(1)}}\}\big)^2\dd x,
\]
where
\[
\left|\int_\RR \frac{(\eta_n^{(1)})^2}{1- \eta_n^{(1)}}\dd x-
\int_{|x|\le M_n} \frac{\eta_n^2}{1- \eta_n} \dd x \right|
\lesssim \int_{M_n\le |x-x_n|\le N_n} \eta_n^2 \dd x\to 0.
\]
We next find that
\begin{align*}
&\int_\RR (1-\eta_n^{(1)})^3 \big(\partial_x \F_1\{\frac{\eta_n^{(1)}}{1-\eta_n^{(1)}}\}\big)^2
-(1-\eta_n^{(1)})^3 \big(\partial_x \F_1\{\frac{\eta_n^{(1)}}{1-\eta_n}\}\big)^2\dd x\\
&=\int_\RR (1-\eta_n^{(1)})^3 \left(\partial_x \F_1\{\frac{\eta_n^{(1)}}{1-\eta_n^{(1)}}\}+\partial_x \F_1\{\frac{\eta_n^{(1)}}{1-\eta_n}\}\right)
\partial_x \F_1\left\{\frac{\eta_n^{(1)}(\eta_n^{(1)}-\eta_n)}{(1-\eta_n^{(1)})(1-\eta_n)}\right\} \dd x.
\end{align*}
Noting that
\[
\eta_n^{(1)}(\eta_n^{(1)}-\eta_n)=-\chi_1^2(|x|/M_n) \chi_2^2(|x|/M_n) \eta_n^2
\]
we can estimate
\begin{align*}
&\left|\int_\RR (1-\eta_n^{(1)})^3 \left(\partial_x \F_1\{\frac{\eta_n^{(1)}}{1-\eta_n^{(1)}}\}+\partial_x \F_1\{\frac{\eta_n^{(1)}}{1-\eta_n}\}\right)
\partial_x \F_1\left\{\frac{\eta_n^{(1)}(\eta_n^{(1)}-\eta_n)}{(1-\eta_n^{(1)})(1-\eta_n)}\right\} \dd x
\right|
\\
&\lesssim  \Norm{\chi_1(|\cdot|/M_n) \chi_2(|\cdot|/M_n) \eta_n}_{L^\infty}  \Norm{\chi_1(|\cdot|/M_n) \chi_2(|\cdot|/M_n) \eta_n}_{H^\nu}\Norm{\eta_n}_{H^\nu}\\
&\lesssim  \Norm{\chi_1(|\cdot|/M_n) \chi_2(|\cdot|/M_n) \eta_n}_{L^2}^{1-\frac1{2\nu}}  \Norm{\chi_1(|\cdot|/M_n) \chi_2(|\cdot|/M_n) \eta_n}_{H^\nu}^{1+\frac1{2\nu}}\Norm{\eta_n}_{H^\nu}\\
&\lesssim \Norm{\eta_n}_{L^2(M_n\le |x|\le N_n)}^{1-\frac1{2\nu}}\\
&\to 0
\end{align*}
by Lemma~\ref{L.product}~\ref{L.product 1} and Lemma~\ref{L.interpolation},\ref{L.interpolation 1}, and using $\Norm{\eta_n^{(1)}}_{H^\nu}\lesssim \Norm{\eta_n}_{H^\nu} \leq R$. On the other hand,
\begin{align*}
&\left|\int_\RR (1-\eta_n^{(1)})^3 \big(\partial_x \F_1\{\frac{\eta_n^{(1)}}{1-\eta_n}\}\big)^2
-(1-\eta_n^{(1)})^3 \chi^2(|\cdot|/M_n)\big(\partial_x \F_1\{\frac{\eta_n}{1-\eta_n}\}\big)^2\dd x\right|\\
&=\left|\int_\RR (1-\eta_n^{(1)})^3 
\Big( \big(\partial_x \F_1\{\frac{\eta_n^{(1)}}{1-\eta_n}\}\big)
+ \chi(|\cdot|/M_n)\big(\partial_x \F_1\{\frac{\eta_n}{1-\eta_n}\}\big)\Big)
[\partial_x \F_1, \chi(|\cdot|/M_n)] \big(\frac{\eta_n}{1-\eta_n}\big)\dd x\right|\\
&\lesssim M_n^{-1} \Norm{\eta_n}_{H^\nu}^2
\end{align*}
by Lemma~\ref{L.Fi-bounds}~\ref{L.Fi-bounds 2}.
Finally
\[
\int_\RR (1-\eta_n^{(1)})^3 \chi^2(|\cdot|/M_n)\big(\partial_x \F_1\{\frac{\eta_n}{1-\eta_n}\}\big)^2\dd x
=\int_{|x|\le M_n} (1-\eta_n)^3 \big(\partial_x \F_1\{\frac{\eta_n}{1-\eta_n}\}\big)^2\dd x +o(1)
\]
as the remainder term is bounded by a constant times $\int_{M_n \le |x-x_n|\le N_n} e_n \dd x$.
An analogous argument for $\underline{\E}$ reveals that
\[
\E(\eta_n^{(1)})=\int_{|x-x_n|\le M_n} e_n \dd x+o(1)\to I^*
\]
and by similar reasoning one finds that
\[
\E(\eta_n^{(2)})=\int_{|x-x_n|\ge N_n} e_n \dd x+o(1)\to I_q-I^*.
\]

We next claim that $q^*>0$. Indeed, if $q^*=0$, we set
\[
\widetilde\eta_n^{(2)}\eqdef c_n \eta_n^{(2)} , \qquad c_n\eqdef \frac{q^{\frac12}}{(\gamma+\delta)^{\frac12}\Norm{\eta_n^{(2)}}_{L^2}}.
\]
By~\eqref{lim-eta2} and since $q^*=0$, one has $c_n\to 1$. Thus we note
\[
\abs{\E(\widetilde\eta_n^{(2)})-\E(\eta_n^{(2)})}\lesssim \Norm{\widetilde\eta_n^{(2)}-\eta_n^{(2)}}_{H^\nu}\to 0
\]
by Lemma~\ref{L.diff-E} and
\[ 
\limsup_{n\to\infty} \Norm{\widetilde\eta_n^{(2)}}_{H^\nu}<R,
\]
resulting in the contradiction 
$I_q\le \E(\widetilde \eta_n^{(2)}) \to I_q-I^*<I_q$ as $n\to \infty$. We obtain a similar contradiction involving $\eta_n^{(1)}$ and~\eqref{lim-eta1} if we assume that $q^*=q$. Hence, $0<q^*<q$.

In view of the above, we can rescale
\[ \widetilde\eta_n^{(1)}\eqdef \frac{(q^*)^{\frac12}}{(\gamma+\delta)^{\frac12}\Norm{\eta_n^{(1)}}_{L^2}} \eta_n^{(1)} \quad \text{ and } \quad \widetilde\eta_n^{(2)}\eqdef \frac{(q-q^*)^{\frac12}}{(\gamma+\delta)^{\frac12}\Norm{\eta_n^{(2)}}_{L^2}} \eta_n^{(2)} ,\]
so that $(\gamma+\delta)\Norm{\widetilde\eta_n^{(1)}}_{L^2}^2=q$ and $(\gamma+\delta)\Norm{\widetilde\eta_n^{(2)}}_{L^2}^2=q-q_*$ for any $n\in\NN$.
One easily checks that
\[ \limsup_{n\to\infty} \Norm{\widetilde\eta_n^{(1)}}_{H^\nu}<R, \quad \limsup_{n\to\infty} \Norm{\widetilde\eta_n^{(2)}}_{H^\nu}<R\]
and that
\[
\lim_{n\to \infty} (\E(\widetilde \eta_n^{(1)})-\E(\eta_n^{(1)}))
=
\lim_{n\to \infty}
(\E(\widetilde \eta_n^{(2)})-\E(\eta_n^{(2)}))= 0.
\]
Thus we arrive at the following contradiction to Corollary~\ref{C.subadditivity}:
\begin{align*}
I_{q}&< I_{q^*}+I_{q-q^*}\\
&\leq \lim_{n\rightarrow \infty}(\E(\widetilde{\eta}_n^{(1)})+\E(\widetilde{\eta}_n^{(2)}))\\
&=I^*+I_q-I^*\\
&=I_q.
\end{align*}
This concludes the proof of Lemma~\ref{L.dichotomy}.
\end{proof}

\section{Long-wave asymptotics}
In this section we prove that the solutions of~\eqref{sol-c} obtained in Theorem~\ref{T.main-result} are approximated by solutions of the corresponding KdV equation in the long-wave regime, \ie letting $q\to0$ in the constrained minimization problem~\eqref{min-pb}. Indeed, if we introduce the scaling 
\begin{equation}\label{KdV-scaling}\zeta(x)=S_{\rm KdV}(\xi)(x)\eqdef q^\frac{2}{3}\xi(q^\frac{1}{3}x)\end{equation}
 in~\eqref{sol-alpha} and denote $\alpha+1=\alpha_0q^\frac{2}{3}$, then we find that the leading order part of the equation as $q\to0$ is 
\begin{equation}\label{KdV-eq}
\alpha_0(\gamma+\delta)\xi+\frac{3(\gamma-\delta^2)\xi^2}{2}-\frac{(\gamma+\delta^{-1})}{3}\partial_x^2 \xi=0.
\end{equation}
Recall (see \eg~\cite{Angulo-Pava09}) that $\xi\in L^2(\RR)$ satisfying~\eqref{KdV-eq} uniquely defines (up to spatial translation) a solitary-wave solution of the KdV equation, with explicit formula \[\xi_{\rm KdV}(x)=\frac{\alpha_0(\gamma+\delta)}{\delta^2-\gamma}\sech^2\left(\frac{1}{2}\sqrt{\frac{3\alpha_0(\gamma+\delta)}{\gamma+\delta^{-1}}}x\right).\]
Equation~\eqref{KdV-eq} can also be obtained as the Euler-Lagrange equation associated with the minimizer of the scalar functional $\E_{\rm KdV}$ (consistently with Lemma~\ref{L.E-decomp-KdV})
\[
\E_{\rm KdV}(\xi)=\int_{\RR}(\gamma-\delta^2)\xi^3+\frac{(\gamma+\delta^{-1})}{3}(\partial_x\xi)^2\ \dd x,
\]
over the set
\[
U_{1}\eqdef \{\xi\in H^1(\RR)\ :\ (\gamma+\delta)\Norm{\xi}_{L^2}^2=1\}.
\]
Indeed, any minimizer satisfies the Euler-Lagrange equation
\begin{equation}\label{Euler-Lagrange-KdV}
\text{d}\E_{\rm KdV}(\xi)+2(\gamma+\delta)\alpha_0\xi=0,
\end{equation}
which is~\eqref{KdV-eq} with $\alpha_0$ the Lagrange multiplier. Testing the constraint $(\gamma+\delta)\Norm{\xi}_{L^2}^2=1$ with the above explicit formula, we find that 
\begin{equation}\label{def-alpha0}
(\gamma+\delta)\alpha_0=\frac{3}{4}\left(\frac{(\delta^2-\gamma)^4}{(\gamma+\delta)(\gamma+\delta^{-1})}\right)^\frac{1}{3}.
\end{equation}
Additional computations show that
\[
I_{\rm KdV}=\inf\{\E_{\rm KdV}(\xi)\ :\ \xi\in U^1\}=\E_{\rm KdV}(\xi_{\rm KdV})=-\frac35\alpha_0.
\]

We aim at proving that the variational characterization of~\eqref{KdV-eq}, and therefore its explicit solutions, approximate (after suitable rescaling) the corresponding one of~\eqref{sol-c}, namely~\eqref{min-pb}, in the limit $q\to 0$.

\subsection{Refined estimates}

We start by establishing estimates on $\zeta\in D_{q,R}$ the set of minimizers of $\E$ over $V_{q,R}$, as provided by Theorem~\ref{T.main-result}. Here and below, we rely on extra assumptions on the Fourier multipliers, which are assumed to be {\em strongly admissible}, in the sense of Definition~\ref{D.stradmissible}.

\begin{Lemma}\label{L.ellipticity}
There exists $q_0>0$ such that $\zeta\in H^s$ for any $s\geq 0$, and there exists $M_s>0$ such that
\[\Norm{\zeta}_{H^s}^2 \leq M_s \ q\]
uniformly for $q\in(0,q_0)$ and $\zeta\in D_{q,R}$.
\end{Lemma}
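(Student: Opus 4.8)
The plan is to prove Lemma~\ref{L.ellipticity} by a bootstrap (finite-induction) argument on the Sobolev exponent, exactly in the spirit of Lemma~\ref{L.zetaP-H1-estimate}, but now using the {\em strong admissibility} of the Fourier multipliers to push the regularity arbitrarily high rather than stopping at $H^\nu$. The starting point is the Euler-Lagrange equation~\eqref{sol-alpha}, namely $\dd\E(\zeta)+2\alpha(\gamma+\delta)\zeta=0$, together with the facts already established in Theorem~\ref{T.main-result}: $\Norm{\zeta}_{H^\nu}^2\leq Mq$, $c^{-2}=-\alpha\in(1/2,3/2)$ (so $\abs{\alpha}$ is bounded), and $\Norm{\zeta}_{L^\infty}<\min(1,\delta^{-1})-h_0$ for some $h_0>0$ when $q$ is small. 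Write $\dd\E(\zeta)$ using the explicit formula~\eqref{def-dE}; the crucial structural point is that the worst term is $\partial_x\F_i\{h_i^3\partial_x\F_i\{h_i^{-1}\zeta\}\}$, which under strong admissibility (Definition~\ref{D.stradmissible}) maps $H^s$ boundedly to $H^{s-2+2\theta}$, i.e. it gains $2\theta$ derivatives over two plain $\partial_x$'s; combined with the self-gain, one reads off from~\eqref{sol-alpha} that $\zeta$ solves an equation of the schematic form $(\text{const})\,\zeta + (\partial_x\F_i)^2\{\cdots\} = (\text{lower order, quadratic})$.

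Concretely, I would set $r_0=\nu$ and $r_{n+1}=\min(r_n+2(1-\theta),\,s_{\rm target})$ for a fixed target exponent, and prove by induction that $\Norm{\zeta}_{H^{r_n}}^2\lesssim q$ implies $\Norm{\zeta}_{H^{r_{n+1}}}^2\lesssim q$. The inductive step tests~\eqref{sol-alpha} against $\Lambda^{2r_n}\zeta$, just as in the proof of Lemma~\ref{L.zetaP-H1-estimate}, and uses: (i) the coercivity-type estimate $\Norm{\zeta}_{H^{r_n}}^2+\langle\partial_x\F_i\{\zeta\},\partial_x\F_i\{\Lambda^{2r_n}\zeta\}\rangle\gtrsim\Norm{\zeta}_{H^{1-\theta+r_n}}^2$ (Lemma~\ref{L.Fi-bounds},\ref{L.Fi-bounds 1}), applied twice to reach $H^{r_n+2(1-\theta)}$; (ii) the bilinear/product estimates of Lemma~\ref{L.product} together with the composition estimate Lemma~\ref{L.composition} to control the fractions $\zeta/(1-\zeta)$, $\zeta/(\delta^{-1}+\zeta)$ and their higher powers in the relevant Sobolev norms by $C(h_0^{-1},\Norm{\zeta}_{H^\nu})\Norm{\zeta}_{H^{\cdot}}$; and (iii) an interpolation gain $\Norm{\zeta}_{H^{\nu-\epsilon}}\lesssim q^{\epsilon/(2\nu)}$ (Lemma~\ref{L.interpolation},\ref{L.interpolation 2}, using $\Norm{\zeta}_{L^2}^2\lesssim q$) to absorb the top-order remainder terms into the left-hand side when $q$ is small. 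The only genuinely new ingredient relative to Lemma~\ref{L.zetaP-H1-estimate} is that, once $r_n\geq\nu$, the nonlinear remainder terms such as $\zeta(\partial_x\F_i\{\cdots\})^2$ and $(\partial_x\F_i\{\zeta\})(\partial_x\F_i\{\zeta^2\})$ can be estimated using Lemma~\ref{L.product},\ref{L.product 1} with one factor measured in the Banach-algebra norm $H^{r_n}$ and one factor in $L^\infty\cap H^{r_n-1+\theta}$ — no further smallness is needed there because we already control $H^\nu$, but the high-order linear term must still be the leading one, which is exactly what strong admissibility guarantees.

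The main obstacle I anticipate is organizing the estimates for the genuinely nonlocal quadratic terms at high regularity: unlike the linear term, $\partial_x\F_i\{h_i^3\partial_x\F_i\{h_i^{-1}\zeta\}\}$ is not a simple Fourier multiplier applied to $\zeta$, so one has to commute $\Lambda^{r_n}$ (or $\partial_x\F_i$) past the multiplications by $h_i^{\pm1}$, $h_i^3$, and control the resulting commutators via the product rule; this is where most of the bookkeeping lives, and where one must be careful that the derivative count never exceeds what strong admissibility provides (each $\partial_x\F_i$ buys back $\theta$ of the derivative it costs, so the net cost of the double operator is $2(1-\theta)<2$, matching the induction step size). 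A secondary, minor point is the very first step $r_0=\nu$: if $\nu=1-\theta$ one cannot split the top-order term as "coercive part plus small part" in the same way, but since $\theta<1$ the induction still advances by $2(1-\theta)>0$ at each step, and the number of steps needed to reach any fixed $s$ is finite; I would remark this explicitly. Once the induction closes for every integer target, the full statement "$\zeta\in H^s$ for all $s\geq0$ with $\Norm{\zeta}_{H^s}^2\leq M_s q$" follows, with $M_s$ depending only on $s$, $h_0^{-1}$, and the constant $M$ from Theorem~\ref{T.main-result} (hence uniform over $\zeta\in D_{q,R}$ and $q\in(0,q_0)$).
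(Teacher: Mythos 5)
Your proposal takes a different route from the paper and, as written, has a genuine gap. The paper splits Lemma~\ref{L.ellipticity} into two separate tasks: first it establishes the \emph{regularity} $\zeta\in H^s$ by treating~\eqref{sol-alpha} as an elliptic equation via paradifferential calculus (paralinearizing the left-hand side to a paradifferential operator $T_{a(x,k)}$ of order $2(1-\theta)$ with invertible symbol, and applying $T_{a^{-1}}$), and only \emph{then} obtains the uniform bound $\Norm{\zeta}_{H^s}^2\le M_s q$ by testing as in Lemma~\ref{L.zetaP-H1-estimate}, as the very first line of its proof says. You propose to do both at once by testing against $\Lambda^{2r_n}\zeta$. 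That captures the second part but not the first: the testing computation is an \emph{a priori} estimate that presupposes the regularity you are trying to prove, because once $r_n>\nu-(1-\theta)$ the individual duality pairings $\langle h_i^{-2}\partial_x\F_i\{h_i^3\partial_x\F_i\{h_i^{-1}\zeta\}\},\Lambda^{2r_n}\zeta\rangle$ are no longer a priori well-defined when one only knows $\zeta\in H^\nu$ (the full left-hand side $\dd\E(\zeta)=-2\alpha\zeta$ is fine, but the decomposition into a coercive piece plus a remainder is not). This is precisely the constraint $r_n+1-\theta\le\nu$ that caps the induction of Lemma~\ref{L.zetaP-H1-estimate} at $H^\nu$. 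To push past $\nu$ by energy methods you would need a regularization/mollification step with uniform commutator control and a limit passage, which you do not supply, and for which your invocation of Lemma~\ref{L.Fi-bounds},\ref{L.Fi-bounds 2} (stated for compactly supported $\varphi$, not frequency cutoffs) is insufficient.

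A related, deeper point that you dismiss as ``bookkeeping'': when $\gamma\neq 0$ the principal part is the \emph{sum} $\tfrac23 h_2^{-2}\partial_x\F_2\{h_2^3\partial_x\F_2\{h_2^{-1}\cdot\}\}+\tfrac{2\gamma}3 h_1^{-2}\partial_x\F_1\{h_1^3\partial_x\F_1\{h_1^{-1}\cdot\}\}$ of two variable-coefficient nonlocal operators built from \emph{different} Fourier multipliers $\F_1\neq\F_2$ and \emph{different} coefficients $h_1\neq h_2$; neither summand can be inverted explicitly against the other, and the paper states explicitly that the ellipticity of this combined operator ``is not straightforward to ascertain when $\gamma\neq 0$'', which is the very reason paradifferential calculus (symbol composition, Lemma~\ref{Lemma:composition}) is brought in — it gives a systematic way to extract the scalar elliptic symbol $a(x,k)=\tfrac23 h_2^{-1}(1+(k\F_2)^2)+\tfrac{2\gamma}3 h_1^{-1}(1+(k\F_1)^2)$ from the operator up to a smoothing remainder. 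Your proposed coercivity (i), $\Norm{\zeta}_{H^{r_n}}^2+\langle\partial_x\F_i\{\zeta\},\partial_x\F_i\{\Lambda^{2r_n}\zeta\}\rangle\gtrsim\Norm{\zeta}_{H^{1-\theta+r_n}}^2$, is a statement about a \emph{single constant-coefficient} multiplier and does not by itself address this. A minor quantitative slip: the net gain per bootstrap step is $1-\theta$ (or $\min(1-\theta,\nu-1/2-\epsilon)$ on the first step, as in the paper), not $2(1-\theta)$ — the elliptic operator has order $2(1-\theta)$ but the right-hand side $R(\zeta)$, being quadratic in $\partial_x\F_i\{h_i^{-1}\zeta\}$, only lies in $H^{r_n-(1-\theta)}$. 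Note finally that in the one-layer case $\gamma=0$ the paper itself remarks (Remark~\ref{R.one-layer}) that the direct product/composition estimates suffice; your strategy is essentially adequate in that restricted setting, but not in the bilayer case covered by the Lemma.
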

\begin{proof}
Once the regularity property $\zeta\in H^s$ has been established, the corresponding estimate is obtained as in the proof of Lemma~\ref{L.zetaP-H1-estimate}, thus we focus only on the regularity issue. This follows from the Euler-Lagrange equation~\eqref{sol-alpha} and elliptic estimates. However, the ellipticity property is not straightforward to ascertain when $\gamma\neq 0$, and we will make use of paradifferential calculus. These tools are recalled in Appendix~\ref{S.para}.

By assumption, one has $\zeta\in H^{\nu}$ with $\nu>1/2$ and $\nu\geq 1-\theta>0$. We fix $\epsilon\in (0,\nu-1/2)$ and $r=\min(1-\theta,\nu-1/2-\epsilon)>0$. We show below that $\zeta\in H^\nu$ satisfying~\eqref{sol-alpha} yields $\zeta\in H^{\nu+r}$, and the argument can be bootstrapped to obtain arbitrarily high regularity, $\zeta\in H^s$, $s\geq 0$.

First we write~\eqref{sol-alpha} as the equality, valid in $H^{-\nu}$,
\begin{align}
&\frac23 h_2^{-2}\partial_x \F_2\big\{h_2^3\partial_x \F_2 \{h_2^{-1}\zeta\}\big\} 
+\frac{2\gamma}3 h_1^{-2}\partial_x \F_1\big\{h_1^3\partial_x \F_1 \{h_1^{-1}\zeta\}\big\} \nonumber \\
&\quad =
2\alpha(\gamma+\delta)\zeta+2\frac{h_1+\gamma h_2}{h_1 h_2}\zeta-\frac{h_1^2-\gamma h_2^2}{h_1^2h_2^2}\abs{\zeta}^2
 +\big(h_2\partial_x \F_2\{h_2^{-1}\zeta\}\big)^2-\gamma\big(h_1\partial_x \F_1\{h_1^{-1}\zeta\}\big)^2 \nonumber \\
 &\quad \eqdef R(\zeta) \label{identity}
 \end{align}
 denoting $h_1=1-\zeta$, $h_2=\delta^{-1}+\zeta$, and recalling $\alpha\in(-3/2,-1/2)$.

Using Lemma~\ref{L.product},~\ref{L.product 2} and Lemma~\ref{L.Fi-bounds},\ref{L.Fi-bounds 1}, one easily checks that $R(\zeta)\in H^{2(\nu-(1-\theta))-1/2-\epsilon}$ in the case $1/2<\nu\le 1/2+(1-\theta)$, and  $R(\zeta)\in H^{\nu-(1-\theta)}$ if $\nu> 1/2+(1-\theta)$. In other words, we find
\begin{equation}\label{RHS}
R(\zeta) \in H^{\nu-2(1-\theta)+r}.
\end{equation}

Above, we used that $\Norm{\zeta}_{L^\infty}<\min(1,\delta^{-1})$ and therefore $h_1(x)^n-1\in H^\nu$ and $h_2(x)^n-(\delta^{-1})^n\in H^\nu$ for any $n\in\ZZ$. This holds as well in the H\"older space $W^{r,\infty}$ since $r\in(0,\nu-1/2)$. In particular, we have 
\[\forall n\in\ZZ, \quad h_1(x)^n \in \Gamma^0_r \qquad \text{ and } \qquad \partial_x\F_i\in \Gamma^{1-\theta}_r,\]
recalling Definition~\ref{D.symbols}.

By Lemma~\ref{Lemma:paralinearization}, we find $\zeta h_1^{-1}-T_{h_1^{-2}}\zeta\in H^{\nu+r}$, and Lemma~\ref{L.Fi-bounds} and Lemma~\ref{L.product} yield
\begin{equation}\label{diff-1}
h_1^{-2}\partial_x\F_1\big\{h_1^3\partial_x\F_1\{\zeta h_1^{-1}-T_{h_1^{-2}}\zeta\}\big\} \in H^{\nu-2(1-\theta)+r}.
\end{equation}
By Lemma~\ref{Lemma:action}, one has $ \partial_x\F_1T_{h_1^{-2}}\zeta=T_{\mathrm ik\F_1(k)}T_{h_1^{-2}}\zeta \in H^{\nu-(1-\theta)}$. We deduce by Lemma~\ref{Lemma:composition} that $\mathrm ik\F_1(k)h_1^{-2}\in \Gamma^{1-\theta}_r$ and
\[ \partial_x\F_1T_{h_1^{-2}}\zeta-T_{\mathrm ik\F_1(k)h_1^{-2}}\zeta\in H^{\nu-(1-\theta)+r},\]
from which we deduce as above
\begin{equation}\label{diff-2}
h_1^{-2}\partial_x\F_1\big\{h_1^3 \big( \partial_x\F_1\{T_{h_1^{-2}}\zeta\}-T_{\mathrm ik\F_1(k)h_1^{-2}}\zeta\big)\big\}  \in H^{\nu-2(1-\theta)+r}.
\end{equation}
Using that $T_{\mathrm ik\F_1(k)h_1^{-2}}\zeta\in   H^{\nu-(1-\theta)}$ and Lemma~\ref{Lemma:paraproduct}, one obtains
\[(h_1^3-T_{h_1^3})T_{\mathrm ik\F_1(k)h_1^{-2}}\zeta \in H^{\nu-(1-\theta)+r}.\]
As above, it follows by Lemma~\ref{L.Fi-bounds} and Lemma~\ref{L.product} that
\begin{equation}\label{diff-3}
 h_1^{-2}\partial_x\F_1\big\{(h_1^3-T_{h_1^3})T_{\mathrm ik\F_1(k)h_1^{-2}}\zeta\big\}\in H^{\nu-2(1-\theta)+r}.
 \end{equation}
We use again Lemma~\ref{Lemma:action} and Lemma~\ref{Lemma:composition} to deduce  that $-(k\F_1(k))^2h_1\in  \Gamma^{2(1-\theta)}_r$ and
\begin{equation}\label{diff-4}
 h_1^{-2}\left(\partial_x\F_1T_{h_1^3} T_{\mathrm ik\F_1(k)h_1^{-2}}\zeta-T_{-(k\F_1(k))^2h_1}\zeta \right)\in H^{\nu-2(1-\theta)+r}.
 \end{equation}
Finally, Lemma~\ref{Lemma:paraproduct} yields
\begin{equation}\label{diff-5}
(h_1^{-2}-T_{h_1^{-2}})T_{-(k\F_1(k))^2h_1}\zeta \in  H^{\nu-2(1-\theta)+r}
\end{equation}
and Lemma~\ref{Lemma:composition} yields
\begin{equation}\label{diff-6}
(T_{h_1^{-2}}T_{-(k\F_1(k))^2h_1}-T_{-(k\F_1(k))^2h_1^{-1}})\zeta\in  H^{\nu-2(1-\theta)+r}.
\end{equation}
Collecting~\eqref{diff-1}--\eqref{diff-6}, we proved
\begin{equation}\label{RHS-composition}  h_1^{-2}\partial_x \F_1\big\{h_1^3\partial_x \F_1 \{h_1^{-1}\zeta\}\big\} -T_{-(k\F_1(k))^2h_1^{-1}}\zeta\in H^{\nu-2(1-\theta) +r}.
\end{equation}

By~\eqref{RHS},~\eqref{RHS-composition} and the corresponding estimate for the second contribution in the left-hand side of~\eqref{identity}, one finds
\[
T_{\frac23h_2^{-1}(\mathrm ik\F_2(k))^2+\frac{2\gamma}3h_1^{-1}(\mathrm ik\F_1(k))^2}\zeta \in H^{\nu-2(1-\theta)+r}.
\]
Moreover, since $\zeta\in H^\nu$, one has $\frac23h_2^{-1}(x)+\frac{2\gamma}3h_1^{-1}(x) \in \Gamma^0_r$ and therefore
\[T_{\frac23h_2^{-1}(x)+\frac{2\gamma}3h_1^{-1}(x)}\zeta\in H^\nu\subset H^{\nu-2(1-\theta)+r}.\]
Adding the two terms yields
\begin{equation}\label{RHS-paraproduct} 
T_{a(x,k)}\zeta \in H^{\nu-2(1-\theta)+r}
\end{equation}
with
\[ a(x,k)\eqdef \frac23h_2^{-1}(x)\big(1+(k\F_2(k))^2\big)+\frac{2\gamma}3h_1^{-1}(x)\big(1+(k\F_1(k))^2\big).\]
Notice that 
\[a(x, k)\in \Gamma^{2(1-\theta)}_r\quad \text{ and }\quad a(x, k)^{-1}\in \Gamma^{-2(1-\theta)}_r.\]
In particular, Lemma~\ref{Lemma:action} and~\eqref{RHS-paraproduct} yield
\[T_{a(x,k)^{-1}}T_{a(x,k)}\zeta \in H^{\nu+r}.\]
Additionally, by Lemma~\ref{Lemma:composition}, we have
\[\zeta-T_{a(x,k)^{-1}}T_{a(x,k)}\zeta=T_{a(x,k)^{-1}a(x,k)}\zeta-T_{a(x,k)^{-1}}T_{a(x,k)}\zeta\in H^{\nu+r}.\]
Adding the two terms shows that $\zeta\in H^{ \nu+r}$, which concludes the proof.
\end{proof}

\begin{Remark}\label{R.one-layer}
In the one-layer situation, namely $\gamma=0$, the use of paradifferential calculus is not necessary, and Lemma~\ref{L.ellipticity} can be obtained through a direct use of Lemmata~\ref{L.composition} and~\ref{L.product}. In particular, Lemma~\ref{L.ellipticity}  and subsequent results hold for (non-necessarily strongly) admissible Fourier multipliers, in the sense of Definition~\ref{D.admissible}.
\end{Remark}

The following lemma shows that the minimizers of $\E$ over $V_{q,R}$, as provided by Theorem~\ref{T.main-result}, scale as~\eqref{KdV-scaling}.
\begin{Lemma}\label{L.zeta-Linfty-bound}
There exists $q_0>0$ and $C>0$ such that the estimates
\begin{align}
\Norm{\zeta}_{L^\infty}&\leq Cq^\frac{2}{3}\label{L-infty-est},\\
\Norm{\partial_x\zeta}_{L^2}^2&\leq Cq^\frac{5}{3}\label{derivative-est},\\
\Norm{\partial_{x}^2\zeta}_{L^2}^2&\leq Cq^\frac{7}{3}\label{second-derivative-est}
\end{align}
hold uniformly for $q\in(0,q_0)$ and $\zeta\in D_{q,R}$, the set of minimizers of $\E$ over $V_{q,R}$.
\end{Lemma}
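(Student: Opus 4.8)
The three estimates are exactly the orders of magnitude forced by the long-wave scaling~\eqref{KdV-scaling} when the rescaled profile is bounded in $H^2$, and the plan is to obtain them by bootstrapping the crude a priori bounds $\Norm{\zeta}_{H^s}^2\lesssim_s q$ supplied by Lemma~\ref{L.ellipticity}. The starting observation is the energy comparison: Lemma~\ref{L.E-decomp-order} gives $\E(\zeta)=(\gamma+\delta)\Norm{\zeta}_{L^2}^2+\widetilde\E_2(\zeta)+\E_3(\zeta)+\E_{{\rm rem}}^{(1)}(\zeta)$ with
\[
\widetilde\E_2(\zeta)\eqdef\E_2(\zeta)-(\gamma+\delta)\Norm{\zeta}_{L^2}^2=\tfrac{\gamma}{3}\Norm{\partial_x\F_1\{\zeta\}}_{L^2}^2+\tfrac{\delta^{-1}}{3}\Norm{\partial_x\F_2\{\zeta\}}_{L^2}^2\geq 0,
\]
while $\E(\zeta)=I_q<q(1-mq^{2/3})$ by Lemma~\ref{L.I-bound-from-below} and Theorem~\ref{T.main-result}. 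Since $|\E_3(\zeta)|+|\E_{{\rm rem}}^{(1)}(\zeta)|\lesssim\Norm{\zeta}_{L^\infty}\Norm{\zeta}_{H^{1-\theta}}^2\lesssim\Norm{\zeta}_{L^\infty}\,q$ (using $\Norm{\zeta}_{H^{1-\theta}}^2\lesssim\E(\zeta)\lesssim q$ from Lemma~\ref{L.EvsY}), these combine into
\[
0\leq\widetilde\E_2(\zeta)\lesssim\Norm{\zeta}_{L^\infty}\,q.
\]
Invoking in addition the lower bound $\F_i(k)\geq C^{\F_i}_-(1+|k|)^{-\theta}$ (Definition~\ref{D.admissible},\ref{D.admissible 4}), a frequency splitting together with $\Norm{\zeta}_{H^s}^2\lesssim_s q$ yields, for every $\epsilon>0$, a bound $\Norm{\partial_x\zeta}_{L^2}^2\lesssim_\epsilon q^{\epsilon}\,\widetilde\E_2(\zeta)^{1-\epsilon}$; there is no loss ($\epsilon=0$) when $\theta=0$, since then $\widetilde\E_2(\zeta)\gtrsim\Norm{\partial_x\zeta}_{L^2}^2$ directly.

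To prove~\eqref{L-infty-est} and~\eqref{derivative-est} I would plug the Gagliardo--Nirenberg estimate $\Norm{\zeta}_{L^\infty}^2\lesssim\Norm{\zeta}_{L^2}\Norm{\partial_x\zeta}_{L^2}\lesssim q^{1/2}\Norm{\partial_x\zeta}_{L^2}$ into the last two displays, producing a self-improving inequality of the form $\widetilde\E_2(\zeta)\lesssim_\epsilon q^{5/4+\epsilon}\,\widetilde\E_2(\zeta)^{(1-\epsilon)/4}$. Iterating it (and letting $\epsilon$ be as small as wished, which is legitimate since Lemma~\ref{L.ellipticity} holds at every order) gives $\widetilde\E_2(\zeta)+\Norm{\partial_x\zeta}_{L^2}^2\lesssim_\epsilon q^{5/3-\epsilon}$ and hence $\Norm{\zeta}_{L^\infty}\lesssim_\epsilon q^{2/3-\epsilon}$ for all $\epsilon>0$. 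The residual $q^{\epsilon}$-loss (present only if $\theta>0$) is removed at the very end, after~\eqref{second-derivative-est} is established, by rerunning the frequency splitting of the previous paragraph with $\Norm{\partial_x^2\zeta}_{L^2}^2$ in place of the crude $\Norm{\zeta}_{H^s}^2\lesssim q$: this upgrades the control to $\Norm{\partial_x\zeta}_{L^2}^2\lesssim q^{5/3}$ exactly, and~\eqref{L-infty-est} then follows from Gagliardo--Nirenberg.

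For~\eqref{second-derivative-est} I would use the Euler--Lagrange equation~\eqref{sol-alpha} together with the ellipticity established in the proof of Lemma~\ref{L.ellipticity} --- through paradifferential calculus when $\gamma\neq0$, and through the elementary Lemmata~\ref{L.composition}--\ref{L.product} when $\gamma=0$ (Remark~\ref{R.one-layer}). Writing~\eqref{sol-alpha} in the form $\T_a\zeta=2(\alpha+1)(\gamma+\delta)\zeta+Q(\zeta)$, where $Q(\zeta)$ gathers the genuinely quadratic contributions and $\T_a$ is elliptic of order $2(1-\theta)$ with inverse bounded from $H^{2\theta}$ to $H^2$, one obtains $\Norm{\zeta}_{H^2}\lesssim\Norm{(\alpha+1)\zeta}_{H^{2\theta}}+\Norm{Q(\zeta)}_{H^{2\theta}}+(\text{lower-order terms})$. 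Testing~\eqref{sol-alpha} against $\zeta$ and invoking Lemma~\ref{L.E-decomp-order} gives the identity $(\alpha+1)q=-\widetilde\E_2(\zeta)-\tfrac32\E_3(\zeta)-\tfrac12\E_{{\rm rem}}^{(2)}(\zeta)$, whence, using $\widetilde\E_2(\zeta)\geq0$, $\alpha+1>0$ and~\eqref{L-infty-est}, one has $\alpha+1\lesssim\Norm{\zeta}_{L^\infty}\lesssim q^{2/3}$. Combined with $\Norm{\zeta}_{H^{2\theta}}\lesssim q^{1/2}$ (interpolation) and the estimates for $Q(\zeta)$ furnished by~\eqref{L-infty-est}--\eqref{derivative-est}, Lemma~\ref{L.Fi-bounds},\ref{L.Fi-bounds 1} and Lemma~\ref{L.product} (bootstrapping the $H^2$ bound once more, as $\Norm{\zeta}_{H^2}$ re-enters through $Q(\zeta)$), this yields $\Norm{\zeta}_{H^2}\lesssim q^{7/6}$, i.e.~\eqref{second-derivative-est}.

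The main obstacle is the circular dependence between the quantities at stake --- $\Norm{\zeta}_{L^\infty}$, $\Norm{\partial_x\zeta}_{L^2}$, $\alpha+1$ and $\Norm{\partial_x^2\zeta}_{L^2}$ --- since the a priori bounds of Lemma~\ref{L.ellipticity} are off from the sharp scaling by powers of $q$, so that none of them can be estimated in one stroke. This is handled by the iterative scheme above, whose convergence rests on (i) the sub-linear dependence on $\Norm{\zeta}_{L^\infty}$ in $\widetilde\E_2(\zeta)\lesssim\Norm{\zeta}_{L^\infty}q$ together with Gagliardo--Nirenberg, and (ii) the fact that $\partial_x\F_i$ is bounded below by a multiple of $(1+|k|)^{-\theta}$, which is precisely what allows the weak coercive quantity $\widetilde\E_2(\zeta)$ to control $\Norm{\partial_x\zeta}_{L^2}$ up to a $q^\epsilon$-loss, the loss being subsequently absorbed using the elliptic regularity coming from~\eqref{sol-alpha}. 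When $\theta=0$ --- in particular for the Miyata--Choi--Camassa and Green--Naghdi systems --- the frequency splitting is unnecessary and the whole argument simplifies considerably.
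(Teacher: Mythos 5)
Your energy argument for \eqref{L-infty-est}--\eqref{derivative-est} (comparing $\widetilde\E_2=\E_2-(\gamma+\delta)\Norm{\zeta}_{L^2}^2\ge0$ with the bound $\E(\zeta)<q(1-mq^{2/3})$, then coupling to Gagliardo--Nirenberg and closing a sublinear self-improvement) is a genuinely different route from the paper's; indeed the paper works directly on the Euler--Lagrange equation in Fourier space throughout, and does not bootstrap from the energy at all. However, your derivation of \eqref{second-derivative-est} contains a gap that I don't think can be repaired as stated, and this gap also propagates to the final removal of the $q^\epsilon$-loss, since that removal needs \eqref{second-derivative-est}.

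The problem is the claim that ``$\T_a$ is elliptic of order $2(1-\theta)$ with inverse bounded from $H^{2\theta}$ to $H^2$, one obtains $\Norm{\zeta}_{H^2}\lesssim\Norm{(\alpha+1)\zeta}_{H^{2\theta}}+\Norm{Q(\zeta)}_{H^{2\theta}}+\ldots$''. If $\T_a$ is the paradifferential operator of Lemma~\ref{L.ellipticity}, with symbol $a(x,k)=\frac23 h_2^{-1}(1+(k\F_2(k))^2)+\frac{2\gamma}3 h_1^{-1}(1+(k\F_1(k))^2)$, then indeed $T_{a^{-1}}$ is uniformly bounded from $H^{2\theta}$ to $H^2$ --- but $T_a\zeta$ is then not equal to $2(\alpha+1)(\gamma+\delta)\zeta+Q(\zeta)$. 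Passing from \eqref{identity} to the paralinearized operator transfers the zero-order term $T_{\frac23 h_2^{-1}(x)+\frac{2\gamma}3 h_1^{-1}(x)}\zeta$ to the left-hand side, so the ``linear-in-$\zeta$'' part of $T_a\zeta$ becomes $\big(2(\alpha+1)(\gamma+\delta)+\frac23\delta+\frac{2\gamma}3\big)\zeta+O(\zeta^2)$, whose leading coefficient is an $O(1)$ constant. Therefore $\Norm{T_a\zeta}_{H^{2\theta}}\gtrsim\Norm{\zeta}_{H^{2\theta}}\approx q^{1/2}$, and the elliptic estimate only reproduces $\Norm{\zeta}_{H^2}\lesssim q^{1/2}$ (the crude Lemma~\ref{L.ellipticity} bound), not the sharp $q^{7/6}$. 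If instead you isolate the constant-coefficient linear part and invert $2(\gamma+\delta)(\alpha+1)+\frac23\big(\gamma(k\F_1)^2+\delta^{-1}(k\F_2)^2\big)$, that operator is \emph{not} uniformly elliptic: its symbol degenerates like $q^{2/3}$ at $k=0$, so the inverse has operator norm $H^{2\theta}\to H^2$ of size $q^{-2/3}$, which again only yields $\Norm{\zeta}_{H^2}\lesssim q^{-2/3}q^{7/6}=q^{1/2}$. This is exactly where the lower bound $\alpha+1\ge mq^{2/3}$ from Theorem~\ref{T.main-result} enters in the paper's proof: one estimates $\widehat\zeta(k)$ pointwise by the Fourier resolvent formula \eqref{freq-est}, and then separately exploits an $L^\infty_k$ bound on $k\mathcal F(\dd\E_3+\dd\E_{\rm rem}^{(1)})$ at low frequency against $\int_{|k|<k_0} k^2\big(mq^{2/3}+\tfrac13(\gamma(k\F_1)^2+\delta^{-1}(k\F_2)^2)\big)^{-2}\dd k\lesssim q^{-1/3}$, and an $L^2_k$ bound at high frequency against the uniformly bounded high-frequency part of the resolvent. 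Your outline never invokes this lower bound on $\alpha+1$, and a fixed-order Sobolev-to-Sobolev mapping property cannot reproduce this frequency-dependent, $q$-dependent gain.

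So the structure breaks: \eqref{second-derivative-est} is not obtained, and with it the removal of the $q^\epsilon$-loss in \eqref{L-infty-est}--\eqref{derivative-est} for $\theta>0$ (your case $\theta=0$ is fine for the first two estimates, since then $\widetilde\E_2\gtrsim\Norm{\partial_x\zeta}_{L^2}^2$ directly, but \eqref{second-derivative-est} still fails for the reason above). I suggest you replace your third step by the paper's Fourier-side argument: use $\alpha+1\ge mq^{2/3}$, write the constant-coefficient resolvent bound for $\widehat\zeta$, and estimate $\Norm{k^j\widehat\zeta}_{L^2}$ by splitting frequencies with $L^\infty$ bounds on the transform of the nonlinearity at low frequency and $L^2$ bounds at high frequency.
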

\begin{proof}
Let $\zeta$ be minimizer over $V_{q,R}$. Since $2(\gamma+\delta)\alpha\zeta+\dd \E(\zeta)=0$, we get from Lemma~\ref{L.E-decomp-order} that
\begin{equation}\label{E-L-rem} 2\alpha(\gamma+\delta)\zeta +\dd \E_2(\zeta) = 2\alpha(\gamma+\delta)\zeta+\dd \E(\zeta)-\dd\E_3(\zeta)-\dd \E_{\rm rem}^{(1)}(\zeta)=-\dd\E_3(\zeta)-\dd \E_{\rm rem}^{(1)}(\zeta),
\end{equation}
where
\[
\dd\E_3(\zeta)=\gamma\dd\overline{\E}_3(\zeta)+\dd\underline{\E}_3(\zeta),
\]
and
\begin{align*}
\dd\overline{\E}_3(\zeta)&=3\zeta^2-\big(\partial_x\F_1\{\zeta\}\big)^2+2\partial_x\F_1\{\zeta\partial_x\F_1\{\zeta\}\}-\frac{2}{3}\partial_x\F_1\{\partial_x\F_1\{\zeta^2\}\}-\frac{4}{3}\zeta\partial_x\F_1\{\partial_x\F_1\{\zeta\}\},\\
\dd\underline{\E}_3(\zeta)&=-3\delta^2\zeta^2+\big(\partial_x\F_2\{\zeta\}\big)^2-2\partial_x\F_2\{\zeta\partial\F_2\{\zeta\}\}+\frac{2}{3}\partial_x\F_2\{\partial_x\F_2\{\zeta^2\}\}+\frac{4}{3}\partial_x\F_2\{\partial_x\F_2\{\zeta\}\}.
\end{align*}
We also have that
\[
\dd\E_2(\zeta)=\gamma\dd\overline{\E}_2(\zeta)+\dd\underline{\E}_2(\zeta)=2(\gamma+\delta)\zeta-\frac{2}{3}\left(\gamma\partial_x\F_1\{\partial_x\F_1\{\zeta\}\}+\delta^{-1}\partial_x\F_2\{\partial_x\F_2\{\zeta\}\}\right).
\]
In frequency space equation~\eqref{E-L-rem} becomes
\[
 2\big((\gamma+\delta)\alpha+\gamma+\delta+\frac13(\gamma(k\F_1(k))^2+\delta^{-1}(k\F_2(k))^2)\big)\widehat\zeta(k)=-\mathcal{F}\Big(\dd\E_3(\zeta)+\dd \E_{\rm rem}^{(1)}(\zeta)\Big)(k).
\]
By using the estimate for $\alpha$ in Theorem~\ref{T.main-result}, we deduce
\begin{equation}\label{freq-est}
\abs{\widehat\zeta(k)}\leq\frac12 \frac{\left|\mathcal{F}\Big(\dd\E_3(\zeta)+\dd \E_{\rm rem}^{(1)}(\zeta)\Big)(k)\right|}{mq^{\frac23}+\frac13(\gamma(k\F_1(k))^2+\delta^{-1}(k\F_2(k))^2)}.
\end{equation}
The estimates follow from~\eqref{freq-est} and a suitable decomposition into high- and low-frequency components. In order to estimate the right-hand-side, we heavily make use of Lemma~\ref{L.ellipticity}: $\Norm{\zeta}_{H^n}^{2}\lesssim q$ for all $n\in\mathbb{N}$. This will be used again throughout the proof without reference.

We first deduce from Lemma~\ref{L.product} that
\[
\Norm{\mathcal{F}\big(\dd\E_3(\zeta)\big)}_{L^1}\lesssim \Norm{\dd\E_3(\zeta)}_{H^1}\lesssim q
\]
and
\[
\Norm{\mathcal{F}\big(\dd\E_3(\zeta)\big)}_{L^\infty}\lesssim  \Norm{\zeta}_{L^2}^2+\Norm{\partial_x\zeta}_{L^2}^2+\Norm{\zeta}_{L^2}\Norm{\partial_x^2\zeta}_{L^2}\lesssim q\]
and, similarly,
\[
\Norm{\mathcal{F}\big(\dd \E_{\rm rem}^{(1)}(\zeta)\big)}_{L^1}+\Norm{\mathcal{F}\big(\dd \E_{\rm rem}^{(1)}(\zeta)\big)}_{L^\infty}\lesssim q^\frac{3}{2}.
\]
By the definition of admissible Fourier multipliers in~\eqref{D.admissible}, there exists $c_0,k_0>0$ such that
\[\forall k\in \RR\setminus[-k_0,k_0], \quad |k|\F(k) \geq c_0.\]
We also assume that $\F(k)>0$, and therefore there exists $c_0'>0$ such that
\[\forall k\in [-k_0,k_0], \quad \F(k) \geq c_0'.\]
As a consequence, we have
\[\sup_{k\in\RR\setminus [-k_0,k_0]} \frac1{mq^{\frac23}+\frac13(\gamma(k\F_1(k))^2+\delta^{-1}(k\F_2(k))^2)}\lesssim 1\]
and
\[\int_{-k_0}^{k_0}\frac1{mq^{\frac23}+\frac13(\gamma(k\F_1(k))^2+\delta^{-1}(k\F_2(k))^2)}\dd k\lesssim q^{-\frac13}.\]

Now, we decompose
\begin{equation} \label{bound-zeta-Linfty}
\Norm{\zeta}_{L^\infty}\leq \frac1{\sqrt{2\pi}}\Norm{\widehat\zeta}_{L^1}\leq \frac12\int_{\RR}\frac1{ mq^{\frac23}+\frac13(\gamma(k\F_1(k))^2+\delta^{-1}(k\F_2(k))^2)}\left|\mathcal{F}\Big(\dd\E_3(\zeta)+\dd \E_{\rm rem}^{(1)}(\zeta)\Big)(k)\right|\dd k.
\end{equation}
into high- and low-frequency components and estimate each part.
For the low frequency part we have
\begin{multline*}
\int_{-k_0}^{k_0}\frac1{mq^{\frac23}+\frac13(\gamma(k\F_1(k))^2+\delta^{-1}(k\F_2(k))^2)}\left|\mathcal{F}\Big(\dd\E_3(\zeta)+\dd \E_{\rm rem}^{(1)}(\zeta)\Big)(k)\right|\dd k\\
\leq \Norm{\mathcal{F}\Big(\dd\E_3(\zeta)+\dd \E_{\rm rem}^{(1)}(\zeta)\Big)(k)}_{L^\infty} \times \int_{-k_0}^{k_0}\frac1{mq^{\frac23}+\frac13(\gamma(k\F_1(k))^2+\delta^{-1}(k\F_2(k))^2)}\dd k\lesssim q^{\frac23}.
\end{multline*}
For the high-frequency part
\begin{align*}
&\int_{\RR\setminus [-k_0,k_0]}\frac1{mq^{\frac23}+\frac13(\gamma(k\F_1(k))^2+\delta^{-1}(k\F_2(k))^2)}\left|\mathcal{F}\Big(\dd\E_3(\zeta)+\dd \E_{\rm rem}(\zeta)\Big)(k)\right|\dd k\\
&\leq \Norm{\mathcal{F}\Big(\dd\E_3(\zeta)+\dd \E_{\rm rem}(\zeta)\Big)(k)}_{L^1} \sup_{k\in\RR\setminus [-k_0,k_0]} \frac1{mq^{\frac23}+\frac13(\gamma(k\F_1(k))^2+\delta^{-1}(k\F_2(k))^2)}\lesssim q.
\end{align*}
Combining the above estimates in~\eqref{bound-zeta-Linfty} gives us the inequality~\eqref{L-infty-est}.

Let us now turn to~\eqref{derivative-est}. By~\eqref{freq-est} we have
\begin{equation}\label{bound-dxzeta}
\Norm{\partial_x\zeta}_{L^2}^2\leq\frac{1}{4}\int_\RR\frac{k^{2}\abs{\mathcal{F}(\dd\E_3(\zeta)+\dd\E_{\rm rem}^{(1)}(\zeta))}^2}{(mq^\frac{2}{3}+\frac13(\gamma(k\F_1(k))^2+\delta^{-1}(k\F_2(k))^2))^2}\dd k
\end{equation}
We estimate the low-frequency part as above:
\begin{multline*}
\int_{-k_0}^{k_0}\frac{k^2\abs{\mathcal{F}(\dd\E_3(\zeta)+\dd\E_{\rm rem}^{(1)}(\zeta))(k)}^2}{(mq^\frac{2}{3}+\frac13(\gamma(k\F_1(k))^2+\delta^{-1}(k\F_2(k))^2))^2}\ dk\\
\leq \Norm{\mathcal{F}(\dd\E_3(\zeta)+\dd\E_{\rm rem}^{(1)}(\zeta))(k)}_{L^\infty}^2 \times \int_{-k_0}^{k_0}\frac{k^2}{(mq^\frac{2}{3}+\frac13(\gamma(k\F_1(k))^2+\delta^{-1}(k\F_2(k))^2))^2}\ \dd k\lesssim q^{\frac53}.
\end{multline*}
As for the high frequency part, we notice that
\[ \Norm{k\mathcal{F}(\dd\E_{\rm rem}^{(1)}(\zeta))}_{L^2}\lesssim q^{\frac32}\]
and
\begin{align*}
\Norm{k\mathcal{F}(\dd\E_3(\zeta))}_{L^2}&\lesssim \Norm{\zeta}_{L^\infty}\Norm{\partial_x\zeta}_{L^2}+\Norm{\partial_x\zeta}_{L^2}\Norm{\partial_x^2\zeta}_{H^1}+\Norm{\zeta}_{L^\infty}\Norm{\partial_x^3\zeta}_{L^2}\\
&\lesssim q^\frac{7}{6}+q^\frac{1}{2}\Norm{\partial_x\zeta}_{L^2},
\end{align*}
where we used~\eqref{L-infty-est}. It follows that
\begin{multline*}
\int_{\RR\setminus[-k_0,k_0]}\frac{k^2\abs{\mathcal{F}(\dd\E_3(\zeta)+\dd\E_{\rm rem}^{(1)}(\zeta))(k)}^2}{(cq^\frac{2}{3}+\frac13(\gamma(k\F_1(k))^2+\delta^{-1}(k\F_2(k))^2))^2}\ \dd k\\
\leq \Norm{k\mathcal{F}(\dd\E_3(\zeta)+\dd\E_{\rm rem}^{(1)}(\zeta))}_{L^2}^2\times \sup_{k\in\RR\setminus [-k_0,k_0]} \frac{1}{(mq^\frac{2}{3}+\frac13(\gamma(k\F_1(k))^2+\delta^{-1}(k\F_2(k))^2))^2}\lesssim q^\frac{7}{3}+q\Norm{\partial_x\zeta}_{L^2}^2 .
\end{multline*}
Combining the high and low frequency estimates into~\eqref{bound-dxzeta} gives us
\[
\Norm{\partial_x\zeta}_{L^2}^2\lesssim q^\frac{5}{3}+q\Norm{\partial_x\zeta}_{L^2}^2,
\]
hence, for $q_0$ sufficiently small we get~\eqref{derivative-est}.

We conclude with the proof of~\eqref{second-derivative-est}. By~\eqref{freq-est} we have
\begin{equation}\label{bound-dx2zeta}
\Norm{\partial_x^2\zeta}_{L^2}^2\leq\frac{1}{4}\int_\RR\frac{k^{4}\abs{\mathcal{F}(\dd\E_3(\zeta)+\dd\E_{\rm rem}^{(1)}(\zeta))}^2}{(mq^\frac{2}{3}+\frac13(\gamma(k\F_1(k))^2+\delta^{-1}(k\F_2(k))^2))^2}\dd k.
\end{equation}
Now we remark that by the Cauchy-Schwarz inequality and integration by parts, one has 
\[\forall j\geq 2, \qquad \Norm{\partial_x^j \zeta}_{L^2}^2\leq \Norm{\partial_x\zeta}_{L^2}\Norm{\partial_x^{2j-1}\zeta}_{L^2}\lesssim q^{\frac43}.\]
Since $\Norm{\zeta}_{L^\infty}$ and $\Norm{\partial_x\zeta}_{L^2}$ satisfy similar estimates by~\eqref{L-infty-est} and~\eqref{derivative-est}, we have
\begin{align*}
\Norm{k^2\mathcal{F}(\dd\E_3(\zeta))}_{L^2}&\lesssim \Norm{\zeta}_{L^\infty}\Norm{\partial_x^2\zeta}_{L^2}+\Norm{\partial_x\zeta}_{H^1}\Norm{\partial_x\zeta}_{L^2}+\Norm{\zeta}_{L^\infty}\Norm{\partial_x^4\zeta}_{L^2}+ \Norm{\partial_x\zeta}_{H^1}\Norm{\partial_x^3\zeta}_{L^2}\\
&\qquad+\Norm{\partial_x^2\zeta}_{H^1}\Norm{\partial_x^2\zeta}_{L^2}\\
&\lesssim q^{\frac43},
\end{align*}
and
\begin{align*}
\Norm{k\mathcal{F}(\dd\E_3(\zeta))}_{L^\infty}&\lesssim \Norm{\zeta}_{L^2}\Norm{\partial_x\zeta}_{L^2}+\Norm{\partial_x\zeta}_{L^2}\Norm{\partial_x^2\zeta}_{L^2}+\Norm{\zeta}_{L^2}\Norm{\partial_x^3\zeta}_{L^2}\\
&\lesssim q^\frac{8}{6}+q^\frac{1}{2}\Norm{\partial_x^3\zeta}_{L^2}\\
&\lesssim q^\frac{8}{6}+q^\frac{1}{2}\Norm{\partial_x^2\zeta}_{L^2}^{\frac12}\Norm{\partial_x^4\zeta}_{L^2}^{\frac12}\\
&\lesssim q^\frac{8}{6}+q^\frac{5}{6}\Norm{\partial_x^2\zeta}_{L^2}^{\frac12}\\
&\lesssim q^\frac{8}{6}+q^\frac{1}{3}\Norm{\partial_x^2\zeta}_{L^2},
\end{align*}
in addition to
\[ \Norm{k\mathcal{F}(\E_{\rm rem}^{(1)}(\zeta))}_{L^\infty}+\Norm{k^2\mathcal{F}(\E_{\rm rem}^{(1)}(\zeta))}_{L^2}\lesssim q^{\frac32}.\]

Thus, proceeding as above, we find
\begin{multline*}
\int_{-k_0}^{k_0}\frac{k^4\abs{\mathcal{F}(\dd\E_3(\zeta)+\dd\E_{\rm rem}^{(1)}(\zeta))(k)}^2}{(mq^\frac{2}{3}+\frac13(\gamma(k\F_1(k))^2+\delta^{-1}(k\F_2(k))^2))^2}\ dk\\
\leq \Norm{k\mathcal{F}(\dd\E_3(\zeta)+\dd\E_{\rm rem}^{(1)}(\zeta))(k)}_{L^\infty}^2 \times \int_{-k_0}^{k_0}\frac{k^2}{(mq^\frac{2}{3}+\frac13(\gamma(k\F_1(k))^2+\delta^{-1}(k\F_2(k))^2))^2}\ \dd k\lesssim q^{\frac73}+q^\frac{1}{3}\Norm{\partial_x^2\zeta}_{L^2}^2
\end{multline*}
and
\begin{multline*}
\int_{\RR\setminus[-k_0,k_0]}\frac{k^4\abs{\mathcal{F}(\dd\E_3(\zeta)+\dd\E_{\rm rem}^{(1)}(\zeta))(k)}^2}{(mq^\frac{2}{3}+\frac13(\gamma(k\F_1(k))^2+\delta^{-1}(k\F_2(k))^2))^2}\ \dd k\\
\leq \Norm{k^2\mathcal{F}(\dd\E_3(\zeta)+\dd\E_{\rm rem}^{(1)}(\zeta))}_{L^2}^2\times \sup_{k\in\RR\setminus [-k_0,k_0]} \frac{1}{(mq^\frac{2}{3}+\frac13(\gamma(k\F_1(k))^2+\delta^{-1}(k\F_2(k))^2))^2}\lesssim q^\frac{8}{3} .
\end{multline*}
Plugging these estimates into~\eqref{bound-dx2zeta} and restricting $q\in(0,q_0)$ if necessary yields~\eqref{second-derivative-est}, and the proof is complete.
\end{proof}

\subsection{Convergence results; proof of Theorem~\ref{T.asymptotic-result}}
We are now in position to relate the minimizers of $\E$ in $D_{q,R}$ with the corresponding solution of the {\rm KdV} equation. We first compare 
\[ I_{\rm KdV}=\inf\{\E_{\rm KdV}(\xi)\ :\ \xi\in U^1\} \quad \text{ and } \quad I_q= \inf\{\E(\zeta)\ : \ \zeta\in V_{q,R}\}.\]
\begin{Lemma}\label{L.Iq vs IKdV}
There exists $q_0>0$ such that the quantities $I_q$ and $I_{\rm KdV}$ satisfy
\begin{align}
I_q&=q+\E_{\rm KdV}(\zeta)+\mathcal{O}(q^2),\ \text{uniformly over minimizers of }\E \text{ in }V_{q,R},\label{inf-1}\\
I_q&=q+q^\frac{5}{3}I_{\rm KdV}+\mathcal{O}(q^2)=q-\frac35 \alpha_0 q^\frac{5}{3}+\mathcal{O}(q^2).\label{inf-2}
\end{align}	
uniformly over $q\in(0,q_0)$.
\end{Lemma}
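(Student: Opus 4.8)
The plan is to prove the two estimates~\eqref{inf-1} and~\eqref{inf-2} by sandwiching $I_q$ from above and below. For~\eqref{inf-1}, I would start from a minimizer $\zeta\in D_{q,R}$ and use Lemma~\ref{L.E-decomp-KdV} with $l=3$ to write
\[
\E(\zeta)=(\gamma+\delta)\Norm{\zeta}_{L^2}^2+\int_\RR (\gamma-\delta^2)\zeta^3+\tfrac{\gamma+\delta^{-1}}{3}(\partial_x\zeta)^2\ \dd x+\E_{\rm rem}(\zeta)
= q+\E_{\rm KdV}(\zeta)+\E_{\rm rem}(\zeta).
\]
The point is then to show $\E_{\rm rem}(\zeta)=\O(q^2)$. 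This follows by feeding the refined bounds of Lemma~\ref{L.zeta-Linfty-bound}, namely $\Norm{\zeta}_{L^\infty}\lesssim q^{2/3}$, $\Norm{\partial_x\zeta}_{L^2}^2\lesssim q^{5/3}$, $\Norm{\partial_x^3\zeta}_{L^2}^2\lesssim q^{4/3}$ (the last from Lemma~\ref{L.ellipticity} and integration by parts, as in the proof of Lemma~\ref{L.zeta-Linfty-bound}), together with $\Norm{\zeta}_{L^2}^2\lesssim q$, into the remainder bound of Lemma~\ref{L.E-decomp-KdV}:
\[
\abs{\E_{\rm rem}(\zeta)}\lesssim \Norm{\zeta}_{L^\infty}^2\Norm{\zeta}_{L^2}^2+\Norm{\zeta}_{L^\infty}\Norm{\partial_x\zeta}_{L^2}^2+\Norm{\partial_x^3\zeta}_{L^2}\Norm{\partial_x\zeta}_{L^2}\lesssim q^{4/3}\cdot q + q^{2/3}\cdot q^{5/3} + q^{2/3}\cdot q^{4/3} = \O(q^2).
\]
This establishes~\eqref{inf-1} (noting $I_q=\E(\zeta)$ by Theorem~\ref{T.main-result}).

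For~\eqref{inf-2}, the lower bound $I_q\ge q+q^{5/3}I_{\rm KdV}+\O(q^2)$ follows from~\eqref{inf-1} by substituting the scaling~\eqref{KdV-scaling}: writing $\zeta=S_{\rm KdV}(\xi)$, i.e.\ $\xi(y)=q^{-2/3}\zeta(q^{-1/3}y)$, a change of variables gives $(\gamma+\delta)\Norm{\xi}_{L^2}^2=(\gamma+\delta)\Norm{\zeta}_{L^2}^2 q^{-1/3-\ldots}$; more precisely one checks $\Norm{\xi}_{L^2}^2=q^{-1/3}\Norm{\zeta}_{L^2}^2$... I would instead track the homogeneities directly: under $\zeta(x)=q^{2/3}\xi(q^{1/3}x)$ one has $\int\zeta^3\,\dd x=q^{5/3}\int\xi^3\,\dd y$, $\int(\partial_x\zeta)^2\,\dd x=q^{5/3}\int(\partial_y\xi)^2\,\dd y$ and $\int\zeta^2\,\dd x=q^{1}\int\xi^2\,\dd y$, so the constraint $(\gamma+\delta)\Norm{\zeta}_{L^2}^2=q$ becomes $(\gamma+\delta)\Norm{\xi}_{L^2}^2=1$, i.e.\ $\xi\in U_1$, and $\E_{\rm KdV}(\zeta)=q^{5/3}\E_{\rm KdV}(\xi)\ge q^{5/3}I_{\rm KdV}$. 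For the matching upper bound, I would take the explicit minimizer $\xi_{\rm KdV}\in U_1$ (or any near-minimizer), set $\zeta_q=S_{\rm KdV}(\xi_{\rm KdV})$, check $\zeta_q\in V_{q,R}$ for $q$ small (using that $\Norm{\zeta_q}_{H^\nu}^2\lesssim q$ by the same homogeneity count and smoothness of $\xi_{\rm KdV}$), and apply Lemma~\ref{L.E-decomp-KdV} to $\zeta_q$ to get $I_q\le \E(\zeta_q)=q+q^{5/3}\E_{\rm KdV}(\xi_{\rm KdV})+\O(q^2)=q+q^{5/3}I_{\rm KdV}+\O(q^2)$. Combining the two bounds gives~\eqref{inf-2}; the final equality uses $I_{\rm KdV}=-\tfrac35\alpha_0$, already recorded above.

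The main obstacle is the control of the remainder term $\E_{\rm rem}$ at the sharp order $\O(q^2)$: the bound in Lemma~\ref{L.E-decomp-KdV} contains the term $\Norm{\partial_x^3\zeta}_{L^2}\Norm{\partial_x\zeta}_{L^2}$, which is only $\O(q^{2/3})\cdot\O(q^{5/6})=\O(q^{3/2})$ if one naively uses $\Norm{\partial_x^3\zeta}_{L^2}^2\lesssim q$ from Lemma~\ref{L.ellipticity}. The gain to $\O(q^2)$ requires the sharper interpolation $\Norm{\partial_x^3\zeta}_{L^2}^2\le\Norm{\partial_x\zeta}_{L^2}\Norm{\partial_x^5\zeta}_{L^2}\lesssim q^{5/6}\cdot q^{1/2}=q^{4/3}$, exactly the Cauchy-Schwarz-plus-integration-by-parts trick used inside the proof of Lemma~\ref{L.zeta-Linfty-bound} (where $\Norm{\partial_x^j\zeta}_{L^2}^2\lesssim q^{4/3}$ for $j\ge2$ is derived); one must be careful to invoke that refined estimate rather than the coarser elliptic bound. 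Everything else is bookkeeping of homogeneities under the KdV scaling and routine applications of the product and composition estimates.
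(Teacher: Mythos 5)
Your overall strategy (apply Lemma~\ref{L.E-decomp-KdV} to a minimizer, then sandwich $I_q$ from below via $\E_{\rm KdV}(\zeta)=q^{5/3}\E_{\rm KdV}(\xi)\geq q^{5/3}I_{\rm KdV}$ and from above by testing with $S_{\rm KdV}(\xi_{\rm KdV})$) is exactly the paper's. However, there is a genuine gap in the remainder estimate, precisely at the point you flagged as the main obstacle.

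You invoke Lemma~\ref{L.E-decomp-KdV} with $l=3$, so the remainder bound contains $\Norm{\partial_x^3\zeta}_{L^2}\Norm{\partial_x\zeta}_{L^2}$. Your sharpened interpolation gives $\Norm{\partial_x^3\zeta}_{L^2}\lesssim q^{2/3}$. But from~\eqref{derivative-est} we have $\Norm{\partial_x\zeta}_{L^2}\lesssim q^{5/6}$, not $q^{4/3}$ as your final tally asserts. Thus this term is only $q^{2/3}\cdot q^{5/6}=q^{3/2}$, not $\O(q^2)$; the last line of your arithmetic ($q^{2/3}\cdot q^{4/3}$) does not correspond to what Lemma~\ref{L.zeta-Linfty-bound} gives. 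An $\O(q^{3/2})$ error is insufficient here: after dividing by $q^{5/3}$ in the proof of Theorem~\ref{T.convergence-minimizers}, it would produce a negative power of $q$ and the convergence $\E_{\rm KdV}(\xi_n)\to I_{\rm KdV}$ would be lost.

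The fix — and what the paper actually does — is to apply Lemma~\ref{L.E-decomp-KdV} with $l=2$, not $l=3$. The relevant remainder term then becomes $\Norm{\partial_x^2\zeta}_{L^2}\Norm{\partial_x\zeta}_{L^2}$, and the dedicated estimate~\eqref{second-derivative-est}, $\Norm{\partial_x^2\zeta}_{L^2}^2\lesssim q^{7/3}$, gives $q^{7/6}\cdot q^{5/6}=q^2$ as required. This is precisely why Lemma~\ref{L.zeta-Linfty-bound} is formulated to go up to the second derivative and why the refined bound $q^{7/3}$ (rather than the cruder interpolation bound $q^{4/3}$ valid for all $j\geq 2$) is proved there. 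In short: choose $l=2$ and use~\eqref{second-derivative-est}; with $l=3$ the crude bound $\Norm{\partial_x^3\zeta}_{L^2}^2\lesssim q^{4/3}$ is not sharp enough, and the sharper $\Norm{\partial_x^3\zeta}_{L^2}^2\lesssim q^{3}$ that the KdV scaling suggests is not available in the paper's toolbox. The rest of your argument, including the lower bound via the scaling $S_{\rm KdV}$ and the upper bound by testing with $S_{\rm KdV}(\xi_{\rm KdV})$, matches the paper.
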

\begin{proof}
Recall that, from Lemma~\ref{L.E-decomp-KdV}, one has for any $\zeta\in H^2$, 
\begin{equation}\label{decomp-KdV-eq}
\E(\zeta)=(\gamma+\delta)\Norm{\zeta}_{L^2}^2+\E_{\rm KdV}(\zeta)+\E_{\rm rem}(\zeta),
\end{equation}
with
\[
\abs{\E_{\rm rem}(\zeta)}\leq C(h_0^{-1},\Norm{\zeta}_{H^1})(\Norm{\zeta}_{L^\infty}^2\Norm{\zeta}_{L^2}^2+\Norm{\zeta}_{L^\infty}\Norm{\partial_x\zeta}_{L^2}^2+\Norm{\partial_x^2\zeta}_{L^2}\Norm{\partial_x\zeta}_{L^2}).
\]
Let $\zeta$ be a minimizer of $\E$  in $V_{q,R}$ and note that $\zeta \in H^2$ by Lemma~\ref{L.ellipticity}. Using Lemma~\ref{L.zeta-Linfty-bound} and~\eqref{decomp-KdV-eq}, we obtain
\[
I_q=\E(\zeta)=q+\E_{\rm KdV}(\zeta)+\mathcal{O}(q^2).
\]
Introducing $\xi=S_{\rm KdV}^{-1}(\zeta)$, we find that $\xi\in U_1$ and
\[
\E_{\rm KdV}(\zeta)=q^\frac{5}{3}\E_{\rm KdV}(\xi)\geq q^\frac{5}{3}I_{\rm KdV}.
\]
Thus we found
\[ I_q\geq q+q^\frac{5}{3}I_{\rm KdV}+\mathcal{O}(q^2).\]
Similarly, notice that $\widetilde\zeta=S_{\rm KdV}(\xi_{\rm KdV})$ satisfies $\widetilde\zeta\in V_{q,R}$ (for $q$ sufficiently small) and, by~\eqref{decomp-KdV-eq}
\[I_q\leq \E(\widetilde\zeta) = q +\E_{\rm KdV}(\widetilde\zeta)+\mathcal{O}(q^2).\]
Since $\E_{\rm KdV}(\widetilde\zeta)=q^\frac{5}{3}\E_{\rm KdV}(\xi_{\rm KdV})=q^\frac{5}{3}I_{\rm KdV}$, we deduce
\[ I_q\leq q+q^\frac{5}{3}I_{\rm KdV}+\mathcal{O}(q^2).\]
We have thus proved~\eqref{inf-2}.
\end{proof}
This next result is the first part of Theorem~\ref{T.asymptotic-result}, which relates the minimizers of $\E$ in $V_{q,R}$ with the minimizers of $\E_{\rm KdV}$ in $U_1$. 
\begin{Theorem}\label{T.convergence-minimizers}
Let $q_0>0$ be such that Theorem~\ref{T.main-result} and Lemma~\ref{L.Iq vs IKdV} hold.
Then for any $q\in(0,q_0)$ and $\zeta\in D_{q,R}$, there exists $x_\zeta\in \RR$ such that
\[
\Norm{q^{-\frac23}\zeta(q^{-\frac13}\cdot)-\xi_{\rm KdV}(\cdot-x_\zeta)}_{H^1}\lesssim q^{\frac16}\ ,
\]
uniformly with respect to $q\in (0,q_0)$ and $\zeta\in D_{q,R}$.
\end{Theorem}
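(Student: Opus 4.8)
The plan is to exploit the near-optimality of the rescaled minimizer for the KdV functional, combined with the quantitative defect estimates already in hand. Fix $\zeta\in D_{q,R}$ and set $\xi\eqdef S_{\rm KdV}^{-1}(\zeta)=q^{-2/3}\zeta(q^{-1/3}\cdot)$, so that $\xi\in U_1$, i.e.\ $(\gamma+\delta)\Norm{\xi}_{L^2}^2=1$. From Lemma~\ref{L.zeta-Linfty-bound} and the scaling~\eqref{KdV-scaling}, $\Norm{\xi}_{L^\infty}\lesssim 1$, $\Norm{\partial_x\xi}_{L^2}^2\lesssim 1$ and $\Norm{\partial_x^2\xi}_{L^2}^2\lesssim 1$, uniformly in $q$ and $\zeta$. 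The first step is to show $\xi$ is an \emph{almost-minimizer} of $\E_{\rm KdV}$ over $U_1$: by~\eqref{decomp-KdV-eq} and the remainder bound of Lemma~\ref{L.E-decomp-KdV} together with Lemma~\ref{L.zeta-Linfty-bound}, one has $\E(\zeta)=q+\E_{\rm KdV}(\zeta)+\O(q^2)=q+q^{5/3}\E_{\rm KdV}(\xi)+\O(q^2)$; and since $\E(\zeta)=I_q=q+q^{5/3}I_{\rm KdV}+\O(q^2)$ by Lemma~\ref{L.Iq vs IKdV}, we conclude $\E_{\rm KdV}(\xi)\le I_{\rm KdV}+\O(q^{1/3})$.

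The second step converts this energy defect into an $H^1$-distance estimate via the (standard) stability of the KdV ground state as a constrained minimizer. Here I would invoke the concentration-compactness / variational characterization recalled before~\eqref{KdV-eq}: the set of minimizers of $\E_{\rm KdV}$ over $U_1$ is exactly the orbit $\{\xi_{\rm KdV}(\cdot-x_0):x_0\in\RR\}$, and any minimizing sequence converges (up to translation and subsequence) strongly in $H^1$. The quantitative version needed is: there is a modulus $\omega$ with $\omega(s)\to0$ as $s\to0^+$ such that $\E_{\rm KdV}(\xi)\le I_{\rm KdV}+s$ and $\xi\in U_1$ imply $\inf_{x_0}\Norm{\xi-\xi_{\rm KdV}(\cdot-x_0)}_{H^1}\le\omega(s)$; this follows by a routine compactness contradiction argument using that $\{\partial_x\xi_n\}$ is bounded in $L^2$, $\Norm{\xi_n}_{L^2}$ is fixed, and the cubic term is weakly continuous after localization (exactly the vanishing/dichotomy exclusion for the scalar KdV functional, which is classical). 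To obtain the \emph{rate} $q^{1/6}$ rather than just $\omega(q^{1/3})$, I would use the nondegeneracy of $\xi_{\rm KdV}$: the linearized operator $\mathcal L=-\frac{\gamma+\delta^{-1}}{3}\partial_x^2+(\gamma+\delta)\alpha_0+3(\gamma-\delta^2)\xi_{\rm KdV}$ has kernel spanned by $\partial_x\xi_{\rm KdV}$ and is positive definite on the $L^2$-orthogonal complement of $\mathrm{span}\{\xi_{\rm KdV},\partial_x\xi_{\rm KdV}\}$ (a standard fact for the KdV soliton). Writing $\xi=\xi_{\rm KdV}(\cdot-x_0)+\eta$ with $x_0$ chosen so that $\eta\perp\partial_x\xi_{\rm KdV}(\cdot-x_0)$, the constraint $\Norm{\xi}_{L^2}^2=\Norm{\xi_{\rm KdV}}_{L^2}^2$ forces the component of $\eta$ along $\xi_{\rm KdV}(\cdot-x_0)$ to be $\O(\Norm{\eta}_{L^2}^2)$, so a Taylor expansion of $\E_{\rm KdV}$ at $\xi_{\rm KdV}(\cdot-x_0)$ gives $\E_{\rm KdV}(\xi)-I_{\rm KdV}\gtrsim \Norm{\eta}_{H^1}^2 - \O(\Norm{\eta}_{H^1}^3)$; combined with the Step-1 bound $\E_{\rm KdV}(\xi)-I_{\rm KdV}=\O(q^{1/3})$ and $\Norm{\eta}_{H^1}\lesssim1$, this yields $\Norm{\eta}_{H^1}^2\lesssim q^{1/3}$, i.e.\ $\Norm{\eta}_{H^1}\lesssim q^{1/6}$. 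Undoing the rescaling with $x_\zeta\eqdef x_0$ gives the claimed estimate $\Norm{q^{-2/3}\zeta(q^{-1/3}\cdot)-\xi_{\rm KdV}(\cdot-x_\zeta)}_{H^1}\lesssim q^{1/6}$, uniformly in $q$ and $\zeta$.

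The main obstacle is the coercivity/nondegeneracy input in Step~2: one must establish (or cite) that the second variation of $\E_{\rm KdV}$ at $\xi_{\rm KdV}$, restricted to the constraint tangent space and modulo the translation direction, is strictly positive, and that the appropriate orthogonality can be imposed by a choice of translation (a simple implicit-function / continuity argument since $\Norm{\eta}_{L^2}$ is already known to be small by Step~1 and the qualitative compactness statement). Everything else — the energy expansions, the uniform bounds on $\xi$, and passing between $\zeta$ and $\xi$ — is bookkeeping already supplied by Lemmata~\ref{L.E-decomp-KdV}, \ref{L.zeta-Linfty-bound} and~\ref{L.Iq vs IKdV}. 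One should also take care that all estimates are uniform over $\zeta\in D_{q,R}$, which is automatic since the constants in Lemma~\ref{L.zeta-Linfty-bound} are uniform and the spectral properties of $\mathcal L$ depend only on the fixed parameters $\gamma,\delta$.
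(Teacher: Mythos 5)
Your proof is correct and follows essentially the same route as the paper: establish $\E_{\rm KdV}(\xi)\le I_{\rm KdV}+\O(q^{1/3})$ from Lemmata~\ref{L.Iq vs IKdV}, \ref{L.E-decomp-KdV} and~\ref{L.zeta-Linfty-bound}, use a compactness/contradiction argument to get qualitative closeness to the $\xi_{\rm KdV}$-orbit, and then convert the $\O(q^{1/3})$ energy defect into an $\O(q^{1/6})$ distance via the coercivity of the constrained second variation at $\xi_{\rm KdV}$. The only difference is presentational: the paper directly cites~\cite[Lemmas~4.1 and~5.2]{BonaSouganidisStrauss87} for the quantitative stability/coercivity step, whereas you sketch that nondegeneracy argument explicitly (choice of $x_0$ by orthogonality, constraint forcing the $\xi_{\rm KdV}$-component of $\eta$ to be second order, Taylor expansion of $\E_{\rm KdV}$).
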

\begin{proof}
Assume that there exists $\epsilon>0$ and a sequence $\zeta_n\in D_{q_n,R}$ with $q_n\searrow 0$ such that
\begin{equation} \label{contradiction}
\forall n\in\NN, \quad \inf_{x_0\in\RR} \Norm{q_n^{-\frac23}\zeta_n(q_n^{-\frac13}\cdot)-\xi_{\rm KdV}(\cdot-x_0)}_{H^1}\geq \epsilon.
\end{equation}
Denote for simplicity $\xi_n(x)=q_n^{-\frac23}\zeta_n(q_n^{-\frac13}x)$. From~\eqref{inf-1} in Lemma~\ref{L.Iq vs IKdV}, we have
\[ I_{q_n}=\E(\zeta_n)=q_n+\E_{\rm KdV}(\zeta_n)+\O(q_n^2)=q_n+q_n^{\frac53}\E_{\rm KdV}(\xi_n)+\O(q_n^2).\]
By~\eqref{inf-2} in Lemma~\ref{L.Iq vs IKdV}, we deduce that 
\[\E_{\rm KdV}(\xi_n)-  I_{\rm KdV} = \O(q_n^{\frac13}) .\]
In particular $\{\xi_n\}_{n\in\NN}$ is a minimizing sequence for $\E_{\rm KdV}$ satisfying the constraint $(\gamma+\delta)\Norm{\xi_n}_{L^2}^2=1$. It follows~\cite{Angulo-Pava09} that there exists a sequence $\{x_n\}_{n\in\NN}$ such that \[\Norm{\xi_n(\cdot-x_n)-\xi_{\rm KdV}}_{H^1}\to 0,\]
which contradicts~\eqref{contradiction}.

The quantitative estimate follows from the argument in~\cite{BonaSouganidisStrauss87}. From the above, we may apply Lemma 4.1 therein and define uniquely $x_\zeta$ such that $\langle\xi,\xi_{\rm KdV}(\cdot-x_\zeta)\rangle=0$, where we denote $\xi=q^{-\frac23}\zeta(q^{-\frac13}\cdot)$. Following the above estimates and~\cite[Lemma 5.2]{BonaSouganidisStrauss87}, we find
\[\Norm{\xi-\xi_{\rm KdV}(\cdot-x_\zeta)}_{H^1}^2\lesssim \E_{\rm KdV}(\xi)-\E_{\rm KdV}(\xi_{\rm KdV})\lesssim q^{\frac13} .\]
This concludes the proof.
\end{proof}
Next we prove the second part of Theorem~\ref{T.asymptotic-result}, which relates the Lagrange multipliers $\alpha$ with the one of the {\rm KdV} equation, $\alpha_0$.
\begin{Theorem}\label{L-multipliers}
The number $\alpha$, defined in Theorem~\ref{T.main-result}, satisfies
\[
\alpha+1=q^\frac{2}{3}\alpha_0+\mathcal{O}(q^\frac{5}{6}),
\]
uniformly over $D_{q,R}$. 
\end{Theorem}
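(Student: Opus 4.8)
The plan is to test the Euler--Lagrange equation~\eqref{sol-alpha} against $\zeta$ itself, expand the resulting quadratic form using Lemma~\ref{L.E-decomp-KdV}, rescale to the KdV variables as in~\eqref{KdV-scaling}, and identify the leading order contribution with the KdV Lagrange multiplier $\alpha_0$ via Theorem~\ref{T.convergence-minimizers}. Concretely, fix $q\in(0,q_0)$ and $\zeta\in D_{q,R}$; by Lemma~\ref{L.ellipticity} we have $\zeta\in H^2(\RR)$, so Lemma~\ref{L.E-decomp-KdV} applies with $l=2$. Pairing~\eqref{sol-alpha} with $\zeta$, using $(\gamma+\delta)\Norm{\zeta}_{L^2}^2=q$ and dividing by $2$ yields
\[
-\alpha\, q = \frac12\langle \dd\E(\zeta),\zeta\rangle = q + \int_\RR \frac32(\gamma-\delta^2)\zeta^3+\frac{\gamma+\delta^{-1}}{3}(\partial_x\zeta)^2\ \dd x + \frac12\langle \dd\E_{\rm rem}(\zeta),\zeta\rangle.
\]
The remainder bound of Lemma~\ref{L.E-decomp-KdV}, combined with the refined estimates of Lemma~\ref{L.zeta-Linfty-bound} (giving $\Norm{\zeta}_{L^\infty}^2\Norm{\zeta}_{L^2}^2\lesssim q^{7/3}$, $\Norm{\zeta}_{L^\infty}\Norm{\partial_x\zeta}_{L^2}^2\lesssim q^{7/3}$ and $\Norm{\partial_x^2\zeta}_{L^2}\Norm{\partial_x\zeta}_{L^2}\lesssim q^{2}$) together with the uniform bound $\Norm{\zeta}_{H^1}\lesssim 1$ from Lemma~\ref{L.ellipticity}, then shows $\langle \dd\E_{\rm rem}(\zeta),\zeta\rangle=\O(q^2)$, uniformly over $D_{q,R}$.

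Next I would rescale: writing $\zeta=S_{\rm KdV}(\xi)$ as in~\eqref{KdV-scaling}, one has $\xi\in U_1$, and the change of variables $y=q^{1/3}x$ gives $\int_\RR \zeta^3\,\dd x = q^{5/3}\int_\RR \xi^3\,\dd x$ and $\int_\RR (\partial_x\zeta)^2\,\dd x = q^{5/3}\int_\RR (\partial_x\xi)^2\,\dd x$. Substituting into the identity above produces
\[
-\alpha\, q = q + q^{5/3}\int_\RR \frac32(\gamma-\delta^2)\xi^3+\frac{\gamma+\delta^{-1}}{3}(\partial_x\xi)^2\ \dd x + \O(q^2) = q + \frac12 q^{5/3}\langle \dd\E_{\rm KdV}(\xi),\xi\rangle + \O(q^2).
\]

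To identify the leading term, I would test the KdV Euler--Lagrange equation~\eqref{Euler-Lagrange-KdV} against $\xi_{\rm KdV}$ and use $(\gamma+\delta)\Norm{\xi_{\rm KdV}}_{L^2}^2=1$, which gives $\tfrac12\langle \dd\E_{\rm KdV}(\xi_{\rm KdV}),\xi_{\rm KdV}\rangle = -\alpha_0$. Since $\xi\mapsto\langle \dd\E_{\rm KdV}(\xi),\xi\rangle$ is a sum of a cubic and a quadratic form, hence locally Lipschitz on $H^1(\RR)$, and is translation invariant, Theorem~\ref{T.convergence-minimizers}---which furnishes $x_\zeta\in\RR$ with $\Norm{\xi-\xi_{\rm KdV}(\cdot-x_\zeta)}_{H^1}\lesssim q^{1/6}$ while $\Norm{\xi}_{H^1}\lesssim 1$---yields $\tfrac12\langle \dd\E_{\rm KdV}(\xi),\xi\rangle = -\alpha_0 + \O(q^{1/6})$. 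Plugging this back gives $-\alpha\,q = q - q^{5/3}\alpha_0 + \O(q^{11/6})$, and dividing by $q$ produces $\alpha+1 = q^{2/3}\alpha_0 + \O(q^{5/6})$, uniformly over $q\in(0,q_0)$ and $\zeta\in D_{q,R}$, as claimed.

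I do not expect a substantial obstacle once Lemmas~\ref{L.ellipticity} and~\ref{L.zeta-Linfty-bound} and Theorem~\ref{T.convergence-minimizers} are available; the main points that need care are the exponent bookkeeping---verifying that the $\E_{\rm rem}$ contribution in the first step, of size $\O(q^2)$, is indeed dominated by the $\O(q^{11/6})$ term once everything is divided by $q$---and checking that the local Lipschitz bound for $\langle \dd\E_{\rm KdV}(\cdot),\cdot\rangle$ on bounded subsets of $H^1(\RR)$ genuinely transfers the $\O(q^{1/6})$ convergence rate of Theorem~\ref{T.convergence-minimizers} without loss.
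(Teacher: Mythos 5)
Your proof is correct and is essentially the paper's: you test~\eqref{sol-alpha} against $\zeta$, decompose via Lemma~\ref{L.E-decomp-KdV}, control the remainder using Lemma~\ref{L.zeta-Linfty-bound}, rescale to KdV variables, and transfer the $\O(q^{1/6})$ rate from Theorem~\ref{T.convergence-minimizers} to the (locally Lipschitz, translation-invariant) functional $\langle\dd\E_{\rm KdV}(\cdot),\cdot\rangle$. The only surface difference is that you record the remainder bound as $\O(q^2)$ while the paper quotes $\O(q^{7/3})$, but either way it is dominated by the $\O(q^{11/6})$ error coming from Theorem~\ref{T.convergence-minimizers}, so the conclusion is unaffected.
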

\begin{proof}
By Lemma~\ref{L.E-decomp-KdV}, we have 
 \[\langle \dd\E(\zeta),\zeta\rangle=2(\gamma+\delta)\int_{\RR}\zeta^2\ \dd x+ \langle \dd\E_{\rm KdV}(\zeta),\zeta\rangle +\langle \dd\E_{\rm rem}(\zeta),\zeta\rangle\]
where, using Lemma~\ref{L.zeta-Linfty-bound}, one has
\[
\langle \dd\E_{\rm rem}(\zeta),\zeta\rangle\lesssim q^{\frac73},
\]
uniformly for minimizers of $\E$ in $V_{q,R}$, and so
\[
\langle \dd\E(\zeta),\zeta\rangle=2q+q^\frac{5}{3}\langle \dd\E_{\rm KdV}(S_{\rm KdV}^{-1}(\zeta)),S_{\rm KdV}^{-1}(\zeta)\rangle+\mathcal{O}(q^{\frac73}).
\]
By Theorem~\ref{T.convergence-minimizers} there exists $x_\zeta$ such that $\Norm{S_{\rm KdV}^{-1}(\zeta)-\xi_{\rm KdV}(\cdot-x_\zeta)}_{H^1}=\mathcal{O}(q^\frac16)$ as $q\searrow 0$. This implies that
\[
\langle \dd\E_{\rm KdV}(S_{\rm KdV}^{-1}(\zeta)),S_{\rm KdV}^{-1}\zeta\rangle-\langle \dd\E_{\rm KdV}(\xi_{\rm KdV}),\xi_{\rm KdV}\rangle=\mathcal{O}(q^\frac16)\ \text{as }q\searrow 0
\]
and therefore
\[\langle \dd\E(\zeta),\zeta\rangle=2q+q^\frac{5}{3}\langle \dd\E_{\rm KdV}(\xi_{\rm KdV}),\xi_{\rm KdV}\rangle+\mathcal{O}(q^\frac{11}6).\]
Now recall the Euler-Lagrange equations~\eqref{sol-alpha} and~\eqref{Euler-Lagrange-KdV}, which yield immediately
\begin{align*}
2\alpha(\zeta)\ q&=-\langle \dd\E(\zeta),\zeta\rangle ,\\
2\alpha_0&=-\langle \dd\E_{\rm KdV}(\xi_{\rm KdV}),\xi_{\rm KdV}\rangle,
\end{align*}
and the result follows.
\end{proof}

\section{Numerical study}

In this section, we provide numerical illustrations of our results as well as some numerical experiments for situations which are not covered by our results. We first describe our numerical scheme, before discussing the outcome of these simulations.

\paragraph{Description of the numerical scheme}

Our numerical scheme computes solutions for~\eqref{sol-c} for given value of $c$ (and hence does not not follow the minimization strategy developed in this work). Because we seek smooth localized solutions and our operators involve Fourier multipliers, it is very natural to discretize the problem through spectral methods~\cite{Trefethen}. We are thus left with the problem of finding a root for a nonlinear function defined in a finite (but large) dimensional space. To this aim, we employ the Matlab script \texttt {fsolve} which implements the so-called trust-region dogleg algorithm~\cite{ConnGouldToint00} based on Newton's method. For an efficient and successful outcome of the method, it is important to have a fairly precise initial guess. To this aim, we use the exact solution of the Green-Naghdi model, which is either explicit (in the one-layer situation~\cite{Serre53a}) or obtained as the solution of an ordinary differential equation (in the bi-layer situation~\cite{Miyata87,ChoiCamassa99}) that we solve numerically. Our solutions are compared with the corresponding ones of the full Euler system. To compute the latter, we use the Matlab script developed by Per-Olav Rus\aa{}s and documented in~\cite{GrueJensenRusaasEtAl99} in the bilayer configuration while in the one-layer case, the Matlab script of Clamond and Dutykh~\cite{ClamondDutykh13} offer faster and more accurate results (although limited to relatively small velocities).

\paragraph{Two-layer setting}

\begin{figure}[!htbp]
 \subfigure[Small velocity, $c=1.005$.]{
\includegraphics[width=.5\textwidth]{./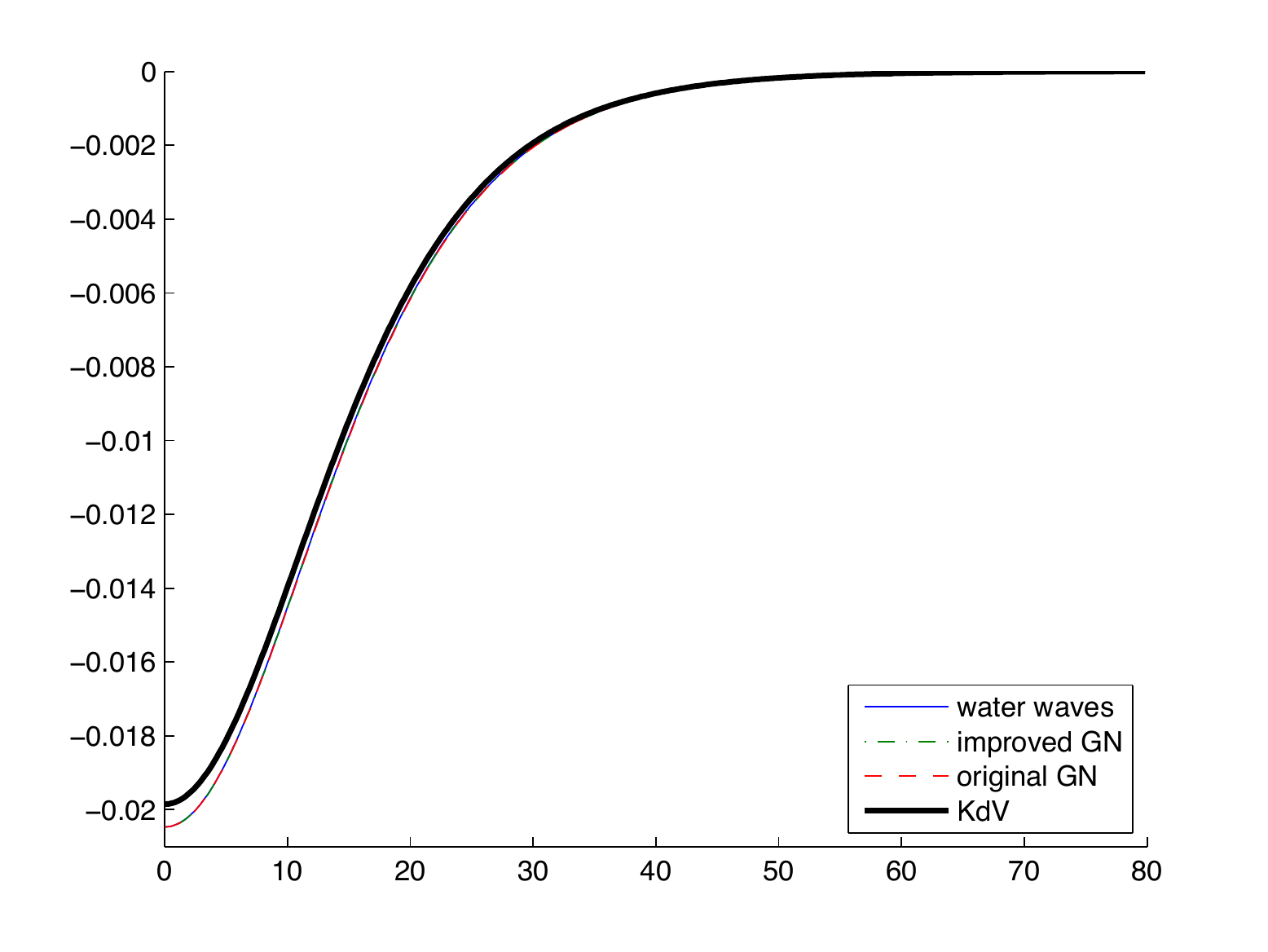}
\label{F.run2-smallq}
}%
\subfigure[Large velocity, $c=1.06065$.]{
\includegraphics[width=.5\textwidth]{./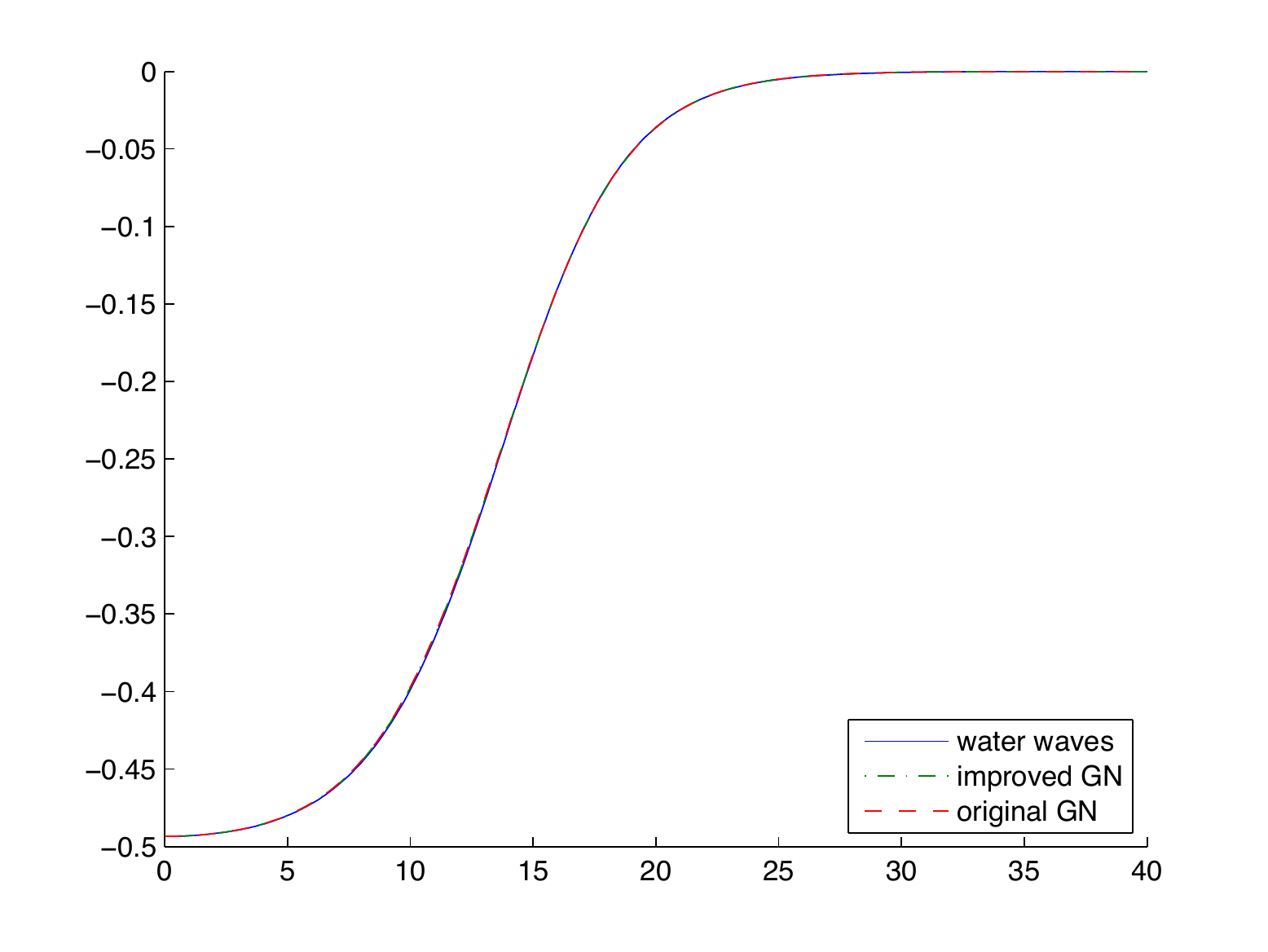}
\label{F.run2-largeq}
}
\caption{Comparison of the bilayer Green-Naghdi models and the water waves system ($\gamma=1,\delta=1/2$).}
\end{figure}

The solitary-wave solutions of the Miyata-Choi-Camassa system have been studied in the original papers of~\cite{Miyata87,ChoiCamassa99}. In particular we know that for a given amplitude, or a given velocity, there exists at most one solitary wave (up to spatial translations). The solitary waves are of elevation if $\delta^2-\gamma>0$, of depression if $\delta^2-\gamma<0$, and do not exist if $\delta^2=\gamma$. Contrarily to the one-layer situation, the bilayer Green-Naghdi model admits solitary waves only for a finite range of velocities (resp. amplitudes), $c\in(1,c_{\rm max}(\gamma,\delta))$ (resp. $|a|\in(0,a_{\rm max}(\gamma,\delta))$). With our choice of parameters (namely $\gamma=1,\delta=1/2$), one has
\[c_{\rm max}=\sqrt{1+1/8}\approx 1.06066 \quad \text{ and } \quad  |a_{\rm max}|=1/2.\]
As the velocity approaches $c_{\rm max}$, the solitary waves broadens and its mass keeps increasing. These type of profiles or often refered to as ``table-top'' profiles, and lead to bore profiles in the limit $c\to c_{\rm max}$.

When the velocity is small the numerically computed solitary wave solutions of the bilayer original ($\F_i=1$) and full dispersion ($\F_i=\F_i^{\rm imp}$) Green-Naghdi systems and the one of the water waves systems (and to a lesser extent the {\rm KdV} model) agree, so that the curves corresponding to the three former models are indistinguishable in see Figure~\ref{F.run2-smallq}. For larger velocities, as in Figure~\ref{F.run2-largeq}, the numerically computed solitary wave solutions of the Green-Naghdi and water waves systems is very different from the $\sech^2$ profile of the solitary wave solution to the Korteweg-de Vries equation. It is interesting to see that both the original and full dispersion Green-Naghdi models offer good approximations, even in this ``large velocity'' limit (the normalized $l^2$ difference of the computed solutions is $\approx 2.10^{-3}$ in both cases). This means that the internal solitary wave keeps a long-wave feature even for large velocities. These observations were already documented and corroborated by laboratory experiments in~\cite{MichalletBarthelemy98,GrueJensenRusaasEtAl99,CamassaChoiMichalletEtAl06}.

\paragraph{One-layer setting} 

\begin{figure}[!b]
 \subfigure[Rescaled solitary waves for $c=1.025,\,1.01,\,1.002$.]{
\includegraphics[width=.5\textwidth]{./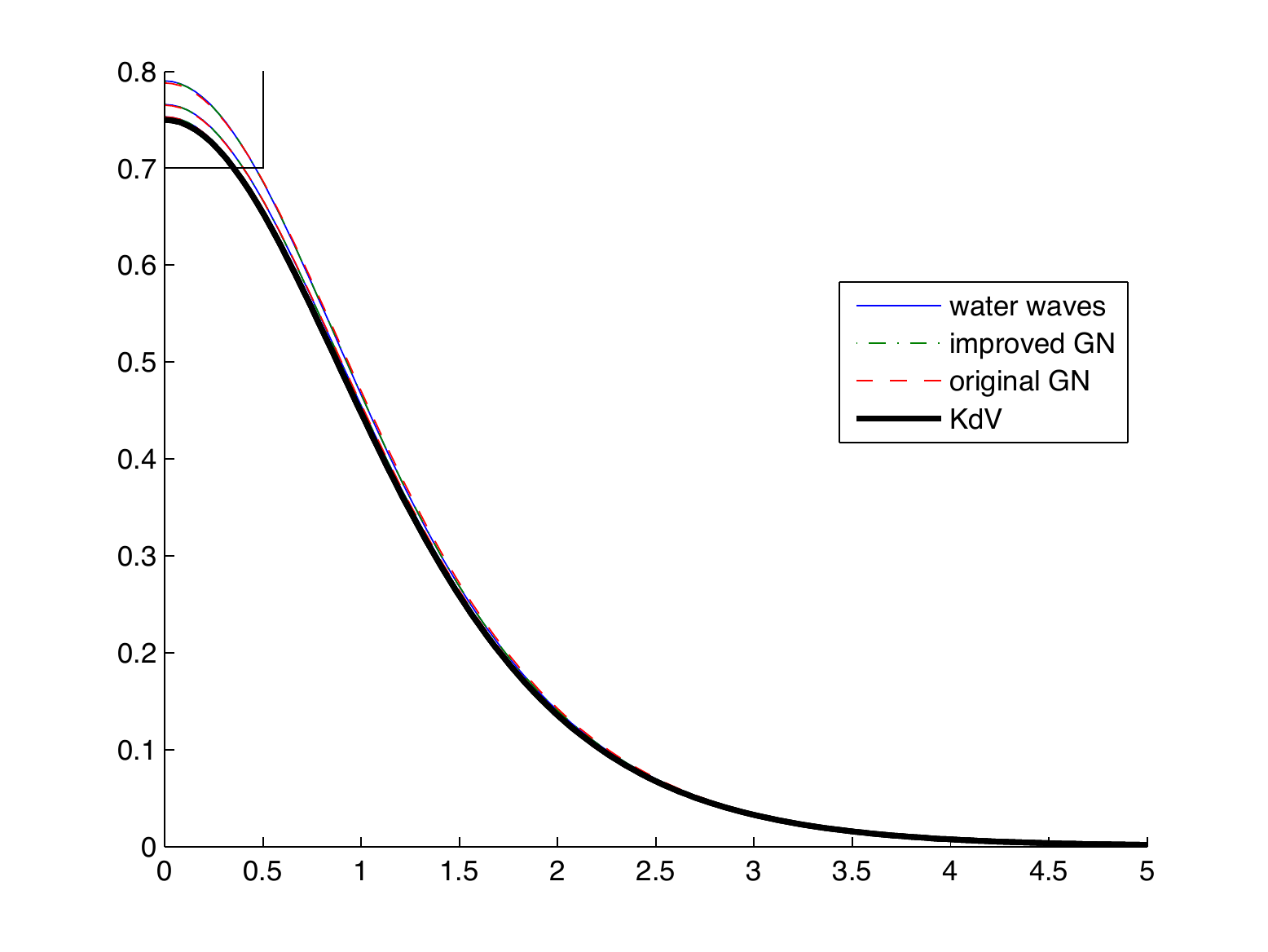}
}%
\subfigure[Close-up.]{
\includegraphics[width=.5\textwidth]{./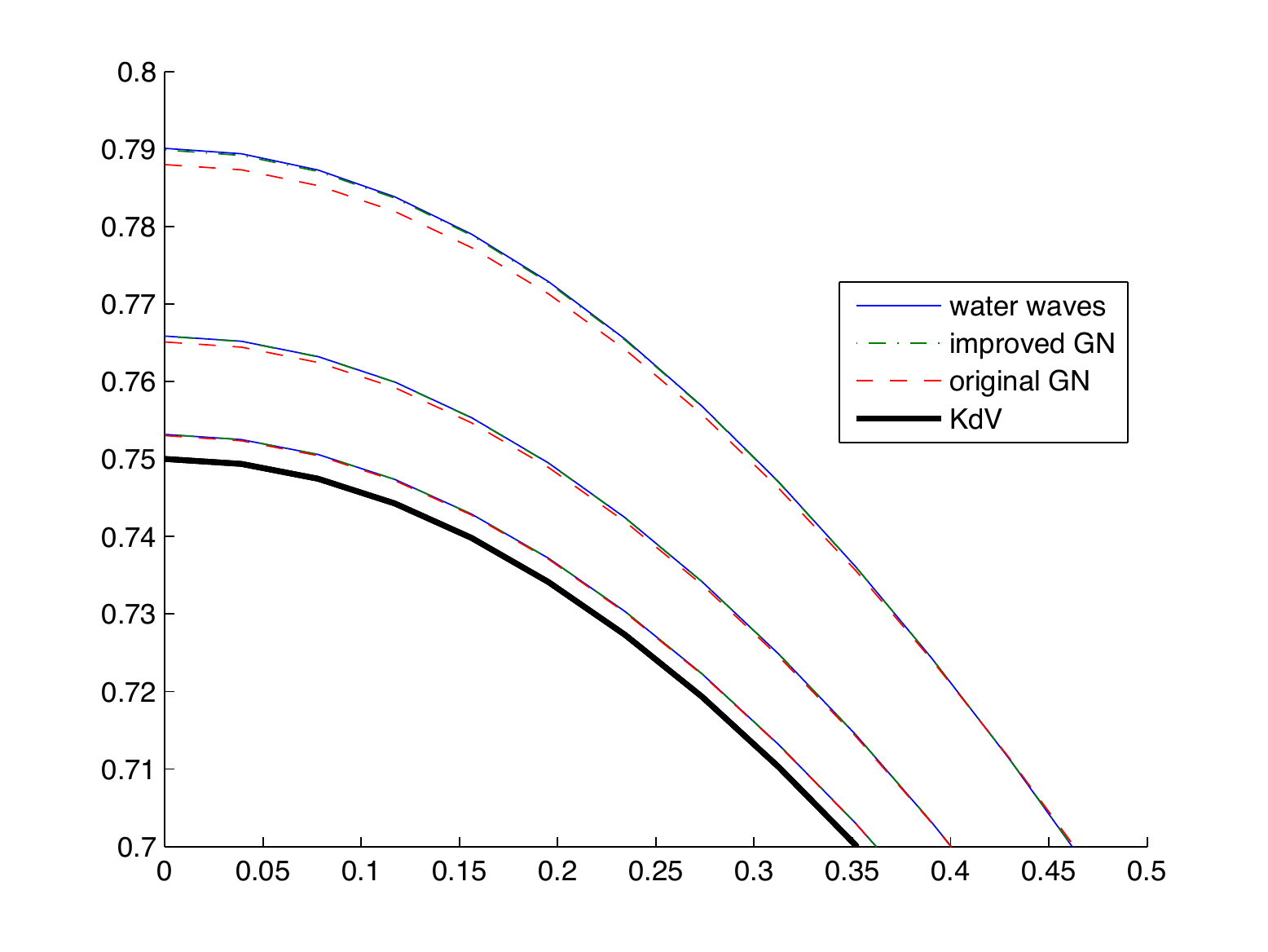}
}
\caption{Comparison of the solutions of the {\rm KdV} and Green-Naghdi models and the water waves system in the one-layer setting ($\gamma=0,\delta=1$).}
\label{F.run1-smallq}
\end{figure}
\begin{figure}[htp]
\centering\includegraphics[width=.5\textwidth]{./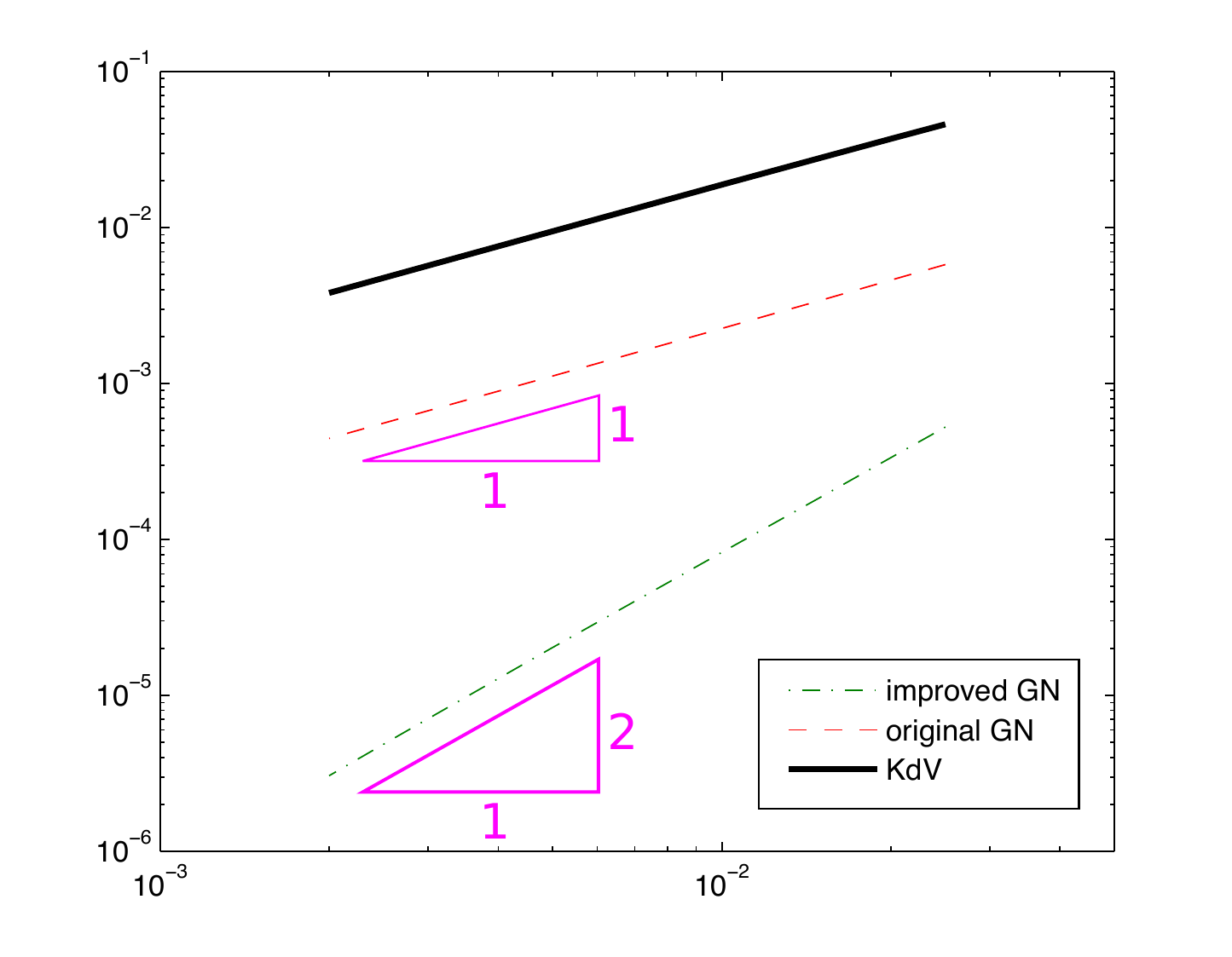}
\caption{Convergence rate. Log-log plot of the normalized $l^2$ norm of the error as a function of $c-1$.}
\label{F.run1-smallq-rate}
\end{figure}

In the one-layer setting, the script by Clamond and Dutykh~\cite{ClamondDutykh13} allows to have a very precise numerical computation of the water waves solitary solution, from which the numerical solutions of the Green-Naghdi models can be compared. In this setting, namely $\gamma=0$ and $\delta=1$, we have an explicit solution for the Green-Naghdi model~\cite{Serre53a}:
\[ \zeta_{\rm GN}(x)=(c^2-1)\sech^2(\frac12\sqrt{3\frac{c^2-1}{c^2}}x) = c^2 \zeta_{\rm KdV}(x).\]
In Figure~\ref{F.run1-smallq}, we compute the solitary waves for our models with different (small) values of the velocity, rescaled by $S_{\rm KdV}^{-1}$. One clearly sees, as predicted by Theorem~\ref{T.asymptotic-result} and the above formula, that the solitary waves converge towards $\xi_{\rm KdV}$ after rescaling, as $c\searrow 1$. One also sees that the water waves solution is closer to the one predicted by the model with full dispersion than the original Green-Naghdi model. Figure~\ref{F.run1-smallq-rate} shows that the convergence rate is indeed quadratic for the full dispersion model whereas it is only linear for the original Green-Naghdi model (and therefore only qualitatively better than the {\rm KdV} model).

\appendix

\section{Paradifferential calculus}
\label{S.para}

The definitions and properties below are collected from~\cite{Metivier08}; see also~\cite{Chemin98,Benzoni-GavageSerre07} for relevant references.

\begin{Definition}[Symbols]\label{D.symbols}
Given $m\in\RR$ and $r\geq 0$, we denote $\Gamma^m_r$ the space of distributions $a(x,\xi)$ on $\RR^2$ such that for almost any $x\in\RR$, $\xi\mapsto a(x,\xi)\in \C^\infty(\RR)$, and
\[\forall \alpha\in\NN, \exists C_\alpha>0 \quad \text{ such that } \quad \forall \xi\in\RR, \quad \Norm{\partial^\alpha_\xi a(\cdot,\xi)}_{W^{r,\infty}}\leq C_\alpha (1+|\xi|)^{m-\alpha},\]
where $W^{r,\infty}$ denote the H\"older space (Lipschitz for integer values).
\end{Definition}

Below, we use an {\em admissible cut-off} function $\psi$ in the sense of~\cite[Definition 5.1.4]{Metivier08} and define paradifferential operators as follows (the constant factor depends on the choice of convention for the Fourier transform).

\begin{Definition}[Paradifferential operators]\label{D.operator}
For $a\in \Gamma^m_0$ and $u\in \mathcal S(\RR)$, we define
\[T_a u(x)\eqdef \frac1{\sqrt{2\pi}}\left\langle \hat u(\cdot),e^{\mathrm{i}x \cdot }\psi(D,\cdot)a(x,\cdot)\right\rangle_{(\mathcal{S}(\mathbb{R}),\mathcal{S}'(\mathbb{R}))},
\]
where $\psi(D,\xi)$ is the Fourier multiplier associated with $\psi(\eta,\xi)$ (here, $\xi$ is a parameter). The operator is defined for $u\in H^s(\RR)$ by density and continuous linear extension. 
\end{Definition}
The following lemma is a direct application of the above definitions~\cite[Theorem 5.1.15]{Metivier08}.
\begin{Lemma}\label{Lemma:action}
For any $r\geq 0$ and  $a\in \Gamma_r^m\subset \Gamma_0^m$, and for all $s\in\RR$, the operator $T_a$ extends in a unique way to a bounded operator from $H^{s+m}$ to $H^s$. 

If $a(\xi)$ is a symbol independent of $x$, then $T_a=a(D)$, the corresponding Fourier multiplier.
\end{Lemma}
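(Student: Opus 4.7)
The plan is to reduce both assertions to off-the-shelf results, since the lemma is explicitly flagged as a direct application of the cited reference.

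For the boundedness statement, I would simply invoke \cite[Theorem 5.1.15]{Metivier08}, which asserts precisely that for any $a\in\Gamma^m_0$ the associated paradifferential operator extends to a bounded map $H^{s+m}\to H^s$ for every $s\in\RR$. The inclusion $\Gamma^m_r\subset \Gamma^m_0$ for $r\geq 0$ is immediate from Definition~\ref{D.symbols}, so no extra regularity in $x$ is needed beyond essentially-bounded control of $\partial_\xi^\alpha a$. The only bookkeeping required is to verify that the pairing in Definition~\ref{D.operator} matches the formulation in Métivier's book up to our Fourier transform convention, which is routine.

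For the second assertion, I would compute directly from Definition~\ref{D.operator}. If $a=a(\xi)$ does not depend on $x$, then for each fixed $\xi$ the function $x\mapsto a(x,\xi)=a(\xi)$ is constant, so its partial Fourier transform in $x$ is $\sqrt{2\pi}\,a(\xi)\,\delta_{\eta=0}$. Since the multiplier $\psi(D,\xi)$ acts on constants (in $x$) by multiplication by $\psi(0,\xi)$, and any admissible cut-off in Métivier's sense satisfies $\psi(0,\xi)\equiv 1$, we obtain $\psi(D,\xi)a(\cdot,\xi)=a(\xi)$. Substituting into Definition~\ref{D.operator} collapses the expression to
\[
T_a u(x)=\frac{1}{\sqrt{2\pi}}\int_\RR e^{\mathrm i x\xi}a(\xi)\hat u(\xi)\,\mathrm d\xi = a(D)u(x)
\]
for $u\in\mathcal S(\RR)$. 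The identity then extends to the relevant Sobolev space by density and the continuity established in the first part.

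No step is a genuine obstacle, since the lemma is essentially a bookkeeping exercise; the single delicate point is the normalisation $\psi(0,\xi)\equiv 1$, which is built into the definition of an admissible cut-off in the reference we follow.
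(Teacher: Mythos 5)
Your proposal is correct and matches the paper's approach, which offers no proof at all beyond the statement ``a direct application of the above definitions'' together with the citation of \cite[Theorem 5.1.15]{Metivier08}. Your direct verification of the Fourier-multiplier identity for $x$-independent symbols is the right way to unpack that claim, and the observation that $\psi(0,\xi)\equiv 1$ is indeed the one normalization fact one needs from the definition of an admissible cut-off.
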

The main tool we use is the following composition property~\cite[Theorem 6.1.1]{Metivier08}
\begin{Lemma}
\label{Lemma:composition}
Let $a\in \Gamma_r^m$ and $b\in \Gamma_r^{m'}$ where $0<r\leq 1$. Then $ab\in \Gamma_{r}^{m+m'}$ 
and $T_{a} T_{b}-T_{ab}$ is a bounded operator from $H^{s+m+m'-r}$ to $H^s$, for any $s\in\RR$.
\end{Lemma}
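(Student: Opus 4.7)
The claim that $ab\in\Gamma_r^{m+m'}$ is routine: by Leibniz's rule, $\partial_\xi^\alpha(ab)$ is a sum of products $\partial_\xi^\beta a\cdot \partial_\xi^{\alpha-\beta} b$, and since $W^{r,\infty}(\RR)$ is a Banach algebra for $r\geq 0$, each such product is $W^{r,\infty}$-bounded by a constant times $(1+|\xi|)^{m+m'-\alpha}$, uniformly in $\xi$. This takes care of the first assertion.

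For the operator estimate, the plan is to pass through the symbolic calculus interpretation of $T_a$. The admissible cut-off $\psi(\eta,\xi)$ from Definition~\ref{D.operator} has the effect that $T_a$ coincides, up to an infinitely smoothing remainder, with the classical pseudodifferential operator of symbol $\sigma_a(x,\xi)\eqdef \mathcal{F}^{-1}_{\eta\to x}\big(\psi(\eta,\xi)\widehat{a}(\eta,\xi)\big)$, whose $x$-spectrum at each fixed $\xi$ is localized in a cone $|\eta|\lesssim 1+|\xi|$. On the Fourier side one may then write $T_aT_bu$ as an explicit oscillatory integral in which the $x$-frequencies are forced by the composed cut-offs to satisfy this cone condition, and performing a Taylor expansion of $\sigma_a(x,\xi+\eta)$ with respect to its second argument around $\xi$ produces the formal Leibniz-type asymptotic expansion
\[a\sharp b\sim ab+\sum_{|\alpha|\geq 1}\frac{1}{i^{|\alpha|}\alpha!}\,\partial_\xi^\alpha a\cdot\partial_x^\alpha b\]
for the composed symbol, modulo smoothing operators.

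Since $0<r\leq 1$, a single Taylor step is sufficient: the integral remainder in $a\sharp b-ab$ picks up one factor involving the Hölder-$r$ continuity of $b$ in $x$, which in Fourier terms translates into a gain of $r$ powers of $(1+|\xi|)$ in decay; Bernstein-type inequalities on the Littlewood-Paley blocks of $a$ and $b$ then yield that the corresponding bilinear operator is bounded from $H^{s+m+m'-r}$ to $H^s$. The main obstacle is bookkeeping rather than any genuine analytic difficulty: one has to choose the admissible cut-off $\psi$ so that the composed cut-off arising in $T_aT_b$ remains admissible and the cone conditions propagate cleanly, and one must track the $W^{r,\infty}$ norms through the dyadic decomposition in order to extract the sharp gain $r$ rather than some smaller exponent. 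Once this is in place, the result follows.
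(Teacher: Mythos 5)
The paper does not prove Lemma~\ref{Lemma:composition}; it is stated as a citation of \cite[Theorem~6.1.1]{Metivier08}, and the two surrounding lemmata are likewise imported from the literature. Your sketch reproduces in outline the standard textbook argument behind that theorem: $ab\in\Gamma^{m+m'}_r$ via Leibniz and the algebra property of $W^{r,\infty}$, and the remainder $T_aT_b-T_{ab}$ treated as a zeroth-order symbolic composition error, with the spectral localization of the smoothed symbols $\sigma_a(\cdot,\xi),\sigma_b(\cdot,\xi)$ in a cone $\abs{\eta}\lesssim 1+\abs{\xi}$ combining with the H\"older-$r$ regularity in $x$ (via Bernstein) to produce the gain of $r$ derivatives. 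That is the right route, and since $0<r\le 1$ the truncated expansion consists only of the product term, so the accounting you describe is what one would do. Be aware that as written this is an outline rather than a proof -- the points you label ``bookkeeping'' (propagation of admissibility of the composed cut-off, the precise Bernstein estimate $\Norm{D_x\sigma_b(\cdot,\xi)}_{L^\infty}\lesssim(1+\abs{\xi})^{1-r}$, and the passage from symbol estimates to $H^{s+m+m'-r}\to H^s$ boundedness) are exactly the technical content, and both $a$ and $b$ are required to have $W^{r,\infty}$ regularity in $x$, not only $b$ as your phrasing suggests. Since the paper simply defers to M\'etivier, citing the reference is the appropriate level of detail here.
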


Of particular interest is the case when the symbol $a(x)\in L^\infty$ is independent of $\xi$. The admissible cut-off function can be constructed so that the paraproduct $T_au$ corresponds to a standard Littlewood-Paley decomposition of the product $au$. This allows to show that $au-T_au$ is a smoothing operator provided that $a$ is sufficiently regular.

\begin{Lemma}
\label{Lemma:paraproduct}
Let $v\in H^s$ and $u\in H^t$, and $r\geq 0$. Then $uv-T_v u \in H^{r} $ provided that $s+t\geq 0,\ s\geq r$ and $s+t>r+1/2$.
\end{Lemma}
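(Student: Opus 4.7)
The plan is to rely on Bony's paraproduct decomposition, which reads
\[ uv \;=\; T_v u + T_u v + R(u,v), \]
where $R(u,v)=\sum_{|j-k|\le N}\Delta_j u\,\Delta_k v$ is the ``resonant'' remainder associated with a Littlewood--Paley decomposition $(\Delta_j)_{j\ge -1}$ (with the paraproducts $T_vu$, $T_uv$ built from the same decomposition as in Definition~\ref{D.operator}). This decomposition is meaningful whenever the product $uv$ has an unambiguous distributional meaning; the condition $s+t\ge 0$ in the hypothesis is precisely the natural one for this to hold. Then $uv-T_vu=T_uv+R(u,v)$, and it is enough to control each of these two terms in $H^r$.

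I would first handle the paraproduct $T_uv$. If $t>1/2$, the Sobolev embedding $H^t\hookrightarrow L^\infty$ together with the standard $L^\infty$--boundedness of the low-frequency truncation yields $T_uv\in H^s$, and since $s\ge r$ we conclude. If on the contrary $t\le 1/2$, Bernstein's inequality gives $\Norm{S_{k-N}u}_{L^\infty}\lesssim 2^{k(1/2-t)}\Norm{u}_{H^t}$, so each dyadic block $S_{k-N}u\cdot\Delta_kv$ is bounded in $L^2$ by $c_k\,2^{-k(s+t-1/2)}\Norm{u}_{H^t}\Norm{v}_{H^s}$ with $(c_k)\in \ell^2$; since these blocks are spectrally localized at frequency $2^k$, this yields $T_uv\in H^{s+t-1/2-\varepsilon}$ for any small $\varepsilon>0$, and the hypothesis $s+t>r+1/2$ gives $T_uv\in H^r$.

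For the remainder $R(u,v)$, I would use the key fact that each diagonal block $\Delta_j u\,\Delta_k v$ with $|j-k|\le N$ is spectrally supported in $\{|\xi|\lesssim 2^j\}$, so we may sum without a spectral-orthogonality loss. Combining $\Norm{\Delta_ju}_{L^2}\lesssim c_j 2^{-js}\Norm{u}_{H^s}$ with a Bernstein estimate on $\Delta_kv\in L^\infty$ (the symmetry between $u$ and $v$ makes this step impartial) yields
\[ \Norm{\Delta_j u\,\Delta_k v}_{L^2} \;\lesssim\; c_j \tilde d_j\, 2^{-j(s+t-1/2)}\Norm{u}_{H^s}\Norm{v}_{H^t}, \]
with $(c_j\tilde d_j)\in\ell^1$. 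The condition $s+t>r+1/2$ (with $s+t\ge 0$ used to make the Cauchy--Schwarz summation legitimate) is precisely what is needed for $\sum_j 2^{2jr}\Norm{\Delta_j u\,\Delta_k v}_{L^2}^2$ to converge, giving $R(u,v)\in H^r$.

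The main obstacle, as usual with low-regularity paraproduct estimates, is the case $t\le 1/2$ in the treatment of $T_uv$: there the low-frequency factor $u$ is not in $L^\infty$, so one must spend part of the $1/2-t$ ``missing derivatives'' on the high-frequency factor $v$ via Bernstein, which is what forces the strict inequality $s+t>r+1/2$ rather than a mere $\ge$. Since the result is entirely standard (it is, up to notation, a consequence of the composition/paraproduct machinery in~\cite[Ch.~5]{Metivier08} and the analogous statements in~\cite{Chemin98,Benzoni-GavageSerre07}), in a final write-up I would present the argument compactly and refer the reader to these sources for the technical Littlewood--Paley estimates.
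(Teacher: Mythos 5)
Your proof is correct in substance and reconstructs, via Bony's decomposition and Littlewood--Paley theory, the argument that underlies the result the paper simply cites (\cite[Theorem~2.4.1]{Chemin98}). The self-contained route is perfectly reasonable, but the one subtlety the paper explicitly highlights is exactly what your write-up skips over: the operator $T_v$ in Definition~\ref{D.operator} is M\'etivier's paradifferential operator built from an admissible cut-off $\psi(\eta,\xi)$, \emph{not} the dyadic paraproduct $\sum_k S_{k-N}v\,\Delta_k u$ that enters the Bony identity $uv=T_vu+T_uv+R(u,v)$ you invoke. The two cut-offs coincide near the paraproduct region, so their difference is supported where $|\eta|\sim|\xi|$; the corresponding operator is of resonant type and satisfies the same $H^{s+t-1/2}$ bound as $R(u,v)$, so the conclusion survives --- but a finished write-up needs to say so explicitly, which is precisely what the paper's ``it is not hard to show'' alludes to. Two further bookkeeping slips: in the estimate of $R(u,v)$ you interchange $s$ and $t$ (since $u\in H^t$ and $v\in H^s$, it should read $\Norm{\Delta_j u}_{L^2}\lesssim c_j 2^{-jt}\Norm{u}_{H^t}$, with the final bound in $\Norm{u}_{H^t}\Norm{v}_{H^s}$); and the resonant blocks $\Delta_ju\,\Delta_kv$ are only ball-localized in frequency, so $\ell^2$ summability of $2^{jr}\Norm{\Delta_ju\,\Delta_kv}_{L^2}$ does not by itself give membership in $H^r$ --- the standard Young-type convolution step is where the strict inequality $s+t>r+1/2$ together with $r\ge 0$ actually enters. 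These are all easy to patch, and your overall strategy (Bony split, $L^\infty$ embedding for $t>1/2$, Bernstein loss for $t\le1/2$, remainder estimate) is the intended one; the paper simply chooses to outsource it to \cite{Chemin98} rather than rederive it.
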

The definitions of the paraproduct in~\cite{Chemin98} and~\cite{Metivier08} differ slightly but it is not hard to show that~\cite[Theorem 2.4.1]{Chemin98} still holds for the paraproduct as it is defined in~\cite{Metivier08}, and Lemma~\ref{Lemma:paraproduct} follows directly from this theorem. 

We conclude with the following lemma, displayed in~\cite[Theorem 5.2.4]{Metivier08}
\begin{Lemma}
\label{Lemma:paralinearization}
Let $G\in \C^\infty(\mathbb R)$ be such that $G(0)=0$. If $u\in H^s$ with $s>1/2$, then $G(u)-T_{G'(u)} u \in H^{2s-1/2}$.
\end{Lemma}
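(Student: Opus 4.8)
The statement is Bony's paralinearization theorem, and the plan is to reproduce the standard Littlewood--Paley proof. Throughout set $\sigma=s-\tfrac12>0$, fix a Littlewood--Paley dyadic decomposition $(\Delta_j)_{j\ge -1}$ with partial sums $S_j=\sum_{k<j}\Delta_k$, and recall: $\Delta_ju$ has Fourier support in an annulus $\{\,\abs{\xi}\sim 2^{j}\,\}$, $S_ju$ in a ball $\{\,\abs{\xi}\lesssim 2^{j}\,\}$, $S_ju\to u$ in $L^\infty$, and (Bernstein plus $u\in H^s$) $\Norm{\Delta_ju}_{L^2}\lesssim 2^{-js}c_j\Norm{u}_{H^s}$, $\Norm{\Delta_ju}_{L^\infty}\lesssim 2^{-j\sigma}c_j\Norm{u}_{H^s}$ with $(c_j)\in\ell^2$. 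Since $u\in H^s\hookrightarrow\C^\sigma_*\cap L^\infty$ (Hölder--Zygmund) and $G$ is smooth, the composition estimate gives $G'(u)\in\C^\sigma_*$, i.e. $G'(u)$ is an $x$-dependent symbol of order $0$ and regularity $\sigma$. The first step is a reduction to the textbook paraproduct: by the standard fact that changing the admissible cut-off in Definition~\ref{D.operator} alters $T_{G'(u)}$ by an operator that is smoothing of order $\sigma$, this change applied to $u\in H^s$ produces only an error in $H^{s+\sigma}=H^{2s-1/2}$; hence it suffices to treat $T_{G'(u)}u=\sum_{j}S_{j-N}\!\big(G'(u)\big)\Delta_ju$ for a fixed large integer $N$, for which the $j$-th summand is spectrally localized in $\{\abs{\xi}\sim 2^j\}$.

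Second, I carry out Bony's telescoping decomposition. Using $S_0u\to u$ and the fundamental theorem of calculus,
\[
G(u)=G(S_0u)+\sum_{j\ge 0}\big(G(S_{j+1}u)-G(S_ju)\big)=G(S_0u)+\sum_{j\ge 0} m_j\,\Delta_ju,\qquad m_j\eqdef\int_0^1 G'\big(S_ju+t\,\Delta_ju\big)\,\dd t,
\]
where $G(S_0u)\in H^{2s-1/2}$ by Lemma~\ref{L.composition} (as $S_0u$ is band-limited, hence in every Sobolev space, and $2s-\tfrac12>\tfrac12$). Subtracting the paraproduct,
\[
G(u)-T_{G'(u)}u=G(S_0u)+\sum_{j\ge 0} r_j,\qquad r_j\eqdef\big(m_j-S_{j-N}G'(u)\big)\Delta_ju,
\]
so the task reduces to $\sum_j r_j\in H^{2s-1/2}$. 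For the $L^2$-size of each term: $\Norm{m_j-G'(u)}_{L^\infty}\lesssim 2^{-j\sigma}\Norm{u}_{H^s}$ (Lipschitz continuity of $G'$ on the bounded range of $u$, together with $\Norm{S_ju+t\Delta_ju-u}_{L^\infty}\le\sum_{k\ge j}\Norm{\Delta_ku}_{L^\infty}\lesssim 2^{-j\sigma}\Norm{u}_{H^s}$) and $\Norm{G'(u)-S_{j-N}G'(u)}_{L^\infty}\lesssim 2^{-j\sigma}$ (since $G'(u)\in\C^\sigma_*$), hence $\Norm{m_j-S_{j-N}G'(u)}_{L^\infty}\lesssim 2^{-j\sigma}$ and $\Norm{r_j}_{L^2}\lesssim 2^{-j\sigma}\Norm{\Delta_ju}_{L^2}\lesssim 2^{-j(2s-1/2)}c_j\Norm{u}_{H^s}$.

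Third is the one genuinely technical point: the summands $r_j$ are not Fourier-localized, because $m_j$ is $G'$ of a band-limited function, not a polynomial. To control $\Delta_p r_j$ for $p\gg j$ I use the quantitative smoothness of $m_j$: the argument $w_{j,t}=S_ju+t\Delta_ju$ is band-limited to $\{\abs{\xi}\lesssim 2^{j+1}\}$ with $\Norm{w_{j,t}}_{L^\infty}\lesssim\Norm{u}_{L^\infty}$ and $\Norm{w_{j,t}}_{\C^\sigma_*}\lesssim\Norm{u}_{H^s}$, so Bernstein gives $\Norm{\partial^k w_{j,t}}_{L^\infty}\lesssim 2^{j(k-\sigma)}\Norm{u}_{H^s}$ for integers $k>\sigma$, and the Fa\`a di Bruno formula yields $\Norm{\partial^k m_j}_{L^\infty}\lesssim 2^{j(k-\sigma)}$ for every integer $k>\sigma$, uniformly in $t$. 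For $p>j+N$ only the frequencies $\sim 2^p$ of $m_j$ contribute to $\Delta_p(m_j\Delta_ju)=\Delta_p r_j$, whence $\Norm{\Delta_p r_j}_{L^2}\lesssim 2^{-pk}2^{j(k-\sigma)}\Norm{\Delta_ju}_{L^2}\lesssim 2^{k(j-p)}2^{-j(2s-1/2)}c_j\Norm{u}_{H^s}$ with $k$ at our disposal, while for $p\le j+N$ one uses $\Norm{\Delta_p r_j}_{L^2}\le\Norm{r_j}_{L^2}$. Combining, $2^{p(2s-1/2)}\Norm{\Delta_p r_j}_{L^2}\lesssim b_{p-j}\,c_j\Norm{u}_{H^s}$ with $b=(b_\ell)_{\ell\in\ZZ}$, $b_\ell=2^{\ell(2s-1/2)}$ for $\ell\le N$ and $b_\ell=2^{-\ell(k-(2s-1/2))}$ for $\ell>N$; choosing $k>2s-\tfrac12$ (and using $2s-\tfrac12>0$ for the $\ell\to-\infty$ tail) makes $b\in\ell^1(\ZZ)$. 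By the discrete Young (Schur) inequality, $\big(2^{p(2s-1/2)}\Norm{\Delta_p(\sum_j r_j)}_{L^2}\big)_p\in\ell^2$ with norm $\lesssim\Norm{b}_{\ell^1}\Norm{(c_j)}_{\ell^2}\Norm{u}_{H^s}$, i.e. $\sum_j r_j\in H^{2s-1/2}$, which closes the proof.

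The main obstacle is the third step: since $m_j$ is not band-limited, $r_j$ cannot be treated as a dyadically localized block, and one must instead convert the Hölder regularity $\sigma$ of $u$ into precise decay estimates for the high frequencies of $m_j$ (Bernstein on $w_{j,t}$ plus Fa\`a di Bruno, with the case $\sigma\ge 1$ requiring the usual care about which derivatives of a band-limited function are actually small), and then sum the resulting off-diagonal bounds by a Schur-type argument rather than a crude triangle inequality. Steps one and two, and all the $L^\infty$/$L^2$ bounds, are routine given the composition and product estimates already recorded in the text.
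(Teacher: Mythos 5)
Your proof is correct, but note that the paper does not prove this lemma at all: it is quoted directly from M\'etivier \cite[Theorem 5.2.4]{Metivier08}, so there is no in-paper argument to compare against. What you have written is a faithful, self-contained rendition of the standard Littlewood--Paley proof of Bony's paralinearization theorem that underlies the cited result: telescoping $G(S_{j+1}u)-G(S_ju)$, comparing $m_j$ with $S_{j-N}G'(u)$ in $L^\infty$ using $u\in C^{s-1/2}_*$, and recovering dyadic localization of the non-band-limited remainders via Bernstein and Fa\`a di Bruno before summing with Young's inequality. The two technical points you flag --- the change of admissible cut-off costing only an $H^{s+\sigma}$ error, and the care needed at integer values of $\sigma$ where one loses a logarithm on the critical derivative --- are real but standard, and are handled exactly as you indicate (e.g.\ by conceding an arbitrarily small $\epsilon$ in the exponent, which is harmless since $k$ is free).
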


\noindent\\
{\bf Acknowledgements.} V.~Duch\^{e}ne was partially supported by the Agence Nationale de la Recherche (project ANR-13-BS01- 0003-01 DYFICOLTI). D.~Nilsson and E.~Wahl\'{e}n were supported by the Swedish Research Council (grant no. 621-2012-3753).

\bibliographystyle{siam}

\begin{thebibliography}{}

\end{thebibliography}


\begin{thebibliography}{10}

\bibitem{Angulo-Pava09}
{\sc J.~Angulo~Pava}, {\em Nonlinear dispersive equations. Existence and
  stability of solitary and periodic travelling wave solutions}, vol.~156 of
  Mathematical Surveys and Monographs, American Mathematical Society,
  Providence, RI, 2009.

\bibitem{Arnesen16}
{\sc M.~N. Arnesen}, {\em Existence of solitary-wave solutions to nonlocal
  equations}, Discrete Contin. Dyn. Syst., 36 (2016), pp.~3483--3510.

\bibitem{BahouriCheminDanchin}
{\sc H.~Bahouri, J.-Y. Chemin, and R.~Danchin}, {\em Fourier analysis and
  nonlinear partial differential equations}, vol.~343 of Grundlehren der
  Mathematischen Wissenschaften [Fundamental Principles of Mathematical
  Sciences], Springer, Heidelberg, 2011.

\bibitem{BenjaminBonaMahony72}
{\sc T.~B. Benjamin, J.~L. Bona, and J.~J. Mahony}, {\em Model equations for
  long waves in nonlinear dispersive systems}, Philos. Trans. Roy. Soc. London
  Ser. A, 272 (1972), pp.~47--78.

\bibitem{Benzoni-GavageSerre07}
{\sc S.~Benzoni-Gavage and D.~Serre}, {\em Multidimensional hyperbolic partial
  differential equations. {F}irst-order systems and applications}, Oxford
  Mathematical Monographs, The Clarendon Press Oxford University Press, Oxford,
  2007.

\bibitem{BonaSmith76}
{\sc J.~L. Bona and R.~Smith}, {\em A model for the two-way propagation of
  water waves in a channel}, Math. Proc. Cambridge Philos. Soc., 79 (1976),
  pp.~167--182.

\bibitem{BonaSouganidisStrauss87}
{\sc J.~L. Bona, P.~E. Souganidis, and W.~A. Strauss}, {\em Stability and
  instability of solitary waves of {K}orteweg-de {V}ries type}, Proc. Roy. Soc.
  London Ser. A, 411 (1987), pp.~395--412.

\bibitem{Boussinesq72}
{\sc J.~Boussinesq}, {\em Th{\'e}orie des ondes et des remous qui se propagent
  le long d'un canal rectangulaire horizontal, en communiquant au liquide
  contenu dans ce canal des vitesses sensiblement pareilles de la surface au
  fond}, J. Math. Pures Appl., 17 (1872), pp.~55--108.

\bibitem{Buffoni04a}
{\sc B.~Buffoni}, {\em Existence and conditional energetic stability of
  capillary-gravity solitary water waves by minimisation}, Arch. Ration. Mech.
  Anal., 173 (2004), pp.~25--68.

\bibitem{BuffoniGrovesSunWahlen13}
{\sc B.~Buffoni, M.~D. Groves, S.~M. Sun, and E.~Wahl{\'e}n}, {\em Existence
  and conditional energetic stability of three-dimensional fully localised
  solitary gravity-capillary water waves}, J. Differential Equations, 254
  (2013), pp.~1006--1096.

\bibitem{CamassaChoiMichalletEtAl06}
{\sc R.~Camassa, W.~Choi, H.~Michallet, P.-O. Rusas, and J.~K. Sveen}, {\em On
  the realm of validity of strongly nonlinear asymptotic approximations for
  internal waves}, J. Fluid Mech., 549 (2006), pp.~1--23.

\bibitem{Chandrasekhar61}
{\sc S.~Chandrasekhar}, {\em Hydrodynamic and hydromagnetic stability}, The
  International Series of Monographs on Physics, Clarendon Press, Oxford, 1961.

\bibitem{Chemin98}
{\sc J.-Y. Chemin}, {\em Perfect incompressible fluids}, vol.~14 of Oxford
  Lecture Series in Mathematics and its Applications, The Clarendon Press,
  Oxford University Press, New York, 1998.
\newblock Translated from the 1995 French original by Isabelle Gallagher and
  Dragos Iftimie.

\bibitem{Chen98a}
{\sc M.~Chen}, {\em Exact solutions of various {B}oussinesq systems}, Appl.
  Math. Lett., 11 (1998), pp.~45--49.

\bibitem{ChenNguyenSun10}
{\sc M.~Chen, N.~V. Nguyen, and S.-M. Sun}, {\em Solitary-wave solutions to
  {B}oussinesq systems with large surface tension}, Discrete Contin. Dyn.
  Syst., 26 (2010), pp.~1153--1184.

\bibitem{ChenNguyenSun11}
\leavevmode\vrule height 2pt depth -1.6pt width 23pt, {\em Existence of
  traveling-wave solutions to {B}oussinesq systems}, Differential Integral
  Equations, 24 (2011), pp.~895--908.

\bibitem{ChoiCamassa99}
{\sc W.~Choi and R.~Camassa}, {\em Fully nonlinear internal waves in a
  two-fluid system}, J. Fluid Mech., 396 (1999), pp.~1--36.

\bibitem{ClamondDutykh13}
{\sc D.~Clamond and D.~Dutykh}, {\em Fast accurate computation of the fully
  nonlinear solitary surface gravity waves}, Comput. \& Fluids, 84 (2013),
  pp.~35--38.

\bibitem{ConnGouldToint00}
{\sc A.~R. Conn, N.~I.~M. Gould, and P.~L. Toint}, {\em Trust-region methods},
  MPS/SIAM Series on Optimization, Society for Industrial and Applied
  Mathematics (SIAM), Philadelphia, PA; Mathematical Programming Society (MPS),
  Philadelphia, PA, 2000.

\bibitem{Darrigol03}
{\sc O.~Darrigol}, {\em The spirited horse, the engineer, and the
  mathematician: water waves in nineteenth-century hydrodynamics}, Arch. Hist.
  Exact Sci., 58 (2003), pp.~21--95.

\bibitem{DjordjevicRedekopp78}
{\sc V.~D. Djordjevic and L.~G. Redekopp}, {\em The fission and disintegration
  of internal solitary waves moving over two-dimensional topography}, J. Phys.
  Oceanogr., 8 (1978), pp.~1016--1024.

\bibitem{Duchene14a}
{\sc V.~Duch{\^e}ne}, {\em On the rigid-lid approximation for two shallow
  layers of immiscible fluids with small density contrast}, J. Nonlinear Sci.,
  24 (2014), pp.~579--632.

\bibitem{DucheneIsrawiTalhouk16}
{\sc V.~Duch\^ene, S.~Israwi, and R.~Talhouk}, {\em A new class of two-layer
  {G}reen-{N}aghdi systems with improved frequency dispersion}, Stud. Appl.
  Math., 137 (2016), pp.~356--415.

\bibitem{EhrnstromGrovesWahlen12}
{\sc M.~Ehrnstr{\"o}m, M.~D. Groves, and E.~Wahl{\'e}n}, {\em On the existence
  and stability of solitary-wave solutions to a class of evolution equations of
  {W}hitham type}, Nonlinearity, 25 (2012), pp.~2903--2936.

\bibitem{GrovesWahlen15}
{\sc M.~D. Groves and E.~Wahl{\'e}n}, {\em Existence and conditional energetic
  stability of solitary gravity-capillary water waves with constant vorticity},
  Proc. Roy. Soc. Edinburgh Sect. A, 145 (2015), pp.~791--883.

\bibitem{GrueJensenRusaasEtAl99}
{\sc J.~Grue, A.~Jensen, P.-O. Rus{\aa}s, and J.~K. Sveen}, {\em Properties of
  large-amplitude internal waves}, J. Fluid Mech., 380 (1999), pp.~257--278.

\bibitem{HelfrichMelville06}
{\sc K.~R. Helfrich and W.~K. Melville}, {\em Long nonlinear internal waves},
  in Annual review of fluid mechanics. {V}ol. 38, 2006, pp.~395--425.

\bibitem{Jackson04}
{\sc C.~R. Jackson}, {\em An atlas of internal solitary-like waves and their
  properties}, 2004.
\newblock \url{http://www.internalwaveatlas.com/Atlas2_index.html}.

\bibitem{KortewegDe95}
{\sc D.~J. Korteweg and G.~De~Vries}, {\em On the change of form of long waves
  advancing in a rectangular canal, and on a new type of long stationary
  waves}, Philos. Mag., 5 (1895), pp.~422--443.

\bibitem{Lannes}
{\sc D.~Lannes}, {\em The water waves problem}, vol.~188 of Mathematical
  Surveys and Monographs, American Mathematical Society, Providence, RI, 2013.
\newblock Mathematical analysis and asymptotics.

\bibitem{Li02}
{\sc Y.~A. Li}, {\em Hamiltonian structure and linear stability of solitary
  waves of the {G}reen-{N}aghdi equations}, J. Nonlinear Math. Phys., 9 (2002),
  pp.~99--105.
\newblock Recent advances in integrable systems (Kowloon, 2000).

\bibitem{Lions84}
{\sc P.-L. Lions}, {\em The concentration-compactness principle in the calculus
  of variations. {T}he locally compact case. {I}}, Ann. Inst. H. Poincar\'e
  Anal. Non Lin\'eaire, 1 (1984), pp.~109--145.

\bibitem{MadsenMurraySorensen91}
{\sc P.~A. Madsen, R.~Murray, and O.~R. S{\o}rensen}, {\em A new form of the
  {B}oussinesq equations with improved linear dispersion characteristics},
  Coastal engineering, 15 (1991), pp.~371--388.

\bibitem{Malcprimetseva89}
{\sc Z.~L. Mal{\cprime}tseva}, {\em Unsteady long waves in a two-layer fluid},
  Dinamika Sploshn. Sredy,  (1989), pp.~96--110.

\bibitem{Metivier08}
{\sc G.~M\'etivier}, {\em Para-differential calculus and applications to the
  {C}auchy problem for nonlinear systems}, vol.~5 of Centro di Ricerca
  Matematica Ennio De Giorgi (CRM) Series, Edizioni della Normale, Pisa, 2008.

\bibitem{MichalletBarthelemy98}
{\sc H.~Michallet and E.~Barth{\'e}lemy}, {\em Experimental study of
  interfacial solitary waves}, J. Fluid Mech., 366 (1998), pp.~159--177.

\bibitem{Miyata87}
{\sc M.~Miyata}, {\em Long internal waves of large amplitude}, in Nonlinear
  Water Waves: IUTAM Symposium, Tokyo, Aug. 1987, Springer, pp.~399--405.

\bibitem{Nwogu93}
{\sc O.~Nwogu}, {\em Alternative form of boussinesq equations for nearshore
  wave propagation}, Journal of waterway, port, coastal, and ocean engineering,
  119 (1993), pp.~618--638.

\bibitem{OstrovskyStepanyants89}
{\sc L.~A. Ostrovsky and Y.~A. Stepanyants}, {\em Do internal solitons exist in
  the ocean?}, Rev. Geophys., 27 (1989), pp.~293--310.

\bibitem{Rayleigh76}
{\sc J.~W.~S. Rayleigh}, {\em On waves}, Philos. Mag., 1 (1876), pp.~251--271.

\bibitem{Serre53a}
{\sc F.~Serre}, {\em Contribution {\`a} l'{\'e}tude des {\'e}coulements
  permanents et variables dans les canaux}, La Houille Blanche,  (1953),
  pp.~830--872.

\bibitem{Struwe}
{\sc M.~Struwe}, {\em Variational methods}, vol.~34 of Ergebnisse der
  Mathematik und ihrer Grenzgebiete. 3. Folge. A Series of Modern Surveys in
  Mathematics
, Springer-Verlag, Berlin, fourth~ed., 2008.

\bibitem{Trefethen}
{\sc L.~N. Trefethen}, {\em Spectral methods in {MATLAB}}, vol.~10 of Software,
  Environments, and Tools, Society for Industrial and Applied Mathematics
  (SIAM), Philadelphia, PA, 2000.

\end{thebibliography}
\def\cprime{$'$}

\end{document}